\theoremstyle{plain} 
\newtheorem{theorem}{Theorem}[section]  
\newtheorem{lemma}[theorem]{Lemma}
\newtheorem{corollary}[theorem]{Corollary}
\newtheorem{proposition}[theorem]{Proposition}
\theoremstyle{definition} 
\newtheorem{remark}[theorem]{Remark}
\newtheorem{example}[theorem]{Example}
\newcommand{\const}{\mathrm{const}}
\newcommand{\norm}[1]{\left|\!\left|#1\right|\!\right|}
\newcommand{\ad}{\mathrm{ad}}
\newcommand{\supp}{\mathrm{supp}}
\newcommand{\vol}{\mathrm{vol}}
\newcommand{\Diam}{\mathrm{Diam}}
\newcommand{\moduli}{\mathcal{M}}
\newcommand{\widim}{\mathrm{Widim}}
\newcommand{\dist}{\mathrm{dist}}
\begin{document}

\title[Instanton approximation, periodic ASD connections, and mean dimension]
{Instanton approximation, periodic ASD connections, and mean dimension} 

\author[S. Matsuo and M. Tsukamoto]{Shinichiroh Matsuo and Masaki Tsukamoto}

\subjclass[2000]{58D27, 53C07}

\keywords{Yang-Mills gauge theory, instanton approximation, infinite dimensional deformation theory, 
periodic ASD connections, mean dimension}

\thanks{Shinichiroh Matsuo was supported by Grant-in-Aid for JSPS fellows (19$\cdot$5618), and
Masaki Tsukamoto was supported by Grant-in-Aid for Young Scientists (B) (21740048).}

\date{\today}

\maketitle

\begin{abstract}
We study a moduli space of ASD connections over $S^3\times \mathbb{R}$.
We consider not only finite energy ASD connections but also infinite energy ones.
So the moduli space is infinite dimensional in general.
We study the (local) mean dimension of this infinite dimensional moduli space.
We show the upper bound on the mean dimension by using a ``Runge-approximation" for ASD connections,
and we prove its lower bound by constructing an infinite dimensional deformation theory 
of periodic ASD connections.
\end{abstract}


\section{introduction}  \label{section: introduction}
Since Donaldson \cite{Donaldson-application} discovered his revolutionary theory, 
many mathematicians have intensively studied the Yang-Mills gauge theory.
There are several astonishing results on the structures of the ASD moduli spaces 
and their applications.
But most of them study only finite energy ASD connections and their finite dimensional 
moduli spaces.
Almost nothing is known about infinite energy ASD connections and their infinite dimensional 
moduli spaces.
(One of the authors struggled to open the way to this direction in \cite{Tsukamoto, Tsukamoto-2}.)
This paper studies an infinite dimensional moduli space coming from the Yang-Mills theory 
over $S^3\times \mathbb{R}$.
Our main purposes are to prove estimates on its ``mean dimension'' (Gromov \cite{Gromov}) and to show that 
there certainly exists a non-trivial structure in this infinite dimensional moduli space.
(Mean dimension is a ``dimension of an infinite dimensional space averaged by a group action''.)

The reason why we consider $S^3\times \mathbb{R}$ is that it is one of the simplest non-compact anti-self-dual 
4-manifolds of (uniformly) positive scalar curvature.
(Indeed it is conformally flat.) 
These metrical conditions are used via the Weitzenb\"{o}ck formula (see Section \ref{subsection: construction}).
Recall that one of the important results of the pioneering work of Atiyah-Hitchin-Singer \cite[Theorem 6.1]{A-H-S}
is the calculation of the dimension of the moduli space of (irreducible) self-dual connections over a compact self-dual 4-manifold
of positive scalar curvature.
So our work is an attempt to develop an infinite dimensional analogue of 
\cite[Theorem 6.1]{A-H-S}.

Of course, the study of the mean dimension is just a one-step toward the full understanding 
of the structures of the infinite dimensional moduli space. 
(But the authors believe that ``dimension'' is one of the most fundamental invariants of spaces and 
that the study of mean dimension is a crucial step toward the full understanding.)
So we need much more studies,
and the authors hope that this paper becomes a stimulus to a further study of 
infinite dimensional moduli spaces in the Yang-Mills gauge theory.

Set $X:= S^3\times \mathbb{R}$.
Throughout the paper, the variable $t$ means the variable of the $\mathbb{R}$-factor of $X=S^3\times \mathbb{R}$.
(That is, $t:X\to \mathbb{R}$ is the natural projection.)
$S^3\times \mathbb{R}$ is endowed with the product metric of 
a positive constant curvature metric on $S^3$ and the standard metric on $\mathbb{R}$.
(Therefore $X$ is $S^3(r)\times \mathbb{R}$ for some $r>0$ as a Riemannian manifold,
where $S^3(r) =\{x\in \mathbb{R}^4|\, |x|=r\}$.)
Let $\bm{E} := X\times SU(2)$ be the product principal $SU(2)$-bundle over $X$.
The additive Lie group $\mathbb{R}$ acts on $X$ by 
$X\times \mathbb{R}\ni ((\theta, t), s) \mapsto (\theta, t+s)\in X$.
This action trivially lifts to the action on $\bm{E}$ by
$\bm{E}\times \mathbb{R}\ni ((\theta, t, u), s) \mapsto (\theta, t+s, u)\in \bm{E}$.

Let $d\geq 0$. We define $\moduli_d$
as the set of all gauge equivalence classes of ASD connections $\bm{A}$ on $\bm{E}$ satisfying 
\begin{equation} \label{eq: Brody condition}
 \norm{F(\bm{A})}_{L^\infty(X)}\leq d.
\end{equation}
Here $F(\bm{A})$ is the curvature of $\bm{A}$.
$\moduli_d$ is equipped with the topology of $\mathcal{C}^\infty$-convergence on compact subsets:
a sequence $[\bm{A}_n]$ $(n\geq 1)$ converges to $[\bm{A}]$ in $\moduli_d$
if there exists a sequence of gauge transformations $g_n$ of $\bm{E}$ such that 
$g_n(\bm{A}_n)$ converges to $\bm{A}$ as $n\to \infty$ in the $\mathcal{C}^\infty$-topology over every compact subset in $X$.
$\moduli_d$ becomes a compact metrizable space
by the Uhlenbeck compactness (\cite{Uhlenbeck, Wehrheim}).
The additive Lie group $\mathbb{R}$ continuously acts on $\moduli_d$ by 
\[ \moduli_d \times \mathbb{R} \to \moduli_d, \quad 
([\bm{A}], s) \mapsto [s^*\bm{A}],\]
where $s^*$ is the pull-back by $s :\bm{E}\to \bm{E}$.
Then we can consider the mean dimension $\dim(\moduli_d:\mathbb{R})$.
Intuitively, 
\[ \dim(\moduli_d:\mathbb{R}) = \frac{\dim\moduli_d}{\vol(\mathbb{R})}.\]
(This is $\infty/\infty$ in general. 
The precise definition will be given in Section \ref{Section: Mean dimension and local mean dimension}.)
Our first main result is the following estimate on the mean dimension.
\begin{theorem}\label{thm: main theorem}
\[ \dim(\moduli_d:\mathbb{R}) < +\infty.\]
Moreover, $\dim(\moduli_d:\mathbb{R})\to +\infty$ as $d\to +\infty$.
\end{theorem}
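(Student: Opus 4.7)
The theorem has two independent assertions that require quite different techniques, so my plan is to treat them separately: the upper bound via the Runge-type approximation advertised in the abstract, and the lower bound via an infinite-dimensional deformation theory of periodic ASD connections.

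For the upper bound I would work with Gromov's $\widim_\epsilon$ characterisation of mean dimension applied to $\moduli_d$ with a metric that induces $\mathcal{C}^\infty$-convergence on compacta. For fixed $T>0$ the curvature bound (\ref{eq: Brody condition}) gives the energy estimate $\int_{S^3\times[-T,T]}|F(\bm{A})|^2 \lesssim d^2 T$. The main analytic step, the ``Runge approximation,'' would assert that every $\bm{A}\in\moduli_d$ can be approximated to within $\epsilon$ in $\mathcal{C}^\infty$ on $S^3\times[-T,T]$ by a \emph{globally finite-energy} ASD connection $\bm{B}$ on $X$ with instanton number $k(\bm{B})\leq C(\epsilon)\,d^2 T$. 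Since $X=S^3\times\mathbb{R}$ is conformally equivalent to $S^4$ minus two points, Uhlenbeck's removable singularity theorem identifies finite-energy ASD connections over $X$ with ASD connections on $S^4$, whose moduli spaces are finite-dimensional of dimension $8k(\bm{B})-3$ by Atiyah--Hitchin--Singer. Restricting these approximants to $S^3\times[-T,T]$ gives an $\epsilon$-cover of $\moduli_d$ by a finite-dimensional object of dimension at most $C(\epsilon)\,d^2 T$; dividing by $2T$ and then sending $T\to\infty$ and $\epsilon\to 0$ yields $\dim(\moduli_d:\mathbb{R})\leq C d^2<\infty$.

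For the lower bound I would construct a family of periodic ASD connections by gluing. Fix a period $T>0$ and a scale $\lambda$ with $\lambda^{-2}\lesssim d$; insert into $S^3\times[0,T]$ several concentrated BPST-type instantons of scale $\lambda$, and spread the configuration $T$-periodically along $\mathbb{R}$ to obtain an approximately ASD connection $\bm{A}_0$ on $X$ satisfying (\ref{eq: Brody condition}). The deformation theory at $\bm{A}_0$ is intrinsically infinite-dimensional because the gluing parameters are repeated along the $\mathbb{R}$-orbit; decomposing the linearised ASD operator $d^+_{\bm{A}_0}$ in Coulomb gauge by Fourier transform in $t$ produces a family of Fredholm operators on $S^3\times\mathbb{R}/T\mathbb{Z}$ whose combined kernel is $\mathbb{R}$-invariant and infinite-dimensional. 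A Kuranishi-type contraction mapping argument in suitable weighted Sobolev spaces should then produce an $\mathbb{R}$-equivariant topological embedding of an infinite product $\prod_{n\in\mathbb{Z}}V$, with $V$ the per-period gluing space, into $\moduli_d$. Since the number of bubbles that fit in one period under the constraint $\norm{F}_{L^\infty}\leq d$ grows like $d^2$, one gets $\dim V\to\infty$ with $d$ and hence $\dim(\moduli_d:\mathbb{R})\to\infty$.

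The technical heart of the argument is the Runge approximation. There is no linear superposition for non-abelian ASD connections, so the approximation must be produced non-linearly: cut $\bm{A}$ off outside $S^3\times[-T,T]$, solve an inhomogeneous ASD equation for a compactly supported correction, and bound the resulting instanton number. The crucial input is the Weitzenb\"{o}ck identity on $X$, where the positive scalar curvature of $S^3$ supplies coercivity of the relevant Laplacian, together with Uhlenbeck's removable singularity theorem to close up the approximant at the two conformal infinities of $X$. The lower-bound argument carries an analogous but milder difficulty: one must ensure that the infinite-dimensional Kuranishi model is a genuine $\mathbb{R}$-equivariant homeomorphism onto its image, which again rests on Weitzenb\"{o}ck positivity to invert $d^+_{\bm{A}_0}$ uniformly across the infinite product of gluing parameters.
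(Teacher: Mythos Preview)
Your overall strategy matches the paper's: Runge-type approximation for the upper bound, deformation theory of periodic ASD connections for the lower bound. However, there are two substantive points where your sketch diverges from what the paper actually manages to carry out.

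\textbf{Upper bound: the cut-off is not gauge-equivariant.} You write ``cut $\bm{A}$ off outside $S^3\times[-T,T]$, solve an inhomogeneous ASD equation \ldots''. The paper points out explicitly (Section~\ref{section: outline of the proofs of the main theorems}) that there is no canonical cut-off compatible with the gauge symmetry, so one cannot define a single continuous map $\moduli_d\to M(c)$ this way. The paper's fix is to decompose $\moduli_d=\bigcup_{0\le i,j\le N}\moduli_{d,T}(i,j)$ according to the gauge-equivalence class of $\bm{A}|_{S^3\times\{\pm T\}}$ (using that non-flat restrictions are irreducible, Proposition~\ref{prop: irreducibility on 3-sphere}), and to build a separate $\varepsilon$-embedding on each piece; Lemma~\ref{lemma: widim and space sum} then combines them. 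This yields only $\dim(\moduli_d:\mathbb{R})\le (N+1)^2 d^2\vol(S^3)/\pi^2$, not the clean $Cd^2$ you claim---the paper says explicitly that it has \emph{not} succeeded in proving the sharper bound. So your outline has the same gap the authors themselves flag, and your asserted conclusion is stronger than what the argument delivers.

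\textbf{Lower bound: different route, and the hard step is injectivity.} The paper does not glue BPST bubbles or use a Bloch/Fourier decomposition. It takes an already-existing periodic ASD connection $\underbar{A}$ on $S^3\times(\mathbb{R}/T\mathbb{Z})$ (existence due to Taubes), pulls back to $A$ on $X$, and works directly with the Banach space $H^1_A=\{a:(d_A^*+d_A^+)a=0,\ \|a\|_{L^\infty}<\infty\}$. For small $a\in H^1_A$ an implicit-function-theorem step produces $\phi_a$ with $F^+(A+a+d_A^*\phi_a)=0$, giving a $T\mathbb{Z}$-equivariant map $B_{\delta}(H^1_A)\to\moduli_d$. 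The genuinely delicate point---which your sketch waves past---is proving this map is \emph{injective}: one needs a Coulomb-gauge uniqueness statement on the noncompact manifold $X$ (Proposition~\ref{prop: Coulomb gauge condition}), and this in turn rests on the irreducibility of $A$ (Corollary~\ref{cor: non-flat implies irreducible}) and a careful $\ell^\infty L^2$ estimate (Lemma~\ref{lemma: estimate by irreducibility}). The mean-dimension lower bound then comes from the $nT$-periodic subspaces $H^1_{A_n}\subset H^1_A$, which have $\dim H^1_{A_n}=8n\,c_2(\underbar{E})$ by the index theorem, giving $\dim_{[A]}(\moduli_d:\mathbb{R})\ge 8c_2(\underbar{E})/T$. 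Your ``$\prod_{n\in\mathbb{Z}}V$'' picture is not what arises; the kernel $H^1_A$ is not a product, and the embedding is only $T\mathbb{Z}$-equivariant, not $\mathbb{R}$-equivariant.
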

For an ASD connection $\bm{A}$ on $\bm{E}$, we define $\rho(\bm{A})$ by setting
\begin{equation} \label{eq: definition of rho(A)}
\rho(\bm{A}) := \lim_{T\to +\infty}\frac{1}{8\pi^2 T}
\sup_{t\in \mathbb{R}}\int_{S^3\times[t,t+T]} |F(\bm{A})|^2d\vol.
\end{equation}
This limit always exists because we have the following subadditivity.
\begin{equation*}
 \begin{split}
 \sup_{t\in \mathbb{R}}&\int_{S^3\times[t,t+T_1+T_2]}|F(\bm{A})|^2 d\vol \\
 &\leq \sup_{t\in \mathbb{R}}\int_{S^3\times [t,t+T_1]}|F(\bm{A})|^2d\vol
 + \sup_{t\in \mathbb{R}}\int_{S^3\times [t,t+T_2]}|F(\bm{A})|^2d\vol.
 \end{split}
\end{equation*}
$\rho(\bm{A})$ is translation invariant; For $s\in \mathbb{R}$, 
we have $\rho(s^*\bm{A})=\rho(\bm{A})$, where $s^*\bm{A}$ is the 
pull-back of $\bm{A}$ by the map $s:\bm{E}=S^3\times \mathbb{R}\times SU(2)\to \bm{E}$, 
$(\theta,t,u)\mapsto (\theta,t+s,u)$.
We define $\rho(d)$ as the supremum of $\rho(\bm{A})$ over all ASD connections $\bm{A}$ on $\bm{E}$
satisfying $\norm{F(\bm{A})}_{L^\infty}\leq d$.

Let $\bm{A}$ be an ASD connection on $\bm{E}$.
We call $\bm{A}$ a periodic ASD connection if 
there exist $T>0$, a principal $SU(2)$-bundle $\underbar{E}$ over $S^3\times (\mathbb{R}/T\mathbb{Z})$, 
and an ASD connection $\underbar{A}$ on $\underbar{E}$ such that $(\bm{E}, \bm{A})$ is gauge equivalent to 
$(\pi^*(\underbar{E}), \pi^*(\underbar{A}))$ 
where $\pi:S^3\times \mathbb{R}\to S^3\times (\mathbb{R}/T\mathbb{Z})$
is the natural projection.
(Here $S^3\times (\mathbb{R}/T\mathbb{Z})$ is equipped with the metric induced by the covering map 
$\pi$.)
Then we have 
\begin{equation} \label{eq: rho(A) for periodic ASD}
\rho(\bm{A}) = \frac{1}{8\pi^2 T}\int_{S^3\times [0,T]}|F(\bm{A})|^2d\vol = \frac{c_2(\underbar{E})}{T}.
\end{equation}
We define $\rho_{peri}(d)$ as the supremum of $\rho(\bm{A})$ over all periodic ASD connections $\bm{A}$ on 
$\bm{E}$ satisfying $\norm{F(\bm{A})}_{L^\infty} <d$.
(Note that we impose the strict inequality condition here.)
If $d=0$, then such an $\bm{A}$ does not exist.  Hence we set $\rho_{peri}(0) :=0$.
(If $d>0$, then the product connection $\bm{A}$ is a periodic ASD connection satisfying 
$\norm{F(\bm{A})}_{L^\infty}=0 < d$.)
Obviously we have $\rho_{peri}(d)\leq \rho(d)$.
Our second main result is the following estimates on the ``local mean dimensions''.
\begin{theorem}\label{thm: main theorem on the local mean dimension}
For any $[\bm{A}]\in \moduli_d$, 
\[ \dim_{[\bm{A}]} (\moduli_d:\mathbb{R}) \leq 8\rho(\bm{A}).\]
Moreover, if $\bm{A}$ is a periodic ASD connection satisfying $\norm{F(\bm{A})}_{L^\infty} <d$,
then 
\[ \dim_{[\bm{A}]}(\moduli_d:\mathbb{R}) = 8\rho(\bm{A}).\]
Therefore,
\[ 8\rho_{peri}(d)\leq \dim_{loc}(\moduli_d:\mathbb{R})\leq 8\rho(d).\]
\end{theorem}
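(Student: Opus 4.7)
The argument naturally splits into the pointwise upper bound $\dim_{[\bm{A}]}(\moduli_d:\mathbb{R}) \leq 8\rho(\bm{A})$, valid for every $[\bm{A}] \in \moduli_d$, and the matching pointwise lower bound when $\bm{A}$ is periodic with $\norm{F(\bm{A})}_{L^\infty}<d$. Granted these two, the chain $8\rho_{peri}(d)\leq \dim_{loc}(\moduli_d:\mathbb{R})\leq 8\rho(d)$ is immediate: the upper bound gives $\dim_{[\bm{A}]} \leq 8\rho(\bm{A}) \leq 8\rho(d)$, while taking the supremum of the equality over all periodic $\bm{A}$ with $\norm{F(\bm{A})}_{L^\infty}<d$ gives the left inequality. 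Both halves amount to counting local degrees of freedom over a long slab $S^3 \times [-T/2, T/2]$ and dividing by $T$ as $T \to \infty$.

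For the upper bound I would run the ``instanton approximation'' advertised in the abstract. Fix a small neighbourhood $U$ of $[\bm{A}]$ in $\moduli_d$ and the metric $d_T$ comparing connections on $S^3 \times [-T/2, T/2]$. The plan is, for each $[\bm{B}]\in U$, to modify $\bm{B}$ on a slightly shrunken slab by a Runge/patching construction into a genuine ASD connection $\bm{B}'$ on the compact manifold $S^3 \times (\mathbb{R}/(T+c)\mathbb{Z})$, with $\norm{\bm{B}-\bm{B}'}_{C^\infty}$ small and $c_2(\bm{B}')$ bounded by $(8\pi^2)^{-1}\norm{F(\bm{B})}^2_{L^2}$ on the slab. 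Shrinking $U$ if necessary, the definition of $\rho$ gives $c_2(\bm{B}') \leq \rho(\bm{A})T + o(T)$. Since $S^3 \times S^1$ is conformally flat with positive scalar curvature, the Weitzenb\"ock formula kills $H^0$ and $H^2$ in the deformation complex, so the Atiyah--Hitchin--Singer formula yields an ASD moduli space of dimension exactly $8c_2 \leq 8\rho(\bm{A})T + o(T)$. The approximation map from $(U,d_T)$ into this finite-dimensional moduli has controlled distortion, so $\widim_\varepsilon(U,d_T) \leq 8\rho(\bm{A})T + o(T)$; dividing by $T$ and letting $\varepsilon\to 0$ yields the bound.

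For the lower bound, let $\bm{A}$ be periodic of period $T$ and $\underbar{A}$ its quotient on $S^3 \times (\mathbb{R}/T\mathbb{Z})$. Positive scalar curvature forces $H^0_{\underbar{A}} = H^2_{\underbar{A}} = 0$, so the deformation complex gives $\dim H^1_{\underbar{A}} = 8c_2(\underbar{E}) = 8T\rho(\bm{A})$ by \eqref{eq: rho(A) for periodic ASD}. I would lift a basis of $H^1_{\underbar{A}}$ to representatives on a single fundamental domain $S^3 \times [0,T] \subset X$, and for each sequence $(v_n)_{n\in \mathbb{Z}} \in \prod_n H^1_{\underbar{A}}$ of uniformly small norm, solve the perturbed ASD equation by a Kuranishi/implicit-function argument in which the $n$th copy of $H^1_{\underbar{A}}$ deforms $\bm{A}$ on the $n$th translate of the fundamental domain. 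The strict inequality $\norm{F(\bm{A})}_{L^\infty}<d$ keeps the perturbed connections inside $\moduli_d$. This produces a continuous locally injective map from a small open subset of $\prod_{n}H^1_{\underbar{A}}$ into $\moduli_d$ that intertwines the shift $n \mapsto n+1$ with the period-$T$ translation of $\mathbb{R}$. Since the source has mean dimension $(\dim H^1_{\underbar{A}})/T = 8\rho(\bm{A})$ under this action, the same lower bound passes to $\dim_{[\bm{A}]}(\moduli_d:\mathbb{R})$.

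The main obstacle is the infinite-dimensional gluing behind the lower bound: one must solve the ASD equation on all of $X$ while perturbing independently on infinitely many periods, with estimates uniform in the set of active periods. This requires a right inverse to $d^+_{\bm{A}}$ on $X$ whose operator norm on suitably weighted Sobolev spaces is bounded independently of how many periods one simultaneously excites. The Weitzenb\"ock formula combined with the positive scalar curvature of $S^3$ should furnish exponential decay of the Green's function of $(d^+_{\bm{A}})^*(d^+_{\bm{A}})$ in the $\mathbb{R}$-direction, which both makes independently supported perturbations essentially decoupled and turns the gluing equation into a contraction mapping with uniform constants. The upper-bound Runge step is milder but still demands care that the $L^2$-approximation error and the change in $c_2$ incurred by the cut-off are controlled uniformly in $T$.
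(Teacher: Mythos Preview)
Your outline is sound and would work, but the lower-bound half takes a harder road than the paper does. You propose to parametrize deformations by $\prod_{n\in\mathbb{Z}} H^1_{\underbar{A}}$, cutting off each $v_n$ to the $n$th fundamental domain and then solving an infinite gluing problem; you correctly flag the need for a right inverse to $d_{\bm A}^+$ with uniform bounds as the main obstacle. The paper sidesteps this entirely by working directly with the global Banach space $H^1_A=\{a\in\Omega^1(\ad E):\ (d_A^*+d_A^+)a=0,\ \norm{a}_{L^\infty}<\infty\}$ on $X$ and applying the implicit function theorem \emph{once}: for small $a\in H^1_A$ one solves $F^+(A+a+d_A^*\phi_a)=0$ using that $d_A^+d_A^*=\tfrac12(\nabla_A^*\nabla_A+S/3)$ is an isomorphism $\mathcal{C}^{2,\alpha}\to\mathcal{C}^{0,\alpha}$ (positive scalar curvature plus a maximum-principle $L^\infty$-estimate). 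No gluing is needed; the decoupling you want is already encoded in the fact that elements of $H^1_A$ are globally defined and in the kernel. The counting then comes from the finite-dimensional $nT$-periodic subspaces $H^1_{A_n}\subset H^1_A$, each of dimension $8n\,c_2(\underbar{E})$, which via Lemma~\ref{lemma: widim of Banach ball} give the lower bound on $\widim_\varepsilon$. Your product model $\prod_n H^1_{\underbar{A}}$ has the right mean dimension, but realizing it inside $\moduli_d$ costs you the infinite-gluing analysis, whereas the paper's parametrization by $H^1_A$ gives the same count for free once the single global linearization is inverted. Injectivity of the resulting map (which you call ``locally injective'') is the other nontrivial point: the paper handles it via a partial Coulomb gauge argument over $X$ (Proposition~\ref{prop: Coulomb gauge condition}), which in turn rests on irreducibility of $A$ (Corollary~\ref{cor: non-flat implies irreducible}); your sketch does not address how gauge equivalence is controlled. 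On the upper bound your target (a compact quotient $S^3\times(\mathbb{R}/(T{+}c)\mathbb{Z})$) differs from the paper's (finite-energy instantons on $X$ itself), but both are workable and lead to the same $8c_2$ dimension count.
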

Here $\dim_{[\bm{A}]}(\moduli_d:\mathbb{R})$ is the ``local mean dimension'' of $\moduli_d$
at $[\bm{A}]$, and 
$\dim_{loc}(\moduli_d:\mathbb{R}) 
:= \sup_{[\bm{A}]\in \moduli_d} \dim_{[\bm{A}]}(\moduli_d:\mathbb{R})$ 
is the ``local mean dimension'' of $\moduli_d$.
These notions will be
defined in Section \ref{subsection: local mean dimension}.

Note that
\begin{equation*} 
 \lim_{d\to +\infty} \rho_{peri}(d)=+\infty.
\end{equation*}
This obviously follows from the fact that for any integer $n\geq 0$ there exists 
an ASD connection on $S^3\times (\mathbb{R}/\mathbb{Z})$ whose second Chern number is equal to $n$.
This is a special case of the famous theorem of Taubes \cite{Taubes}.
(Note that the intersection form of $S^3\times S^1$ is zero.)
We have $\dim(\moduli_d:\mathbb{R})\geq \dim_{loc}(\moduli_d:\mathbb{R})$ 
(see (\ref{eq: local mean dim. leq mean dim.}) in Section \ref{subsection: local mean dimension}).
Hence the statement that $\dim(\moduli_d:\mathbb{R})\to +\infty$
$(d\to +\infty)$ in Theorem \ref{thm: main theorem}
follows from the inequality
$\dim_{loc}(\moduli_d:\mathbb{R})\geq 8\rho_{peri}(d)$
in Theorem \ref{thm: main theorem on the local mean dimension}.
\begin{remark} \label{remark: another description of moduli_d}
All principal $SU(2)$-bundles over $S^3\times \mathbb{R}$ are gauge equivalent to the product bundle $\bm{E}$.
Hence the moduli space $\moduli_d$ is equal to the space of all gauge equivalence classes $[E,A]$
such that
$E$ is a principal $SU(2)$-bundle over $X$, and that $A$ is an ASD connection on $E$ satisfying 
$|F(A)|\leq d$.
We have $[E_1,A_1]=[E_2,A_2]$ if and only if there exists a bundle map $g:E_1\to E_2$
satisfying $g(A_1)=A_2$.
In this description, the topology of $\moduli_d$ is described as follows.
A sequence $[E_n, A_n]$ $(n\geq 1)$ in $\moduli_d$ converges to $[E, A]$ if 
and only if there exist gauge transformations $g_n:E_n\to E$ $(n\gg 1)$ such that 
$g_n(A_n)$ converges to $A$ as $n\to \infty$ in $\mathcal{C}^\infty$ over every compact subset in $X$.
\end{remark}
\begin{remark}
An ASD connection satisfying the condition (\ref{eq: Brody condition}) is a Yang-Mills
analogue of a ``Brody curve''
(cf. Brody \cite{Brody}) in the entire holomorphic curve theory (Nevanlinna theory).
It is widely known that there exist several similarities between the Yang-Mills gauge theory and 
the theory of (pseudo-)holomorphic curves (e.g. Donaldson invariant vs. Gromov-Witten invariant).
On the holomorphic curve side, several researchers in the Nevanlinna theory have systematically studied
the value distributions of holomorphic curves (of infinite energy) from the complex plane $\mathbb{C}$.
They have found several deep structures of such infinite energy holomorphic curves.
Therefore the authors hope that infinite energy ASD connections also have deep structures.
\end{remark}
The rough ideas of the proofs of the main theorems are as follows. 
(For more about the outline of the proofs, see Section \ref{section: outline of the proofs of the main theorems}.)
The upper bounds on the (local) mean dimension are proved by using the Runge-type approximation 
of ASD connections (originally due to Donaldson \cite{Donaldson}).
This ``instanton approximation'' technique gives a method to approximate infinite energy ASD connections 
by finite energy ones (instantons).
Then we can construct ``finite dimensional approximations'' of $\moduli_d$ by moduli spaces
of instantons.
This gives a upper bound on $\dim(\moduli_d:\mathbb{R})$.
The lower bound on the local mean dimension is proved by constructing an infinite dimensional 
deformation theory of periodic ASD connections.
This method is a Yang-Mills analogue of the deformation theory of ``elliptic Brody curves"
developed in Tsukamoto \cite{Tsukamoto-deformation}.

A big technical difficulty in the study of $\moduli_d$ comes from the point that
ASD equation is not elliptic.
When we study the Yang-Mills theory over compact manifolds, this point can be easily overcome by 
using the Coulomb gauge. But in our situation (perhaps) there is no such good way to recover 
the ellipticity.
So we will consider some ``partial gauge fixings'' in this paper.
In the proof of the upper bound, we will consider the Coulomb gauge over $S^3$ instead of $S^3\times \mathbb{R}$
(see Propositions \ref{prop: gauge fixing on S^3} and \ref{prop: gauge fixing on S^3: flat connection}).
In the proof of the lower bound, we will consider the Coulomb gauge over $S^3\times \mathbb{R}$,
but it is less powerful and more technical than the usual Coulomb gauges over compact manifolds
(see Proposition \ref{prop: Coulomb gauge condition}).

\textbf{Organization of the paper:}
In Section \ref{Section: Mean dimension and local mean dimension} we review the definition of mean dimension and define local mean dimension.
In Section \ref{section: outline of the proofs of the main theorems}
we explain the outline of the proofs of Theorem \ref{thm: main theorem} 
and \ref{thm: main theorem on the local mean dimension}.
Sections \ref{section: solving ASD equation}, \ref{section: continuity of the perturbation} 
and \ref{section: cut-off constructions}
are preparations for the proof of the upper bounds on the (local) mean dimension.
Section \ref{section: non-flat implies irreducible} is a preparation for both proofs of
the upper and lower bounds.
In Section \ref{Section: proof of the upper bound} we prove the upper bounds.
Section \ref{section: Analytic preliminaries for the lower bound} 
is a preparation for the proof of the lower bound.
In Section \ref{section: Proof of the lower bound} we develop the deformation theory of periodic 
ASD connections and prove the lower bound on the local mean dimension.
In Appendix \ref{appendix: Green kernel}
we prepare some basic results on the Green kernel of $\Delta +a$ ($a>0$).

\textbf{Acknowledgement.}
The authors wish to thank Professors Kenji Nakanishi and Yoshio Tsutsumi.
When the authors studied the lower bound on the local mean dimension,
they gave the authors helpful advice.
Their advice was very useful especially for preparing the arguments in Section
\ref{section: Analytic preliminaries for the lower bound}.
The authors also wish to thank Professors Kenji Fukaya and Mikio Furuta for 
their encouragement.

\section{Mean dimension and local mean dimension} \label{Section: Mean dimension and local mean dimension} 
\subsection{Review of mean dimension} \label{subsection: Review of mean dimension}
We review the definitions and basic properties of mean dimension in this subsection.
For the detail, see Gromov \cite{Gromov} and Lindenstrauss-Weiss \cite{Lindenstrauss-Weiss}.

Let $(X, d)$ be a compact metric space, $Y$ be a topological space, and $f:X\to Y$ be a 
continuous map.
For $\varepsilon>0$, $f$ is called an $\varepsilon$-embedding if we have $\Diam f^{-1}(y)\leq \varepsilon$ for all
$y\in Y$.
We define $\widim_\varepsilon(X,d)$ as the minimum integer $n\geq 0$ 
such that there exist a polyhedron $P$ of dimension $n$ and an $\varepsilon$-embedding $f:X\to P$.
We have 
\[ \lim_{\varepsilon\to 0}\widim_\varepsilon (X, d) =\dim X,\]
where $\dim X$ denotes the topological covering dimension of $X$.
For example, consider $[0,1]\times [0,\varepsilon]$ with the Euclidean distance.
Then the natural projection $\pi:[0,1]\times [0,\varepsilon]\to [0,1]$ is an $\varepsilon$-embedding.
Hence $\widim_\varepsilon([0, 1]\times [0,\varepsilon], \mathrm{Euclidean})\leq 1$.
The following is given in Gromov \cite[p. 333]{Gromov}.
(For the detailed proof, see also Gournay \cite[Lemma 2.5]{Gournay} and Tsukamoto \cite[Appendix]{Tsukamoto-deformation}.)
\begin{lemma}\label{lemma: widim of Banach ball}
Let $(V, \norm{\cdot})$ be a finite dimensional normed linear space over $\mathbb{R}$.
Let $B_r(V)$ be the closed ball of radius $r>0$ in $V$. Then
\[ \widim_\varepsilon(B_r(V), \norm{\cdot}) = \dim V \quad (\varepsilon <r).\]
\end{lemma}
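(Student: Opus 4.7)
The plan is to prove two matching inequalities. The upper bound $\widim_\varepsilon(B_r(V),\norm{\cdot})\leq \dim V$ is immediate: the identity map on $B_r(V)$ is a $0$-embedding (hence an $\varepsilon$-embedding for every $\varepsilon>0$), and $B_r(V)$, being homeomorphic to a closed $n$-ball, admits a triangulation as a polyhedron of dimension $n:=\dim V$.

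For the lower bound I would argue by contradiction. Suppose there exist a polyhedron $P$ with $\dim P<n$ and an $\varepsilon$-embedding $f:B_r(V)\to P$; fix an auxiliary $\varepsilon'$ with $\varepsilon<\varepsilon'<r$. The strategy is to manufacture a continuous map $g:P\to V$ that ``almost inverts'' $f$, in the sense that $\norm{g(f(x))-x}<\varepsilon'$ for every $x\in B_r(V)$, and then feed $h:=g\circ f$ into a degree-theoretic argument.

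To build $g$, I would first observe that since every fiber $f^{-1}(y)$ has diameter at most $\varepsilon<\varepsilon'$ and $B_r(V)$ is compact, a standard compactness argument produces a finite open cover $\{U_i\}$ of $f(B_r(V))$ in $P$ with $\Diam f^{-1}(U_i)<\varepsilon'$. Extending to an open cover of $P$, picking $x_i\in f^{-1}(U_i)$, and averaging by a subordinate partition of unity $\{\phi_i\}$, the formula $g(y):=\sum_i \phi_i(y)\,x_i$ defines a continuous $V$-valued map. For any $x\in B_r(V)$ with $y:=f(x)$, only indices with $y\in U_i$ contribute, and for these $\norm{x-x_i}<\varepsilon'$; convexity of the norm then yields $\norm{g(f(x))-x}<\varepsilon'$.

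With $g$ in hand, $h:=g\circ f:B_r(V)\to V$ satisfies $\norm{h(x)-x}<\varepsilon'<r$, so $h$ is nowhere zero on the boundary sphere $\partial B_r$. The straight-line homotopy $(1-t)x+t\,h(x)$ avoids the origin on $\partial B_r$, exhibiting $h|_{\partial B_r}$ as homotopic in $V\setminus\{0\}$ to the inclusion, which has nontrivial degree as a map $S^{n-1}\to V\setminus\{0\}\simeq S^{n-1}$. Standard degree theory then forces $h(B_r(V))$ to contain an open neighborhood of $0\in V$. But $h(B_r(V))\subset g(P)$ is the continuous image of a polyhedron of dimension strictly less than $n$, which cannot contain any nonempty open subset of the $n$-dimensional vector space $V$; this contradiction establishes the lower bound. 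The main technical obstacle is the compactness argument producing the cover $\{U_i\}$ with the uniform fiber-diameter bound $<\varepsilon'$; once that is secured, the partition-of-unity averaging and the degree-theoretic finish are routine.
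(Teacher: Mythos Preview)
The paper does not give its own proof of this lemma; it cites Gromov, Gournay, and Tsukamoto for the details. Your overall strategy---build an approximate left inverse $g$ and run a degree argument---is the standard one in those references, and your argument is correct up to the final sentence.

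There is a genuine gap at the very end. You assert that $g(P)$, being the continuous image of a polyhedron of dimension strictly less than $n$, cannot contain a nonempty open subset of $V\cong\mathbb{R}^n$. This is false: space-filling curves give continuous surjections $[0,1]\to[0,1]^2$, and nothing in your construction of $g=\sum_i\phi_i(\cdot)\,x_i$ via an \emph{arbitrary} continuous partition of unity rules out such behaviour. Continuous maps can raise topological dimension.

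The fix is to exploit the simplicial structure of $P$ when building $g$. After the compactness step you already described, take a simplicial subdivision of $P$ fine enough that the open star $\mathrm{St}(v)$ of every vertex $v$ lies in some $U_i$; for each vertex choose $x_v\in f^{-1}(\mathrm{St}(v))$ whenever this set is nonempty, and extend $g$ \emph{affinely} over each simplex. The verification that $\norm{g(f(x))-x}<\varepsilon'$ goes through exactly as you wrote, using the barycentric coordinates in place of the $\phi_i$. Now $g$ is piecewise linear, so $g(P)$ is a finite union of affine simplices of dimension at most $\dim P<n$, hence has Lebesgue measure zero in $V$ and empty interior. Your degree argument, which correctly shows $h(B_r(V))\supset B_{r-\varepsilon'}(0)$, then yields the desired contradiction. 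Alternatively, you may keep your $g$ and simply approximate it uniformly by a PL map $g'$ close enough that $h'=g'\circ f$ still satisfies $\norm{h'(x)-x}<r$; either route closes the gap.
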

$\widim_\varepsilon(X,d)$ satisfies the following subadditivity. (The proof is obvious.)
\begin{lemma} \label{lemma: subadditivity of widim}
For compact metric spaces $(X,d_X)$, $(Y,d_Y)$, we set 
$(X,d_X)\times(Y,d_Y):=(X\times Y, d_{X\times Y})$ with 
$d_{X\times Y}((x_1,y_1),(x_2,y_2)) := \max (d_X(x_1,x_2),d_Y(y_1,y_2))$.
Then we have
\[ \widim_\varepsilon((X,d_X)\times (Y,d_Y)) \leq \widim_\varepsilon(X,d_X)+\widim_\varepsilon(Y,d_Y).\]
\end{lemma}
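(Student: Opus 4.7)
The plan is to build an $\varepsilon$-embedding of the product $X\times Y$ into a polyhedron of the expected dimension by taking the product of $\varepsilon$-embeddings on each factor. So I would start by setting $m := \widim_\varepsilon(X,d_X)$ and $n := \widim_\varepsilon(Y,d_Y)$ and choosing, by definition, $\varepsilon$-embeddings $f : X \to P$ and $g : Y \to Q$ into polyhedra with $\dim P \leq m$ and $\dim Q \leq n$.

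Next I would consider the product map $(f,g) : X\times Y \to P\times Q$. The key observation is that $(f,g)^{-1}(p,q) = f^{-1}(p)\times g^{-1}(q)$, and under the maximum product metric we have
\[
\Diam\bigl((f,g)^{-1}(p,q)\bigr) \;=\; \max\bigl(\Diam f^{-1}(p),\,\Diam g^{-1}(q)\bigr) \;\leq\; \max(\varepsilon,\varepsilon) \;=\; \varepsilon.
\]
Hence $(f,g)$ is an $\varepsilon$-embedding of $(X\times Y, d_{X\times Y})$. Since the product $P\times Q$ of two finite polyhedra is canonically a polyhedron of dimension $\dim P+\dim Q \leq m+n$, the definition of $\widim_\varepsilon$ yields the desired inequality
\[
\widim_\varepsilon\bigl((X,d_X)\times (Y,d_Y)\bigr) \leq m+n.
\]

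There is no real obstacle here: the only thing that needs attention is the choice of product metric, where using the maximum (rather than, say, the sum or Euclidean sum) is what makes the diameter bound on the fibers come out to exactly $\varepsilon$ without any extra factor. The definition of the product metric in the statement is tailored precisely for this argument, which is why the author labels the proof obvious.
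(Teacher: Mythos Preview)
Your proof is correct and is exactly the obvious product-map argument the paper has in mind; the paper itself gives no proof beyond the parenthetical ``(The proof is obvious.)''.
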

The following will be used in Section \ref{subsection: proof of mean dim. < infty}
\begin{lemma} \label{lemma: widim and space sum}
Let $(X, d)$ be a compact metric space and suppose $X=X_1\cup X_2$ with 
closed sets $X_1$ and $X_2$. Then 
\[ \widim_\varepsilon(X, d)\leq \widim_\varepsilon(X_1, d)+\widim_\varepsilon(X_2,d)+1.\]
In general, if $X=X_1\cup X_2\cup\cdots \cup X_n$ ($X_i$: closed), then 
\[ \widim_\varepsilon(X,d)\leq \sum_{i=1}^n\widim_\varepsilon(X_i,d)+n-1.\]
\end{lemma}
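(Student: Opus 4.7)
The general $n$-set inequality reduces by induction to the two-set case: once we know $\widim_\varepsilon(A\cup B,d)\leq \widim_\varepsilon(A,d)+\widim_\varepsilon(B,d)+1$, applying it to $A=X_1$ and $B=X_2\cup\cdots\cup X_n$ together with the inductive hypothesis yields $\widim_\varepsilon(X,d)\leq\sum_{i=1}^n\widim_\varepsilon(X_i,d)+(n-1)$. So the heart of the argument is the two-set case $X=X_1\cup X_2$.

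Write $n_i:=\widim_\varepsilon(X_i,d)$ and pick $\varepsilon$-embeddings $f_i\colon X_i\to P_i$ into polyhedra with $\dim P_i\leq n_i$. I would aim to construct an $\varepsilon$-embedding $F\colon X\to P_1*P_2$ into the topological join, a polyhedron of dimension $n_1+n_2+1$. Since polyhedra are ANRs, each $f_i$ extends to a continuous $\tilde f_i\colon U_i\to P_i$ on an open neighborhood $U_i\supset X_i$ in $X$, and the $U_i$ cover $X$. By normality, shrink to open $V_i$ with $X_i\subset V_i\subset\overline{V_i}\subset U_i$ and $V_1\cup V_2=X$, and set $A_1:=X\setminus V_2$, $A_2:=X\setminus V_1$; these are disjoint closed sets with $A_i\subset X_i$. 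By Urysohn's lemma take a continuous $\psi\colon X\to[0,1]$ with $\psi|_{A_1}\equiv 0$ and $\psi|_{A_2}\equiv 1$, and define
\[F(x):=\bigl[\tilde f_1(x),\tilde f_2(x),\psi(x)\bigr]\in P_1*P_2,\]
using the join convention that the $P_2$-coordinate is collapsed when $\psi(x)=0$ and the $P_1$-coordinate when $\psi(x)=1$. The map $F$ is continuous because $\tilde f_i(x)$ is only required on the locus where its coordinate is not collapsed, namely $\psi(x)<1$ for $i=1$ and $\psi(x)>0$ for $i=2$; both loci sit inside $V_i\subset U_i$.

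To verify that $F$ is an $\varepsilon$-embedding I would analyze $F(x)=F(y)$ by the common value $t:=\psi(x)=\psi(y)$. In the endpoint cases $t=0$ (resp.\ $t=1$), both $x,y$ lie in $A_1\subset X_1$ (resp.\ $A_2\subset X_2$) and the identity $\tilde f_1(x)=\tilde f_1(y)$ (resp.\ of $\tilde f_2$) reduces to the $\varepsilon$-embedding property of $f_1$ (resp.\ $f_2$), giving $d(x,y)\leq\varepsilon$ at once. The main obstacle is the intermediate range $0<t<1$, where $x,y\in V_1\cap V_2$ and one has both $\tilde f_1(x)=\tilde f_1(y)$ and $\tilde f_2(x)=\tilde f_2(y)$, yet neither original embedding property applies on its own. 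The resolution is to choose the shrinkings $V_i$ so tight around $X_i$ that $V_1\cap V_2$ is forced into an arbitrarily small neighborhood of $X_1\cap X_2$, and to exploit that the product $(f_1,f_2)$ is already an $\varepsilon$-embedding on the compact set $X_1\cap X_2$; a uniform continuity and compactness argument, together with a careful choice of the continuous extensions $\tilde f_i$, then forces $d(x,y)\leq\varepsilon$ in this range. This intermediate step is the only technically delicate part of the argument, and is where most of the care in the proof lies.
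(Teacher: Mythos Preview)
Your join construction and the induction for general $n$ are exactly what the paper does. The issue is the intermediate range $0<\psi(x)=\psi(y)<1$, and the ``uniform continuity and compactness'' patch you outline does not close it. In that range you only know $\tilde f_i(x)=\tilde f_i(y)$ for the \emph{extensions}, while the $\varepsilon$-embedding property belongs to $f_i$ on $X_i$, not to $\tilde f_i$ on $U_i$. Shrinking $V_1\cap V_2$ into a small neighborhood of $X_1\cap X_2$ lets you pick nearby $x',y'\in X_1\cap X_2$, but then you only obtain $f_i(x')\approx f_i(y')$, not equality, and an $\varepsilon$-embedding says nothing about nearly coincident images. Concretely, when $x\in X_1\setminus X_2$ and $y\in X_2\setminus X_1$ both land in $V_1\cap V_2$, neither original $f_1$ nor $f_2$ is defined at both points, and no shrinking changes that; at best you get diameter $\le\varepsilon+o(1)$, which is not enough for the stated inequality at the fixed scale $\varepsilon$.

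The paper avoids the whole difficulty by choosing the partition function \emph{asymmetrically}. Take $\rho:X\to[0,1]$ continuous with $\supp\rho\subset U_1$ and $\rho^{-1}(1)=X_1$ exactly (in a compact metric space one can build this from distance functions after shrinking $U_1$ slightly). Set $F(x)=\rho(x)f_1(x)\oplus(1-\rho(x))f_2(x)\in P_1*P_2$. If $F(x)=F(y)$ then $\rho(x)=\rho(y)$. If this common value is $1$, then $x,y\in X_1$ and $f_1(x)=f_1(y)$ gives $d(x,y)\le\varepsilon$. If it is strictly less than $1$, then $x,y\notin X_1$, hence $x,y\in X_2$, and the (uncollapsed) second coordinate gives $f_2(x)=f_2(y)$, so again $d(x,y)\le\varepsilon$. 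There is no third case. The single-line fix to your argument is thus to demand $\psi^{-1}(0)=X_1$ (not merely $\psi|_{A_1}\equiv 0$ for some $A_1\subset X_1$); then every non-endpoint value of $\psi$ already forces the point into $X_2$.
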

\begin{proof}
There exist a finite polyhedron $P_i$ ($i=1,2$) with $\dim P_i=\widim_\varepsilon (X_i,d)$ and 
an $\varepsilon$-embedding $f_i:(X_i,d)\to P_i$.
Let $P_1*P_2 =\{tx\oplus (1-t)y|\, x\in X_1, y\in X_2, 0\leq t\leq 1\}$ be the join of $P_1$ and $P_2$.
($P_1*P_2 = [0,1]\times P_1\times P_2/\sim$, where $(0, x, y)\sim (0,x',y)$ for any $x,x'\in X$
and $(1,x,y)\sim (1,x,y')$ for any $y,y'\in Y$. $tx\oplus(1-t)y$ is the equivalence class of $(t,x,y)$.)
$P_1*P_2$ is a finite polyhedron of dimension $\widim_\varepsilon(X_1,d)+\widim_\varepsilon(X_2,d)+1$.
Since a finite polyhedron is ANR, there exists a open set $U_i\supset X_i$ over which the map $f_i$
continuously extends.
Let $\rho$ be a cut-off function such that $0\leq \rho\leq 1$, $\supp \rho \subset U_1$
and $\rho(x)=1$ if and only if $x\in X_1$.
Then $\supp(1-\rho) = \overline{X\setminus X_1} \subset X_2\subset U_2$.
We define a continuous map $F:X\to P_1*P_2$ by setting $F(x):=\rho(x)f_1(x)\oplus (1-\rho(x))f_2(x)$.
$F$ becomes an $\varepsilon$-embedding; Suppose $F(x)=F(y)$.
If $\rho(x)=\rho(y)=1$, then $x, y\in X_1$ and $f_1(x)=f_1(y)$. Then $d(x,y)\leq \varepsilon$.
If $\rho(x)=\rho(y)<1$, then $x,y\in X_2$ and $f_2(x)=f_2(y)$. Then $d(x,y)\leq \varepsilon$.
Thus $\widim_\varepsilon(X,d)\leq \dim P_1*P_2=\widim_\varepsilon(X_1,d)+\widim_\varepsilon(X_2,d)+1$.
\end{proof}

Let $\Gamma$ be a locally compact Hausdorff unimodular group with a bi-invariant Haar measure $|\cdot|$.
We suppose that $\Gamma$ is endowed with a left-invariant proper distance.
(Properness means that every bounded closed set is compact.)
In Section \ref{Section: Mean dimension and local mean dimension} 
we always assume that $\Gamma$ satisfies these conditions.
When $\Gamma$ is discrete, we always assume that the Haar measure $|\cdot|$ is the counting measure.
(That is, $|\Omega|$ is equal to the cardinality of $\Omega$.)

Let $\Omega \subset \Gamma$ be a subset and $r>0$.
The $r$-boundary $\partial_r\Omega$ is the set of points $\gamma\in \Gamma$ such that 
the closed $r$-ball $B_r(\gamma)$ centered at $\gamma$ 
has non-empty intersection with both $\Omega$ and $\Gamma\setminus \Omega$.
A sequence of bounded Borel sets $\{\Omega_n\}_{n\geq 1}$ in $\Gamma$ is called amenable (or F{\o}lner)
if for any $r>0$ the following is satisfied:
\[ \lim_{n\to \infty}|\partial_r\Omega_n|/|\Omega_n| = 0 .\]
$\Gamma$ is called amenable group if it admits an amenable sequence.
\begin{example} \label{example: integers}
$\Gamma = \mathbb{Z}$ with the counting measure $|\cdot|$ and the standard distance $|x-y|$.
Then the sequence of sets $\{0,1,2,\cdots, n\}$ $(n\geq 1)$ is amenable.
The sequence of sets $\{-n,-n+1, \cdots, -1,0,1,\cdots, n-1, n\}$ $(n\geq 1)$ is also amenable.
\end{example}
\begin{example} \label{example: real line}
$\Gamma = \mathbb{R}$ with the Lebesgue measure $|\cdot|$ and the standard distance $|x-y|$.
In this paper we always assume that $\mathbb{R}$ has these standard measure and distance.
Then the sequence of sets $\{x\in \mathbb{R}|\, 0\leq x \leq n\}$ $(n\geq 1)$ is amenable. 
The sequence of sets $\{x\in \mathbb{R}|\, -n\leq x\leq n\}$ $(n\geq 1)$ is also amenable.
\end{example}
We need the following ``Ornstein-Weiss Lemma'' (\cite[pp. 336-338]{Gromov} and \cite[Appendix]{Lindenstrauss-Weiss}).
\begin{lemma} \label{lemma: Ornstein-Weiss lemma}
Suppose $\Gamma$ is amenable.
Let $h: \{\text{bounded sets in }\Gamma\} \to \mathbb{R}_{\geq 0}$ be a map satisfying the following conditions.

\noindent 
(i) If $\Omega_1\subset \Omega_2$, then $h(\Omega_1)\leq h(\Omega_2)$.

\noindent 
(ii) $h(\Omega_1\cup \Omega_2)\leq h(\Omega_1)+h(\Omega_2)$.

\noindent 
(iii) For any $\gamma\in \Gamma$ and any bounded set $\Omega \subset \Gamma$, 
$h(\gamma\Omega) = h(\Omega)$. Here $\gamma\Omega := \{\gamma x\in \Gamma |\, x\in \Omega\}$.

Then for any amenable sequence $\{\Omega_n\}_{n\geq 1}$ in $\Gamma$, the limit $\lim_{n\to \infty}h(\Omega_n)/|\Omega_n|$
always exists and is independent of the choice of an amenable sequence $\{\Omega_n\}_{n\geq 1}$.
\end{lemma}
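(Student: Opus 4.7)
The plan is to show that for any amenable sequence $\{\Omega_n\}$ in $\Gamma$,
\[ \lim_{n \to \infty} \frac{h(\Omega_n)}{|\Omega_n|} \;=\; \alpha \;:=\; \inf\left\{ \frac{h(K)}{|K|} : K \subset \Gamma \text{ bounded Borel},\ |K| > 0 \right\}. \]
Because $\alpha$ depends only on $h$, independence of the amenable sequence follows at once from existence of the limit. The inequality $\liminf_n h(\Omega_n)/|\Omega_n| \geq \alpha$ is immediate from the definition of $\alpha$, so all of the work goes into the matching upper bound.

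For the upper bound, fix $\varepsilon > 0$ and pick a bounded Borel set $K$ with $h(K)/|K| \leq \alpha + \varepsilon$. First I would approximately tile each $\Omega_n$ by translates of $K$: choose $\gamma_1, \ldots, \gamma_{N_n} \in \Gamma$ greedily so that the sets $\gamma_i K$ are pairwise disjoint, each $\gamma_i K$ lies inside $\Omega_n$, and the family is maximal with these properties. Setting $R_n := \Omega_n \setminus \bigcup_i \gamma_i K$, properties (ii) and (iii) give
\[ h(\Omega_n) \;\leq\; \sum_{i=1}^{N_n} h(\gamma_i K) + h(R_n) \;=\; N_n\, h(K) + h(R_n), \]
while $N_n |K| \leq |\Omega_n|$ is automatic.

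The crucial step is to show $|R_n|/|\Omega_n| \to 0$, which via (i), (ii) and (iii) applied to a covering of $R_n$ by translates of a fixed bounded shape then yields $h(R_n)/|\Omega_n| \to 0$. Maximality of the packing forces every $\gamma \in R_n$ to lie within bounded distance (depending only on $K$) either of some previously chosen $\gamma_i K$ or of $\Gamma \setminus \Omega_n$, and the Følner condition $|\partial_r \Omega_n|/|\Omega_n|\to 0$ controls the second contribution. Combining gives $\limsup_n h(\Omega_n)/|\Omega_n| \leq h(K)/|K| \leq \alpha + \varepsilon$, and letting $\varepsilon \to 0$ completes the argument.

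The main obstacle is the quasi-tiling step: for a general amenable group a single-shape greedy packing only guarantees a residue density bounded by some constant less than $1$, not by an arbitrary $\varepsilon$. The standard Ornstein-Weiss workaround is to introduce a finite family of shapes $K_1, \ldots, K_m$, constructed inductively so that each $h(K_j)/|K_j|$ is close to $\alpha$ and so that translates of the $K_j$ jointly $\varepsilon$-cover every sufficiently right-invariant set; running the greedy packing successively with $K_1, K_2, \ldots, K_m$ then shrinks the residue to arbitrarily small density. For the cases $\Gamma = \mathbb{Z}$ and $\Gamma = \mathbb{R}$ actually needed in the rest of the paper, intervals tile essentially exactly and the single-shape argument sketched above goes through with only a boundary correction from the Følner property.
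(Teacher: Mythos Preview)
The paper does not give its own proof of this lemma: it is stated with a citation to Gromov \cite[pp.~336--338]{Gromov} and Lindenstrauss--Weiss \cite[Appendix]{Lindenstrauss-Weiss}, and then used as a black box. So there is nothing in the paper to compare your proposal against.

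Your sketch is the standard Ornstein--Weiss argument from those references, and you have correctly located the one genuine difficulty: a single-shape greedy packing only guarantees that the residue $R_n$ has density bounded away from $1$, not density tending to $0$. Your paragraph about maximality forcing points of $R_n$ to lie near some $\gamma_i K$ or near $\Gamma\setminus\Omega_n$ is right, but the first alternative does not by itself give $|R_n|/|\Omega_n|\to 0$; it only gives $|R_n|\le c\,|\Omega_n|$ for some $c<1$ depending on $K$. The fix you name---iterating with a finite hierarchy of shapes $K_1,\dots,K_m$, each nearly optimal and each much more invariant than the last, so that successive packings shrink the residual density geometrically---is exactly the Ornstein--Weiss quasi-tiling lemma, and is what the cited references carry out. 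Your remark that for $\Gamma=\mathbb{Z}$ or $\mathbb{R}$ intervals tile exactly (up to boundary) is also correct and would suffice for the applications in this paper. One small additional point worth flagging: the step ``$|R_n|/|\Omega_n|\to 0$ implies $h(R_n)/|\Omega_n|\to 0$'' needs a covering of $R_n$ by boundedly many translates of a fixed set per unit volume, which is fine for $\mathbb{Z}$ or $\mathbb{R}$ but again requires some care in general.
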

Let $(X,d)$ be a compact metric space with a continuous action of $\Gamma$.
We suppose that the action is a right-action.
For a subset $\Omega\subset \Gamma$, we define a new distance $d_{\Omega}(\cdot, \cdot)$ on $X$ by 
\[ d_{\Omega}(x,y) := \sup_{\gamma\in \Omega}d(x.\gamma, y.\gamma) \quad (x,y\in X).\]
\begin{lemma}\label{lemma: widim satisfies ornstein-weiss}
The map $\Omega\mapsto \widim_\varepsilon(X,d_\Omega)$ satisfies the conditions (i), (ii), (iii) 
in Lemma \ref{lemma: Ornstein-Weiss lemma}.
\end{lemma}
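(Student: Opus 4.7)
The plan is to verify the three properties directly from the definition of $d_\Omega$ as a supremum over the orbit restricted to $\Omega$. Three structural observations drive everything: (a) $\Omega \mapsto d_\Omega$ is monotone in $\Omega$ in the pointwise partial order on metrics; (b) $d_{\Omega_1 \cup \Omega_2}(x,y) = \max(d_{\Omega_1}(x,y), d_{\Omega_2}(x,y))$; and (c) the right action of $\gamma$ on $X$ intertwines $d_{\gamma\Omega}$ and $d_\Omega$.

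For property (i), I would note that if $\Omega_1 \subset \Omega_2$ then $d_{\Omega_1} \le d_{\Omega_2}$ pointwise. Consequently any $\varepsilon$-embedding $f \colon X \to P$ for the metric $d_{\Omega_2}$ remains an $\varepsilon$-embedding for $d_{\Omega_1}$, since each fiber $f^{-1}(p)$ has $d_{\Omega_1}$-diameter no larger than its $d_{\Omega_2}$-diameter. Minimizing $\dim P$ on both sides gives $\widim_\varepsilon(X, d_{\Omega_1}) \le \widim_\varepsilon(X, d_{\Omega_2})$.

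For property (ii), I would introduce the diagonal map $\Delta \colon X \to X \times X$, $x \mapsto (x,x)$, equip $X \times X$ with the max product metric of $d_{\Omega_1}$ and $d_{\Omega_2}$ as in Lemma \ref{lemma: subadditivity of widim}, and observe that observation (b) makes $\Delta$ an isometric embedding from $(X, d_{\Omega_1 \cup \Omega_2})$ into $(X, d_{\Omega_1}) \times (X, d_{\Omega_2})$. Composing an $\varepsilon$-embedding of the product into a polyhedron $P$ with $\Delta$ yields an $\varepsilon$-embedding of $(X, d_{\Omega_1 \cup \Omega_2})$, so Lemma \ref{lemma: subadditivity of widim} supplies the required inequality. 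For property (iii), I would simply unwind definitions: for $\gamma \in \Gamma$ and $x,y\in X$, substituting $\alpha = \gamma\beta$ gives
\[ d_{\gamma\Omega}(x,y) = \sup_{\alpha\in\gamma\Omega} d(x.\alpha, y.\alpha) = \sup_{\beta\in\Omega} d((x.\gamma).\beta, (y.\gamma).\beta) = d_\Omega(x.\gamma, y.\gamma), \]
so the homeomorphism $x \mapsto x.\gamma$ is an isometry between $(X, d_{\gamma\Omega})$ and $(X, d_\Omega)$. An isometry obviously preserves $\widim_\varepsilon$.

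I do not foresee a serious obstacle: each of (i), (ii), (iii) reduces to a one-line manipulation of the supremum defining $d_\Omega$. The only point that requires mild care is (ii), where one must match the max-product metric in Lemma \ref{lemma: subadditivity of widim} with the identity $d_{\Omega_1 \cup \Omega_2} = \max(d_{\Omega_1}, d_{\Omega_2})$ so that the diagonal embedding is genuinely isometric; once that is set up, the existing subadditivity lemma does the rest.
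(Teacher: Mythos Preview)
Your proof is correct and essentially identical to the paper's: for (i) you use monotonicity of $d_\Omega$ in $\Omega$, for (ii) the diagonal embedding into the max-product together with Lemma \ref{lemma: subadditivity of widim}, and for (iii) the fact that $x\mapsto x.\gamma$ is an isometry from $(X,d_{\gamma\Omega})$ to $(X,d_\Omega)$. The paper's proof is just a slightly terser version of exactly these three observations.
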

\begin{proof}
If $\Omega_1\subset \Omega_2$, then the identity map $(X, d_{\Omega_1})\to (X, d_{\Omega_2})$
is distance non-decreasing. Hence $\widim_\varepsilon(X,d_{\Omega_1})\leq \widim_\varepsilon(X, d_{\Omega_2})$.
The map $(X, d_{\Omega_1\cup\Omega_2})\to (X,d_{\Omega_1})\times (X,d_{\Omega_2})$, $x\to (x,x)$, is distance 
preserving. Hence, by using Lemma \ref{lemma: subadditivity of widim},
$\widim_{\varepsilon}(X,d_{\Omega_1\cup\Omega_2})\leq \widim_\varepsilon(X,d_{\Omega_1})
+\widim_\varepsilon(X,d_{\Omega_2})$.
The map $(X,d_{\gamma\Omega})\to (X,d_{\Omega})$, $x\mapsto x.\gamma$, is an isometry.
Hence $\widim_\varepsilon(X,d_{\gamma\Omega})=\widim_\varepsilon(X,d_\Omega)$.
\end{proof}
Suppose that $\Gamma$ is an amenable group and that an amenable sequence $\{\Omega_n\}_{n\geq 1}$ is given.
For $\varepsilon>0$, we set 
\[ \widim_\varepsilon(X:\Gamma):=\lim_{n\to \infty}\frac{1}{|\Omega_n|}\widim_\varepsilon (X, d_{\Omega_n}).\]
This limit exists and is independent of the choice of an amenable sequence $\{\Omega_n\}_{n\geq 1}$.
The value of $\widim_\varepsilon(X:\Gamma)$ depends on the distance $d$. Hence, strictly speaking, 
we should use the notation $\widim_\varepsilon ((X, d):\Gamma)$.
But we use the above notation for simplicity.
We define $\dim(X:\Gamma)$ (the mean dimension of $(X, \Gamma)$) by 
\[ \dim(X:\Gamma):=\lim_{\varepsilon\to 0}\widim_\varepsilon(X:\Gamma).\]
This becomes a topological invariant, i.e., the value of $\dim(X:\Gamma)$ does not depend on 
the choice of a distance $d$ on $X$ compatible with the topology of $X$.
\begin{example}
Let $\Gamma$ be a finitely generated (discrete) amenable group.
Let $B\subset \mathbb{R}^N$ be the closed ball. 
$\Gamma$ acts on $B^\Gamma$ by the shift.
Then 
\[ \dim(B^\Gamma:\Gamma) = N.\]
For the proof, see Lindenstrauss-Weiss \cite[Proposition 3.1, 3.3]{Lindenstrauss-Weiss} or Tsukamoto
\cite[Example 9.6]{Tsukamoto-2}.
\end{example}

\subsection{Local mean dimension} \label{subsection: local mean dimension}
Let $(X, d)$ be a compact metric space.
The usual topological dimension $\dim X$ is a ``local notion" as follows:
For each point $p\in X$, we define the ``local dimension" $\dim_p X$ at $p$ by 
$\dim_p X := \lim_{r \to 0} \dim B_r (p)$.
(Here $B_r(p)$ is the closed $r$-ball centered at $p$.)
Then we have $\dim X=\sup_{p\in X}\dim_p X$.
The authors don't know whether a similar description of the mean dimension is possible or not. 
Instead, in this subsection we will introduce a new notion ``local mean dimension".

Suppose that an amenable group $\Gamma$ continuously acts on $X$ from the right. 
($(X,d)$ is a compact metric space.)
Let $Y\subset X$ be a closed subset.
Then the map $\Omega\mapsto \sup_{\gamma\in\Gamma}\widim_\varepsilon(Y, d_{\gamma\Omega})$
satisfies the conditions in Lemma \ref{lemma: Ornstein-Weiss lemma}.
Hence we can set
\[ \widim_\varepsilon(Y\subset X:\Gamma) 
:= \lim_{n\to \infty}\left(\frac{1}{|\Omega_n|}\sup_{\gamma\in \Gamma}
\widim_\varepsilon(Y, d_{\gamma\Omega_n})\right), \]
where $\{\Omega_n\}_{n\geq 1}$ is an amenable sequence. We define
\[ \dim(Y\subset X:\Gamma) := \lim_{\varepsilon\to 0}\widim_\varepsilon(Y\subset X:\Gamma).\]
This does not depend on the choice of a distance on $X$ compatible with the topology of $X$.
If $Y_1$ and $Y_2$ are closed subsets in $X$ with $Y_1\subset Y_2$, then 
\[ \dim(Y_1\subset X:\Gamma)\leq \dim(Y_2\subset X:\Gamma).\]
If $Y\subset X$ is a $\Gamma$-invariant closed subset, then 
$\widim_\varepsilon (Y,d_{\gamma\Omega_n}) = \widim_\varepsilon(Y,d_{\Omega_n})$ because
$(Y,d_{\gamma\Omega_n})\to (Y,d_{\Omega_n})$, $x\to x.\gamma$, is an isometry. Hence
\[ \dim(Y\subset X:\Gamma) = \dim(Y:\Gamma),\]
where the right-hand-side is the ordinary mean dimension of $(Y,\Gamma)$.
In particular, $\dim(X\subset X:\Gamma) = \dim(X:\Gamma)$, and hence for any closed subset $Y\subset X$
(not necessarily $\Gamma$-invariant) 
\[ \dim(Y\subset X:\Gamma) \leq \dim(X\subset X:\Gamma) = \dim(X:\Gamma) .\]

Let $X_1$ and $X_2$ be compact metric spaces with continuous $\Gamma$-actions.
Let $Y_1\subset X_1$ and $Y_2\subset X_2$ be closed subsets.
If there exists a $\Gamma$-equivariant topological embedding 
$f:X_1\to X_2$ satisfying $f(Y_1)\subset Y_2$, then 
\begin{equation} \label{eq: monotonicity of relative mean dimension}
\dim(Y_1\subset X_1:\Gamma) \leq \dim(Y_2\subset X_2:\Gamma).
\end{equation}

For each point $p\in X$ and $r>0$ we denote $B_r(p)_\Gamma$ (or $B_r(p;X)_\Gamma$)
as the closed $r$-ball centered at $p$ with respect to the 
distance $d_\Gamma(\cdot,\cdot)$:
\[ B_r(p)_{\Gamma} := \{x\in X|\, d_{\Gamma}(x,p)\leq r\}.\]
Note that $d_\Gamma(x,p)\leq r \Leftrightarrow d(x.\gamma,p.\gamma)\leq r$ for all $\gamma\in \Gamma$.
$B_r(p)_\Gamma$ is a closed set in $X$.
We define the local mean dimension of $X$ at $p$ by 
\[ \dim_p(X:\Gamma) := \lim_{r\to 0}\dim(B_r(p)_\Gamma\subset X:\Gamma).\]
This is independent of the choice of a distance compatible with the topology of $X$.
We define the local mean dimension of $X$ by 
\[ \dim_{loc}(X:\Gamma) :=\sup_{p\in X}\dim_p(X:\Gamma).\]
Obviously we have 
\begin{equation} \label{eq: local mean dim. leq mean dim.}
 \dim_{loc}(X:\Gamma)\leq \dim(X:\Gamma).
\end{equation}
We will use the following formula in Section \ref{subsection: upper bound on the local mean dimension}.
Since $(B_r(p)_{\Gamma}).\gamma = B_r(p.\gamma)_{\Gamma}$, we have 
\[ \widim_\varepsilon(B_r(p)_{\Gamma}, d_{\gamma\Omega})
 =\widim_\varepsilon((B_r(p)_{\Gamma}).\gamma, d_{\Omega})
 = \widim_\varepsilon(B_r(p.\gamma)_{\Gamma}, d_\Omega),\]
and hence
\begin{equation} \label{eq: one description of local mean widim}
\widim_\varepsilon(B_r(p)_{\Gamma}\subset X:\Gamma)
= \lim_{n\to \infty}\left( \frac{1}{|\Omega_n|}
\sup_{\gamma\in \Gamma}\widim_\varepsilon(B_r(p.\gamma)_{\Gamma},d_{\Omega_n})\right).
\end{equation}

Let $X, Y$ be compact metric spaces with continuous $\Gamma$-actions.
If there exists a $\Gamma$-equivariant topological embedding $f:X\to Y$,
then, from (\ref{eq: monotonicity of relative mean dimension}), for all $p\in X$ 
\[ \dim_p(X:\Gamma)\leq \dim_{f(p)}(Y:\Gamma).\]
\begin{example}
Let $B\subset \mathbb{R}^N$ be the closed ball centered at the origin. Then we have 
\[ \dim_{\bm{0}}(B^{\Gamma}:\Gamma) 
  = \dim_{loc}(B^{\Gamma}:\Gamma)  = \dim(B^{\Gamma}:\Gamma) = N,\]
where $\bm{0} = (x_\gamma)_{\gamma\in \Gamma}$ with $x_\gamma=0$ for all $\gamma\in \Gamma$.
\end{example}
\begin{proof}
Fix a distance on $B^{\Gamma}$. 
Then it is easy to see that for any $r>0$ there exists $s>0$
such that $B_s^{\Gamma} \subset B_r(\bm{0})_{\Gamma}$, where 
$B_s$ is the $s$-ball in $\mathbb{R}^N$. Then 
\[ N = \dim(B_s^{\Gamma}:\Gamma) \leq \dim(B_r(\bm{0})_{\Gamma}\subset B^{\Gamma}:\Gamma) 
  \leq \dim(B^{\Gamma}:\Gamma) = N.\]
Hence $\dim_{\bm{0}}(B^{\Gamma}:\Gamma)=N$.
\end{proof}
\begin{remark}
We have so far supposed that $\Gamma$ has a bi-invariant Haar measure and a proper left-invariant distance.
The values of mean dimension and local mean dimension depend on the choice of a Haar measure.
But they are independent of the choice of a proper left-invariant distance on $\Gamma$.
(We need the \textit{existence} of a proper left-invariant distance on $\Gamma$ for defining the notion ``amenable sequence''.
But this notion is independent of the choice of a proper left-invariant distance on $\Gamma$.)
\end{remark}

\subsection{The case of $\Gamma = \mathbb{R}$}
Let $\Gamma = \mathbb{R}$ with the Lebesgue measure and the standard distance.
Suppose that $\mathbb{R}$ continuously acts on a compact metric space $(X,d)$.
For $T>0$, consider the discrete subgroup $T\mathbb{Z} := \{Tn\in \mathbb{R}|\, n\in \mathbb{Z}\}$ in $\mathbb{R}$.
$T\mathbb{Z}$ also acts on $X$.
We want to compare the mean dimensions of $(X,\mathbb{R})$ and $(X,T\mathbb{Z})$.
Here $T\mathbb{Z}$ is equipped with the counting measure.
\begin{proposition} \label{prop: mean dimension of flow and shift}
\[ \dim(X:T\mathbb{Z}) = T\dim(X:\mathbb{R}).\]
This result is given in \cite[p. 329]{Gromov} and \cite[Proposition 2.7]{Lindenstrauss-Weiss}.
For any point $p\in X$,
\[ \dim_{p}(X:T\mathbb{Z}) = T\dim_{p}(X:\mathbb{R}).\]
In particular, $\dim_{loc}(X:T\mathbb{Z}) = T\dim_{loc}(X:\mathbb{R})$.
\end{proposition}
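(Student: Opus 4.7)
The plan is to reduce the local statement to a ``relative'' analogue of the global identity $\dim(X:T\mathbb{Z}) = T\dim(X:\mathbb{R})$, and then use the continuity of the $\mathbb{R}$-action to compare the two kinds of balls $B_r(p)_\mathbb{R}$ and $B_r(p)_{T\mathbb{Z}}$. Throughout I will fix the amenable sequences $\Omega_n^{\mathbb{R}} := [0,nT]$ in $\mathbb{R}$ and $\Omega_n^{T\mathbb{Z}} := \{0,T,2T,\dots,(n-1)T\}$ in $T\mathbb{Z}$, with $|\Omega_n^{\mathbb{R}}| = nT$ and $|\Omega_n^{T\mathbb{Z}}| = n$.

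First I would establish the following relative version: for every closed subset $Y\subset X$,
\[ \dim(Y\subset X:T\mathbb{Z}) \;=\; T\,\dim(Y\subset X:\mathbb{R}). \]
The inequality ``$\leq$'' is the easy, measure-theoretic one: since $T\mathbb{Z}\subset \mathbb{R}$, for any $\gamma\in T\mathbb{Z}$ we have $d_{\gamma+\Omega_n^{T\mathbb{Z}}} \leq d_{\gamma+[0,nT]}$, so
\[ \tfrac{1}{n}\sup_{\gamma\in T\mathbb{Z}}\widim_{\varepsilon}(Y,d_{\gamma+\Omega_n^{T\mathbb{Z}}}) \;\leq\; T\cdot\tfrac{1}{nT}\sup_{s\in\mathbb{R}}\widim_{\varepsilon}(Y,d_{s+[0,nT]}), \]
and letting $n\to\infty$ then $\varepsilon\to 0$ gives the bound. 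For the reverse direction, compactness of $X$ and continuity of the action on $X\times[0,T]$ give a modulus of continuity: for every $\varepsilon>0$ there is $\delta>0$ with $d(x,y)<\delta \Rightarrow d(x.t,y.t)<\varepsilon$ for all $t\in[0,T]$. Given $s\in\mathbb{R}$, choose $s'\in T\mathbb{Z}$ with $s'\leq s<s'+T$, so $[s,s+nT]\subset[s',s'+(n+1)T]$; if $d_{s'+\Omega_{n+2}^{T\mathbb{Z}}}(x,y)<\delta$, then writing each $t\in[s',s'+(n+1)T]$ as $s'+kT+\tau$ with $\tau\in[0,T]$ and applying the modulus gives $d_{[s,s+nT]}(x,y)\leq \varepsilon$. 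Hence
\[ \sup_{s\in\mathbb{R}}\widim_\varepsilon(Y,d_{s+[0,nT]}) \;\leq\; \sup_{s'\in T\mathbb{Z}}\widim_{\delta}(Y,d_{s'+\Omega_{n+2}^{T\mathbb{Z}}}). \]
Dividing by $nT$, letting $n\to\infty$, then $\varepsilon\to 0$ (so $\delta\to 0$) gives $T\dim(Y\subset X:\mathbb{R}) \leq \dim(Y\subset X:T\mathbb{Z})$.

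Next I would compare the two types of balls at $p$. Obviously $B_r(p)_{\mathbb{R}}\subset B_r(p)_{T\mathbb{Z}}$, since the sup in $d_\mathbb{R}$ is over a larger set. Conversely, using the modulus of continuity above, if $x\in B_\delta(p)_{T\mathbb{Z}}$, then for any $s=kT+\tau\in\mathbb{R}$ with $\tau\in[0,T]$ we get $d(x.s,p.s)=d((x.kT).\tau,(p.kT).\tau)\leq \varepsilon$, whence $B_\delta(p)_{T\mathbb{Z}}\subset B_\varepsilon(p)_{\mathbb{R}}$. In particular, as $r\to 0$ the families $\{B_r(p)_{\mathbb{R}}\}$ and $\{B_r(p)_{T\mathbb{Z}}\}$ are cofinal in each other.

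Finally, combining the relative identity with these two inclusions and the monotonicity $\dim(Y_1\subset X:\Gamma)\leq \dim(Y_2\subset X:\Gamma)$ for $Y_1\subset Y_2$, I get
\[ \dim(B_r(p)_{T\mathbb{Z}}\subset X:T\mathbb{Z}) \;=\; T\,\dim(B_r(p)_{T\mathbb{Z}}\subset X:\mathbb{R}) \;\leq\; T\,\dim(B_{\varepsilon(r)}(p)_{\mathbb{R}}\subset X:\mathbb{R}) \]
and
\[ T\,\dim(B_r(p)_{\mathbb{R}}\subset X:\mathbb{R}) \;=\; \dim(B_r(p)_{\mathbb{R}}\subset X:T\mathbb{Z}) \;\leq\; \dim(B_r(p)_{T\mathbb{Z}}\subset X:T\mathbb{Z}), \]
so taking $r\to 0$ yields $\dim_p(X:T\mathbb{Z})=T\dim_p(X:\mathbb{R})$, and taking the supremum over $p$ gives the stated equality for $\dim_{loc}$. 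The main obstacle is the reverse inequality in the relative identity: it relies entirely on uniform continuity of the $\mathbb{R}$-action, and one must carefully align the shifts $s\in\mathbb{R}$ with nearby lattice points $s'\in T\mathbb{Z}$ and absorb the extra $(n+2)/n\to 1$ factor in the limit.
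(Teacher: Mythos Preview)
Your proposal is correct and follows essentially the same approach as the paper: first prove the relative identity $\dim(Y\subset X:T\mathbb{Z}) = T\,\dim(Y\subset X:\mathbb{R})$ for arbitrary closed $Y$ via the two inequalities (one trivial, one using uniform continuity of the action on $[0,T]$), then sandwich $B_r(p)_\mathbb{R}$ between $T\mathbb{Z}$-balls using the same uniform continuity, and pass to the limit $r\to 0$. The only cosmetic differences are that the paper absorbs the real shift into a single $[0,2T)$-modulus (avoiding your $(n+2)/n$ bookkeeping), and that in your final displayed inequality the notation $\varepsilon(r)$ should really be read via cofinality of the two ball families (as you yourself note), since the modulus naturally runs $\varepsilon\mapsto\delta$ rather than the reverse.
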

\begin{proof}
Set $\Omega_n := \{\gamma\in \mathbb{R}|\, 0\leq \gamma < Tn\}$ and $\Omega_n':=\Omega_n\cap T\mathbb{Z}$.
$\{\Omega_n\}_{n\geq 1}$ is an amenable sequence for $\mathbb{R}$, and $\{\Omega'_n\}_{n\geq 1}$ is 
an amenable sequence for $T\mathbb{Z}$.
Let $Y\subset X$ be a closed subset.
For $\gamma\in T\mathbb{Z}$,
$d_{\gamma+\Omega_n'}(\cdot,\cdot)\leq d_{\gamma+\Omega_n}(\cdot,\cdot)$.
Hence, for any $\varepsilon>0$, 
$\widim_\varepsilon (Y,d_{\gamma + \Omega'_n})\leq \widim_\varepsilon(Y,d_{\gamma+\Omega_n})$.
Therefore 
\[ \dim(Y\subset X: T\mathbb{Z})\leq T\dim(Y\subset X:\mathbb{R}).\]

For any $\varepsilon>0$ there exists $\delta>0$ such that 
if $d(x,y)\leq \delta$ then $d_{[0,2T)}(x,y)\leq \varepsilon$.
Let $a\in \mathbb{R}$ and set $k:=[a]$ (the maximum integer $\leq a$).
If $d_{kT+\Omega'_n}(x,y)\leq \delta$, then $d_{aT+\Omega_n}(x,y)\leq \varepsilon$.
Hence $\widim_\varepsilon(Y,d_{aT+\Omega_n})\leq \widim_\delta(Y,d_{kT+\Omega_n'})$.
This implies 
\[ \sup_{\gamma\in \mathbb{R}}\widim_\varepsilon(Y,d_{\gamma+\Omega_n})
   \leq \sup_{\gamma\in T\mathbb{Z}} \widim_\delta(Y,d_{\gamma+\Omega'_n}).\]
Therefore $T\dim(Y\subset X: \mathbb{R})\leq \dim(Y\subset X: T\mathbb{Z})$.
Thus 
\begin{equation} \label{eq: relative mean dimension of flow and shift}
 T\dim(Y\subset X:\mathbb{R}) = \dim(Y\subset X: T\mathbb{Z}).
\end{equation}
In particular, if $Y=X$, then $\dim(X: T\mathbb{Z}) = T\dim(X:\mathbb{R})$.

For any $r>0$ there exists $r'>0$ such that if $d(x,y)\leq r'$ then $d_{[0,T)}(x,y)\leq r$.
Then if $d_{T\mathbb{Z}}(x,y)\leq r'$, we have $d_{\mathbb{R}}(x,y)\leq r$.
Hence $B_{r'}(p)_{T\mathbb{Z}}\subset B_r(p)_{\mathbb{R}}\subset B_r(p)_{T\mathbb{Z}}$.
Therefore, by using the above (\ref{eq: relative mean dimension of flow and shift}),
\begin{equation*}
 \begin{split}
 \dim(B_{r'}(p)_{T\mathbb{Z}}\subset X:T\mathbb{Z}) &= T\dim(B_{r'}(p)_{T\mathbb{Z}}\subset X:\mathbb{R})
 \leq T\dim(B_r(p)_{\mathbb{R}}\subset X:\mathbb{R}) \\
 &\leq T\dim(B_r(p)_{T\mathbb{Z}}\subset X:\mathbb{R}) = \dim(B_r(p)_{T\mathbb{Z}}\subset X:T\mathbb{Z}).
 \end{split}
\end{equation*}
Thus $\dim_p(X:T\mathbb{Z}) = T\dim_p(X:\mathbb{R})$.
\end{proof}

\section{Outline of the proofs of the main theorems} \label{section: outline of the proofs of the main theorems}
The ideas of the proofs of Theorem \ref{thm: main theorem} and \ref{thm: main theorem on the local mean 
dimension} are simple.
But the completion of the proofs needs lengthy technical arguments.
So we want to describe the outline of the proofs in this section.
Here we don't pursue the accuracy of the arguments for simplicity of the explanation.
Some of the arguments will be replaced with different ones in the later sections. 

First we explain how to get the upper bound on the mean dimension of $\moduli_d$.
We define a distance on $\moduli_d$ by setting 
\begin{equation*}
 \dist([\bm{A}], [\bm{B}]) 
 := \inf_{g:\bm{E}\to \bm{E}} 
 \left\{\sum_{n\geq 1} 2^{-n}\frac{\norm{g(\bm{A})-\bm{B}}_{L^\infty(|t|\leq n)}}
 {1+ \norm{g(\bm{A})-\bm{B}}_{L^\infty(|t|\leq n)}}\right\},
\end{equation*}
where $g$ runs over all gauge transformations of $\bm{E}$, and $|t|\leq n$ means the region 
$\{(\theta, t)\in S^3\times \mathbb{R}|\, |t|\leq n\}$.
For $R = 1,2,3,\cdots$, we define $\Omega_R \subset \mathbb{R}$ by
$\Omega_R :=\{s \in \mathbb{R}| -R \leq s\leq R \}$.
$\{\Omega_R\}_{R\geq 1}$ is an amenable sequence in $\mathbb{R}$.

Let $\varepsilon >0$ be a positive number, and define a positive integer 
$L=L(\varepsilon)$ so that 
\begin{equation} \label{eq: definition of L in the outline}
 \sum_{n>L}2^{-n} < \varepsilon/2.
\end{equation}
Let $D= D(\varepsilon)$ be a large positive number which depends on $\varepsilon$ but is 
independent of $R$, and set $T := R+L+D$. 
($D$ is chosen so that the condition (\ref{eq: conditions of A'' in the outline}) below 
is satisfied.
Here we don't explain how to define $D$ precisely.)

For $c\geq 0$ we define $M(c)$ as the space of the gauge equivalence classes $[A]$ where 
$A$ is an ASD connection on $\bm{E}$ satisfying 
\[ \frac{1}{8\pi^2}\int_{X}|F_A|^2d\vol \leq c.  \]  
The index theorem gives the estimate:
\[ \dim M(c) \leq 8c.\]
We want to construct an $\varepsilon$-embedding from $(\moduli_d, \dist_{\Omega_R})$ to $M(c)$
for an appropriate $c\geq 0$.

Let $\bm{A}$ be an ASD connection on $\bm{E}$ with $[\bm{A}]\in \moduli_d$.
We ``cut-off'' $\bm{A}$ over the region $T < |t| <T+1$
and construct a new connection $\bm{A}'$ satisfying the following conditions.
$\bm{A}'$ is a (not necessarily ASD) connection on $\bm{E}$ satisfying $\bm{A}'|_{|t|\leq T}=\bm{A}|_{|t|\leq T}$, 
$F(\bm{A}') =0$ over $|t|\geq T+1$, and 
\[ \frac{1}{8\pi^2}\int_{X} tr(F(\bm{A}')^2) \leq \frac{1}{8\pi^2} \int_{|t|\leq T}|F(\bm{A})|^2d\vol + \const
  \leq \frac{2Td^2\vol(S^3)}{8\pi^2} + \const,\]
where $\const$ is a positive constant independent of $\varepsilon$ and $R$.
Next we ``perturb'' $\bm{A}'$ and construct an ASD connection $\bm{A}''$ on $\bm{E}$ satisfying
\begin{equation}\label{eq: conditions of A'' in the outline}
 |\bm{A}-\bm{A}''| = |\bm{A}'-\bm{A}''| \leq \varepsilon/4 \quad (|t|\leq T-D = R+L),
\end{equation}
\[ \frac{1}{8\pi^2}\int_X |F(\bm{A}'')|^2 d\vol = \frac{1}{8\pi^2}\int_X tr(F(\bm{A}')^2)
 \leq \frac{2Td^2\vol(S^3)}{8\pi^2} + \const.\]
Then we can define the map 
\[ \moduli_d\to M\left(\frac{2Td^2\vol(S^3)}{8\pi^2} + \const\right), 
\quad [\bm{A}]\mapsto [\bm{A}''].\]
The conditions (\ref{eq: definition of L in the outline}) and (\ref{eq: conditions of A'' in the outline})
imply that this map is an $\varepsilon$-embedding with respect to the 
distance $\dist_{\Omega_R}$. Hence 
\[ \widim_\varepsilon(\moduli_d, \dist_{\Omega_R}) 
\leq \dim M\left(\frac{2Td^2\vol(S^3)}{8\pi^2} + \const\right)
 \leq \frac{2Td^2\vol(S^3)}{\pi^2} + 8\cdot \const.\]
(Caution! This estimate will \textit{not} be proved in this paper. The above argument contains a gap.)
Recall $T=R+L+D$. Since $L$, $D$ and $\const$ are independent of $R$, we get 
\[ \widim_\varepsilon(\moduli_d:\mathbb{R}) = \lim_{R \to \infty}\frac{\widim_{\varepsilon}(\moduli_d,\dist_{\Omega_R})}{2R} 
\leq \frac{d^2\vol(S^3)}{\pi^2}.\] 
Hence we get 
\begin{equation} \label{eq: upper bound on the mean dimension in the outline}
 \dim(\moduli_d:\mathbb{R}) \leq \frac{d^2\vol(S^3)}{\pi^2} < +\infty.
\end{equation}

This is the outline of the proof of the upper bound on the mean dimension.
(The upper bound on the local mean dimension can be proved by investigating the above 
procedure more precisely.)
Strictly speaking, the above argument contains a gap.
Actually we have not so far succeeded to prove the estimate 
$\dim(\moduli_d:\mathbb{R}) \leq d^2\vol(S^3)/\pi^2$.
In this paper we prove only $\dim(\moduli_d:\mathbb{R}) < +\infty$.
A problem occurs in the cut-off construction.
Indeed (we think that) there exists no canonical way to cut-off connections compatible with the gauge symmetry.
Therefore we cannot define a suitable cut-off construction all over $\moduli_d$.
Instead we will decompose $\moduli_d$ as $\moduli_d = \bigcup_{0\leq i,j\leq N}\moduli_{d,T}(i,j)$ 
($N$ is independent of $\varepsilon$ and $R$) and 
define a cut-off construction for each piece $\moduli_{d,T}(i,j)$ independently.
Then we will get an upper bound worse than (\ref{eq: upper bound on the mean dimension in the outline})
(cf. Lemma \ref{lemma: widim and space sum}).
We study the cut-off construction (the procedure $[\bm{A}]\mapsto [\bm{A}']$) 
in Section \ref{section: cut-off constructions}.
In Section \ref{section: solving ASD equation} and \ref{section: continuity of the perturbation}
we study the perturbation procedure ($\bm{A}' \mapsto \bm{A}''$).
The upper bounds on the (local) mean dimension are proved in Section \ref{Section: proof of the upper bound}.

Next we explain how to prove the lower bound on the local mean dimension.
Let $T>0$, $\underbar{E}$ be a principal $SU(2)$-bundle over $S^3\times (\mathbb{R}/T\mathbb{Z})$,
and $\underbar{A}$ be a non-flat ASD connection on $\underbar{E}$ satisfying 
$|F(\underbar{A})| < d$.

Let $\pi:S^3\times \mathbb{R}\to S^3\times (\mathbb{R}/T\mathbb{Z})$ be the natural projection, and set 
$E:=\pi^*(\underbar{E})$ and $A:=\pi^*(\underbar{A})$.
We define the infinite dimensional Banach space $H^1_A$ by 
\[ H^1_A :=\{a\in \Omega^1(\ad E)|\, (d_A^*+d_A^+)a=0,\, \norm{a}_{L^\infty} <\infty\}.\]
There exists a natural $T\mathbb{Z}$-action on $H^1_A$.
Let $r>0$ be a sufficiently small number.
For each $a\in H^1_A$ with $\norm{a}_{L^\infty}\leq r$ we can construct 
$\tilde{a} \in \Omega^1(\ad E)$ (a small perturbation of $a$) satisfying $F^+(A+\tilde{a})=0$
and $|F(A+\tilde{a})|\leq d$.
If $a=0$, then $\tilde{a}=0$.

For $n\geq 1$, let $\pi_n:S^3\times(\mathbb{R}/nT\mathbb{Z})\to S^3\times (\mathbb{R}/T\mathbb{Z})$
be the natural projection, and set $E_n:=\pi_n^*(\underbar{E})$ and $A_n := \pi_n^*(\underbar{A})$.
We define $H^1_{A_n}$ as the space of $a \in \Omega(\ad E_n)$ satisfying $(d^*_{A_n}+d^+_{A_n})a=0$.
We can identify $H^1_{A_n}$ with the subspace of $H^1_A$ consisting of $nT\mathbb{Z}$-invariant elements.
The index theorem gives 
\[ \dim H^1_{A_n} = 8nc_2(\underbar{E}).\]

We define the map from $B_r(H^1_A)$ (the $r$-ball of $H^1_A$ centered at the origin) to $\moduli_d$ by
\[ B_r(H^1_A) \to \moduli_d, \quad 
  a \mapsto [E, A+\tilde{a}].\]
(cf. the description of $\moduli_d$ in Remark \ref{remark: another description of moduli_d}.)
This map becomes a $T\mathbb{Z}$-equivariant topological embedding for $r\ll 1$.
(Here $B_r(H^1_A)$ is endowed with the following topology.
A sequence $\{a_n\}_{n\geq 1}$ in $B_r(H^1_A)$ converges to $a$
in $B_r(H^1_A)$ if and only if $a_n$ uniformly converges to $a$ over every compact subset.)
Then we have
\[ \dim_{[E,A]}(\moduli_d:T\mathbb{Z})\geq \dim_0(B_r(V):T\mathbb{Z}).\]
The right-hand-side is the local mean dimension of $B_r(H^1_A)$ at the origin.
We can prove that $\dim_0(B_r(H^1_A):T\mathbb{Z})$ can be 
estimated from below by ``the growth of periodic points":
\[ \dim_0(B_r(H^1_A):T\mathbb{Z})\geq \lim_{n\to \infty}\dim H^1_{A_n}/n = 8c_2(\underbar{E}).\]
(This is not difficult to prove. This is just an application of Lemma \ref{lemma: widim of Banach ball}.)
Therefore 
\[ \dim_{[E,A]}(\moduli_d:\mathbb{R}) = \dim_{[E,A]}(\moduli_d:T\mathbb{Z})/T \geq 8c_2(\underbar{E})/T = 8\rho(A).\]
This is the outline of the proof of the lower bound.

\section{Perturbation} \label{section: solving ASD equation}
In this section we construct the method of constructing ASD connections from 
``approximately ASD" connections over $X = S^3\times \mathbb{R}$.
We basically follow the argument of Donaldson \cite{Donaldson}.
As we promised in the introduction, the variable $t$ means the variable of the 
$\mathbb{R}$-factor of $S^3\times \mathbb{R}$.
\subsection{Construction of the perturbation} \label{subsection: construction}
Let $T$ be a positive number, and $d, d'$ be two non-negative real numbers.
Set $\varepsilon_0 = 1/(1000)$. (The value $1/(1000)$ itself has no meaning.
The point is that it is an explicit number which satisfies 
(\ref{eq: fixing condition of varepsilon_0}) below.)
Let $E$ be a principal $SU(2)$-bundle over $X$, and $A$ be a connection on $E$ satisfies
the following conditions (i), (ii), (iii).

\noindent
(i) $F_A = 0$ over $|t|>T+1$.

\noindent
(ii) $F_A^+$ is supported in $\{(\theta,t)\in S^3\times \mathbb{R}|\, T<|t|<T+1\}$, and 
$\norm{F_A^+}_{\mathrm{T}}\leq \varepsilon_0$.
Here $\norm{\cdot}_{\mathrm{T}}$ is the ``Taubes norm'' defined below 
((\ref{eq: def. of pointwise Taubes norm}) and (\ref{eq: def. of Taubes norm})).
(``$\mathrm{T}$'' of the norm $\norm{\cdot}_{\mathrm{T}}$ comes from ``Taubes'', and it has no 
relation with the above positive number $T$.)

\noindent
(iii) $|F_A|\leq d$ on $|t|\leq T$ and $\norm{F^+_A}_{L^\infty(X)}\leq d'$.
(The condition (iii) is not used in Section \ref{subsection: construction}, 
\ref{subsection: regularity and the behavior at the end}, \ref{subsection: conclusion of the construction}. 
It will be used in Section \ref{subsection: interior estimate}.)

Let $\Omega^+(\ad E)$ be the set of smooth self-dual 2-forms valued in $\ad E$
(not necessarily compact supported).
The first main purpose of this section is to solve the equation $F^+(A+d_A^*\phi) = 0$ for $\phi\in \Omega^+(\ad E)$.
We have $F^+(A+d_A^*\phi) = F^+_A + d_A^+d_A^*\phi + (d_A^*\phi\wedge d_A^*\phi)^+$.
The Weitzenb\"{o}ck formula gives (\cite[Chapter 6]{Freed-Uhlenbeck})
\begin{equation}\label{eq: Weitzenbock formula}
 d_A^+d_A^*\phi = \frac{1}{2}\nabla_A^*\nabla_A \phi + \left(\frac{S}{6}- W^+\right)\phi + F_A^+\cdot\phi ,
\end{equation}
where $S$ is the scalar curvature of $X$ and $W^+$ is the self-dual part of the Weyl curvature.
Since $X$ is conformally flat, we have $W^+=0$.
The scalar curvature $S$ is a positive constant.
Then the equation $F^+(A+d_A^*\phi) =0$ becomes 
\begin{equation}\label{eq: ASD equation}
 (\nabla^*_A \nabla_A +S/3)\phi + 2F^+_A\cdot\phi + 2(d_A^*\phi \wedge d_A^*\phi)^+ = -2F_A^+.
\end{equation}
Set $c_0 = 10$. Then
\begin{equation}\label{eq: universal constant c_0}
 |F^+_A\cdot\phi|\leq c_0|F_A^+|\cdot |\phi|, \quad 
 |(d_A^*\phi_1 \wedge d_A^*\phi_2)^+|\leq c_0|\nabla_A\phi_1|\cdot |\nabla_A\phi_2|.
\end{equation}
(These are not best possible.\footnote{Strictly speaking, the choice of $c_0$ depends on the 
convention of the metric (inner product) on $su(2)$.
Our convention is: $\langle A, B\rangle = -tr(AB)$ for $A, B\in su(2)$.})
The positive constant $\varepsilon_0 = 1/1000$ in the above satisfies
\begin{equation} \label{eq: fixing condition of varepsilon_0}
 50 c_0 \varepsilon_0 <1.
\end{equation}

Let $\Delta = \nabla^*\nabla$ be the Laplacian on functions over $X$, and 
$g(x, y)$ be the Green kernel of $\Delta + S/3$. 
We prepare basic facts on $g(x,y)$ in Appendix \ref{appendix: Green kernel}.
Here we state some of them without the proofs.  
For the proofs, see Appendix \ref{appendix: Green kernel}.
$g(x,y)$ satisfies
\[ (\Delta_y + S/3)g(x,y) = \delta_x(y).\]
This equation means that, for any compact supported smooth function $\varphi$,
\[ \varphi(x) = \int_X g(x,y)(\Delta_y+S/3)\varphi(y) d\vol(y),\]
where $d\vol(y)$ denotes the volume form of $X$.
$g(x,y)$ is smooth outside the diagonal and it has a singularity of order $1/d(x,y)^2$ along the 
diagonal:
\begin{equation} \label{eq: singulairy of g(x,y) along the diagonal}
 \const_1/d(x,y)^2 \leq g(x,y) \leq \const_2/d(x,y)^2, \quad (d(x,y) \leq \const_3) ,
\end{equation}
where $d(x,y)$ is the distance on $X$, and $\const_1$, $\const_2$, $\const_3$ are positive constants.
$g(x,y) >0$ for $x\neq y$ (Lemma \ref{lemma: positivity of the Green function}), 
and it has an exponential decay (Lemma \ref{lemma: exponential decay of the Green function}):
\begin{equation} \label{eq: exponential decay of g(x,y)}
 0< g(x,y) < \const_4\cdot e^{-\sqrt{S/3}d(x,y)} \quad (d(x,y)\geq 1).
\end{equation}
Since $S^3\times \mathbb{R} = SU(2)\times \mathbb{R}$ is a Lie group and its Riemannian 
metric is two-sided invariant, we have $g(zx,zy) = g(xz,yz) = g(x,y)$.
In particular, for $x= (\theta_1, t_1)$ and $y = (\theta_2, t_2)$, we have 
$g((\theta_1,t_1-t_0),(\theta_2,t_2-t_0)) = g((\theta_1,t_1),(\theta_2,t_2))$ $(t_0\in \mathbb{R})$.
That is, $g(x,y)$ is invariant under the translation $t\mapsto t-t_0$.

For $\phi\in \Omega^+(\ad E)$, we define the pointwise Taubes norm $|\phi|_{\mathrm{T}}(x)$ by setting 
\begin{equation} \label{eq: def. of pointwise Taubes norm}
 |\phi|_{\mathrm{T}}(x):= \int_X g(x, y)|\phi(y)|d\vol(y)   \quad (x\in X) ,
\end{equation}
(Recall $g(x,y)>0$ for $x\neq y$.) This may be infinity.
We define the Taubes norm $\norm{\phi}_{\mathrm{T}}$ by 
\begin{equation}\label{eq: def. of Taubes norm}
 \norm{\phi}_{\mathrm{T}} := \sup_{x\in X}|\phi|_{\mathrm{T}}(x) .
\end{equation}
Set 
\[ K := \int_X g(x, y)d\vol(y)  \quad \text{(this is independent of $x \in X$)}.\]
(This is finite by (\ref{eq: singulairy of g(x,y) along the diagonal})
 and (\ref{eq: exponential decay of g(x,y)}).)
We have 
\[ \norm{\phi}_{\mathrm{T}} \leq K\norm{\phi}_{L^\infty}.\]

We define $\Omega^+(\ad E)_0$ as the set of $\phi\in \Omega^+(\ad E)$
which vanish at infinity: $\lim_{x\to \infty}|\phi(x)|=0$.
(Here $x = (\theta, t)\to \infty$ means $|t|\to +\infty$.)
If $\phi\in\Omega^+(\ad E)_0$, then $\norm{\phi}_{\mathrm{T}}<\infty$ and 
$\lim_{x\to \infty}|\phi|_{\mathrm{T}}(x) =0$. 
(See the proof of Proposition \ref{prop: solve the equation in a general case}.)

Let $\eta\in \Omega^+(\ad E)_0$.
There uniquely exists $\phi\in \Omega^+(\ad E)_0$ satisfying
$(\nabla_A^*\nabla_A+S/3)\phi=\eta$. (See Proposition \ref{prop: solve the equation in a general case}.)
We set $(\nabla_A^*\nabla_A+S/3)^{-1}\eta :=\phi$.
This satisfies
\begin{equation}\label{eq: phi leq eta}
 |\phi(x)|\leq |\eta|_{\mathrm{T}}(x),\, 
 \text{and hence $\norm{\phi}_{L^\infty}\leq \norm{\eta}_{\mathrm{T}}$}.
\end{equation}
\begin{lemma}\label{lemma: key lemma for the construction}
$\lim_{x\to \infty}|\nabla_A\phi (x)| = 0$.
\end{lemma}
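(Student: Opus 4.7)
The plan is to deduce the decay of $|\nabla_A\phi|$ at infinity from a translation-uniform interior elliptic estimate, exploiting the fact that by hypothesis (i) the connection $A$ is flat on the complement of the slab $\{|t|\leq T+1\}$. First, fix a radius $r_0>0$ smaller than the injectivity radius of $X$. For any point $x=(\theta,t)\in X$ with $|t|>T+1+r_0$, the geodesic ball $B_{r_0}(x)$ lies in the flat region of $A$; since $B_{r_0}(x)$ is contractible, the bundle $E|_{B_{r_0}(x)}$ admits a trivialization in which $A\equiv 0$. In this trivialization $\nabla_A$ coincides with the ordinary exterior derivative, and the equation $(\nabla_A^*\nabla_A+S/3)\phi=\eta$ decouples into the scalar equation $(\Delta+S/3)\phi=\eta$ for an $\mathfrak{su}(2)$-valued self-dual $2$-form.

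For this scalar equation one has the standard interior estimate: choosing any $p>4$, the Calder\'{o}n--Zygmund bound for $\Delta+S/3$ together with the Morrey embedding $W^{2,p}\hookrightarrow C^{1}$ on a $4$-dimensional ball yields
\[
|\nabla_A\phi(x)| \;\leq\; C\bigl(\norm{\phi}_{L^\infty(B_{r_0}(x))}+\norm{\eta}_{L^\infty(B_{r_0}(x))}\bigr),
\]
with $C$ independent of $x$ because the Riemannian metric on $X=S^3\times\mathbb{R}$ is invariant under the translations $t\mapsto t+t_0$, so the balls $B_{r_0}(x)$ are all mutually isometric. As $|t(x)|\to\infty$, the second right-hand term tends to $0$ since $\eta\in\Omega^+(\ad E)_0$, and the first tends to $0$ by the pointwise bound (\ref{eq: phi leq eta}) combined with the fact, stated in the excerpt and justified in Proposition \ref{prop: solve the equation in a general case}, that $|\eta|_{\mathrm{T}}(y)\to 0$ at infinity. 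The conclusion $\lim_{x\to\infty}|\nabla_A\phi(x)|=0$ follows.

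The only point that requires care is the uniformity of the constant $C$ in $x$, and this is a non-issue here because the flat gauge fixing reduces the problem to the \emph{same} scalar equation $(\Delta+S/3)u=\eta$ on isometric geodesic balls of a fixed radius. I therefore do not anticipate a substantive obstacle; the real input is hypothesis (i), which has placed the problem in the flat region and reduced the coupled Weitzenb\"{o}ck-type PDE to a fixed scalar Laplace-type equation of uniform character. The gauge-theoretic coupling to $A$ has been absorbed by the fact that $A$ is pure gauge on each such ball.
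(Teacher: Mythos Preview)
Your proof is correct and follows essentially the same approach as the paper: both use the flatness of $A$ outside the slab $\{|t|\leq T+1\}$ to reduce $(\nabla_A^*\nabla_A+S/3)\phi=\eta$ to a fixed (i.e., $A$-independent) elliptic equation, and then invoke a translation-uniform interior $C^1$ estimate together with the decay of $\phi$ and $\eta$. The only cosmetic differences are that the paper takes one global flat trivialization of $E$ over the entire region $|t|>T+1$ and applies the interior estimate on strips $S^3\times(t-1,t+1)$, whereas you trivialize on individual geodesic balls; and your phrase ``scalar equation'' is a slight abuse (it is the Bochner Laplacian on $\Lambda^+$-valued sections, not on functions), but this does not affect the argument.
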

\begin{proof}
From the condition (i) in the beginning of this section, $A$ is flat over $|t|>T+1$.
Therefore there exists a bundle map $g:E|_{|t|>T+1}\to X_{|t|>T+1}\times SU(2)$
such that $g(A)$ is the product connection.
Here $X_{|t|>T+1} = \{(\theta, t)\in S^3\times \mathbb{R}|\, |t|>T+1\}$ and $E|_{|t|>T+1}$
is the restriction of $E$ to $X_{|t|>T+1}$.
We sometimes use similar notations in this paper.
Set $\phi' := g(\phi)$ and $\eta' := g(\eta)$. They satisfy 
$(\nabla^*\nabla+S/3)\phi' = \eta'$. 
(Here $\nabla$ is defined by the product connection on $X|_{|t|>T+1}\times SU(2)$ and 
the Levi-Civita connection.)

For $|t|>T+2$, we set $B_t := S^3\times (t-1, t+1)$.
From the elliptic estimates, for any $\theta \in S^3$,
\[ |\nabla \phi'(\theta, t)| \leq C(\norm{\phi'}_{L^\infty(B_t)} + \norm{\eta'}_{L^\infty(B_t)}),\]
where $C$ is a constant independent of $t$.
This means 
\[ |\nabla_A\phi(\theta, t)|\leq C(\norm{\phi}_{L^\infty(B_t)} + \norm{\eta}_{L^\infty(B_t)}).\]
The right-hand-side goes to $0$ as $|t|$ goes to infinity.
\end{proof}
The following Lemma shows a power of the Taubes norm.
(Here $\eta\in \Omega^+(\ad E)_0$ and $\phi = (\nabla_A^*\nabla_A+S/3)^{-1}\eta\in \Omega^+(\ad E)_0$.)
\begin{lemma} \label{lemma: magical lemma of the Taubes norm}
\[ \left| |\nabla_A\phi|^2\right|_{\mathrm{T}}(x) := \int_X g(x,y)|\nabla_A\phi(y)|^2d\vol(y)
\leq \norm{\eta}_{\mathrm{T}} |\eta|_{\mathrm{T}}(x) .\]
In particular, 
$\norm{|\nabla_A\phi|^2}_{\mathrm{T}} 
:= \sup_{x\in X}\left| |\nabla_A\phi|^2\right|_{\mathrm{T}}(x)\leq \norm{\eta}_{\mathrm{T}}^2$ 
and $\norm{(d_A^*\phi\wedge d^*_A\phi)^+}_{\mathrm{T}}\leq c_0 \norm{\eta}_{\mathrm{T}}^2$.
\end{lemma}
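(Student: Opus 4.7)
The plan is to exploit the Bochner/Weitzenböck identity $\tfrac12\Delta|\phi|^2=\langle\nabla_A^*\nabla_A\phi,\phi\rangle-|\nabla_A\phi|^2$, which here is just a pointwise Leibniz computation since $|\phi|^2$ is smooth. Combining this with the equation $(\nabla_A^*\nabla_A+S/3)\phi=\eta$, I obtain the identity
\[
(\Delta+S/3)|\phi|^2+2|\nabla_A\phi|^2 \;=\; 2\langle\eta,\phi\rangle-(S/3)|\phi|^2,
\]
which will be the engine of the proof. Since $|\phi|^2\ge 0$ and the right side is bounded by $2|\eta|\,|\phi|$, we get in particular
\[
(\Delta+S/3)|\phi|^2+2|\nabla_A\phi|^2 \;\le\; 2|\eta|\,|\phi|.
\]

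Next I would convolve this inequality with the Green kernel $g(x,\cdot)$ of $\Delta+S/3$. Using the Green's representation formula $u(x)=\int_X g(x,y)(\Delta+S/3)u(y)\,d\vol(y)$, applied to $u=|\phi|^2$, I get
\[
|\phi(x)|^2+2\int_X g(x,y)|\nabla_A\phi(y)|^2\,d\vol(y) \;\le\; 2\int_X g(x,y)|\eta(y)|\,|\phi(y)|\,d\vol(y).
\]
Dropping the nonnegative term $|\phi(x)|^2$ on the left and applying the pointwise bound $|\phi(y)|\le\|\eta\|_{\mathrm{T}}$ from (\ref{eq: phi leq eta}) on the right yields
\[
\int_X g(x,y)|\nabla_A\phi(y)|^2\,d\vol(y) \;\le\; \|\eta\|_{\mathrm{T}} \int_X g(x,y)|\eta(y)|\,d\vol(y) \;=\; \|\eta\|_{\mathrm{T}}\,|\eta|_{\mathrm{T}}(x),
\]
which is the desired inequality. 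Taking the supremum over $x$ gives $\||\nabla_A\phi|^2\|_{\mathrm{T}}\le\|\eta\|_{\mathrm{T}}^2$, and combining with the pointwise estimate $|(d_A^*\phi\wedge d_A^*\phi)^+|\le c_0|\nabla_A\phi|^2$ from (\ref{eq: universal constant c_0}) immediately yields $\|(d_A^*\phi\wedge d_A^*\phi)^+\|_{\mathrm{T}}\le c_0\|\eta\|_{\mathrm{T}}^2$.

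The only delicate point is justifying the Green's representation formula for $u=|\phi|^2$ on the noncompact manifold $X=S^3\times\mathbb{R}$. The hypothesis $\eta\in\Omega^+(\ad E)_0$, together with (\ref{eq: phi leq eta}) and Lemma \ref{lemma: key lemma for the construction}, gives $|\phi|\to 0$ and $|\nabla_A\phi|\to 0$ as $|t|\to\infty$, so $u$ and $|\nabla_A\phi|^2$ decay at infinity; combined with the exponential decay (\ref{eq: exponential decay of g(x,y)}) of $g(x,y)$, all integrals are absolutely convergent. To handle boundary terms rigorously I would introduce a cutoff $\chi_R$ supported in $|t|\le R+1$ and equal to $1$ on $|t|\le R$, apply integration by parts on the compact region, and let $R\to\infty$; the error terms are controlled by $\|u\|_{L^\infty(|t|\ge R)}$ times uniformly bounded integrals of $g$ and $|\nabla g|$ near the strip $|t|\sim R$, and hence vanish in the limit. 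This limiting argument is the main technical step; the rest is a routine application of the Bochner identity.
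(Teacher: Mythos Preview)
Your proof is correct and follows essentially the same route as the paper: both arguments combine the Bochner identity $\Delta|\phi|^2 = 2\langle\nabla_A^*\nabla_A\phi,\phi\rangle - 2|\nabla_A\phi|^2$ with the equation $(\nabla_A^*\nabla_A+S/3)\phi=\eta$, integrate the resulting inequality against the Green kernel $g(x,\cdot)$, and use the pointwise bound $|\phi|\le\|\eta\|_{\mathrm{T}}$. The paper packages the justification of the Green representation for $|\phi|^2$ into an appendix lemma (Lemma~\ref{lemma: Green kernel representation}), invoking the decay of $|\phi|$, $\nabla|\phi|^2$, and $(\Delta+S/3)|\phi|^2$ from Lemma~\ref{lemma: key lemma for the construction}; your cutoff argument is exactly how one would prove that lemma.
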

\begin{proof}
$\nabla |\phi|^2 = 2(\nabla_A\phi,\phi)$ vanishes at infinity 
(Lemma \ref{lemma: key lemma for the construction}).
\[ (\Delta + 2S/3)|\phi|^2 = 2(\nabla_A^*\nabla_A\phi + (S/3)\phi, \phi)-2|\nabla_A\phi|^2 
   = 2(\eta, \phi)-2|\nabla_A\phi|^2.\]
In particular, $(\Delta +S/3)|\phi|^2$ vanishes at infinity (Lemma \ref{lemma: key lemma for the construction}).
Hence $|\phi|^2, \nabla|\phi|^2, (\Delta +S/3)|\phi|^2$ vanish at infinity (in particular, 
they are contained in $L^\infty$). 
Then we can apply Lemma \ref{lemma: Green kernel representation} in Appendix \ref{appendix: Green kernel}
to $|\phi|^2$ and get 
\[ \int_Xg(x,y)(\Delta_y+S/3)|\phi(y)|^2 d\vol(y) = |\phi(x)|^2.\]
We have
\begin{equation*}
 \begin{split}
 |\nabla_A\phi|^2 &= (\eta, \phi) -\frac{1}{2}(\Delta+S/3)|\phi|^2 -\frac{S}{6}|\phi|^2 ,\\
              &\leq (\eta, \phi) -\frac{1}{2}(\Delta+S/3)|\phi|^2. 
 \end{split}
\end{equation*}
Therefore
\begin{equation*}
 \begin{split}
 \int_X g(x,y)|\nabla_A\phi(y)|^2 d\vol(y) &\leq \int_X g(x, y)(\eta(y), \phi(y))d\vol(y) 
 - \frac{1}{2}|\phi(x)|^2,\\ 
 &\leq \int_X g(x, y)(\eta(y), \phi(y))d\vol(y) ,\\
 &\leq \norm{\phi}_{L^\infty}\int_X g(x, y)|\eta(y)| d\vol(y) 
 \leq \norm{\eta}_{\mathrm{T}} |\eta|_{\mathrm{T}}(x). 
 \end{split}
\end{equation*}
In the last line we have used (\ref{eq: phi leq eta}).
\end{proof}
For $\eta_1, \eta_2\in \Omega^+(\ad E)_0$, set 
$\phi_i := (\nabla_A^*\nabla_A+S/3)^{-1}\eta_i \in \Omega^+(\ad E)_0$ $(i=1,2)$ and 
\begin{equation} \label{eq: definition of beta}
 \beta(\eta_1, \eta_2) := (d_A^*\phi_1\wedge d_A^*\phi_2)^+ + (d_A^*\phi_2\wedge d^*_A\phi_1)^+.
\end{equation}
$\beta$ is symmetric and 
$|\beta(\eta_1, \eta_2)| \leq 2c_0|\nabla_A\phi_1|\cdot |\nabla_A\phi_2|$.
In particular, $\beta(\eta_1, \eta_2)\in \Omega^+(\ad E)_0$ (Lemma \ref{lemma: key lemma for the construction}).
\begin{lemma} \label{lemma: estimate of beta}
$\norm{\beta(\eta_1, \eta_2)}_{\mathrm{T}}\leq 4c_0 \norm{\eta_1}_{\mathrm{T}}\norm{\eta_2}_{\mathrm{T}}$.
\end{lemma}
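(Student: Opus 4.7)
The plan is to reduce Lemma 4.3 immediately to Lemma 4.2 via a weighted Cauchy--Schwarz inequality, using the Green kernel $g(x,y)$ itself as the weight. This is possible precisely because $g(x,y)>0$ off the diagonal (cited just before Lemma 4.2).

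First I would recall from (\ref{eq: universal constant c_0}) that for each summand in the definition (\ref{eq: definition of beta}) of $\beta$,
\[ |(d_A^*\phi_1\wedge d_A^*\phi_2)^+|\leq c_0|\nabla_A\phi_1|\cdot|\nabla_A\phi_2|,\]
and similarly for the symmetrized term, yielding the pointwise bound $|\beta(\eta_1,\eta_2)(y)|\leq 2c_0\,|\nabla_A\phi_1(y)|\cdot|\nabla_A\phi_2(y)|$. Multiplying by $g(x,y)$, integrating over $y\in X$, and applying the Cauchy--Schwarz inequality with respect to the positive measure $g(x,y)\,d\vol(y)$ gives
\[ |\beta(\eta_1,\eta_2)|_{\mathrm{T}}(x)\leq 2c_0\prod_{i=1,2}\left(\int_X g(x,y)|\nabla_A\phi_i(y)|^2\,d\vol(y)\right)^{1/2}.\]

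Next I would invoke Lemma \ref{lemma: magical lemma of the Taubes norm} (the magical lemma) for each factor separately to obtain
\[ \int_X g(x,y)|\nabla_A\phi_i(y)|^2\,d\vol(y)\leq \norm{\eta_i}_{\mathrm{T}}\,|\eta_i|_{\mathrm{T}}(x)\leq \norm{\eta_i}_{\mathrm{T}}^2.\]
Substituting this back and taking supremum over $x\in X$ produces
\[ \norm{\beta(\eta_1,\eta_2)}_{\mathrm{T}}\leq 2c_0\,\norm{\eta_1}_{\mathrm{T}}\norm{\eta_2}_{\mathrm{T}},\]
which is in fact slightly stronger than the constant $4c_0$ stated in the lemma; either form suffices.

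There is no real obstacle here: the entire content rests on Lemma 4.2, whose proof already encoded the difficult cancellation coming from the Weitzenb\"ock identity (\ref{eq: Weitzenbock formula}) and the positivity of $S/6$. The only point that requires a (routine) check is that the Cauchy--Schwarz step is legitimate, which is immediate from $g(x,y)>0$ and from the fact that the relevant integrals are finite because $\phi_i\in\Omega^+(\ad E)_0$ and (by Lemma \ref{lemma: key lemma for the construction}) $\nabla_A\phi_i\to 0$ at infinity, so that $|\nabla_A\phi_i|^2$ lies in $L^\infty$ and the Green integral converges by the decay (\ref{eq: exponential decay of g(x,y)}).
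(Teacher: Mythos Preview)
Your proof is correct and in fact yields the sharper constant $2c_0$. The paper's argument is different: it first observes from Lemma~\ref{lemma: magical lemma of the Taubes norm} that $\norm{\beta(\eta,\eta)}_{\mathrm{T}}\leq 2c_0\norm{\eta}_{\mathrm{T}}^2$ on the diagonal, and then passes to the off-diagonal case via the polarization identity $4\beta(\eta_1,\eta_2)=\beta(\eta_1+\eta_2,\eta_1+\eta_2)-\beta(\eta_1-\eta_2,\eta_1-\eta_2)$ together with a normalization $\norm{\eta_1}_{\mathrm{T}}=\norm{\eta_2}_{\mathrm{T}}=1$; the triangle inequality then gives the constant $4c_0$. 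Your approach avoids polarization by applying Cauchy--Schwarz directly in the weighted space $L^2(g(x,\cdot)\,d\vol)$, which both shortens the argument and saves the factor of~$2$. The paper's route has the minor advantage of using Lemma~\ref{lemma: magical lemma of the Taubes norm} as a black box (only the scalar conclusion $\norm{|\nabla_A\phi|^2}_{\mathrm{T}}\leq\norm{\eta}_{\mathrm{T}}^2$ is needed), whereas you use the pointwise version, but that pointwise version is exactly what the lemma states.
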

\begin{proof}
From Lemma \ref{lemma: magical lemma of the Taubes norm}, 
$\norm{\beta(\eta, \eta)}_{\mathrm{T}}\leq 2c_0\norm{\eta}_{\mathrm{T}}^2$.
Suppose $\norm{\eta_1}_{\mathrm{T}} = \norm{\eta_2}_{\mathrm{T}} =1$.
Since $4\beta(\eta_1, \eta_2) = \beta(\eta_1+\eta_2, \eta_1+\eta_2) -\beta(\eta_1-\eta_2, \eta_1-\eta_2)$,
\[ 4\norm{\beta(\eta_1, \eta_2)}_{\mathrm{T}}
 \leq 2c_0\norm{\eta_1+\eta_2}_{\mathrm{T}}^2 + 2c_0\norm{\eta_1-\eta_2}_{\mathrm{T}}^2\leq 16c_0.\]
Hence $\norm{\beta(\eta_1, \eta_2)}_{\mathrm{T}}\leq 4c_0$. The general case follows from this.
\end{proof}
For $\eta\in \Omega^+(\ad E)_0$, we set 
$\phi := (\nabla_A^*\nabla_A+S/3)^{-1}\eta \in \Omega^+(\ad E)_0$ and define 
\[ \Phi(\eta) := -2F^+_A\cdot \phi - \beta(\eta, \eta) -2F_A^+ \in \Omega^+(\ad E)_0 .\]
If $\eta$ satisfies $\eta = \Phi(\eta)$, then $\phi$ satisfies the ASD equation (\ref{eq: ASD equation}).
\begin{lemma}\label{lemma: contraction}
For $\eta_1, \eta_2\in \Omega^+(\ad E)_0$, 
\[ \norm{\Phi(\eta_1)-\Phi(\eta_2)}_{\mathrm{T}} 
 \leq  2c_0(\norm{F_A^+}_{\mathrm{T}}+2\norm{\eta_1+\eta_2}_{\mathrm{T}})\norm{\eta_1-\eta_2}_{\mathrm{T}}.  \]
\end{lemma}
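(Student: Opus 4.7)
The plan is to split $\Phi(\eta_1)-\Phi(\eta_2)$ into its two nontrivial pieces and bound each in the Taubes norm separately. Writing $\phi_i := (\nabla_A^*\nabla_A+S/3)^{-1}\eta_i$, the constant term $-2F_A^+$ cancels and
\[
\Phi(\eta_1)-\Phi(\eta_2) = -2F_A^+\cdot(\phi_1-\phi_2) - \bigl(\beta(\eta_1,\eta_1)-\beta(\eta_2,\eta_2)\bigr).
\]
The strategy is: (a) control the curvature-times-linear term by pushing $F_A^+$ into its Taubes norm and using $\norm{\phi_1-\phi_2}_{L^\infty}\leq \norm{\eta_1-\eta_2}_{\mathrm{T}}$ from (\ref{eq: phi leq eta}); and (b) linearize the quadratic term via the polarization $\beta(\eta_1,\eta_1)-\beta(\eta_2,\eta_2)=\beta(\eta_1-\eta_2,\eta_1+\eta_2)$, which holds because $\beta$ is a symmetric bilinear form in $(\eta_1,\eta_2)$ (linearity in each slot follows from linearity of $\eta\mapsto \phi$ and bilinearity of $(\phi_1,\phi_2)\mapsto (d_A^*\phi_1\wedge d_A^*\phi_2)^++(d_A^*\phi_2\wedge d_A^*\phi_1)^+$), and then invoke Lemma \ref{lemma: estimate of beta}.

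For step (a), the pointwise estimate (\ref{eq: universal constant c_0}) gives $|F_A^+\cdot(\phi_1-\phi_2)|(y)\leq c_0|F_A^+(y)|\cdot|\phi_1(y)-\phi_2(y)|$, so for any $x\in X$,
\[
\bigl|2F_A^+\cdot(\phi_1-\phi_2)\bigr|_{\mathrm{T}}(x)
\leq 2c_0\int_X g(x,y)|F_A^+(y)|\cdot|\phi_1(y)-\phi_2(y)|\,d\vol(y)
\leq 2c_0\,\norm{\phi_1-\phi_2}_{L^\infty}\,|F_A^+|_{\mathrm{T}}(x).
\]
Applying (\ref{eq: phi leq eta}) to $\phi_1-\phi_2=(\nabla_A^*\nabla_A+S/3)^{-1}(\eta_1-\eta_2)$ and taking the supremum over $x$ yields
\[
\norm{2F_A^+\cdot(\phi_1-\phi_2)}_{\mathrm{T}}\leq 2c_0\norm{F_A^+}_{\mathrm{T}}\norm{\eta_1-\eta_2}_{\mathrm{T}}.
\]

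For step (b), Lemma \ref{lemma: estimate of beta} applied to the polarized form gives
\[
\norm{\beta(\eta_1,\eta_1)-\beta(\eta_2,\eta_2)}_{\mathrm{T}} = \norm{\beta(\eta_1-\eta_2,\eta_1+\eta_2)}_{\mathrm{T}} \leq 4c_0\,\norm{\eta_1-\eta_2}_{\mathrm{T}}\,\norm{\eta_1+\eta_2}_{\mathrm{T}}.
\]
Combining the two estimates by the triangle inequality yields the claimed bound. There is no real obstacle here; the only point requiring care is the polarization identity for $\beta$, and verifying that both sides of every inequality genuinely live in $\Omega^+(\ad E)_0$ so that the Taubes norms are finite, which follows from Lemma \ref{lemma: key lemma for the construction} and the remark preceding Lemma \ref{lemma: estimate of beta}.
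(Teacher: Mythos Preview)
Your proof is correct and follows essentially the same approach as the paper: split $\Phi(\eta_1)-\Phi(\eta_2)$ into the curvature term and the quadratic term, bound the first by $2c_0\norm{F_A^+}_{\mathrm{T}}\norm{\eta_1-\eta_2}_{\mathrm{T}}$ via (\ref{eq: phi leq eta}), and handle the second via the polarization identity $\beta(\eta_1,\eta_1)-\beta(\eta_2,\eta_2)=\beta(\eta_1+\eta_2,\eta_1-\eta_2)$ together with Lemma~\ref{lemma: estimate of beta}. The paper's write-up is terser but the ingredients and the resulting constants are identical.
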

\begin{proof}
\[ \Phi(\eta_1)-\Phi(\eta_2) = -2F_A^+\cdot (\phi_1-\phi_2) + \beta(\eta_1+\eta_2, \eta_2-\eta_1).\]
From Lemma \ref{lemma: estimate of beta} and 
$\norm{\phi_1-\phi_2}_{L^\infty}\leq \norm{\eta_1-\eta_2}_{\mathrm{T}}$ (see (\ref{eq: phi leq eta})),
\begin{equation*}
 \begin{split}
  \norm{\Phi(\eta_1)-\Phi(\eta_2)}_{\mathrm{T}} &\leq 2c_0\norm{F_A^+}_{\mathrm{T}}\norm{\phi_1-\phi_2}_{L^\infty}
                                +4c_0\norm{\eta_1+\eta_2}_{\mathrm{T}}\norm{\eta_1-\eta_2}_{\mathrm{T}} ,\\
  &\leq 2c_0(\norm{F_A^+}_{\mathrm{T}}+2\norm{\eta_1+\eta_2}_{\mathrm{T}})\norm{\eta_1-\eta_2}_{\mathrm{T}}.
 \end{split}
\end{equation*}
\end{proof}
\begin{proposition}  \label{prop: eta_n becomes Cauchy}
The sequence $\{\eta_n\}_{n\geq 0}$ in $\Omega^+(\ad E)_0$ defined by 
\[ \eta_0 =0, \quad \eta_{n+1} = \Phi(\eta_n), \]
becomes a Cauchy sequence with respect to the Taubes norm $\norm{\cdot}_{\mathrm{T}}$ and satisfies
\[ \norm{\eta_n}_{\mathrm{T}} \leq 3\varepsilon_0,\]
for all $n\geq 0$.
\end{proposition}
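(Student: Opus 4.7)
The plan is a standard Banach contraction argument inside the closed Taubes-norm ball of radius $3\varepsilon_0$ in $\Omega^+(\ad E)_0$, driven entirely by Lemma \ref{lemma: contraction} and the smallness condition \eqref{eq: fixing condition of varepsilon_0}. I would prove the statement by simultaneous induction on $n$, establishing
\[
\norm{\eta_n}_{\mathrm{T}} \leq 3\varepsilon_0 \qquad \text{and} \qquad \norm{\eta_{n+1}-\eta_n}_{\mathrm{T}} \leq \lambda \norm{\eta_n-\eta_{n-1}}_{\mathrm{T}}
\]
for some contraction constant $\lambda<1$ depending only on $c_0\varepsilon_0$.

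First I would compute the base case. Since the solution of $(\nabla_A^*\nabla_A+S/3)\phi=0$ in $\Omega^+(\ad E)_0$ is $\phi=0$, we have $\Phi(0)=-2F_A^+$, and by hypothesis (ii) this gives $\norm{\eta_1}_{\mathrm{T}}=2\norm{F_A^+}_{\mathrm{T}}\leq 2\varepsilon_0\leq 3\varepsilon_0$. For the inductive step, assume $\norm{\eta_n}_{\mathrm{T}}\leq 3\varepsilon_0$. Apply Lemma \ref{lemma: contraction} to the pair $(\eta_n,0)$:
\[
\norm{\eta_{n+1}-\eta_1}_{\mathrm{T}} = \norm{\Phi(\eta_n)-\Phi(0)}_{\mathrm{T}} \leq 2c_0\bigl(\norm{F_A^+}_{\mathrm{T}}+2\norm{\eta_n}_{\mathrm{T}}\bigr)\norm{\eta_n}_{\mathrm{T}}.
\]
Using $\norm{F_A^+}_{\mathrm{T}}\leq\varepsilon_0$ and $\norm{\eta_n}_{\mathrm{T}}\leq 3\varepsilon_0$, the right-hand side is at most $2c_0(\varepsilon_0+6\varepsilon_0)(3\varepsilon_0)=42c_0\varepsilon_0^2$, so
\[
\norm{\eta_{n+1}}_{\mathrm{T}} \leq 2\varepsilon_0 + 42c_0\varepsilon_0^2 = \varepsilon_0\bigl(2+42c_0\varepsilon_0\bigr) \leq 3\varepsilon_0,
\]
where the last inequality uses $42c_0\varepsilon_0 \leq 50c_0\varepsilon_0 < 1$ from \eqref{eq: fixing condition of varepsilon_0}.

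Next, for the Cauchy property, apply Lemma \ref{lemma: contraction} to $(\eta_n,\eta_{n-1})$. Since both lie in the ball of radius $3\varepsilon_0$,
\[
\norm{\eta_{n+1}-\eta_n}_{\mathrm{T}} \leq 2c_0\bigl(\norm{F_A^+}_{\mathrm{T}}+2\norm{\eta_n+\eta_{n-1}}_{\mathrm{T}}\bigr)\norm{\eta_n-\eta_{n-1}}_{\mathrm{T}} \leq 2c_0(\varepsilon_0+12\varepsilon_0)\norm{\eta_n-\eta_{n-1}}_{\mathrm{T}} = 26c_0\varepsilon_0\norm{\eta_n-\eta_{n-1}}_{\mathrm{T}}.
\]
Setting $\lambda:=26c_0\varepsilon_0<1$ (again by \eqref{eq: fixing condition of varepsilon_0}) yields a geometric decay $\norm{\eta_{n+1}-\eta_n}_{\mathrm{T}}\leq \lambda^n\norm{\eta_1}_{\mathrm{T}}\leq 2\lambda^n\varepsilon_0$, from which the Cauchy property with respect to $\norm{\cdot}_{\mathrm{T}}$ follows by summing the telescoping series.

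I expect no genuine obstacle in this argument: once Lemma \ref{lemma: contraction} is in hand and one verifies that $\Phi$ maps $\Omega^+(\ad E)_0$ into itself (which has already been noted after \eqref{eq: definition of beta} and before Lemma \ref{lemma: contraction}), everything reduces to arithmetic with the explicit constant $\varepsilon_0=1/1000$ satisfying $50c_0\varepsilon_0<1$. The only point requiring a small amount of care is the choice of radius: one must pick a ball large enough that $\Phi(0)$ lies inside (hence radius $\geq 2\varepsilon_0$) yet small enough that the contraction factor remains below $1$, and $3\varepsilon_0$ is a convenient choice that makes both conditions hold with room to spare.
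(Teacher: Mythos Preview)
Your proof is correct and follows essentially the same contraction-mapping argument as the paper. The only cosmetic difference is that the paper proves $\Phi$ maps the ball $B=\{\norm{\eta}_{\mathrm{T}}\leq 3\varepsilon_0\}$ into itself by estimating $\norm{\Phi(\eta)}_{\mathrm{T}}$ directly from the definition of $\Phi$ (getting $(24c_0\varepsilon_0+2)\varepsilon_0\leq 3\varepsilon_0$), whereas you obtain the same invariance by applying Lemma~\ref{lemma: contraction} to the pair $(\eta_n,0)$; both routes lead to the same contraction constant $26c_0\varepsilon_0<1$.
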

\begin{proof}
Set $B:=\{\eta\in \Omega^+(\ad E)_0| \norm{\eta}_{\mathrm{T}}\leq 3\varepsilon_0\}$.
For $\eta\in B$ (recall: $\norm{F_A^+}_{\mathrm{T}}\leq \varepsilon_0$),
\begin{equation*}
 \begin{split}
 \norm{\Phi(\eta)}_{\mathrm{T}}&
 \leq 2c_0\norm{F_A^+}_{\mathrm{T}}\norm{\phi}_{L^\infty} 
 + 2c_0\norm{\eta}_{\mathrm{T}}^2 + 2\norm{F_A^+}_{\mathrm{T}},\\
 &\leq 2c_0\varepsilon_0 \norm{\eta}_{\mathrm{T}} + 2c_0\norm{\eta}_{\mathrm{T}}^2 + 2\varepsilon_0 ,\\
 &\leq (24c_0\varepsilon_0 + 2)\varepsilon_0 \leq 3\varepsilon_0.
 \end{split}
\end{equation*}
Here we have used (\ref{eq: fixing condition of varepsilon_0}).
Hence $\Phi(\eta)\in B$.
Lemma \ref{lemma: contraction} implies (for $\eta_1, \eta_2\in B$)
\[ \norm{\Phi(\eta_1)-\Phi(\eta_2)}_{\mathrm{T}}
 \leq 2c_0(\norm{F_A^+}_{\mathrm{T}} + 2\norm{\eta_1+\eta_2}_{\mathrm{T}})\norm{\eta_1-\eta_2}_{\mathrm{T}}
                                 \leq 26c_0\varepsilon_0 \norm{\eta_1-\eta_2}_{\mathrm{T}}.\] 
$26c_0\varepsilon_0<1$ by (\ref{eq: fixing condition of varepsilon_0}). 
Hence $\Phi:B\to B$ becomes a contraction map with respect to the norm $\norm{\cdot}_{\mathrm{T}}$.
Thus $\eta_{n+1} = \Phi(\eta_n)$ $(\eta_0 =0)$ becomes a Cauchy sequence.
\end{proof}
The sequence $\phi_n \in \Omega^+(\ad E)_0$ $(n\geq 0)$ defined by 
$\phi_n := (\nabla_A^*\nabla_A+S/3)^{-1}\eta_n$
satisfies $\norm{\phi_n-\phi_m}_{L^\infty}\leq \norm{\eta_n-\eta_m}_{\mathrm{T}}$.
Hence it becomes a Cauchy sequence in $L^\infty(\Lambda^+(\ad E))$.
Therefore $\phi_n$ converges to some $\phi_A$ in $L^\infty(\Lambda^+(\ad E))$.
$\phi_A$ is continuous since every $\phi_n$ is continuous. 
Indeed we will see later that $\phi_A$ is smooth and satisfies the ASD equation 
$F^+(A+d_A^*\phi_A) =0$.

We have $\eta_{n+1} = \Phi(\eta_n) = -2F_A^+\cdot \phi_n -2(d^*_A\phi_n\wedge d_A^*\phi_n)^+ -2F_A^+$.
\begin{equation*}
 \begin{split}
 |2F_A^+\cdot \phi_n|_{\mathrm{T}}(x) &\leq 2c_0 \int g(x, y) |F_A^+(y)||\phi_n(y)|d\vol(y) ,\\
 &\leq 2c_0|F_A^+|_{\mathrm{T}}(x) \norm{\phi_n}_{L^\infty} 
 \leq 2c_0|F^+_A|_{\mathrm{T}}(x)\norm{\eta_n}_{\mathrm{T}}.
 \end{split}
\end{equation*}
\[ |2(d_A^*\phi_n\wedge d_A^*\phi_n)^+|_{\mathrm{T}}(x)
 \leq 2c_0\norm{\eta_n}_{\mathrm{T}}|\eta_n|_{\mathrm{T}}(x)  \quad 
 \text{(Lemma \ref{lemma: magical lemma of the Taubes norm})}.\]
Hence
\[ |\eta_{n+1}|_{\mathrm{T}}(x)
 \leq 2c_0\norm{\eta_n}_{\mathrm{T}}|F_A^+|_{\mathrm{T}}(x) 
 + 2c_0\norm{\eta_n}_{\mathrm{T}} |\eta_n|_{\mathrm{T}}(x) + 2|F_A^+|_{\mathrm{T}}(x).\]
Since $\norm{\eta_n}_{\mathrm{T}}\leq 3\varepsilon_0$, 
\[ |\eta_{n+1}|_{\mathrm{T}}(x) 
    \leq 6c_0\varepsilon_0 |\eta_n|_{\mathrm{T}}(x) + (6c_0\varepsilon_0 + 2)|F_A^+|_{\mathrm{T}}(x).\]
By (\ref{eq: fixing condition of varepsilon_0}),
\[ |\eta_n|_{\mathrm{T}}(x)
  \leq \frac{(6c_0\varepsilon_0 +2)|F_A^+|_{\mathrm{T}}(x)}{1-6c_0\varepsilon_0} 
  \leq 3|F_A^+|_{\mathrm{T}}(x).\]

Recall that $F_A^+$ is supported in $\{T<|t|<T+1\}$ and that $g(x,y)>0$ for $x\neq y$.
Set
\[ \delta(x) := \int_{T<|t|<T+1} g(x, y)d\vol(y) \quad (x\in X).\]
Then $|F_A^+|_{\mathrm{T}}(x) \leq \delta(x)\norm{F_A^+}_{L^\infty}$.
Note that $\delta(x)$ vanishes at infinity because 
$g(x,y) \leq \const\cdot e^{-\sqrt{S/3}d(x,y)}$ for $d(x,y)\geq 1$. 
(See (\ref{eq: exponential decay of g(x,y)}).)
We get the following decay estimate.
\begin{proposition}\label{prop: decay estimate}
$ |\phi_n(x)|\leq |\eta_n|_{\mathrm{T}}(x) \leq 3\delta(x) \norm{F_A^+}_{L^\infty}$.
Hence $|\phi_A(x)|\leq 3\delta(x)\norm{F_A^+}_{L^\infty}$. In particular,
$\phi_A$ vanishes at infinity.
\end{proposition}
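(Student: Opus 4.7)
The statement essentially packages together estimates that have just been derived in the running discussion preceding it, so my plan is to collect and crystallize them in the right order.

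First I would establish the pointwise bound $|\phi_n(x)|\leq |\eta_n|_{\mathrm{T}}(x)$. This is immediate from (\ref{eq: phi leq eta}) applied to $\eta_n$ and $\phi_n = (\nabla_A^*\nabla_A+S/3)^{-1}\eta_n$, which was proved by the maximum-principle-type argument built into the Green kernel representation (Appendix \ref{appendix: Green kernel}).

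Next, the key quantitative step is the bound $|\eta_n|_{\mathrm{T}}(x)\leq 3|F_A^+|_{\mathrm{T}}(x)$. This follows from the iterated inequality
\[ |\eta_{n+1}|_{\mathrm{T}}(x)\leq 6c_0\varepsilon_0|\eta_n|_{\mathrm{T}}(x) + (6c_0\varepsilon_0+2)|F_A^+|_{\mathrm{T}}(x) \]
already derived by using Lemma \ref{lemma: magical lemma of the Taubes norm} to control $|2(d_A^*\phi_n\wedge d_A^*\phi_n)^+|_{\mathrm{T}}$ and by controlling $|2F_A^+\cdot\phi_n|_{\mathrm{T}}$ through $\norm{\phi_n}_{L^\infty}\leq \norm{\eta_n}_{\mathrm{T}}\leq 3\varepsilon_0$. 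Since $6c_0\varepsilon_0<1$ by (\ref{eq: fixing condition of varepsilon_0}), one iterates and takes the geometric-series supremum, obtaining the uniform constant $3=(6c_0\varepsilon_0+2)/(1-6c_0\varepsilon_0)$ (the explicit numerics use $c_0=10$, $\varepsilon_0 = 1/1000$).

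Then I would use the support condition on $F_A^+$: since $F_A^+$ is supported in $\{T<|t|<T+1\}$,
\[ |F_A^+|_{\mathrm{T}}(x) = \int_{T<|t|<T+1}g(x,y)|F_A^+(y)|\,d\vol(y)\leq \delta(x)\norm{F_A^+}_{L^\infty}, \]
which, chained with the previous two inequalities, yields the combined bound $|\phi_n(x)|\leq |\eta_n|_{\mathrm{T}}(x)\leq 3\delta(x)\norm{F_A^+}_{L^\infty}$ for every $n$. Since $\phi_n\to \phi_A$ in $L^\infty$, the same pointwise bound passes to $\phi_A$.

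Finally, for the ``vanishes at infinity'' statement, I would observe that $\delta(x)\to 0$ as $x\to\infty$: the integrand $g(x,y)$ satisfies the exponential decay estimate (\ref{eq: exponential decay of g(x,y)}), while $y$ ranges over the bounded slab $\{T<|t|<T+1\}$ of fixed finite volume, so the integral tends to $0$ as the distance from $x$ to this slab goes to infinity. Combined with $|\phi_A(x)|\leq 3\delta(x)\norm{F_A^+}_{L^\infty}$, this gives $\phi_A\in \Omega^+(\ad E)_0$. No step looks like a real obstacle; everything needed is already in place, and the only care required is verifying that the numerical constants behave as claimed, which follows from the choices $c_0=10$ and $\varepsilon_0=1/1000$ fixed in (\ref{eq: fixing condition of varepsilon_0}).
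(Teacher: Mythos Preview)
Your proposal is correct and follows essentially the same route as the paper: the proposition is indeed just a packaging of the inequalities derived in the discussion immediately preceding it, and you have assembled them in the right order. One tiny slip: you write ``the uniform constant $3=(6c_0\varepsilon_0+2)/(1-6c_0\varepsilon_0)$'', but with $c_0=10$ and $\varepsilon_0=1/1000$ that fraction is about $2.19$, so this should be an inequality $\leq 3$ (as the paper writes), not an equality.
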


\subsection{Regularity and the behavior at the end} \label{subsection: regularity and the behavior at the end}
From the definition of $\phi_n$, we have 
\begin{equation}\label{eq: differential equation for phi_n}
 (\nabla_A^*\nabla_A +S/3)\phi_{n+1} = \eta_{n+1} 
 = -2F_A^+\cdot \phi_n -2(d_A^*\phi_n\wedge d_A^*\phi_n)^+ -2F_A^+ .
\end{equation}
\begin{lemma} \label{lemma: gradient estimate}
$\sup_{n\geq 1}\norm{\nabla_A\phi_n}_{L^\infty} < +\infty$.
\end{lemma}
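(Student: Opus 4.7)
The plan is to apply interior elliptic regularity to the linear equation (\ref{eq: differential equation for phi_n}),
$(\nabla_A^*\nabla_A + S/3)\phi_{n+1} = \eta_{n+1}$,
bootstrapping the integrability of $\nabla_A\phi_n$ using the uniform bounds already established: $\norm{\phi_n}_{L^\infty}\leq 3\varepsilon_0$ (Proposition \ref{prop: eta_n becomes Cauchy}) and the Taubes-norm estimate $\norm{|\nabla_A\phi_n|^2}_{\mathrm{T}}\leq 9\varepsilon_0^2$ (Lemma \ref{lemma: magical lemma of the Taubes norm}), both valid uniformly in $n$.

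First I would extract a uniform local integrability bound on $\nabla_A\phi_n$. Lemma \ref{lemma: magical lemma of the Taubes norm} gives $\int_X g(x,y)|\nabla_A\phi_n(y)|^2 d\vol(y)\leq 9\varepsilon_0^2$ for all $x$ and $n$. Combined with the sharp lower bound $g(x,y)\geq \const_1/d(x,y)^2$ of the Green kernel near the diagonal (see (\ref{eq: singulairy of g(x,y) along the diagonal})), an annular dyadic decomposition yields a uniform Morrey-type estimate
\[ \int_{B_r(x)}|\nabla_A\phi_n|^2 d\vol\leq C r^2 \quad (r\leq r_0), \]
and in particular a uniform $L^2_{\mathrm{loc}}$ bound on $\nabla_A\phi_n$.

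Next I would apply the interior Calder\'on--Zygmund $L^p$ estimate to $\phi_{n+1}$. Working in a local trivialization of $E$ over a small ball in $X$, the operator $\nabla_A^*\nabla_A$ becomes the scalar Laplacian plus smooth lower-order perturbations, so for any $p\in(1,\infty)$
\[ \norm{\phi_{n+1}}_{W^{2,p}(B_{r/2}(x))}\leq C_p\bigl(\norm{\phi_{n+1}}_{L^\infty(B_r(x))}+\norm{\eta_{n+1}}_{L^p(B_r(x))}\bigr), \]
with $C_p$ independent of $n$ and $x$. The pointwise estimate $|\eta_{n+1}|\leq 2c_0|F_A^+|(|\phi_n|+1)+2c_0|\nabla_A\phi_n|^2$ reduces an $L^p$-bound on $\eta_{n+1}$ to an $L^{2p}$-bound on $\nabla_A\phi_n$. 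I would then bootstrap through the chain $\nabla_A\phi_n\in L^{p_k}_{\mathrm{loc}}\Rightarrow\eta_{n+1}\in L^{p_k/2}_{\mathrm{loc}}\Rightarrow\phi_{n+1}\in W^{2,p_k/2}_{\mathrm{loc}}\Rightarrow\nabla_A\phi_{n+1}\in L^{p_{k+1}}_{\mathrm{loc}}$, where $p_{k+1}=4p_k/(8-p_k)$ via the Sobolev embedding $W^{1,p_k/2}\hookrightarrow L^{p_{k+1}}$ in dimension $4$. Once the exponent exceeds $p=4$, each step strictly improves $p_k$, and in finitely many iterations we cross the Morrey threshold, after which $W^{2,p}\hookrightarrow C^{1,\alpha}$ yields a uniform $L^\infty$ bound on $\nabla_A\phi_n$. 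Translation invariance in $t$ and compactness of $S^3$ upgrade this local bound to the claimed global one.

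The main obstacle is initiating the bootstrap: the naive recursion $p\mapsto 4p/(8-p)$ strictly improves the exponent only for $p>4$, whereas the starting Morrey/Taubes information only places $|\nabla_A\phi_n|^2$ on the boundary of $L^1_{\mathrm{loc}}$. The resolution is to extract slightly more than the bare Morrey estimate from the Taubes-norm bound, either by a Gehring-type self-improvement applied to the Caccioppoli inequality for $\phi_n$ (using $\phi_n\in L^\infty$) or by exploiting the sharpness of the kernel singularity $g(x,y)\sim d(x,y)^{-2}$ to upgrade $\int g|\nabla_A\phi_n|^2\leq C$ to a local $L^{p_0}$-bound on $\nabla_A\phi_n$ with $p_0>4$. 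Once this starting point is in hand, the finite-step bootstrap closes cleanly, with all constants uniform in $n$.
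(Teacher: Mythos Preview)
Your bootstrap strategy is different from the paper's argument, and the obstacle you flag at the end is a genuine gap that your proposed fixes do not close.

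The Taubes-norm bound $\int g(x,y)|\nabla_A\phi_n|^2\,d\vol\leq 9\varepsilon_0^2$ together with $g(x,y)\gtrsim d(x,y)^{-2}$ places $\nabla_A\phi_n$ exactly in the critical Morrey class $M^{2,2}$ in dimension four, and this class does \emph{not} embed into any $L^{p_0}$ with $p_0>4$ (take $|x|^{-1}$ on a ball in $\mathbb{R}^4$: it lies in $M^{2,2}$ but fails to be in $L^4_{\mathrm{loc}}$). A Gehring-type self-improvement from a Caccioppoli inequality yields only $\nabla_A\phi_n\in L^{2+\epsilon}_{\mathrm{loc}}$, still well below $4$. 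And as you compute, the recursion $p\mapsto 4p/(8-p)$ has $p=4$ as a fixed point and is strictly \emph{decreasing} for $p<4$, so starting below $4$ the bootstrap goes the wrong way. Without a genuinely new idea (e.g.\ Morrey-space Calder\'on--Zygmund estimates together with Adams-type embeddings, which you do not invoke), the argument does not close.

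The paper instead uses Donaldson's rescaling argument, which avoids the critical exponent entirely. Suppose $R_k:=\norm{\nabla_A\phi_{n_k}}_{L^\infty}\to\infty$ with the sup attained at $x_k$. The equation gives $|\nabla_A^*\nabla_A\phi_{n_k}|\leq \const_A\, R_k^2$. In geodesic coordinates and exponential gauge at $x_k$, set $\tilde\phi_k(x):=\phi_{n_k}(x/R_k)$; then $|\sum\tilde g^{ij}\partial_i\partial_j\tilde\phi_k|\leq\const_A$ uniformly, $|\nabla\tilde\phi_k(0)|=1$, and $\tilde\phi_k$ is uniformly bounded. A subsequence converges in $C^1$ on compacta to some $\tilde\phi$ with $|\nabla\tilde\phi(0)|=1$. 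But if $\{x_k\}$ stays bounded, $\phi_{n_k}\to\phi_A$ in $C^0$ forces $\tilde\phi$ to be constant; if $\{x_k\}$ is unbounded, the uniform decay of $\phi_n$ at infinity (Proposition~\ref{prop: decay estimate}) forces $\tilde\phi=0$. Either way one contradicts $|\nabla\tilde\phi(0)|=1$. This blow-up argument is short and uses only the $C^0$ information already in hand.
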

\begin{proof}
We use the rescaling argument of Donaldson \cite[Section 2.4]{Donaldson}.
Recall that $\phi_n$ are uniformly bounded and uniformly go to zero at infinity (Proposition \ref{prop: decay estimate}).
Moreover $\norm{\nabla_A\phi_n}_{L^\infty}<\infty$ for each $n\geq 1$ 
by Lemma \ref{lemma: key lemma for the construction}.
Suppose $\sup_{n\geq 1}\norm{\nabla_A\phi_n}_{L^\infty}=+\infty$.
Then there exists a sequence $n_1<n_2<n_3<\cdots$ such that $R_k :=\norm{\nabla_A\phi_{n_k}}_{L^\infty}$
go to infinity and $R_k \geq \max_{1\leq n\leq n_k}\norm{\nabla_A \phi_n}_{L^\infty}$.
Since $|\nabla_A\phi_n|$ vanishes at infinity (see Lemma \ref{lemma: key lemma for the construction}),
we can take $x_k \in X$ satisfying $R_k = |\nabla_A\phi_{n_k}(x_k)|$.
From the equation (\ref{eq: differential equation for phi_n}),
$|\nabla_A^*\nabla_A\phi_{n_k}|\leq \const_A \cdot R_k^2$.
Here ``$\const_A$'' means a positive constant depending on $A$ (but independent of $k\geq 1$).
Let $r_0>0$ be a positive number less than the injectivity radius of $X$.
We consider the geodesic coordinate centered at $x_k$ for each $k\geq 1$, and
we take a bundle trivialization of $E$ over each geodesic ball $B(x_k,r_0)$ by the exponential gauge
centered at $x_k$.
Then we can consider $\phi_{n_k}$ as a vector-valued function in the ball $B(x_k, r_0)$. 
Under this setting, $\phi_{n_k}$ satisfies 
\begin{equation} \label{eq: bound on Delta phi_{n_k}}
 \left|\sum_{i,j} g_{(k)}^{ij}\partial_i\partial_j \phi_{n_k}\right|\leq \const_A \cdot R_k^2 \quad \text{on $B(x_k,r_0)$},
\end{equation}
where $(g_{(k)}^{ij}) = (g_{(k),ij})^{-1}$ and $g_{(k),ij}$ is the Riemannian metric tensor 
in the geodesic coordinate centered at $x_k$.
(Indeed $S^3\times \mathbb{R}=SU(2)\times \mathbb{R}$ is a Lie group. 
Hence we can take the geodesic coordinates 
so that $g_{(k),ij}$ are independent of $k$.)
Set $\tilde{\phi}_k(x):=\phi_{n_k}(x/R_k)$.
$\tilde{\phi}_k(x)$ is a vector-valued function defined over the $r_0R_k$-ball in $\mathbb{R}^4$
centered at the origin.
$\tilde{\phi}_k$ $(k\geq 1)$ satisfy $|\nabla\tilde{\phi}_k(0)|=1$, and they are uniformly bounded.
From (\ref{eq: bound on Delta phi_{n_k}}), they satisfy
\[ \left|\sum_{i,j} \tilde{g}^{ij}_{(k)}\partial_i\partial_j\tilde{\phi}_k\right| \leq \const_A ,\]
where $\tilde{g}^{ij}_{(k)}(x) = g^{ij}_{(k)}(x/R_{k})$.
$\{\tilde{g}^{ij}_{(k)}\}_{k\geq 1}$ converges to $\delta^{ij}$
(the Kronecker delta) as $k\to +\infty$ 
in the $\mathcal{C}^\infty$-topology over compact subsets in $\mathbb{R}^4$.
Hence there exists a subsequence $\{\tilde{\phi}_{k_l}\}_{l\geq 1}$ which converges to some 
$\tilde{\phi}$ in the $\mathcal{C}^1$-topology over compact subsets in $\mathbb{R}^4$.
Since $|\nabla\tilde{\phi}_k(0)|=1$, we have $|\nabla\tilde{\phi}(0)|=1$.

If $\{x_{k_l}\}_{l\geq 1}$ is a bounded sequence, then $\{\tilde{\phi}_{k_l}\}$ has a subsequence
which converges to a constant function uniformly over every compact subset
because $\phi_n$ converges to $\phi_A$ (a continuous section) in the $\mathcal{C}^0$-topology ($=L^\infty\text{-topology}$)
and $R_k \to \infty$. 
But this contradicts the above $|\nabla\tilde{\phi}(0)|=1$.
Hence $\{x_{k_l}\}$ is an unbounded sequence. Since $\phi_n$ uniformly go to zero at infinity, 
$\{\tilde{\phi}_{k_l}\}$ has a subsequence which converges to $0$  uniformly over every compact subset.
Then this also contradicts $|\nabla\tilde{\phi}(0)|=1$.
\end{proof}
From Lemma \ref{lemma: gradient estimate} and the equation (\ref{eq: differential equation for phi_n}), the elliptic estimates show
that $\phi_n$ converges to $\phi_A$ in the $\mathcal{C}^\infty$-topology over every compact subset in $X$.
In particular, $\phi_A$ is smooth. 
(Indeed $\phi_A\in \Omega^+(\ad E)_0$ from Proposition \ref{prop: decay estimate}.) 
From the equation (\ref{eq: differential equation for phi_n}), 
\begin{equation}\label{eq: ASD equation for phi_A}
 (\nabla_A^*\nabla_A +S/3)\phi_A = -2F_A^+\cdot \phi_A -2(d_A^*\phi_A\wedge d_A^*\phi_A)^+ -2F_A^+.
\end{equation}
This implies that $A+d_A^*\phi_A$ is an ASD connection.

Lemma \ref{lemma: key lemma for the construction} shows $\lim_{x\to \infty}|\nabla_A\phi_n(x)| =0$
for each $n$.
Indeed we can prove a stronger result:
\begin{lemma}\label{lemma: decay estimate for the gradient}
For each $\varepsilon >0$, there exists a compact set $K\subset X$ such that for all $n$
\[ |\nabla_A\phi_n(x)|\leq \varepsilon \quad (x\in X\setminus K).\]
Therefore, $\lim_{x\to \infty}|\nabla_A\phi_A(x)|=0$.
\end{lemma}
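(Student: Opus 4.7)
The plan is to argue by contradiction via a rescaling and compactness argument, in the spirit of the proof of Lemma~\ref{lemma: gradient estimate}. Suppose the first claim fails: then there exist $\varepsilon_0>0$ and sequences $n_k$, $x_k\in X$ with $|t(x_k)|\to\infty$ and $|\nabla_A\phi_{n_k}(x_k)|\geq\varepsilon_0$. Lemma~\ref{lemma: key lemma for the construction} already gives decay at infinity for each fixed $n$, so after passing to a subsequence we may assume $n_k\to\infty$.

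The main technical input is a uniform-in-$n$ $C^k_{\mathrm{loc}}$ bound on $\{\phi_n\}$ in the flat region $\{|t|>T+1\}$. From Proposition~\ref{prop: decay estimate} we have $\sup_n|\phi_n(x)|\leq 3\delta(x)\norm{F_A^+}_{L^\infty}$, and from Lemma~\ref{lemma: gradient estimate} we have $\sup_n\norm{\nabla_A\phi_n}_{L^\infty}\leq M<\infty$. In the flat region $F_A^+\equiv 0$, so the recursion (\ref{eq: differential equation for phi_n}) reduces to $(\nabla_A^*\nabla_A+S/3)\phi_n=-2(d_A^*\phi_{n-1}\wedge d_A^*\phi_{n-1})^+$ with right-hand side uniformly bounded by $2c_0M^2$. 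Each end of $X$ is simply connected, so on each end we may trivialize $E$ globally and make $A$ the product connection, turning the equation into a scalar elliptic equation. Interior $L^p$ estimates and Sobolev embedding yield uniform $C^{1,\alpha}_{\mathrm{loc}}$ bounds on $\{\phi_n\}$; a Schauder bootstrap, using that the right-hand side is a smooth (indeed quadratic) function of $\nabla\phi_{n-1}$, then upgrades this one regularity level at a time to uniform $C^{k,\alpha}_{\mathrm{loc}}$ bounds for every $k$.

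Now, since translation in the $\mathbb{R}$-factor is an isometry of $X$, pull each $\phi_{n_k}$ back by $(\theta,t)\mapsto(\theta,t-t(x_k))$ to produce sections $\tilde\phi_k$ defined, for $k$ large, on a fixed compact neighborhood of a reference point in the flat region, satisfying the same uniform $C^k$ bounds and with $|\nabla\tilde\phi_k(0)|\geq\varepsilon_0$. By Arzel\`a--Ascoli a subsequence converges in $C^1_{\mathrm{loc}}$ to a limit $\tilde\phi_\infty$. But the uniform pointwise decay $|\phi_n(x)|\leq 3\delta(x)\norm{F_A^+}_{L^\infty}$, combined with $\delta(x)\to 0$ at infinity (from the exponential decay of the Green kernel, cf.\ (\ref{eq: exponential decay of g(x,y)})), forces $\tilde\phi_k\to 0$ in $L^\infty_{\mathrm{loc}}$. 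Hence $\tilde\phi_\infty\equiv 0$, contradicting $|\nabla\tilde\phi_\infty(0)|\geq\varepsilon_0$. The second claim $\lim_{x\to\infty}|\nabla_A\phi_A(x)|=0$ follows immediately from the first together with the $C^\infty_{\mathrm{loc}}$ convergence $\phi_n\to\phi_A$ established above. The main obstacle I anticipate is closing the Schauder bootstrap uniformly in $n$: each induction step bounds $\phi_n$ in terms of $\phi_{n-1}$, so one must verify the constants do not degrade as the induction proceeds; this works because the nonlinearity is quadratic in $\nabla\phi$ and we already have a uniform $L^\infty$ gradient bound from Lemma~\ref{lemma: gradient estimate}.
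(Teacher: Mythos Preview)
Your proposal is correct and follows the same overall strategy as the paper: argue by contradiction, translate in the $\mathbb{R}$-factor into the flat region, use the uniform pointwise decay from Proposition~\ref{prop: decay estimate} to force the translated sequence to zero in $L^\infty_{\mathrm{loc}}$, and contradict the lower bound on the gradient.

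The main difference is economy. You invoke a full Schauder bootstrap to obtain uniform $\mathcal{C}^{k,\alpha}_{\mathrm{loc}}$ bounds for every $k$, and then worry about whether the constants degrade because the right-hand side of the equation for $\phi_n$ involves $\nabla\phi_{n-1}$. The paper sidesteps this entirely: it uses only the uniform bound $\sup_n\norm{\nabla_A\phi_n}_{L^\infty}<\infty$ from Lemma~\ref{lemma: gradient estimate}, which immediately gives a uniform $L^\infty$ bound on $(\nabla^*\nabla+S/3)\phi'_n$ in the flat region, and then a single interior elliptic estimate (say $L^p\to W^{2,p}$ with $p>4$, giving $\mathcal{C}^{1,\alpha}$ compactness) is enough to pass from $L^\infty$-convergence to zero to $\mathcal{C}^1$-convergence to zero on the translated pieces. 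No bootstrap in $k$, and no induction in $n$, is needed; the uniform gradient bound from Lemma~\ref{lemma: gradient estimate} already closes the argument at the first step. Your concern about degrading constants is therefore a non-issue once you notice that $\mathcal{C}^1$ control is all that is required.
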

\begin{proof}
Suppose the statement is false.
Then there are $\delta>0$, a sequence $n_1<n_2<n_3<\cdots$, and a sequence of points 
$x_1, x_2, x_3, \cdots$ in $X$ which goes to infinity such that 
\[ |\nabla_A\phi_{n_k}(x_k)|\geq \delta \quad (k =1, 2, 3, \cdots).\]
Let $x_k = (\theta_k ,t_k)\in S^3\times \mathbb{R} =X$. $|t_k|$ goes to infinity.
We can suppose $|t_k|> T+2$. 

Since $A$ is flat in $|t| > T+1$, there exists a bundle trivialization 
$g:E|_{|t|> T+1} \to X_{|t|> T+1} \times SU(2)$ such that $g(A)$ is equal to 
the product connection. (Here $X_{|t|>T+1} = \{(\theta, t)\in S^3\times \mathbb{R}|\, |t|>T+1\}$.)
Set $\phi_n':= g(\phi_n)$. We have 
\[ (\nabla^*\nabla +S/3)\phi'_{n} = -2(d^*\phi'_{n-1}\wedge d^*\phi'_{n-1})^+ \quad 
   (|t|> T+1) ,\]
where $\nabla$ is defined by using the product connection on $X_{|t|>T+1} \times SU(2)$.
From this equation and Lemma \ref{lemma: gradient estimate},
\[ |(\nabla^*\nabla +S/3)\phi'_{n}|\leq \const \quad (|t|> T+1),\]
where $\const$ is independent of $n$.
We define $\varphi_k\in \Gamma(S^3\times (-1,1), \Lambda^+\otimes su(2))$ by 
$\varphi_k(\theta, t) := \phi'_{n_k}(\theta, t_k +t)$.
We have $|(\nabla^*\nabla+S/3)\varphi_k|\leq \const$.
Since $|\phi'_n(x)|\leq 3\delta(x)\norm{F^+_A}_{L^\infty}$ and $|t_k|\to +\infty$, 
the sequence $\varphi_k$ converges to $0$ in $L^\infty(S^3\times (-1, 1))$.
Using the elliptic estimate, we get $\varphi_k\to 0$ in $\mathcal{C}^1(S^3\times [-1/2, 1/2])$.
On the other hand, $|\nabla \varphi_k(\theta_k, 0)| = |\nabla_A\phi_{n_k}(\theta_k, t_k)|\geq \delta>0$.
This is a contradiction.
\end{proof}
Set 
\begin{equation}\label{eq: eta_A and phi_A}
 \eta_A:= (\nabla_A^*\nabla_A+S/3)\phi_A 
 = -2F_A^+\cdot \phi_A -2(d_A^*\phi_A\wedge d_A^*\phi_A)^+ -2F_A^+.
\end{equation}
This is contained in $\Omega^+(\ad E)_0$ (Lemma \ref{lemma: decay estimate for the gradient}).
The sequence $\eta_n$ defined in Proposition \ref{prop: eta_n becomes Cauchy}
satisfies
\[ \eta_{n+1} = -2F_A^+\cdot \phi_n -2(d_A^*\phi_n\wedge d_A^*\phi_n)^+ -2F_A^+.\]
\begin{corollary} \label{cor: convergence of eta_n to eta_A}
The sequence $\eta_n$ converges to $\eta_A$ in $L^\infty$. 
In particular, $\norm{\eta_n-\eta_A}_{\mathrm{T}} \to 0$ as $n\to \infty$.
Hence $\norm{\eta_A}_{\mathrm{T}}\leq 3\varepsilon_0$. (Proposition \ref{prop: eta_n becomes Cauchy}.)
\end{corollary}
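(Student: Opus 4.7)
The plan is to prove the $L^\infty$ convergence $\eta_n \to \eta_A$ directly from the explicit formulas for $\eta_{n+1}$ and $\eta_A$, then deduce Taubes-norm convergence via the elementary inequality $\norm{\eta}_{\mathrm{T}} \leq K \norm{\eta}_{L^\infty}$, and finally obtain $\norm{\eta_A}_{\mathrm{T}} \leq 3\varepsilon_0$ by combining this convergence with Proposition \ref{prop: eta_n becomes Cauchy} and the triangle inequality.

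First I would subtract (\ref{eq: eta_A and phi_A}) from the recursive formula for $\eta_{n+1}$ to get
\[ \eta_{n+1}-\eta_A = -2F_A^+\cdot(\phi_n-\phi_A) - 2\bigl[(d_A^*\phi_n\wedge d_A^*\phi_n)^+ - (d_A^*\phi_A\wedge d_A^*\phi_A)^+\bigr]. \]
The first summand is bounded pointwise by $2c_0|F_A^+|\,|\phi_n-\phi_A|$ via (\ref{eq: universal constant c_0}), hence in $L^\infty$ by $2c_0\norm{F_A^+}_{L^\infty}\norm{\phi_n-\phi_A}_{L^\infty}$, which tends to zero because $\phi_n\to\phi_A$ in $L^\infty$ (established just before the corollary). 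The second summand is the delicate one: writing it bilinearly as
\[ (d_A^*(\phi_n-\phi_A)\wedge d_A^*\phi_n)^+ + (d_A^*\phi_A\wedge d_A^*(\phi_n-\phi_A))^+ \]
and applying (\ref{eq: universal constant c_0}) gives the pointwise bound $c_0\,|\nabla_A(\phi_n-\phi_A)|\bigl(|\nabla_A\phi_n|+|\nabla_A\phi_A|\bigr)$.

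The main subtlety is that $\nabla_A\phi_n$ only converges to $\nabla_A\phi_A$ in $\mathcal{C}^\infty$ on compact subsets, not in $L^\infty$. To handle this, given $\varepsilon>0$ I would invoke Lemma \ref{lemma: decay estimate for the gradient} to find a compact set $K\subset X$ outside which $|\nabla_A\phi_n|\leq\varepsilon$ for every $n$ (hence also $|\nabla_A\phi_A|\leq\varepsilon$, as the limit). On $X\setminus K$ this immediately yields the uniform bound $2c_0\varepsilon^2$ on the quadratic difference. On $K$, the $\mathcal{C}^\infty$-convergence $\phi_n\to\phi_A$ established in Section \ref{subsection: regularity and the behavior at the end} forces $\nabla_A\phi_n\to\nabla_A\phi_A$ uniformly on $K$, so the quadratic difference tends to zero uniformly there. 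Fixing $\varepsilon$, then sending $n\to\infty$, and finally letting $\varepsilon\to 0$ yields $\norm{\eta_{n+1}-\eta_A}_{L^\infty}\to 0$.

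Finally, since $g(x,y)\geq 0$ integrates in $y$ to the constant $K$, the elementary estimate $\norm{\eta_n-\eta_A}_{\mathrm{T}}\leq K\norm{\eta_n-\eta_A}_{L^\infty}$ gives the desired Taubes-norm convergence, and combining $\norm{\eta_n}_{\mathrm{T}}\leq 3\varepsilon_0$ from Proposition \ref{prop: eta_n becomes Cauchy} with the triangle inequality yields $\norm{\eta_A}_{\mathrm{T}}\leq 3\varepsilon_0$. The only genuinely nontrivial ingredient is the uniform-in-$n$ decay of $|\nabla_A\phi_n|$ at infinity supplied by Lemma \ref{lemma: decay estimate for the gradient}; everything else is bookkeeping on the explicit formulas.
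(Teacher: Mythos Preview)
Your proof is correct and follows essentially the same approach as the paper: subtract the two formulas, bound the curvature term by $2c_0\norm{F_A^+}_{L^\infty}\norm{\phi_n-\phi_A}_{L^\infty}$, decompose the quadratic term bilinearly, and then combine the uniform decay of $|\nabla_A\phi_n|$ at infinity (Lemma \ref{lemma: decay estimate for the gradient}) with $\mathcal{C}^\infty$-convergence on compact sets. The paper's write-up is terser (it also cites the uniform boundedness from Lemma \ref{lemma: gradient estimate}, which your compact/noncompact split makes unnecessary), but the substance is identical.
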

\begin{proof}
\[ \eta_{n+1}-\eta_A = -2F_A^+\cdot(\phi_n-\phi_A)
   +2\{d_A^*(\phi_A-\phi_n)\wedge d_A^*\phi_A + d_A^*\phi_n\wedge d_A^*(\phi_A-\phi_n)\}^+.\]
Hence 
\[ |\eta_{n+1}-\eta_A|\leq 2c_0\norm{F_A^+}_{L^\infty}\norm{\phi_n-\phi_A}_{L^\infty}
    + 2c_0(|\nabla_A\phi_n| + |\nabla_A\phi_A|)|\nabla_A\phi_A-\nabla_A\phi_n| .\]
$\phi_n\to \phi_A$ in $L^\infty(X)$ and in $\mathcal{C}^\infty$ over every compact subset.
Moreover $|\nabla_A\phi_n|$ are uniformly bounded and 
uniformly go to zero at infinity 
(Lemma \ref{lemma: gradient estimate} and Lemma \ref{lemma: decay estimate for the gradient}).
Then the above inequality implies that $\norm{\eta_{n+1}-\eta_A}_{L^\infty}$ goes to $0$.
\end{proof}
\begin{lemma} \label{lemma: the bound on the second derivative}
$\norm{d_A d_A^*\phi_A}_{L^\infty} < \infty$.
\end{lemma}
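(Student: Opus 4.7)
The plan is to establish a global $L^\infty$ bound on the second covariant derivatives of $\phi_A$ by combining the known $L^\infty$ bound on $\eta_A = (\nabla_A^*\nabla_A+S/3)\phi_A$ with standard interior elliptic regularity, exploiting smoothness on the compact region and flatness of $A$ at infinity.

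First I gather the inputs. Proposition \ref{prop: decay estimate} gives $\phi_A\in L^\infty(X)$, and passing to the limit in Lemma \ref{lemma: gradient estimate} (using the $\mathcal{C}^\infty_{loc}$ convergence $\phi_n\to\phi_A$) yields $\norm{\nabla_A\phi_A}_{L^\infty}<\infty$. Combined with $\norm{F_A^+}_{L^\infty}\leq d'$ and the pointwise bound (\ref{eq: universal constant c_0}), the expression (\ref{eq: eta_A and phi_A}) shows $\norm{\eta_A}_{L^\infty}<\infty$. Note also that Weitzenb\"ock (\ref{eq: Weitzenbock formula}) together with the defining equation for $\eta_A$ already yields $d_A^+ d_A^*\phi_A = \tfrac12\eta_A + F_A^+\cdot\phi_A \in L^\infty$; the issue is controlling the anti-self-dual part of $d_A d_A^*\phi_A$.

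Now split $X$ as the compact region $K_0:=\{|t|\leq T+2\}$ together with its complement. On $K_0$, both $A$ and $\phi_A$ are smooth, so $d_A d_A^*\phi_A$ is bounded by compactness. On $X\setminus K_0$, the connection $A$ is flat by condition (i) at the beginning of Section \ref{subsection: construction}; fix a flat trivialization $g$ as in the proof of Lemma \ref{lemma: key lemma for the construction}, and set $\tilde\phi_A:=g(\phi_A)$ and $\tilde\eta_A:=g(\eta_A)$. The equation transforms into the flat Euclidean equation $(\Delta+S/3)\tilde\phi_A=\tilde\eta_A$, with both sides uniformly in $L^\infty$. Interior elliptic regularity on unit balls, applied via the standard bootstrap -- first $L^p$-estimates give $\tilde\phi_A\in W^{2,p}_{loc}$, Sobolev embedding gives $\tilde\phi_A\in C^{1,\alpha}_{loc}$, so via (\ref{eq: eta_A and phi_A}) one has $\tilde\eta_A\in C^{0,\alpha}_{loc}$, and finally Schauder estimates give $\tilde\phi_A\in C^{2,\alpha}_{loc}$ -- produces a uniform bound on $|\nabla^2\tilde\phi_A|$ independent of the center of the ball. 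Since $g$ is a bundle isometry, this translates back to a uniform bound on $|d_A d_A^*\phi_A|$ on $X\setminus K_0$, and combining with the compact part finishes the proof.

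The main technical point is the uniformity of the interior elliptic estimates at infinity, which follows from translation invariance of the Euclidean Laplacian (once $A$ is trivialized on the flat region) together with the already-established global $L^\infty$ control on $\phi_A$ and $\eta_A$. The quadratic dependence of $\eta_A$ on $\nabla_A\phi_A$ does not cause a loss because $\nabla_A\phi_A$ has been bounded a priori; the bootstrap from $L^\infty$ data to $C^{2,\alpha}$ regularity is then entirely routine.
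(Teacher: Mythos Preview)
Your proof is correct and follows essentially the same route as the paper's: split off the compact region $\{|t|\le T+2\}$, pass to a flat trivialization on the complement, and apply interior elliptic estimates with constants that are uniform by $\mathbb{R}$-translation invariance. The paper's write-up is terser (it simply says ``by using the elliptic estimates'') and cites the decay of $|\phi'|$ and $|\nabla\phi'|$ from Lemma~\ref{lemma: decay estimate for the gradient} rather than just their boundedness, but your more explicit $L^p\to W^{2,p}\to C^{1,\alpha}\to C^{2,\alpha}$ bootstrap is a valid elaboration of the same idea. One small terminological point: the equation on the flat region is not literally ``Euclidean'' since the base is $S^3\times\mathbb{R}$, but what you need (and have) is that the operator is a fixed second-order elliptic operator with $t$-translation-invariant coefficients.
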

\begin{proof}
It is enough to prove $|d_A d_A^*\phi_A(\theta, t)|\leq \const$ for $|t|> T+2$.
Take a trivialization $g$ of $E$ over $|t| > T+1$ such that $g(A)$ is the product connection, 
and set $\phi' := g(\phi_A)$. This satisfies
\[ (\nabla^*\nabla +S/3)\phi' = -2(d^*\phi'\wedge d^*\phi')^+ \quad (|t|> T+1) .\]
Since $|\phi'|$ and $|\nabla \phi'|$ go to zero at infinity 
(Proposition \ref{prop: decay estimate} and Lemma \ref{lemma: decay estimate for the gradient}),
this shows (by using the elliptic estimates) that $|d d^*\phi'|$ is bounded.
\end{proof}
\begin{lemma}\label{lemma: finiteness of energy}
\[ \frac{1}{8\pi^2}\int_X |F(A+d_A^*\phi_A)|^2 d\vol = \frac{1}{8\pi^2}\int_X tr(F_A^2) .\]
Recall that $A$ is flat over $|t| > T+1$. Hence the right hand side is finite.
(Indeed it is a non-negative integer by the Chern-Weil theory.)
\end{lemma}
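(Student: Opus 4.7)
The plan is to argue in two steps: first reduce the claimed identity to the Chern--Weil invariance of $\int tr(F^2)$ under the deformation $A \mapsto A + d_A^*\phi_A$, and then verify the boundary decay needed to push this topological identity through on the non-compact manifold $X$.

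First, since $A' := A + d_A^*\phi_A$ is ASD by the equation \eqref{eq: ASD equation for phi_A}, the pointwise identity $|F(A')|^2 d\vol = tr\bigl(F(A')\wedge F(A')\bigr)$ holds (with the convention $\langle X, Y\rangle_{su(2)} = -tr(XY)$ used throughout the paper). So the lemma reduces to showing
\[
\int_X tr\bigl(F(A')\wedge F(A')\bigr) \;=\; \int_X tr(F_A\wedge F_A).
\]
Set $a := d_A^*\phi_A$ and $A_s := A + sa$ for $s\in[0,1]$. A standard transgression computation, using $\frac{d}{ds}F(A_s) = d_{A_s}a$, the Bianchi identity $d_{A_s}F(A_s)=0$, and the fact that $d\circ tr = tr\circ d_{A_s}$ on $\ad E$-valued forms, yields
\[
tr\bigl(F(A')\wedge F(A')\bigr) - tr(F_A\wedge F_A) \;=\; d\alpha,
\qquad
\alpha \;:=\; 2\int_0^1 tr\bigl(a \wedge F(A_s)\bigr)\, ds.
\]

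Second, for $R > T+1$ let $X_R := S^3\times[-R, R]$. Stokes' theorem gives
\[
\int_{X_R} d\alpha \;=\; \int_{S^3\times\{R\}}\alpha \;-\; \int_{S^3\times\{-R\}}\alpha.
\]
Over the region $|t| > T+1$ we have $F_A = 0$, so $F(A_s) = s\, d_A a + s^2\, a\wedge a$ and therefore $|\alpha| \leq C\,|a|\bigl(|d_A a| + |a|^2\bigr)$ pointwise on each end slice. Lemma \ref{lemma: decay estimate for the gradient} yields $|a| \leq |\nabla_A \phi_A| \to 0$ uniformly as $|t|\to\infty$, while Lemma \ref{lemma: the bound on the second derivative} supplies a uniform bound on $|d_A a| = |d_A d_A^*\phi_A|$. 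Since $S^3$ has finite volume, both boundary integrals tend to $0$ as $R\to\infty$, and $\int_X d\alpha$ is well-defined and equal to zero, giving the desired identity.

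The only subtlety is the boundary decay of the transgression form $\alpha$ at the two ends of $X$; this is precisely what the preceding lemmas on the uniform decay of $\nabla_A\phi_A$ and the global bound on $d_A d_A^*\phi_A$ were set up to provide, so once those are in hand the Chern--Weil/Stokes argument closes directly.
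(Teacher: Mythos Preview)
Your proof is correct and follows essentially the same route as the paper: the paper writes the transgression form explicitly as the Chern--Simons form $cs_A(a) = \tfrac{1}{8\pi^2}tr(2a\wedge F_A + a\wedge d_A a + \tfrac{2}{3}a^3)$, which is exactly what your integral $\alpha = 2\int_0^1 tr(a\wedge F(A_s))\,ds$ evaluates to, and then invokes the same decay input (Lemmas~\ref{lemma: decay estimate for the gradient} and \ref{lemma: the bound on the second derivative}, together with $F_A=0$ for $|t|>T+1$) to kill the boundary terms. One cosmetic remark: the pointwise inequality should read $|a| \leq \mathrm{const}\cdot|\nabla_A\phi_A|$ rather than $|a|\leq |\nabla_A\phi_A|$, but this does not affect the argument.
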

\begin{proof}
Set $a:= d_A^*\phi_A$ and $cs_A(a) := \frac{1}{8\pi^2}tr(2a\wedge F_A + a\wedge d_A a +\frac{2}{3}a^3)$.
We have $\frac{1}{8\pi^2}tr(F(A+a)^2) -\frac{1}{8\pi^2}tr(F(A)^2) = d cs_A(a)$.
Since $A+a$ is ASD, we have $|F(A+a)|^2d\vol = tr(F(A+a)^2)$ and 
\[ \frac{1}{8\pi^2}\int_{|t|\leq R} tr (F(A+a)^2) - \frac{1}{8\pi^2}\int_{|t|\leq R}tr (F(A)^2)
= \int_{t=R} cs_A(a) - \int_{t=-R} cs_A(a) .\]
From Lemma \ref{lemma: decay estimate for the gradient}, $|a| = |d_A^*\phi_A|$ goes to zero
at infinity. 
From Lemma \ref{lemma: the bound on the second derivative}, 
$|d_A a| = |d_A d_A^*\phi_A|$ is bounded.
$F_A$ vanishes over $|t|>T+1$.
Hence $|cs_A(a)|$ goes to zero at infinity.
Thus the right-hand-side of the above equation goes to zero as $R \to \infty$.
\end{proof}

\subsection{Conclusion of the construction} \label{subsection: conclusion of the construction}
The following is the conclusion of Sections \ref{subsection: construction} and 
\ref{subsection: regularity and the behavior at the end}.
This will be used in Sections \ref{section: continuity of the perturbation} 
and \ref{Section: proof of the upper bound}.
(Notice that we have not so far used the condition (iii) 
in the beginning of Section \ref{subsection: construction}.)
\begin{proposition} \label{prop: conclusion of the perturbation}
Let $T>0$.
Let $E$ be a principal $SU(2)$-bundle over $X$, and $A$ be a connection on $E$ satisfying 
$F_A=0$ $(|t|>T+1)$, $\supp F_A^+\subset \{T< |t|<T+1\}$ and 
$\norm{F_A^+}_{\mathrm{T}}\leq \varepsilon_0 = 1/1000$.
Then we can construct $\phi_A\in \Omega^+(\ad E)_0$ satisfying the following conditions.

\noindent 
(a) $A+d_A^*\phi_A$ is an ASD connection: $F^+(A+d_A^*\phi_A)=0$.

\noindent
(b) 
\[  \frac{1}{8\pi^2}\int_X |F(A+d_A^*\phi_A)|^2 d\vol = \frac{1}{8\pi^2}\int_X tr(F_A^2).\]

\noindent 
(c) $|\phi_A(x)|\leq 3\delta(x)\norm{F_A^+}_{L^\infty}$, where $\delta(x) = \int_{T<|t|<T+1}g(x,y)d\vol(y)$.

\noindent
(d) $\eta_A := (\nabla_A^*\nabla_A+S/3)\phi_A$ is contained in $\Omega^+(\ad E)_0$ and 
$\norm{\eta_A}_{\mathrm{T}} \leq 3\varepsilon_0$.

Moreover this construction $(E, A)\mapsto \phi_A$ is gauge equivariant, i.e., if $F$ is another
principal $SU(2)$-bundle over $X$ admitting a bundle map $g:E\to F$, then $\phi_{g(A)} = g(\phi_A)$.
\end{proposition}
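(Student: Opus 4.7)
The plan is to assemble the proposition directly from the ingredients developed in the two preceding subsections; essentially no new analysis is needed, only a careful bookkeeping of what has already been shown. First I would define the iteration $\eta_0=0$, $\eta_{n+1}=\Phi(\eta_n)$ with $\phi_n:=(\nabla_A^*\nabla_A+S/3)^{-1}\eta_n$, exactly as in Subsection 4.1. Proposition \ref{prop: eta_n becomes Cauchy} guarantees that $\{\eta_n\}$ is Cauchy in the Taubes norm and satisfies $\norm{\eta_n}_{\mathrm{T}}\le 3\varepsilon_0$. Using the pointwise bound $\norm{\phi_n-\phi_m}_{L^\infty}\le\norm{\eta_n-\eta_m}_{\mathrm{T}}$ from (\ref{eq: phi leq eta}), the sequence $\phi_n$ converges in $L^\infty$ to some continuous $\phi_A\in \Omega^+(\ad E)_0$.

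Next I would upgrade this $L^\infty$ convergence to smooth convergence. By Lemma \ref{lemma: gradient estimate}, $\norm{\nabla_A\phi_n}_{L^\infty}$ is uniformly bounded, so the right-hand side of (\ref{eq: differential equation for phi_n}) is uniformly bounded in $L^\infty$. Bootstrapping with standard elliptic estimates gives $\phi_n\to\phi_A$ in $\mathcal{C}^\infty$ on compact subsets; passing to the limit in (\ref{eq: differential equation for phi_n}) yields (\ref{eq: ASD equation for phi_A}), which is equivalent to $F^+(A+d_A^*\phi_A)=0$. This proves item (a). Item (c) is simply a restatement of Proposition \ref{prop: decay estimate}, which is proved directly from the iteration bound $\norm{\eta_n}_{\mathrm{T}}\le 3\varepsilon_0$ and the fact that $F_A^+$ is supported in $\{T<|t|<T+1\}$.

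For item (d), I would invoke Corollary \ref{cor: convergence of eta_n to eta_A}: the inequality
\[
|\eta_{n+1}-\eta_A|\le 2c_0\norm{F_A^+}_{L^\infty}\norm{\phi_n-\phi_A}_{L^\infty}
+2c_0(|\nabla_A\phi_n|+|\nabla_A\phi_A|)|\nabla_A\phi_A-\nabla_A\phi_n|
\]
combined with Lemmas \ref{lemma: gradient estimate} and \ref{lemma: decay estimate for the gradient} (uniform boundedness and uniform decay of $|\nabla_A\phi_n|$ at infinity) forces $\norm{\eta_n-\eta_A}_{L^\infty}\to 0$, and hence also $\norm{\eta_n-\eta_A}_{\mathrm{T}}\to 0$. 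Passing to the limit in $\norm{\eta_n}_{\mathrm{T}}\le 3\varepsilon_0$ gives the required estimate on $\norm{\eta_A}_{\mathrm{T}}$. Item (b) is exactly Lemma \ref{lemma: finiteness of energy}, whose proof uses that $|d_A^*\phi_A|\to 0$ at infinity, $|d_A d_A^*\phi_A|$ is bounded (Lemma \ref{lemma: the bound on the second derivative}), and $F_A$ vanishes for $|t|>T+1$, so the Chern–Simons boundary integrals vanish in the limit $R\to\infty$.

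The final claim, gauge equivariance $\phi_{g(A)}=g(\phi_A)$, I would handle by observing that every ingredient of the construction is natural with respect to bundle isomorphisms: the rough Laplacian $\nabla_A^*\nabla_A$, the self-dual projection, the curvature $F_A$, and the nonlinearity $\beta$ all transform covariantly under $g$, while the Green kernel $g(x,y)$ is a scalar object independent of the bundle. Hence the iteration $\eta_n^{g(A)}=g(\eta_n^A)$ and $\phi_n^{g(A)}=g(\phi_n^A)$ holds for every $n$, and passes to the limit by continuity of $g$. I do not foresee any genuine obstacle here — the work has all been done in Subsections 4.1 and 4.2; the only care required is to verify that each of (a)–(d) is really a limit-level consequence of the corresponding statement for the approximants $\phi_n$, $\eta_n$, and that the smoothness needed to make sense of items (a) and (b) indeed follows from the rescaling-and-elliptic-regularity argument of Lemma \ref{lemma: gradient estimate}.
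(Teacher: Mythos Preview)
Your proposal is correct and matches the paper's approach exactly: the paper's own proof simply states that conditions (a)--(d) have already been established in Subsections~\ref{subsection: construction} and~\ref{subsection: regularity and the behavior at the end}, and that gauge equivariance is obvious from the construction. Your write-up is a faithful, more explicit unpacking of precisely which earlier results furnish each item, with no deviation in strategy.
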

\begin{proof}
The conditions (a), (b), (c), (d) have been already proved.
The gauge equivariance is obvious by the construction of $\phi_A$ in Section \ref{subsection: construction}.
\end{proof}

\subsection{Interior estimate} \label{subsection: interior estimate}
In the proof of the upper bound on the mean dimension, we need to use an ``interior estimate" of 
$\phi_A$ (Lemma \ref{lemma: interior estimate} below), which we investigate in this subsection.
We use the argument of Donaldson \cite[pp. 189-190]{Donaldson}.
Recall that $|F_A|\leq d$ on $|t|\leq T$ and $\norm{F_A^+}_{L^\infty(X)}\leq d'$ by the condition 
(iii) in the beginning of Section \ref{subsection: construction}.
We fix $r_0>0$ so that $r_0$ is less than the injectivity radius of $S^3\times \mathbb{R}$
(cf. the proof of Lemma \ref{lemma: gradient estimate}).
\begin{lemma}\label{lemma: sublemma for the interior estimate}
For any $\varepsilon>0$, 
there exists a constant $\delta_0 =\delta_0(d,\varepsilon)>0$ depending only on $d$ and $\varepsilon$
such that the following statement holds.
For any $\phi\in \Omega^+(\ad E)$ and any closed $r_0$-ball $B$ contained in $S^3\times [-T+1,T-1]$, if 
$\phi$ satisfies
\begin{equation} \label{eq: condition on the r_0-ball B}
 (\nabla_A^*\nabla_A+S/3)\phi 
  = -2(d_A^*\phi \wedge d_A^*\phi)^+  \text{ over $B$ and } \norm{\phi}_{L^\infty(B)}\leq \delta_0,
\end{equation}
then we have 
\[ \sup_{x\in B} |\nabla_A\phi(x)|d(x,\partial B) \leq \varepsilon.\]
Here $d(x, \partial B)$ is the distance between $x$ and $\partial B$. 
(If $T<1$, then $S^3\times [-T+1,T-1]$ is empty, and the above statement has no meaning.)
\end{lemma}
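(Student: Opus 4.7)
The plan is a standard rescaling-and-compactness argument by contradiction, in the style of Donaldson \cite{Donaldson}. Suppose the conclusion fails for some $\varepsilon>0$: then there exist sequences $T_n>0$, connections $A_n$ on bundles $E_n$ over $X$ satisfying the standing hypotheses of Section~\ref{subsection: construction} with the common constant $d$, closed $r_0$-balls $B_n\subset S^3\times[-T_n+1,T_n-1]$, and sections $\phi_n\in\Omega^+(\ad E_n)$ solving \eqref{eq: condition on the r_0-ball B} on $B_n$ with $\norm{\phi_n}_{L^\infty(B_n)}\le 1/n$, yet with $M_n:=\sup_{x\in B_n}|\nabla_{A_n}\phi_n(x)|\,d(x,\partial B_n)>\varepsilon$. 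I would pick $x_n\in B_n$ achieving $M_n$ (the product is continuous on the compact set $B_n$ and vanishes on $\partial B_n$), and set $R_n:=|\nabla_{A_n}\phi_n(x_n)|$, $s_n:=d(x_n,\partial B_n)$, so $R_n s_n=M_n\ge\varepsilon$. The maximality of $x_n$ yields the crucial uniform bound $|\nabla_{A_n}\phi_n(y)|\le 2R_n$ on the half-ball $B(x_n,s_n/2)\subset B_n$, because $d(y,\partial B_n)\ge s_n/2$ there.

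Next I would work in geodesic normal coordinates at $x_n$ together with the exponential gauge for $A_n$ at $x_n$; the curvature bound $|F_{A_n}|\le d$ gives the pointwise bound $|A_n(y)|\le Cd|y|$ and, by bootstrapping in this gauge, uniform $C^k$ control on the connection coefficients on a fixed-size ball. This is the step in which the parameter $d$ enters the constant $\delta_0$. I would then split into two cases according to whether $R_n$ is bounded.

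If $R_n$ stays bounded, then \eqref{eq: condition on the r_0-ball B} gives $|\nabla_{A_n}^*\nabla_{A_n}\phi_n|\le 2c_0R_n^2+S/(3n)\le C$ on $B(x_n,s_n/2)$. In the exponential gauge this becomes a uniformly elliptic scalar system on a ball of size $\ge s_n/2$ with uniformly bounded right-hand side; interior elliptic regularity upgrades $\norm{\phi_n}_{L^\infty}\to 0$ to $\phi_n\to 0$ in $C^1$ near $x_n$, forcing $R_n\to 0$. This contradicts $R_n s_n\ge\varepsilon$ together with $s_n\le r_0$.

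Otherwise $R_n\to\infty$, and I would rescale: set $\tilde\phi_n(y):=\phi_n(\exp_{x_n}(y/R_n))$ on the Euclidean ball of radius $R_n s_n/2\ge\varepsilon/2$. Then $\norm{\tilde\phi_n}_{L^\infty}\le 1/n\to 0$, the rescaled covariant derivative satisfies $|\tilde\nabla\tilde\phi_n|\le 2$, and $|\tilde\nabla\tilde\phi_n(0)|=1$. The rescaled gauge potential obeys $|\tilde A_n(y)|\le Cd|y|/R_n^2\to 0$ on compact sets, the metric tensor in these rescaled coordinates converges to the Euclidean one, and \eqref{eq: condition on the r_0-ball B} rescales to $\Delta\tilde\phi_n=O(|\tilde\nabla\tilde\phi_n|^2)+o(1)$ with uniformly bounded right-hand side. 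Interior elliptic bootstrap extracts a $C^1_{loc}$-subsequential limit $\tilde\phi$ on a fixed Euclidean ball with $\tilde\phi\equiv 0$ (from $L^\infty$ convergence) yet $|\nabla\tilde\phi(0)|=1$, a contradiction. The main technical obstacle I anticipate is not the overall strategy but the careful tracking of gauges and metric tensors through the rescaling, so that the rescaled equations are genuinely uniformly elliptic with coefficients converging in a topology strong enough for $C^1_{loc}$-compactness; this is precisely where the hypothesis $|F_{A_n}|\le d$ (rather than merely $\norm{F_{A_n}^+}_{L^\infty}\le d'$) is indispensable.
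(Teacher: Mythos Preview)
Your proof is correct and shares the same rescaling philosophy as the paper, but the paper's argument is more direct and avoids both sequences and the case split. The paper works with a single $\phi$ assumed to violate the conclusion, picks the maximizing point $x_0$, rescales once by $R=|\nabla_A\phi(x_0)|$, and then --- instead of extracting a convergent subsequence --- applies the interior $L^8_2\hookrightarrow C^{1,1/2}$ estimate to the rescaled equation to obtain $|\nabla\tilde\phi(x)-\nabla\tilde\phi(0)|\le C|x|^{1/2}$ with an explicit $C=C(d,\varepsilon)$. Integrating along the unit vector $u=\nabla\tilde\phi(0)$ then gives the quantitative lower bound $|\tilde\phi(u/C^2)-\tilde\phi(0)|\ge 1/(3C^2)$, which is incompatible with $\|\phi\|_{L^\infty}<1/(6C^2)=:\delta_0$. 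This produces $\delta_0$ explicitly.

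Your case split is in fact unnecessary: since $s_n\le r_0$ and $R_n s_n\ge\varepsilon$, you always have $R_n\ge\varepsilon/r_0>0$, so the rescaled domain always contains a fixed ball of radius $\varepsilon/2$ and the rescaled metric and connection are uniformly controlled regardless of whether $R_n$ stays bounded. Thus your Case~2 argument alone would suffice. Also, your Case~1 phrasing ``interior elliptic regularity upgrades $\|\phi_n\|_{L^\infty}\to 0$ to $\phi_n\to 0$ in $C^1$'' tacitly uses a compactness step (uniform $W^{2,p}$ bound, then Arzel\`a--Ascoli), since the right-hand side $-2(d_A^*\phi_n\wedge d_A^*\phi_n)^+$ does not itself go to zero a priori; this is fine but should be made explicit. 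The paper's H\"older-integration trick sidesteps all of this and gives a constructive constant, which is what the sequence-free approach buys.
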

\begin{proof}
Suppose $\phi$ satisfies
\[ \sup_{x\in B}|\nabla_A\phi(x)|d(x,\partial B) > \varepsilon,\]
and the supremum is attained at $x_0\in B$ ($x_0$ is an inner point of $B$).
Set $R:=|\nabla_A\phi(x_0)|$ and $r_0':=d(x_0,\partial B)/2$.
We have $2r_0'R>\varepsilon$.
Let $B'$ be the closed $r_0'$-ball centered at $x_0$.
We have $|\nabla_A\phi|\leq 2R$ on $B'$.
We consider the geodesic coordinate over $B'$ centered at $x_0$, and 
we trivialize the bundle $E$ over $B'$ by the exponential gauge centered at $x_0$.
Since $A$ is ASD and $|F_A|\leq d$ over $-T\leq t\leq T$,
the $\mathcal{C}^1$-norm of the connection matrix of $A$ in the exponential gauge over $B'$ is bounded by a constant depending 
only on $d$.
From the equation (\ref{eq: condition on the r_0-ball B}) and $|\nabla_A\phi|\leq 2R$ on $B'$, 
\[ \left|\sum g^{ij}\partial_i\partial_j\phi\right|\leq \const_{d,\varepsilon}\cdot R^2 \quad \text{over $B'$},\]
where $(g^{ij}) = (g_{ij})^{-1}$ and $g_{ij}$ is the Riemannian metric tensor in the geodesic coordinate over $B'$.
Here we consider $\phi$ as a vector valued function over $B'$.
Set $\tilde{\phi}(x) := \phi(x/R)$. Since $2r_0'R>\varepsilon$, 
$\tilde{\phi}$ is defined over the $\varepsilon/2$-ball $B(\varepsilon/2)$
centered at the origin in $\mathbb{R}^4$, and it satisfies
\[ \left|\sum\tilde{g}^{ij}\partial_i\partial_i\tilde{\phi}\right|\leq \const_{d,\varepsilon} 
   \quad \text{over $B(\varepsilon/2)$}.\]
Here $\tilde{g}^{ij}(x):=g^{ij}(x/R)$.
The eigenvalues of the matrix $(\tilde{g}^{ij})$ are bounded from below by a positive constant depending only on the 
geometry of $X$,
and the $\mathcal{C}^1$-norm of $\tilde{g}^{ij}$ is bounded from above by a constant depending only on 
$\varepsilon$ and the geometry of $X$. 
(Note that $R>\varepsilon/(2r_0') \geq \varepsilon/(2r_0)$.)
Then by using the elliptic estimate \cite[Theorem 9.11]{Gilbarg-Trudinger} 
and the Sobolev embedding $L^8_2(B(\varepsilon/4))\hookrightarrow \mathcal{C}^{1,1/2}(B(\varepsilon/4))$ 
(the H\"{o}lder space), we get 
\[ |\!|\tilde{\phi}|\!|_{\mathcal{C}^{1,1/2}(B(\varepsilon/4))}\leq 
\const_\varepsilon\cdot |\!|\tilde{\phi}|\!|_{L^8_2(B(\varepsilon/4))}\leq C = C(d,\varepsilon).\]
Hence $|\nabla\tilde{\phi}(x)-\nabla\tilde{\phi}(0)|\leq C|x|^{1/2}$ on $B(\varepsilon/4)$.
Set $u:=\nabla\tilde{\phi}(0)$.
From the definition, we have $|u| =1$.
\[ \tilde{\phi}(tu)-\tilde{\phi}(0) = t\int_0^1\nabla\tilde{\phi}(tsu)\cdot uds 
    = t + t\int_0^1(\nabla\tilde{\phi}(tsu)-u)\cdot uds.\]
Hence 
\[ |\tilde{\phi}(tu)-\tilde{\phi}(0)|\geq t-t\int_0^1C|tsu|^{1/2}ds = t- 2Ct^{3/2}/3.\]
We can suppose $C\geq 2/\sqrt{\varepsilon}$. Then $u/C^2\in B(\varepsilon/4)$ and
\[ |\tilde{\phi}(u/C^2)-\tilde{\phi}(0)|\geq 1/(3C^2).\]
If $|\phi|\leq \delta_0 < 1/(6C^2)$, then this inequality becomes a contradiction.
\end{proof}
The following will be used in Section \ref{Section: proof of the upper bound}.
\begin{lemma} \label{lemma: interior estimate}
For any $\varepsilon >0$ there exists a positive number $D=D(d, d', \varepsilon)$ such that 
\[ \norm{d^*_A\phi_A}_{L^\infty(S^3\times [-T+D, T-D])} \leq \varepsilon .\]
(If $D>T$, then $S^3\times [-T+D,T-D]$ is the empty set.)
Here the important point is that $D$ is independent of $T$.
\end{lemma}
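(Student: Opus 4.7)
The plan is to combine the pointwise decay estimate for $\phi_A$ in the interior, coming from Proposition \ref{prop: conclusion of the perturbation}(c), with the interior gradient bound of Lemma \ref{lemma: sublemma for the interior estimate}. Both ingredients are insensitive to $T$, which gives the required $T$-independence of $D$.

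First I would extract exponential decay of $\phi_A$ away from the support annulus of $F_A^+$. Proposition \ref{prop: conclusion of the perturbation}(c) and condition (iii) give $|\phi_A(x)| \leq 3 d' \delta(x)$, where $\delta(x) = \int_{T<|t'|<T+1} g(x,y)\,d\vol(y)$. For $x = (\theta,t)$ with $|t|\leq T-D$ and any $y = (\theta',t')$ in the annulus, the product metric yields $d(x,y)\geq |t-t'|\geq D$. Assuming $D\geq 1$, the exponential decay bound (\ref{eq: exponential decay of g(x,y)}) together with the fact that the annulus $\{T<|t'|<T+1\}$ has bounded $t'$-width gives $\delta(x)\leq C_1\, e^{-\sqrt{S/3}\, D}$, for a constant $C_1$ depending only on the geometry of $X$. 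Hence
\[
\norm{\phi_A}_{L^\infty(S^3\times [-T+D,\,T-D])} \leq 3 C_1 d'\, e^{-\sqrt{S/3}\, D}.
\]

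Second, since $F_A^+$ is supported in $T<|t|<T+1$, the equation (\ref{eq: ASD equation for phi_A}) restricts on $|t|\leq T$ to $(\nabla_A^*\nabla_A + S/3)\phi_A = -2(d_A^*\phi_A\wedge d_A^*\phi_A)^+$, which matches the hypothesis of Lemma \ref{lemma: sublemma for the interior estimate}; condition (iii) gives $|F_A|\leq d$ there. Fix a universal constant $C_2$ with $|d_A^*\phi|\leq C_2|\nabla_A\phi|$ for any self-dual $2$-form $\phi$ (a purely algebraic contraction). Given $\varepsilon>0$, choose $D$ so that $D\geq 1+r_0$ (so that every closed $r_0$-ball centered in $S^3\times [-T+D,\,T-D]$ lies inside $S^3\times [-T+1,\,T-1]$) and so that $3C_1 d'\, e^{-\sqrt{S/3}\, D}\leq \delta_0(d,\,r_0\varepsilon/C_2)$, where $\delta_0$ is the threshold supplied by Lemma \ref{lemma: sublemma for the interior estimate}. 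Applying the sublemma to the $r_0$-ball $B$ centered at any $x\in S^3\times [-T+D,\,T-D]$ gives $|\nabla_A\phi_A(x)|\cdot r_0 \leq r_0\varepsilon/C_2$, and hence $|d_A^*\phi_A(x)|\leq \varepsilon$.

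The only real issue is bookkeeping: one must verify that $D$ depends only on $d$, $d'$, $\varepsilon$ and the fixed geometry, never on $T$. This is transparent: $C_1$, $C_2$ and the decay rate $\sqrt{S/3}$ are geometric; the distance lower bound $d(x,y)\geq D$ between an interior point and the annular support holds regardless of $T$; and $\delta_0$ depends only on $d$ and the desired gradient bound. Thus $D=D(d,d',\varepsilon)$ may be chosen uniformly in $T$, as required.
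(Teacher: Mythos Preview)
Your approach is exactly the paper's: combine the pointwise bound $|\phi_A(x)|\leq 3d'\delta(x)$ with exponential decay of the Green kernel to make $\phi_A$ small in the interior, then feed this into Lemma~\ref{lemma: sublemma for the interior estimate} on $r_0$-balls to bound $|\nabla_A\phi_A|$ and hence $|d_A^*\phi_A|$.

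There is one bookkeeping slip. To apply Lemma~\ref{lemma: sublemma for the interior estimate} at a point $x\in S^3\times[-T+D,\,T-D]$ you need $\norm{\phi_A}_{L^\infty(B)}\leq \delta_0$ on the entire $r_0$-ball $B$ centred at $x$; but $B$ can reach into $S^3\times[-T+D-r_0,\,T-D+r_0]$, which is strictly larger than the region on which you established the smallness bound. The paper handles this by setting $D':=D-r_0$ and proving the $\delta_0$-smallness on $S^3\times[-T+D',\,T-D']$ first (the distance from such points to the annulus is at least $D'=D-r_0\geq 1$, so the exponential factor becomes $e^{-\sqrt{S/3}\,D'}$). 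With that adjustment your condition on $D$ should read $3C_1 d'\,e^{-\sqrt{S/3}(D-r_0)}\leq \delta_0(d,\,r_0\varepsilon/C_2)$, which is still independent of $T$, and the rest of your argument goes through verbatim.
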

\begin{proof}
Note that $|d_A^*\phi_A|\leq \sqrt{3/2}|\nabla_A\phi_A|$.
We have $|\phi_A(x)|\leq 3d'\delta(x)$ by Proposition \ref{prop: conclusion of the perturbation} (c) 
(or Proposition \ref{prop: decay estimate}) and 
\[ \delta(x) = \int_{T<|t|<T+1}g(x,y)d\vol(y).\]
Set $D' := D-r_0$. (We choose $D$ so that $D'\geq 1$.)
Since $g(x,y) \leq \const\cdot e^{-\sqrt{S/3}d(x,y)}$ for $d(x,y)\geq 1$, we have 
\[ \delta(x) \leq C \cdot e^{-\sqrt{S/3}D'} \quad \text{for $x\in S^3\times [-T+D', T-D']$}.\]
We choose $D = D(d,d',\varepsilon) \geq r_0+1$ so that  
\[ 3d'Ce^{-\sqrt{S/3}D'}\leq \delta_0(d, r_0\varepsilon\sqrt{2/3})  .\]
Here $\delta_0(d, r_0\varepsilon\sqrt{2/3})$ is the positive constant introduced 
in Lemma \ref{lemma: sublemma for the interior estimate}.
Note that this condition is independent of $T$.
Then $\phi_A$ satisfies, for $x\in S^3\times [-T+D',T-D']$,
\begin{equation*}
 |\phi_A(x)|\leq \delta_0(d, r_0\varepsilon\sqrt{2/3}).
\end{equation*}
$\phi_A$ satisfies $(\nabla_A^*\nabla_A+S/3)\phi_A=-2(d_A^*\phi_A\wedge d^*_A\phi_A)^+$ over $|t|\leq T$.
Then Lemma \ref{lemma: sublemma for the interior estimate} implies
\begin{equation*}
 |\nabla_A\phi_A(x)|\leq \varepsilon\sqrt{2/3} \quad \text{for $x\in S^3\times [-T+D, T-D]$}.
\end{equation*}
(Note that, for $x\in S^3\times [-T+D, T-D]$, 
we have $B(x,r_0)\subset S^3\times [-T+D',T-D']$ and hence $|\phi_A|\leq \delta_0(d,r_0\varepsilon\sqrt{2/3})$
over $B(x,r_0)$.) Then, for $x\in S^3\times [-T+D,T-D]$, 
\[ |d_A^*\phi_A(x)|\leq \sqrt{3/2}|\nabla_A\phi_A(x)|\leq  \varepsilon. \]
\end{proof}

\section{Continuity of the perturbation} \label{section: continuity of the perturbation}
The purpose of this section is to show the continuity of the perturbation construction in 
Section \ref{section: solving ASD equation}.
The conclusion of Section \ref{section: continuity of the perturbation} 
is Proposition \ref{prop: continuity of the perturbation in C^0}.
As in Section \ref{section: solving ASD equation}, $X = S^3\times \mathbb{R}$, 
$T>0$ is a positive number, and $E\to X$ is a principal $SU(2)$-bundle.
Let $\rho$ be a flat connection on $E|_{|t|>T+1}$.
($E|_{|t|>T+1}$ is the restriction of $E$ to 
$X_{|t|>T+1} = \{(\theta,t)\in S^3\times \mathbb{R}|\, |t|>T+1\}$.)
We define $\mathcal{A}'$ as the set of connections $A$ on $E$ satisfying the following
(i), (ii), (iii).

\noindent
(i) $A|_{|t|>T+1} = \rho$, i.e., $A$ coincides with $\rho$ over $|t|>T+1$.

\noindent 
(ii) $F_A^+$ is supported in $\{(\theta, t)\in S^3\times \mathbb{R}|\, T<|t|<T+1\}$.

\noindent 
(iii) $\norm{F_A^+}_{\mathrm{T}} \leq \varepsilon_0 = 1/1000$.

By Proposition \ref{prop: conclusion of the perturbation}, for each $A \in \mathcal{A}'$,
we have $\phi_A\in \Omega^+(\ad E)_0$ and $\eta_A = (\nabla_A^*\nabla_A+S/3)\phi_A\in \Omega^+(\ad E)_0$ 
satisfying
\begin{equation} \label{eq: definition of eta_A}
 \eta_A = -2F_A^+\cdot \phi_A -2(d_A^*\phi_A\wedge d_A^*\phi_A)^+ -2F_A^+, \quad 
 \norm{\eta_A}_{\mathrm{T}} \leq 3\varepsilon_0.
\end{equation}
The first equation in the above is equivalent to the ASD equation $F^+(A+d_A^*\phi_A)=0$. 
Since $\phi_A = (\nabla_A^*\nabla_A+S/3)^{-1}\eta_A$, 
we have ((\ref{eq: phi leq eta}) and Lemma \ref{lemma: magical lemma of the Taubes norm}) 
\[ \norm{\phi_A}_{L^\infty}\leq \norm{\eta_A}_{\mathrm{T}}\leq 3\varepsilon_0, \quad 
   \norm{|\nabla_A\phi_A|^2}_{\mathrm{T}}\leq \norm{\eta_A}_{\mathrm{T}}^2 \leq 9\varepsilon_0^2 .\]
Then (by the Cauchy-Schwartz inequality) 
\[ \norm{\nabla_A\phi_A}_{\mathrm{T}} := \sup_{x\in X} \int_X g(x,y)|\nabla_A\phi_A(y)|d\vol(y)
 \leq 3\varepsilon_0\sqrt{K},\]
where $K = \int_Xg(x, y)d\vol(y)$. (The value of $K$ is independent of $x\in X$.)

Let $A, B\in \mathcal{A}'$. We want to estimate $\norm{\phi_A-\phi_B}_{L^\infty}$.
Set $a := B-A$. Since both $A$ and $B$ coincide with $\rho$ (the fixed flat connection)
over $|t|>T+1$, $a$ is compact-supported.
We set 
\[ \norm{a}_{\mathcal{C}^1_A} := \norm{a}_{L^\infty} + \norm{\nabla_A a}_{L^\infty}.\]
We suppose 
\[ \norm{a}_{\mathcal{C}^1_A} \leq 1.\]
\begin{lemma}\label{lemma: phi_A-phi_B}
$\norm{\phi_A-\phi_B}_{L^\infty} \leq \norm{\eta_A-\eta_B}_{\mathrm{T}} + \const \norm{a}_{\mathcal{C}^1_A}$,
where $\const$ is an universal constant independent of $A, B$.
\end{lemma}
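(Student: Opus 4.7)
The plan is to compare $\phi_A$ and $\phi_B$ by rewriting both as solutions of equations involving the \emph{same} Laplace-type operator $\nabla_A^*\nabla_A + S/3$, and then to invert this operator via its Green kernel. First I would write $\nabla_B = \nabla_A + a$ and expand to obtain
\[
(\nabla_A^*\nabla_A + S/3)\phi_B = \eta_B - R, \qquad R := (\nabla_B^*\nabla_B - \nabla_A^*\nabla_A)\phi_B,
\]
where $R$ is a pointwise expression in $a$, $\nabla_A a$, $\phi_B$, $\nabla_A\phi_B$ satisfying
\[
|R| \leq \const\left(|\nabla_A a|\,|\phi_B| + |a|\,|\nabla_A\phi_B| + |a|^2|\phi_B|\right).
\]
Subtracting from $(\nabla_A^*\nabla_A + S/3)\phi_A = \eta_A$ gives $(\nabla_A^*\nabla_A + S/3)(\phi_A - \phi_B) = (\eta_A - \eta_B) + R$. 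Since $a$ is compactly supported $R$ is compactly supported, while $\phi_A,\phi_B,\eta_A,\eta_B \in \Omega^+(\ad E)_0$ vanish at infinity; hence I may apply $(\nabla_A^*\nabla_A + S/3)^{-1}$ together with the pointwise estimate (\ref{eq: phi leq eta}) to conclude
\[
\norm{\phi_A - \phi_B}_{L^\infty} \leq \norm{\eta_A - \eta_B}_{\mathrm{T}} + \norm{R}_{\mathrm{T}}.
\]

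The remaining task is to bound $\norm{R}_{\mathrm{T}} \leq \const\cdot \norm{a}_{\mathcal{C}^1_A}$. The two summands of $|R|$ carrying $|\phi_B|$ are handled using $\norm{\phi_B}_{L^\infty} \leq 3\varepsilon_0$ and $\int_X g(x,y)\,d\vol(y) = K$, both displayed at the start of this section; together with $\norm{a}_{\mathcal{C}^1_A}\leq 1$ they yield a Taubes-norm contribution of at most $\const\cdot(\norm{\nabla_A a}_{L^\infty} + \norm{a}_{L^\infty}^2)\leq \const\cdot\norm{a}_{\mathcal{C}^1_A}$. For the middle summand I would use the covariant-derivative comparison $\nabla_A\phi_B = \nabla_B\phi_B - [a,\phi_B]$ to get the pointwise bound $|\nabla_A\phi_B| \leq |\nabla_B\phi_B| + \const\cdot|a|\,|\phi_B|$, whence
\[
|\nabla_A\phi_B|_{\mathrm{T}}(x) \leq |\nabla_B\phi_B|_{\mathrm{T}}(x) + \const\cdot\norm{a}_{L^\infty}\norm{\phi_B}_{L^\infty}\,K.
\]
The first term is bounded by $3\varepsilon_0\sqrt{K}$ thanks to Lemma \ref{lemma: magical lemma of the Taubes norm} applied with $B$ in place of $A$ (this is exactly the Cauchy-Schwartz consequence already displayed at the top of this section), so $\norm{\nabla_A\phi_B}_{\mathrm{T}}$ is bounded by an absolute constant and the middle contribution to $\norm{R}_{\mathrm{T}}$ is at most $\const\cdot\norm{a}_{L^\infty}$. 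Summing the three contributions finishes the proof.

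The main technical point is the middle term $|a|\,|\nabla_A\phi_B|$: no $L^\infty$ bound on $\nabla_A\phi_B$ is directly available at this stage, so one must exploit the Taubes-norm bound that \emph{is} available for $\nabla_B\phi_B$ (via the machinery of Section \ref{subsection: construction}) and then transfer back to $\nabla_A$ using the covariant-derivative comparison. The resulting transfer error is quadratic in $a$ and is precisely absorbed by the running hypothesis $\norm{a}_{\mathcal{C}^1_A}\leq 1$.
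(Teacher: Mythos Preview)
Your proof is correct and follows essentially the same approach as the paper: expand $\nabla_B^*\nabla_B$ in terms of $\nabla_A^*\nabla_A$, apply the Green-kernel estimate (\ref{eq: phi leq eta}) to $\phi_A-\phi_B$, and bound the remainder in the Taubes norm using $\norm{\phi_B}_{L^\infty}\leq 3\varepsilon_0$ and $\norm{\nabla_B\phi_B}_{\mathrm{T}}\leq 3\varepsilon_0\sqrt{K}$. The only cosmetic difference is that the paper expands $(\nabla_B^*\nabla_B-\nabla_A^*\nabla_A)\phi_B$ so that $\nabla_B\phi_B$ (rather than $\nabla_A\phi_B$) appears directly, which saves your short detour through the covariant-derivative comparison.
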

\begin{proof}
We have $\eta_A = (\nabla^*_A\nabla_A+S/3)\phi_A$ and 
\[ \eta_B = (\nabla_B^*\nabla_B +S/3)\phi_B = 
(\nabla_A^*\nabla_A +S/3)\phi_B + (\nabla^*_A a)*\phi_B + a*\nabla_B\phi_B  + a*a*\phi_B,\]
where $*$ are algebraic multiplications.
Then 
\begin{equation*}
 \begin{split}
 \norm{\phi_A-\phi_B}_{L^\infty} &\leq \norm{(\nabla_A^*\nabla_A+S/3)(\phi_A-\phi_B)}_{\mathrm{T}} ,\\
 &\leq \norm{\eta_A-\eta_B}_{\mathrm{T}} + 
 \const \left(\norm{\nabla_A a}_{L^\infty}\norm{\phi_B}_{\mathrm{T}} 
 + \norm{a}_{L^\infty}\norm{\nabla_B \phi_B}_{\mathrm{T}}
 + \norm{a}_{L^\infty}^2\norm{\phi_B}_{\mathrm{T}}\right) ,\\
 &\leq \norm{\eta_A-\eta_B}_{\mathrm{T}} + \const \norm{a}_{\mathcal{C}^1_A}.
 \end{split}
\end{equation*}
\end{proof}
\begin{lemma}  \label{lemma: estimate on the quadratic terms}
\[ \norm{(d_A^*\phi_A\wedge d_A^*\phi_A)^+ - (d_B^*\phi_B\wedge d_B^*\phi_B)^+}_{\mathrm{T}}
   \leq  \left(\frac{1}{4}+\const\norm{a}_{\mathcal{C}^1_A}\right)\norm{\eta_A-\eta_B}_{\mathrm{T}} 
   + \const\norm{a}_{\mathcal{C}^1_A}.\]
\end{lemma}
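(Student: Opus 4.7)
The plan is to split the difference into two pieces by introducing the auxiliary quantity
$\tilde{\eta}_B := (\nabla_A^*\nabla_A + S/3)\phi_B$, which measures $\phi_B$ through the \emph{$A$-Laplacian} rather than the $B$-Laplacian. Writing
\[
(d_A^*\phi_A\wedge d_A^*\phi_A)^+ - (d_B^*\phi_B\wedge d_B^*\phi_B)^+ = I + II,
\]
with
$I := (d_A^*\phi_A\wedge d_A^*\phi_A)^+ - (d_A^*\phi_B\wedge d_A^*\phi_B)^+$
and
$II := (d_A^*\phi_B\wedge d_A^*\phi_B)^+ - (d_B^*\phi_B\wedge d_B^*\phi_B)^+$,
I would control $I$ through the bilinear form $\beta$ of (\ref{eq: definition of beta}), which is tailored to this situation, and $II$ through a direct expansion of $d_B^* = d_A^* + a\ast$.

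For $I$: since $\beta$ is symmetric bilinear and $\beta(\eta,\eta) = 2(d_A^*\phi\wedge d_A^*\phi)^+$ whenever $\phi = (\nabla_A^*\nabla_A+S/3)^{-1}\eta$, the polarization identity gives
\[
2I = \beta(\eta_A,\eta_A) - \beta(\tilde{\eta}_B,\tilde{\eta}_B) = \beta(\eta_A+\tilde{\eta}_B,\,\eta_A-\tilde{\eta}_B),
\]
and Lemma \ref{lemma: estimate of beta} yields $\norm{I}_{\mathrm{T}} \leq 2c_0\norm{\eta_A+\tilde{\eta}_B}_{\mathrm{T}}\norm{\eta_A-\tilde{\eta}_B}_{\mathrm{T}}$. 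The Weitzenb\"ock-style expansion from the proof of Lemma \ref{lemma: phi_A-phi_B} shows $\tilde{\eta}_B - \eta_B = -(\nabla_A^*a)\ast\phi_B - a\ast\nabla_B\phi_B - a\ast a\ast\phi_B$, and bounding each piece in the Taubes norm using $\norm{\phi_B}_{L^\infty}\leq 3\varepsilon_0$ and $\norm{\nabla_B\phi_B}_{\mathrm{T}}\leq 3\varepsilon_0\sqrt{K}$ (from Lemma \ref{lemma: magical lemma of the Taubes norm} and Cauchy--Schwarz) gives $\norm{\tilde{\eta}_B - \eta_B}_{\mathrm{T}} \leq \const\norm{a}_{\mathcal{C}^1_A}$. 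Combined with $\norm{\eta_A}_{\mathrm{T}},\norm{\eta_B}_{\mathrm{T}} \leq 3\varepsilon_0$, this yields $\norm{\eta_A+\tilde{\eta}_B}_{\mathrm{T}} \leq 6\varepsilon_0 + \const\norm{a}_{\mathcal{C}^1_A}$ and $\norm{\eta_A-\tilde{\eta}_B}_{\mathrm{T}} \leq \norm{\eta_A-\eta_B}_{\mathrm{T}} + \const\norm{a}_{\mathcal{C}^1_A}$, so
\[
\norm{I}_{\mathrm{T}} \leq \bigl(12c_0\varepsilon_0 + \const\norm{a}_{\mathcal{C}^1_A}\bigr)\bigl(\norm{\eta_A-\eta_B}_{\mathrm{T}} + \const\norm{a}_{\mathcal{C}^1_A}\bigr).
\]
Since (\ref{eq: fixing condition of varepsilon_0}) gives $50c_0\varepsilon_0 < 1$, we have $12c_0\varepsilon_0 < 1/4$, which produces the required leading coefficient.

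For $II$: expanding $d_B^*\phi_B = d_A^*\phi_B + a\ast\phi_B$ and squaring produces cross-terms such as $(d_A^*\phi_B\wedge a\ast\phi_B)^+$, $(a\ast\phi_B\wedge d_A^*\phi_B)^+$, and $(a\ast\phi_B\wedge a\ast\phi_B)^+$. Each is pointwise bounded by $\const\,|a|\,|\phi_B|\bigl(|\nabla_A\phi_B|+|a||\phi_B|\bigr)$, and writing $\nabla_A\phi_B = \nabla_B\phi_B - a\ast\phi_B$ and integrating against $g(x,y)$ converts every factor into an $L^\infty$ or Taubes-norm bound already at our disposal; the uniform smallness of $\phi_B$ absorbs into $\const$, leaving $\norm{II}_{\mathrm{T}} \leq \const\norm{a}_{\mathcal{C}^1_A}$.

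The main obstacle is numerical: getting the coefficient of $\norm{\eta_A-\eta_B}_{\mathrm{T}}$ to be \emph{exactly} $1/4$ plus a small correction depends on not double-counting the factor of $2$ between $\beta(\eta,\eta)$ and $(d_A^*\phi\wedge d_A^*\phi)^+$ and on squeezing $12c_0\varepsilon_0$ below $1/4$ via (\ref{eq: fixing condition of varepsilon_0}). Once the accounting is correct, everything else is routine bookkeeping with the Taubes norm and the bounds on $\phi_A,\phi_B$ established in Section \ref{subsection: construction}.
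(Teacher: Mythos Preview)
Your proposal is correct and follows essentially the same route as the paper: the same $I+II$ splitting, the same auxiliary quantity $\tilde{\eta}_B=(\nabla_A^*\nabla_A+S/3)\phi_B$ (the paper calls it $\eta'_B$), the same use of $\beta$ and Lemma~\ref{lemma: estimate of beta} to bound $I$ via polarization, and the same direct expansion of $d_B^*=d_A^*+a\ast$ to bound $II$. Your handling of the numerics, including the factor of $2$ and the inequality $12c_0\varepsilon_0<1/4$, matches the paper as well.
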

\begin{proof}
\begin{equation*}
 \begin{split}
 (d_A^*\phi_A\wedge& d_A^*\phi_A)^+ - (d_B^*\phi_B\wedge d_B^*\phi_B)^+ =\\
 &\underbrace{(d_A^*\phi_A\wedge d_A^*\phi_A)^+ - (d_A^*\phi_B\wedge d_A^*\phi_B)^+}_{(I)} 
 + \underbrace{(d_A^*\phi_B\wedge d_A^*\phi_B)^+ -(d_B^*\phi_B\wedge d_B^*\phi_B)^+ }_{(II)}.
 \end{split}
\end{equation*}
We first estimate the term $(II)$. Since $B=A+a$,
\begin{equation*}
 \begin{split}
  (d_B^*\phi_B &\wedge d_B^*\phi_B)^+ -(d_A^*\phi_B\wedge d_A^*\phi_B)^+ =\\ 
  &(d_A^*\phi_B\wedge (a*\phi_B))^+ + ((a*\phi_B)\wedge d_A^*\phi_B)^+ 
  + ((a*\phi_B)\wedge (a*\phi_B))^+.
 \end{split}
\end{equation*} 
\begin{equation*}
 \begin{split}
 \norm{(II)}_{\mathrm{T}} &
 \leq \const \norm{\nabla_A\phi_B}_{\mathrm{T}}\norm{a}_{L^\infty}\norm{\phi_B}_{L^\infty}
                     +\const \norm{a}_{L^\infty}^2\norm{\phi_B}_{L^\infty}^2 ,\\ 
               &\leq \const\cdot  \norm{\nabla_A\phi_B}_{\mathrm{T}}\norm{a}_{L^\infty}
                    + \const\cdot \norm{a}_{L^\infty}. 
 \end{split}
\end{equation*}
We have 
\[ \norm{\nabla_A\phi_B}_{\mathrm{T}} = \norm{\nabla_B\phi_B + a*\phi_B}_{\mathrm{T}} 
   \leq \norm{\nabla_B\phi_B}_{\mathrm{T}} + \const \norm{a}_{L^\infty}\norm{\phi_B}_{L^\infty} \leq \const.\]
Hence $\norm{(II)}_{\mathrm{T}} \leq \const \norm{a}_{L^\infty}$.

Next we estimate the term $(I)$.
For $\eta_1, \eta_2\in \Omega^+(\ad E)_0$, set 
$\phi_i := (\nabla_A^*\nabla_A+S/3)^{-1}\eta_i \in \Omega^+(\ad E)_0$ $(i=1,2)$, 
and define (see (\ref{eq: definition of beta}))
\begin{equation*} 
 \beta_A(\eta_1, \eta_2) := (d_A^*\phi_1\wedge d_A^*\phi_2)^+ + (d_A^*\phi_2\wedge d^*_A\phi_1)^+.
\end{equation*}
Set $\eta'_B := (\nabla_A^*\nabla_A+S/3)\phi_B = \eta_B + (\nabla^*_A a)*\phi_B + a*\nabla_B\phi_B + a*a*\phi_B$.
Then $(d_A^*\phi_B\wedge d_A^*\phi_B)^+ = \beta_A(\eta'_B, \eta'_B)/2$ and
$(I) = (\beta_A(\eta_A, \eta_A)-\beta_A(\eta_B', \eta_B'))/2 = \beta_A(\eta_A +\eta_B', \eta_A-\eta_B')/2$.
From Lemma \ref{lemma: estimate of beta},
\[ \norm{(I)}_{\mathrm{T}}\leq 2c_0\norm{\eta_A+\eta_B'}_{\mathrm{T}}\norm{\eta_A-\eta_B'}_{\mathrm{T}}.\]
$\norm{\eta_A+\eta'_B}_{\mathrm{T}} \leq \norm{\eta_A+\eta_B}_{\mathrm{T}}+\norm{\eta_B'-\eta_B}_{\mathrm{T}} 
 \leq 6\varepsilon_0 + \const \norm{a}_{\mathcal{C}^1_A}$, and
$\norm{\eta_A-\eta_B'}_{\mathrm{T}}\leq \norm{\eta_A-\eta_B}_{\mathrm{T}}+\const \norm{a}_{\mathcal{C}^1_A}$.
From (\ref{eq: fixing condition of varepsilon_0}), we have $12c_0\varepsilon_0\leq 1/4$.
Then 
\[ \norm{(I)}_{\mathrm{T}} \leq 
 \left(\frac{1}{4}+\const\norm{a}_{\mathcal{C}^1_A}\right)\norm{\eta_A-\eta_B}_{\mathrm{T}} 
   + \const\norm{a}_{\mathcal{C}^1_A} .\]
\end{proof}
We have $F_B^+ = F_A^+ + d_A^+a + (a\wedge a)^+$. 
Recall that we have supposed $\norm{a}_{\mathcal{C}^1_A}\leq 1$.
Hence 
\[ |F_B^+-F_A^+|\leq \const \norm{a}_{\mathcal{C}^1_A}.\]
\begin{proposition}
There exists $\delta>0$ such that if $\norm{a}_{\mathcal{C}^1_A} \leq \delta$ then 
\[ \norm{\eta_A -\eta_B}_{\mathrm{T}} \leq \const \norm{a}_{\mathcal{C}^1_A}.\]
\end{proposition}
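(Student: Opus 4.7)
The plan is to subtract the two defining equations for $\eta_A$ and $\eta_B$ from (\ref{eq: definition of eta_A}), apply the triangle inequality in Taubes norm, and invoke the two preceding lemmas to obtain a fixed-point-type inequality that can be solved for $\norm{\eta_A-\eta_B}_{\mathrm{T}}$. Concretely, writing
\begin{equation*}
 \eta_A-\eta_B = -2\bigl(F_A^+\cdot\phi_A - F_B^+\cdot\phi_B\bigr) -2\bigl((d_A^*\phi_A\wedge d_A^*\phi_A)^+ - (d_B^*\phi_B\wedge d_B^*\phi_B)^+\bigr) -2(F_A^+-F_B^+),
\end{equation*}
the middle term is handled directly by Lemma \ref{lemma: estimate on the quadratic terms}, producing $(1/2+\const\norm{a}_{\mathcal{C}^1_A})\norm{\eta_A-\eta_B}_{\mathrm{T}}+\const\norm{a}_{\mathcal{C}^1_A}$ after multiplication by $2$.

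For the first bracket I will split $F_A^+\cdot\phi_A-F_B^+\cdot\phi_B = F_A^+\cdot(\phi_A-\phi_B)+(F_A^+-F_B^+)\cdot\phi_B$. The first piece is bounded pointwise in Taubes norm by $c_0\norm{F_A^+}_{\mathrm{T}}\norm{\phi_A-\phi_B}_{L^\infty}\leq c_0\varepsilon_0\norm{\phi_A-\phi_B}_{L^\infty}$, which by Lemma \ref{lemma: phi_A-phi_B} is at most $c_0\varepsilon_0\norm{\eta_A-\eta_B}_{\mathrm{T}}+\const\norm{a}_{\mathcal{C}^1_A}$. For the second piece, note that $F_A^+-F_B^+=d_A^+a+(a\wedge a)^+$ is supported in $\{T<|t|<T+1\}$ (since both $F_A^+$ and $F_B^+$ are, by condition (ii) in the definition of $\mathcal{A}'$), so
\begin{equation*}
 |F_A^+-F_B^+|_{\mathrm{T}}(x) \leq \norm{F_A^+-F_B^+}_{L^\infty}\int_X g(x,y)\,d\vol(y) = K\norm{F_A^+-F_B^+}_{L^\infty} \leq \const\norm{a}_{\mathcal{C}^1_A},
\end{equation*}
with $K$ the universal constant of Section \ref{subsection: construction}. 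Therefore $\norm{(F_A^+-F_B^+)\cdot\phi_B}_{\mathrm{T}}\leq c_0\norm{\phi_B}_{L^\infty}\norm{F_A^+-F_B^+}_{\mathrm{T}}\leq\const\norm{a}_{\mathcal{C}^1_A}$. The third term $2(F_A^+-F_B^+)$ is estimated by the same Taubes-norm bound, contributing $\const\norm{a}_{\mathcal{C}^1_A}$.

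Assembling all contributions gives
\begin{equation*}
 \norm{\eta_A-\eta_B}_{\mathrm{T}} \leq \bigl(1/2 + 2c_0\varepsilon_0 + \const\norm{a}_{\mathcal{C}^1_A}\bigr)\norm{\eta_A-\eta_B}_{\mathrm{T}} + \const\norm{a}_{\mathcal{C}^1_A}.
\end{equation*}
The choice $50c_0\varepsilon_0<1$ in (\ref{eq: fixing condition of varepsilon_0}) forces $2c_0\varepsilon_0<1/25$, so $1/2+2c_0\varepsilon_0<1$ with room to spare; taking $\delta>0$ small enough that $\const\cdot\delta$ still leaves the coefficient strictly below $1$, we absorb the $\norm{\eta_A-\eta_B}_{\mathrm{T}}$-term from the right to the left, yielding the claimed estimate. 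The main obstacle is really the bookkeeping: verifying that every constant entering the bound is genuinely universal (independent of $T$), which is why it matters that $F_A^+-F_B^+$ is compactly supported and that we express all bounds through the Taubes norm rather than $L^\infty$ over $X$.
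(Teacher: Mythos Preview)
Your proof is correct and follows essentially the same route as the paper: the same subtraction of the two defining equations, the same splitting $F_A^+\cdot\phi_A-F_B^+\cdot\phi_B=F_A^+\cdot(\phi_A-\phi_B)+(F_A^+-F_B^+)\cdot\phi_B$, the same appeal to Lemmas \ref{lemma: phi_A-phi_B} and \ref{lemma: estimate on the quadratic terms}, and the same absorption argument using $2c_0\varepsilon_0<1/4$. One harmless remark: the compact support of $F_A^+-F_B^+$ is not actually needed for the bound $\norm{F_A^+-F_B^+}_{\mathrm{T}}\leq K\norm{F_A^+-F_B^+}_{L^\infty}$, which holds for any bounded section since $K=\int g(x,y)\,d\vol(y)$ is a finite universal constant independent of $T$; so your closing commentary slightly overstates its role.
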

\begin{proof}
From (\ref{eq: definition of eta_A}),
\begin{equation*}
 \begin{split}
 \eta_A-\eta_B = \, & 2(F_B^+-F^+_A)\cdot \phi_B + 2F_A^+\cdot (\phi_B-\phi_A) \\
                 &+ 2((d_B^*\phi_B\wedge d_B^*\phi_B)^+ -(d_A^*\phi_A\wedge d_A^*\phi_A)^+)
                 +2(F_B^+ -F_A^+)
 \end{split}
\end{equation*}
Using $\norm{\phi_B}_{L^\infty}\leq 3\varepsilon_0$, $\norm{F_A^+}_{\mathrm{T}}\leq \varepsilon_0$
and Lemma \ref{lemma: estimate on the quadratic terms},
\[ \norm{\eta_A-\eta_B}_{\mathrm{T}}
  \leq \const \norm{a}_{\mathcal{C}^1_A} + 2c_0\varepsilon_0\norm{\phi_A-\phi_B}_{L^\infty} 
  + \left(\frac{1}{2}+\const \norm{a}_{\mathcal{C}^1_A}\right)\norm{\eta_A-\eta_B}_{\mathrm{T}}.\]
Using Lemma \ref{lemma: phi_A-phi_B},
\[ \norm{\eta_A-\eta_B}_{\mathrm{T}} \leq \const \norm{a}_{\mathcal{C}^1_A}
  + \left(\frac{1}{2}+\const \norm{a}_{\mathcal{C}^1_A} 
  + 2c_0\varepsilon_0\right)\norm{\eta_A-\eta_B}_{\mathrm{T}} .\]
From (\ref{eq: fixing condition of varepsilon_0}), we can choose $\delta>0$ so that if 
$\norm{a}_{\mathcal{C}^1_A} \leq \delta$ then
\[   \left(\frac{1}{2}+\const \norm{a}_{\mathcal{C}^1_A} + 2c_0\varepsilon_0\right) \leq 3/4 .\]                   
Then we get 
\[  \norm{\eta_A-\eta_B}_{\mathrm{T}} 
 \leq \const \norm{a}_{\mathcal{C}^1_A} + (3/4)\norm{\eta_A-\eta_B}_{\mathrm{T}}.\]
Then $\norm{\eta_A-\eta_B}_{\mathrm{T}} \leq \const \norm{a}_{\mathcal{C}^1_A}$.
\end{proof}
From Lemma \ref{lemma: phi_A-phi_B}, we get (under the condition $\norm{a}_{\mathcal{C}^1_A}\leq \delta$)
\[ \norm{\phi_A-\phi_B}_{L^\infty}\leq \norm{\eta_A-\eta_B}_{\mathrm{T}} + \const \norm{a}_{\mathcal{C}^1_A}
                                  \leq \const \norm{a}_{\mathcal{C}^1_A} .\]
Therefore we get the following.
\begin{corollary} \label{cor: L^infty convergence}
The map 
\[ (\mathcal{A}',\, \text{$\mathcal{C}^1$-topology}) \to (\Omega^+(\ad E)_0,\, \norm{\cdot}_{L^\infty}), 
\quad A\mapsto \phi_A ,\]
is continuous.
\end{corollary}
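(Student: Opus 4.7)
The plan is to recognize that the corollary is an almost immediate packaging of the preceding Lipschitz-type estimate. First I would fix an arbitrary point $A\in \mathcal{A}'$ and show continuity of the map $B\mapsto \phi_B$ at $A$. The proposition just proved gives $\delta>0$ and a constant $C$ such that whenever $a:=B-A$ satisfies $\norm{a}_{\mathcal{C}^1_A}\leq \delta$, one has $\norm{\eta_A-\eta_B}_{\mathrm{T}}\leq C\norm{a}_{\mathcal{C}^1_A}$. Feeding this bound into Lemma \ref{lemma: phi_A-phi_B} yields
\[
 \norm{\phi_A-\phi_B}_{L^\infty}
 \leq \norm{\eta_A-\eta_B}_{\mathrm{T}} + \const\norm{a}_{\mathcal{C}^1_A}
 \leq C'\norm{a}_{\mathcal{C}^1_A},
\]
which is a local Lipschitz bound about $A$ and in particular implies sequential continuity of $B\mapsto \phi_B$ at $A$.

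The only point that deserves a comment is the relation between the norm $\norm{\cdot}_{\mathcal{C}^1_A}$ that appears in the estimates and the $\mathcal{C}^1$-topology on $\mathcal{A}'$ that appears in the statement of the corollary. Every element of $\mathcal{A}'$ agrees with the fixed flat connection $\rho$ on the region $\{|t|>T+1\}$, so the difference $a=B-A$ of any two elements is supported in the compact set $\{|t|\leq T+1\}$. On this compact set, the $\mathcal{C}^1_A$-norm defined by $A$ and the $\mathcal{C}^1$-norm defined with respect to any fixed reference connection are equivalent (the change-of-connection terms are continuous tensorial perturbations on a compact region). Consequently, $B\to A$ in the $\mathcal{C}^1$-topology of $\mathcal{A}'$ forces $\norm{B-A}_{\mathcal{C}^1_A}\to 0$, and for $B$ sufficiently close to $A$ the smallness hypothesis $\norm{a}_{\mathcal{C}^1_A}\leq \delta$ of the proposition is satisfied automatically.

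There is no real obstacle here; the genuinely delicate work was absorbed into the proof of the preceding proposition, where the fixed-point/contraction structure behind $\phi_A$ had to be differentiated in $A$ with uniform control coming from $50c_0\varepsilon_0<1$. Once that estimate is in hand, the corollary is just the statement that a locally Lipschitz map (with a uniform constant near each point) is continuous. If I wanted to be completely explicit, I would close the argument by noting that the local Lipschitz bound also shows $A\mapsto \phi_A$ is Lipschitz on bounded neighborhoods of each point in $\mathcal{A}'$, which is strictly stronger than the continuity asserted in the corollary.
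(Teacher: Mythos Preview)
Your proof is correct and follows essentially the same route as the paper: combine the preceding proposition's bound $\norm{\eta_A-\eta_B}_{\mathrm{T}}\leq \const\norm{a}_{\mathcal{C}^1_A}$ with Lemma~\ref{lemma: phi_A-phi_B} to obtain the local Lipschitz estimate $\norm{\phi_A-\phi_B}_{L^\infty}\leq \const\norm{a}_{\mathcal{C}^1_A}$. Your additional remark on the equivalence of the $\mathcal{C}^1_A$-norm and the $\mathcal{C}^1$-topology (via the common compact support in $\{|t|\leq T+1\}$) is a welcome clarification that the paper leaves implicit.
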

Let $A_n$ $(n\geq 1)$ be a sequence in $\mathcal{A}'$ which converges to $A\in \mathcal{A}'$ 
in the $\mathcal{C}^1$-topology:
$\norm{A_n-A}_{\mathcal{C}^1_A} \to 0$ $(n\to \infty)$.
By Corollary \ref{cor: L^infty convergence}, we get $\norm{\phi_{A_n}-\phi_A}_{L^\infty}\to 0$.
Set $a_n := A_n-A$. 
\begin{lemma} \label{lemma: bound on the derivative in the argument of continuity}
$\sup_{n\geq 1}\norm{\nabla_{A_n}\phi_{A_n}}_{L^\infty} < \infty$. 
(Equivalently, $\sup_{n\geq 1}\norm{\nabla_A\phi_{A_n}}_{L^\infty} < \infty$.)
\end{lemma}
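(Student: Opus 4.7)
The plan is to adapt the rescaling argument of Lemma \ref{lemma: gradient estimate} to a situation where the base connection varies with $n$. I would argue by contradiction, assuming that there is a subsequence $\{n_k\}$ with $R_k := \norm{\nabla_{A_{n_k}}\phi_{A_{n_k}}}_{L^\infty} \to \infty$. Since Lemma \ref{lemma: decay estimate for the gradient} applied to each individual $A_n$ forces $|\nabla_{A_n}\phi_{A_n}|$ to vanish at infinity, the supremum is attained at some point $x_k\in X$. Using the equation $(\nabla^*_{A_{n_k}}\nabla_{A_{n_k}}+S/3)\phi_{A_{n_k}} = \eta_{A_{n_k}}$ together with the uniform bounds $\norm{\phi_{A_{n_k}}}_{L^\infty}\leq 3\varepsilon_0$ and $\norm{F^+_{A_{n_k}}}_{L^\infty}\leq \const$ (the latter following from $F^+_{A_n} = F^+_A + d_A^+ a_n + (a_n\wedge a_n)^+$ and $\norm{a_n}_{\mathcal{C}^1_A}\to 0$), one obtains $|\nabla^*_{A_{n_k}}\nabla_{A_{n_k}}\phi_{A_{n_k}}|\leq \const\cdot(1+R_k^2)$.

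Next I would work in the geodesic coordinate centered at $x_k$ and in the exponential gauge for the limit connection $A$ at $x_k$. Because $A_{n_k}=A+a_{n_k}$ with $\norm{a_{n_k}}_{\mathcal{C}^1_A}\to 0$, the connection matrix of $A_{n_k}$ in this gauge has $\mathcal{C}^1$-norm bounded uniformly in $k$. Writing the PDE in these coordinates and rescaling $\tilde{\phi}_k(x) := \phi_{A_{n_k}}(x/R_k)$, one obtains a uniformly bounded sequence defined on balls of radius $r_0 R_k\to \infty$ satisfying a uniformly bounded Laplace-type equation, with $|\nabla\tilde{\phi}_k(0)|\to 1$: at $x_k$ the connection matrix of $A$ vanishes by construction of the exponential gauge, so the ordinary derivative differs from $\nabla_{A_{n_k}}$ only by $a_{n_k}(x_k)\cdot \phi_{A_{n_k}}(x_k)$, which becomes $O(1/R_k)$ after rescaling. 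Standard elliptic estimates together with Arzel\`a--Ascoli then yield a subsequence converging in $\mathcal{C}^1_{\mathrm{loc}}(\mathbb{R}^4)$ to a limit $\tilde{\phi}$ with $|\nabla\tilde{\phi}(0)|=1$.

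Finally I would derive a contradiction by splitting on the behavior of $\{x_{k_l}\}$. If $\{x_{k_l}\}$ stays in a compact set, then since $\phi_{A_n}\to \phi_A$ in $L^\infty$ by Corollary \ref{cor: L^infty convergence} and $\phi_A$ is continuous, the rescaled functions $\tilde{\phi}_{k_l}$ converge uniformly on compact sets to a constant, contradicting $|\nabla\tilde{\phi}(0)|=1$. If $\{x_{k_l}\}$ is unbounded, the decay estimate $|\phi_{A_n}(x)|\leq 3\delta(x)\norm{F^+_{A_n}}_{L^\infty}$ from Proposition \ref{prop: conclusion of the perturbation}(c), combined with $\delta(x)\to 0$ at infinity and the uniform bound on $\norm{F^+_{A_n}}_{L^\infty}$, forces $\tilde{\phi}_{k_l}\to 0$ uniformly on compact sets, again contradicting $|\nabla\tilde{\phi}(0)|=1$.

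The main obstacle, and essentially the only point where the argument genuinely differs from that of Lemma \ref{lemma: gradient estimate}, is the varying base connection: one must ensure that both the rescaled PDE and the rescaled pointwise gradient normalization behave well as $k\to\infty$. The strong $\mathcal{C}^1_A$ convergence $a_n\to 0$ makes the connection matrices, the terms relating $\nabla_{A_{n_k}}$ to the flat derivative in the exponential gauge of $A$, and the curvature $F^+_{A_n}$ all uniformly controlled in $k$, so that each of these varying objects contributes only lower-order corrections after the rescaling by $R_k$.
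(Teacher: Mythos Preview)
Your proposal is correct and follows essentially the same approach as the paper, which simply states that the rescaling argument of Lemma~\ref{lemma: gradient estimate} applies once one observes that $|\nabla_{A_n}\phi_{A_n}|$ vanishes at infinity, that $\phi_{A_n}\to\phi_A$ in $L^\infty$, and that the $\phi_{A_n}$ go to zero at infinity uniformly in $n$. You have spelled out in detail the one point the paper leaves implicit, namely how the varying base connection is absorbed: working in the exponential gauge of the fixed limit connection $A$ and using $\norm{a_n}_{\mathcal{C}^1_A}\to 0$ to control the connection matrices uniformly.
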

\begin{proof}
Note that $|\nabla_{A_n}\phi_{A_n}|$ vanishes at infinity 
(see Lemma \ref{lemma: decay estimate for the gradient}).
Hence we can take a point $x_n\in S^3\times \mathbb{R}$ satisfying 
$|\nabla_{A_n}\phi_{A_n}(x_n)|=\norm{\nabla_{A_n}\phi_{A_n}}_{L^\infty}$.
$\norm{\phi_{A_n}-\phi_A}_{L^\infty}\to 0$ $(n\to \infty)$, and $\phi_{A_n}$ uniformly go to zero at infinity 
(see Proposition \ref{prop: conclusion of the perturbation} (c) or Proposition \ref{prop: decay estimate}).
Then the rescaling argument as in the proof of Lemma \ref{lemma: gradient estimate}
shows the above statement.
\end{proof}
Since $(\nabla_{A_n}^*\nabla_{A_n}+S/3)\phi_{A_n} 
= -2F_{A_n}^+\cdot \phi_{A_n} -2(d_{A_n}^*\phi_{A_n}\wedge d_{A_n}^*\phi_{A_n})^+ -2F_{A_n}^+$,
\[ \sup_{n\geq 1}\norm{\nabla_{A_n}^*\nabla_{A_n}\phi_{A_n}}_{L^\infty} < \infty.\]
We have 
$\nabla_{A_n}^*\nabla_{A_n}\phi_{A_n} = \nabla_A^*\nabla_A\phi_{A_n}
+(\nabla^*_A a_n)*\phi_{A_n} + a_n*\nabla_{A_n}\phi_{A_n} + a_n*a_n*\phi_{A_n}$.
Hence 
\[ \sup_{n\geq 1} \norm{\nabla_{A}^*\nabla_{A}\phi_{A_n}}_{L^\infty} < \infty.\]
By the elliptic estimate, we conclude that $\phi_{A_n}$ converges to $\phi_A$ in $\mathcal{C}^1$ over 
every compact subset. 
Then we get the following conclusion. This will be used in Section \ref{Section: proof of the upper bound}.
\begin{proposition}  \label{prop: continuity of the perturbation in C^0}
Let $\{A_n\}_{n\geq 1}$ be a sequence in $\mathcal{A}'$ which converges to $A\in \mathcal{A}'$ in the $\mathcal{C}^1$-topology.
Then $\phi_{A_n}$ converges to $\phi_A$ in the $\mathcal{C}^1$-topology over every compact subset in $X$.
Therefore $d_{A_n}^*\phi_{A_n}$ converges to $d_A^*\phi_A$ in the $\mathcal{C}^0$-topology 
over every compact subset in $X$.
Moreover, for any $n\geq 1$,
\[ \int_X |F(A_n + d_{A_n}^*\phi_{A_n})|^2 d\vol = \int_X |F(A+d_A^*\phi_A)|^2d\vol.\]
(This means that no energy is lost at the end.)
\end{proposition}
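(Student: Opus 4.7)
The plan is to assemble the $\mathcal{C}^1_{\mathrm{loc}}$ convergence from the preceding lemmas and then prove the energy identity using Chern--Weil.

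First, for the $\mathcal{C}^1_{\mathrm{loc}}$ convergence of $\phi_{A_n}\to\phi_A$, I would combine three ingredients already established in this section. Corollary~\ref{cor: L^infty convergence} gives $\norm{\phi_{A_n}-\phi_A}_{L^\infty}\to 0$; Lemma~\ref{lemma: bound on the derivative in the argument of continuity} gives $\sup_n\norm{\nabla_A\phi_{A_n}}_{L^\infty}<\infty$; and, expanding $\nabla_{A_n}^*\nabla_{A_n}=\nabla_A^*\nabla_A+(\nabla_A^*a_n)*+a_n*\nabla_{A_n}+a_n*a_n*$ with $a_n=A_n-A$ in the defining equation for $\phi_{A_n}$, one obtains a uniform $L^\infty$ bound on $\nabla_A^*\nabla_A\phi_{A_n}$. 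Because the coefficients of the scalar operator $\nabla_A^*\nabla_A+S/3$ are \emph{independent of $n$}, standard interior elliptic estimates yield uniform $\mathcal{C}^{1,\alpha}$ bounds on compact subsets, hence precompactness in $\mathcal{C}^1_{\mathrm{loc}}$. Since $\phi_{A_n}\to\phi_A$ in $L^\infty$, every subsequential limit must coincide with $\phi_A$, giving $\phi_{A_n}\to\phi_A$ in $\mathcal{C}^1_{\mathrm{loc}}$.

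The $\mathcal{C}^0_{\mathrm{loc}}$ convergence of $d_{A_n}^*\phi_{A_n}$ is then immediate: writing $d_{A_n}^*\phi_{A_n}=d_A^*\phi_{A_n}+a_n*\phi_{A_n}$, the first term converges to $d_A^*\phi_A$ in $\mathcal{C}^0_{\mathrm{loc}}$ by the previous paragraph, and the second tends to zero in $\mathcal{C}^0_{\mathrm{loc}}$ because $\norm{a_n}_{\mathcal{C}^1_A}\to 0$ and $\phi_{A_n}$ is uniformly $L^\infty$-bounded.

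For the energy identity, I would apply Lemma~\ref{lemma: finiteness of energy} separately to $A$ and to each $A_n$ to get
\[
\tfrac{1}{8\pi^2}\int_X |F(A_n+d_{A_n}^*\phi_{A_n})|^2\,d\vol=\tfrac{1}{8\pi^2}\int_X tr(F_{A_n}^2),\qquad
\tfrac{1}{8\pi^2}\int_X |F(A+d_A^*\phi_A)|^2\,d\vol=\tfrac{1}{8\pi^2}\int_X tr(F_A^2).
\]
Both integrands $tr(F_{A_n}^2)$ and $tr(F_A^2)$ vanish outside $\{|t|\leq T+1\}$ because $A_n$ and $A$ both coincide with the fixed flat connection $\rho$ on $|t|>T+1$. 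Moreover the difference $a_n=A_n-A$ is compactly supported, and the transgression identity
\[
tr(F_{A_n}^2)-tr(F_A^2)=d\left(tr\bigl(2 a_n\wedge F_A+a_n\wedge d_A a_n+\tfrac{2}{3}a_n^3\bigr)\right)
\]
involves a Chern--Simons 3-form that is likewise compactly supported (it vanishes wherever $a_n$ does). Stokes' theorem applied on a sufficiently large slab $\{|t|\leq R\}$ then forces $\int_X tr(F_{A_n}^2)=\int_X tr(F_A^2)$, yielding the desired equality.

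The only subtle step is the uniform elliptic bootstrapping in the first paragraph, where it is essential that one rewrites the $n$-dependent rough Laplacian $\nabla_{A_n}^*\nabla_{A_n}$ in terms of the fixed operator $\nabla_A^*\nabla_A$ before invoking interior estimates; otherwise the constants in the elliptic estimate could depend on $n$. Everything else is routine bookkeeping using the tools already assembled.
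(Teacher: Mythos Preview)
Your proposal is correct and follows essentially the same route as the paper: the $\mathcal{C}^1_{\mathrm{loc}}$ convergence is obtained exactly as you describe (Corollary~\ref{cor: L^infty convergence} plus Lemma~\ref{lemma: bound on the derivative in the argument of continuity} plus rewriting $\nabla_{A_n}^*\nabla_{A_n}$ in terms of the fixed $\nabla_A^*\nabla_A$ and applying interior elliptic estimates), and the energy identity is proved via Lemma~\ref{lemma: finiteness of energy} together with the transgression formula $tr(F_{A_n}^2)-tr(F_A^2)=d(\cdots)$ and the fact that $a_n=A_n-A$ is compactly supported. Your remark that one must pass to the fixed operator $\nabla_A^*\nabla_A$ before bootstrapping is exactly the point the paper makes implicitly.
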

\begin{proof}
The last statement follows from Proposition \ref{prop: conclusion of the perturbation} (b) 
(or Lemma \ref{lemma: finiteness of energy}) and the fact that for any $A$ and $B$ in $\mathcal{A}'$ we have 
\[ \int_X tr(F_A^2) = \int_X tr(F_B^2) .\]
This is because $tr F_B^2 -tr F_A^2 = d (tr(2a\wedge F_A + a\wedge d_A a + \frac{2}{3}a^3))$
$(a = B-A)$, and
both $A$ and $B$ coincide with the fixed flat connection $\rho$ over $|t|>T+1$.
\end{proof}

\section{``Non-flat'' implies ``irreducible''}  \label{section: non-flat implies irreducible}
This section is short. But the results in this section are crucial for both proofs of the upper and lower bounds
on the mean dimension.
Note that the following trivial fact: 
if a smooth function $u$ on $\mathbb{R}$ is bounded and convex ($u''\geq 0$)
then $u$ is a constant function.
\begin{lemma} \label{lemma: bounded, nonnegative subharminic function}
If a smooth function $f$ on $S^3\times \mathbb{R}$ is bounded, non-negative and sub-harmonic 
($\Delta f\leq 0$)\footnote{Our convention of the sign of the Laplacian is geometric; we have 
$\Delta = -\partial^2/\partial x_1^2 -\partial^2/\partial x_2^2 -\partial^2/\partial x_3^2 -\partial^2/\partial x_4^2$
on $\mathbb{R}^4$},
then $f$ is a constant function.
\end{lemma}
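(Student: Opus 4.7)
The plan is to exploit the product structure by averaging over the compact factor $S^3$, reducing the problem to the one-dimensional convexity fact stated just above the lemma. With the geometric sign convention, the Laplacian on $S^3\times\mathbb{R}$ with the product metric decomposes as $\Delta = \Delta_{S^3} - \partial_t^2$, where $\Delta_{S^3}\geq 0$. Define
\[ u(t) := \int_{S^3} f(\theta,t)\,d\vol(\theta). \]
Integrating the inequality $\Delta f\leq 0$ over $S^3$ and noting that $\int_{S^3}\Delta_{S^3}f\,d\vol = 0$ (Stokes on the closed manifold $S^3$) gives $u''(t) = \int_{S^3}\partial_t^2 f\,d\vol \geq 0$. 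Since $f$ is bounded, so is $u$, and the observation just before the lemma yields $u'' \equiv 0$.

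Next I would extract pointwise harmonicity from the equality case. We have $-\Delta f = \partial_t^2 f - \Delta_{S^3} f \geq 0$ pointwise, and for every $t$,
\[ \int_{S^3}(-\Delta f)(\theta,t)\,d\vol(\theta) = u''(t) = 0. \]
Since the integrand is non-negative and continuous, $\Delta f \equiv 0$ on $S^3\times\mathbb{R}$. So $f$ is a bounded, non-negative, harmonic function.

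Now I would run a Liouville argument by separation of variables. Let $\{\phi_k\}_{k\geq 0}$ be an $L^2(S^3)$-orthonormal basis of eigenfunctions of $\Delta_{S^3}$, with $\phi_0$ constant and eigenvalues $0 = \lambda_0 < \lambda_1 \leq \lambda_2 \leq \cdots$. Set
\[ a_k(t) := \int_{S^3} f(\theta,t)\phi_k(\theta)\,d\vol(\theta). \]
By Cauchy--Schwarz each $a_k$ is bounded, and harmonicity of $f$ gives the ODE $a_k''(t) = \lambda_k a_k(t)$. For $k\geq 1$ the only bounded solution of $a_k'' = \lambda_k a_k$ (with $\lambda_k>0$) is the zero solution, so $a_k\equiv 0$. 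Hence for every $t$, $f(\cdot,t)$ is $L^2$-orthogonal to every non-constant eigenfunction of $\Delta_{S^3}$, so by smoothness $f(\theta,t)$ depends only on $t$. Writing $f(\theta,t) = v(t)$, the condition $\Delta f\leq 0$ becomes $-v''(t)\leq 0$, i.e.\ $v$ is convex, and $v$ is bounded; the one-dimensional fact then forces $v$ to be constant.

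The one potentially delicate point is the separation-of-variables step: I need to be sure that vanishing of all higher Fourier coefficients really forces $f(\cdot,t)$ to be constant on $S^3$. This is just the completeness of $\{\phi_k\}$ in $L^2(S^3)$ together with the continuity of $f$, so no issue of uniform convergence arises. If one prefers to bypass the Fourier decomposition entirely, an alternative ending is to apply the averaging/convexity argument to translates $f_s(\theta,t) := f(\theta,t+s)$ and deduce that every spherical average of $f$ is independent of $t$, but the Fourier route seems cleanest.
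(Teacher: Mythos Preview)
Your proof is correct, and it shares with the paper the opening move of integrating over the compact factor and invoking the one-dimensional convexity fact. The paper, however, integrates $f^2$ rather than $f$: it computes
\[
\frac{1}{2}\frac{\partial^2}{\partial t^2}\int_{S^3\times\{t\}} f^2 \,d\vol
= \int_{S^3\times\{t\}}\Bigl(|\partial_t f|^2 + |\nabla_{S^3} f|^2 + f(-\Delta f)\Bigr)d\vol \geq 0,
\]
using $f\geq 0$ and $-\Delta f\geq 0$ for the last term. Once this bounded convex function of $t$ is constant, the integrand---a sum of non-negative terms---vanishes identically, giving $\partial_t f\equiv 0$ and $\nabla_{S^3}f\equiv 0$ in one stroke, with no Liouville or Fourier step needed. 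Your route (integrate $f$, deduce $\Delta f\equiv 0$, then run separation of variables) is a couple of steps longer but has the mild advantage that it never actually uses the hypothesis $f\geq 0$, so it proves the stronger statement that any bounded subharmonic function on $S^3\times\mathbb{R}$ is constant.
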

\begin{proof}
We have $\Delta = -\partial^2/\partial t^2 + \Delta_{S^3}$ where $t$ is the coordinate of the $\mathbb{R}$-factor of 
$S^3\times \mathbb{R}$ and $\Delta_{S^3}$ is the Laplacian of $S^3$.
We have 
\[ \frac{\partial^2}{\partial t^2}f^2 = 2\left(\frac{\partial f}{\partial t}\right)^2 + 2f\Delta_{S^3}f -2f\Delta f.\]
Then we have 
\[ \frac{1}{2}\frac{\partial^2}{\partial t^2}\int_{S^3\times \{t\}} f^2 d\vol
= \int_{S^3\times\{t\}}\left( \left|\frac{\partial f}{\partial t}\right|^2 + |\nabla_{S^3} f|^2 + f(-\Delta f)\right) d\vol \geq 0.\]
Here we have used $f\geq 0$ and $\Delta f\leq 0$. 
This shows that $u(t) = \int_{S^3\times\{t\}}f^2$ is a bounded convex function. Hence 
it is a constant function. In particular $u'' \equiv 0$. Then the above formula implies 
$\partial f/\partial t \equiv \nabla_{S^3} f \equiv 0$. This means that $f$ is a constant function. 
\end{proof}
\begin{lemma}
If $A$ is a $U(1)$-ASD connection on $S^3\times \mathbb{R}$ satisfying $\norm{F_A}_{L^\infty}<\infty$, then $A$ is flat.
\end{lemma}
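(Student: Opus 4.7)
The plan is to show that $|F_A|^2$ is a bounded, non-negative, subharmonic function on $S^3\times\mathbb{R}$ and then invoke the preceding Lemma \ref{lemma: bounded, nonnegative subharminic function} to conclude that $|F_A|^2$ is constant; the resulting pointwise identity will then force this constant to be zero.

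First I would observe that since $A$ is a $U(1)$-connection, its curvature $F_A$ is a (scalar-valued) $2$-form satisfying the Bianchi identity $dF_A=0$. The ASD hypothesis gives $*F_A=-F_A$, and so $d^*F_A=-*d*F_A=*dF_A=0$. Hence $F_A$ is a harmonic ASD $2$-form. Now apply the Weitzenb\"ock formula for ASD $2$-forms on a $4$-manifold: for any $\omega\in\Omega^-$,
\[
 2d^-d^*\omega + 2dd^*\omega \; (\text{i.e. } (d+d^*)^2\omega) = \nabla^*\nabla \omega -2W^-\omega + \frac{S}{3}\omega.
\]
Since $X=S^3\times\mathbb{R}$ is conformally flat we have $W^-=0$, and since $F_A$ is harmonic the left-hand side vanishes. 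Therefore
\[
 \nabla^*\nabla F_A + \frac{S}{3}F_A = 0.
\]

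Taking the pointwise inner product with $F_A$ and using the standard identity $\frac{1}{2}\Delta|F_A|^2 = \langle \nabla^*\nabla F_A,F_A\rangle - |\nabla F_A|^2$, I obtain
\[
 \frac{1}{2}\Delta|F_A|^2 + |\nabla F_A|^2 + \frac{S}{3}|F_A|^2 = 0,
\]
so that
\[
 \Delta|F_A|^2 = -2|\nabla F_A|^2 - \frac{2S}{3}|F_A|^2 \leq 0,
\]
using $S>0$. Hence $|F_A|^2$ is a non-negative smooth subharmonic function on $S^3\times\mathbb{R}$, and it is bounded by assumption $\norm{F_A}_{L^\infty}<\infty$.

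By Lemma \ref{lemma: bounded, nonnegative subharminic function}, $|F_A|^2$ is constant, so $\Delta|F_A|^2\equiv 0$. Substituting back into the displayed identity gives $|\nabla F_A|^2 + (S/3)|F_A|^2\equiv 0$, and because both summands are non-negative and $S>0$ we conclude $|F_A|\equiv 0$, i.e.\ $A$ is flat. No step looks like a serious obstacle: the only non-trivial input is the Weitzenb\"ock formula for harmonic ASD $2$-forms, which specializes cleanly here because $X$ is conformally flat (killing $W^-$) with constant positive scalar curvature.
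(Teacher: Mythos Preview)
Your argument is correct and follows essentially the same route as the paper: show $(\nabla^*\nabla + S/3)F_A = 0$ via the Weitzenb\"ock formula (using $W^-=0$ and $d^*F_A=0$), deduce that $|F_A|^2$ is bounded, non-negative and subharmonic, apply the preceding lemma to get constancy, and read off $F_A\equiv 0$ from the resulting identity. The only cosmetic issue is that your displayed Weitzenb\"ock formula is garbled on the left-hand side (``$2d^-d^*\omega + 2dd^*\omega$'' is not $(d+d^*)^2\omega$); the clean version you actually use is $2d^-d^*\omega = \nabla^*\nabla\omega - 2W^-\omega + \tfrac{S}{3}\omega$ for $\omega\in\Omega^-$, together with $d^*F_A=0$, which is exactly what the paper invokes.
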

\begin{proof}
We have $F_A \in \sqrt{-1}\Omega^-$.
The Weitzenb\"{o}ck formula (cf. (\ref{eq: Weitzenbock formula})) gives 
$(\nabla^*\nabla+S/3)F_A= 2d^{-}d^*F_A = 0$.
We have 
\[ \Delta |F_A|^2 = -2|\nabla F_A|^2 + 2(F_A, \nabla^*\nabla F_A) 
= -2|\nabla F_A|^2 -(2S/3)|F_A|^2\leq 0.\]
This shows that $|F_A|^2$ is a non-negative, bounded, subharmonic function. Hence it is a constant function.
In particular $\Delta |F_A|^2 \equiv 0$. Then the above formula implies $F_A\equiv 0$.
\end{proof}
\begin{corollary}\label{cor: non-flat implies irreducible}
If $A$ is a non-flat $SU(2)$-ASD connection on $S^3\times \mathbb{R}$ satisfying $\norm{F_A}_{L^\infty}<\infty$, 
then $A$ is irreducible.
\end{corollary}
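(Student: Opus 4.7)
The plan is to argue by contradiction, reducing the $SU(2)$ case to the $U(1)$ case proved in the previous lemma. Suppose $A$ is reducible. For an $SU(2)$-connection this means, by definition, that the isotropy subgroup in the gauge group is strictly larger than the centre $\{\pm 1\}$, or equivalently that there exists a nonzero section $\sigma \in \Omega^0(\ad E)$ with $d_A \sigma = 0$. I would begin from such a $\sigma$.

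Since the fibre inner product on $\ad E$ is $A$-parallel, $|\sigma|$ is then a constant function, and after rescaling one may assume $|\sigma| \equiv 1$. The pointwise stabiliser of a unit vector in $\mathfrak{su}(2)$ under the adjoint action of $SU(2)$ is a circle $U(1) \subset SU(2)$, so the parallel section $\sigma$ determines a reduction of the principal $SU(2)$-bundle $E$ to a principal $U(1)$-subbundle $P$, and $A$ comes from a $U(1)$-connection $B$ on $P$. Passing to the associated rank two complex vector bundle, this reads as a splitting $E \otimes \mathbb{C}^2 = L \oplus L^{-1}$ under which $A = B \oplus (-B)$ and hence $F_A = F_B \oplus (-F_B)$; in particular $|F_A|^2 = 2|F_B|^2$, so $\norm{F_B}_{L^\infty} < \infty$, and $B$ is $U(1)$-ASD since $A$ is $SU(2)$-ASD.

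Having reduced to the $U(1)$ case, the immediately preceding lemma applies and yields $F_B \equiv 0$, so $F_A \equiv 0$, contradicting the assumption that $A$ is non-flat. This completes the argument.

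The main (and rather mild) obstacle is the bookkeeping in the reduction step, namely carefully identifying a parallel section $\sigma$ of $\ad E$ with a reduction of structure group to $U(1)$ and writing out the resulting splitting $A = B \oplus (-B)$. Once that identification is in place, the Weitzenb\"ock-based argument of the previous lemma does all the analytic work, and no further ingredient beyond the hypothesis $\norm{F_A}_{L^\infty} < \infty$ is required.
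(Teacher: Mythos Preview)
Your proposal is correct and is precisely the standard reduction argument the paper has in mind: the corollary is stated without proof, as an immediate consequence of the preceding $U(1)$ lemma via the splitting $A=B\oplus(-B)$ induced by a parallel section of $\ad E$. There is nothing to add.
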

This corollary will be used in the proof of the lower bound on the mean dimension.
The following proposition will be used in the cut-off construction in Section \ref{section: cut-off constructions}.
\begin{proposition} \label{prop: irreducibility on 3-sphere}
Let $A$ be a non-flat $SU(2)$-ASD connection on $S^3\times \mathbb{R}$ satisfying $\norm{F_A}_{L^\infty}<+\infty$.
The restriction of $A$ to $S^3\times \{0\}$ is irreducible.
\end{proposition}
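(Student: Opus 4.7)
The plan is to argue by contradiction. Suppose $A|_{S^3\times\{0\}}$ is reducible; then there is a non-zero section $\sigma_0$ of $\ad E|_{S^3\times\{0\}}$ with $\nabla_v \sigma_0 = 0$ for every vector $v$ tangent to $S^3$. Extend $\sigma_0$ to a section $\sigma$ of $\ad E$ over $S^3\times \mathbb{R}$ by parallel transport along $\partial_t$, so that $\nabla_{\partial_t}\sigma=0$ and $\sigma|_{t=0}=\sigma_0$. Since the connection on $\ad E$ preserves fibre norms, $|\sigma|\equiv |\sigma_0|$ is a positive constant on $X$. The strategy is to show $d_A\sigma\equiv 0$ on all of $X$, which would reduce $A$ globally and contradict Corollary \ref{cor: non-flat implies irreducible}.

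Set $\psi:=d_A\sigma$. Since $\psi(\partial_t)=\nabla_{\partial_t}\sigma=0$, we may view $\psi$ as a $t$-family of $\ad E$-valued $1$-forms on $S^3$, and $\psi|_{t=0}=0$ by hypothesis. The key step is a first-order evolution equation for $\psi$ in $t$. Extending a tangent vector $v$ on $S^3$ trivially in $t$ so that $[\partial_t,v]=0$, the curvature identity gives
\[ \nabla_{\partial_t}\psi(v) \,=\, \nabla_{\partial_t}\nabla_v\sigma \,=\, [F_A(\partial_t,v),\sigma]. \]
Decomposing $F_A = B + dt\wedge E$ with $B(t)$ the spatial $2$-form part and $E(t)$ the electric $1$-form part on each slice, this reads $\nabla_{\partial_t}\psi=[E,\sigma]$. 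The ASD condition $F_A^+=0$ forces $E=-*_3 B$, while $[B,\sigma]=d_A(d_A\sigma)=d_A^{S^3}\psi$ and $*_3$ commutes with bracketing against the $0$-form $\sigma$; combining these yields
\[ \nabla_{\partial_t}\psi \,=\, -*_3 d_A^{S^3}\psi. \]

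The argument then concludes with an energy estimate. Set $f(t):=\int_{S^3\times\{t\}}|\psi|^2\,d\vol_{S^3}$. Using that $\nabla_{\partial_t}$ preserves the fibre metric,
\[ f'(t) \,=\, -2\int_{S^3\times\{t\}}\langle\psi,\,*_3 d_A^{S^3}\psi\rangle\,d\vol_{S^3}. \]
On a closed oriented $3$-manifold one has $d_A^*|_{\Omega^2}=-*_3 d_A *_3$, so a short integration by parts shows that $*_3 d_A^{S^3}$ is skew-adjoint on $\Omega^1(S^3,\ad E|_{S^3\times\{t\}})$, giving $f'(t)\equiv 0$. Since $f(0)=0$ we conclude $\psi\equiv 0$, and together with $\nabla_{\partial_t}\sigma=0$ this yields $d_A\sigma\equiv 0$ on all of $X$, producing the desired contradiction.

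The main technical issue will be fixing the sign conventions in the ASD relation $E=-*_3 B$ and in the skew-adjointness formula; both are routine but somewhat unforgiving Hodge-theoretic book-keeping on a $3$-manifold. The reduction from a parallel section on a single slice to a parallel section on all of $X$ crucially uses the product structure of $S^3\times \mathbb{R}$, which permits extending spatial vector fields to commute with $\partial_t$.
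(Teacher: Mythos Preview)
Your evolution equation $\nabla_{\partial_t}\psi=-*_3 d_A^{S^3}\psi$ is correct, but the energy argument that follows breaks down: the operator $*_3 d_A^{S^3}$ acting on $\Omega^1(S^3,\ad E)$ is \emph{self-adjoint}, not skew-adjoint. On a closed oriented $3$-manifold one has $d_A^*|_{\Omega^2}=+*_3 d_A *_3$ (your sign is off), and the integration by parts then yields
\[
\int_{S^3}\langle *_3 d_A\psi,\psi\rangle
=\int_{S^3}\langle d_A\psi,*_3\psi\rangle
=\int_{S^3}\langle \psi,d_A^*(*_3\psi)\rangle
=\int_{S^3}\langle \psi,*_3 d_A\psi\rangle,
\]
which is symmetry, not antisymmetry. (Concretely, on $\mathbb{R}^3$ the operator $*d$ on $1$-forms is the curl, which is self-adjoint.) So $f'(t)$ has no reason to vanish, and you cannot conclude $\psi\equiv 0$ this way. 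This is not merely a matter of bookkeeping that can be repaired by flipping a convention: the linearized ASD flow is the gradient flow of the Chern--Simons functional, whose Hessian $*_3 d_A$ is genuinely symmetric with spectrum unbounded in both directions.

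The paper proves the result by a uniqueness argument of a different flavor. It passes to a real-analytic temporal gauge (using that $\bm{A}$ is ASD and the metric is real analytic), so that the slice connections satisfy the real-analytic ODE $\partial_t A(t)=*_3 F(A(t))_3$. A gauge transformation $u$ stabilizing $A(0)$ on a small open set $\Omega\subset S^3$ produces a second real-analytic solution $B=u(A)$ of the same equation with the same initial data, and Cauchy--Kovalevskaya forces $A=B$ on $\Omega\times(-\varepsilon,\varepsilon)$; unique continuation (Donaldson--Kronheimer, Lemma 4.3.21) then propagates reducibility to all of $S^3\times\mathbb{R}$, contradicting Corollary~\ref{cor: non-flat implies irreducible}. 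Your reduction to a first-order evolution problem is morally in the same spirit, but the uniqueness step for $\partial_t\psi=L\psi$ with $L$ self-adjoint and unbounded in both directions genuinely needs an analyticity or unique-continuation input rather than an $L^2$ energy identity.
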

\begin{proof}
This follows from the above Corollary \ref{cor: non-flat implies irreducible} and 
the result of Taubes \cite[Theorem 5]{Taubes-3}.
Here we give a brief proof for readers' convenience.
Note that the Riemannian metric on $S^3\times \mathbb{R}$ is real analytic.
(Later we will use Cauchy-Kovalevskaya's theorem. Hence the real analyticity of all data is essential.)
Suppose $A|_{S^3\times \{0\}}$ is reducible.
Fix $p\in S^3$ and take a small open neighborhood $\Omega\subset S^3$ of $p$.
Let $\varepsilon>0$ be a small positive number.
By using the Uhlenbeck gauge \cite[Corollary 1.4]{Uhlenbeck}, 
we can suppose that $A$ is represented by a real analytic connection matrix over 
$\Omega\times (-\varepsilon, \varepsilon)$.
Moreover, by using the (real analytic) temporal gauge (see Donaldson \cite[Chapter 2]{Donaldson-Furuta-Kotschick}),
we can assume that the (real analytic) connection matrix of $A$ over $\Omega\times (-\varepsilon,\varepsilon)$ 
is $dt$-part free and satisfies
\begin{equation*} 
\frac{\partial}{\partial t}A(t) = *_3 F(A(t))_3,
\end{equation*}
where $A(t) := A|_{\Omega\times \{t\}}$ and $F(A(t))_3$ is the curvature of $A(t)$ as a connection over 
the 3-manifold $\Omega\times \{t\}$. $*_3$ is the Hodge star on $\Omega\times \{t\}$.

Since $A(0)$ is reducible, 
there exists a real analytic gauge transformation $u$ $(\neq \pm 1)$  over $\Omega$ satisfying 
$u(A(0)) = A(0)$.
Set $B := u(A)$ over $\Omega\times (-\varepsilon,\varepsilon)$.
$B$ is real analytic and satisfies 
\begin{equation*}
\frac{\partial}{\partial t}B(t) = *_3F(B(t))_3.
\end{equation*}
$A$ and $B$ are both real analytic and satisfy the same real analytic equation
of the normal form with the same real analytic initial value
$A(0) = B(0)$.
Therefore Cauchy-Kovalevskaya's theorem implies $A=B = u(A)$.
This means that $A$ is reducible over an open set $\Omega\times (-\varepsilon,\varepsilon)\subset S^3\times \mathbb{R}$.
Then the unique continuation principle (see Donaldson-Kronheimer \cite[Lemma 4.3.21]{Donaldson-Kronheimer})
implies that $A$ is reducible all over $S^3\times \mathbb{R}$.
But this contradicts Corollary \ref{cor: non-flat implies irreducible}.
\end{proof}

\section{Cut-off constructions} \label{section: cut-off constructions}
As we explained in Section \ref{section: outline of the proofs of the main theorems}, 
we need to define a `cut-off' of $[\bm{A}]\in \moduli_d$.
Section \ref{subsection: gauge fixing on S^3} is a preparation to define a cut-off construction, 
and we define it in Section \ref{subsection: cut-off construction}.

Let $\delta_1>0$.
We define $\delta'_1 =\delta'_1(\delta_1)$ by 
\[ \delta'_1 := \sup_{x\in S^3\times \mathbb{R}}
 \left( \int_{S^3\times (-\delta_1,\delta_1)} g(x,y)d\vol(y)\right) .\]
Since we have $g(x,y)\leq \const/d(x,y)^2$ 
(see (\ref{eq: singulairy of g(x,y) along the diagonal}) and (\ref{eq: exponential decay of g(x,y)})),
\[ \int_{d(x,y)\leq (\delta_1)^{1/4}}g(x,y)d\vol(y)\leq \const\int_0^{(\delta_1)^{1/4}}rdr=\const'\sqrt{\delta_1},\]
\[ \int_{\{d(x,y)\geq (\delta_1)^{1/4}\} \cap S^3\times(-\delta_1,\delta_1)} g(x,y)d\vol(y) 
 \leq \const\cdot \delta_1\frac{1}{\sqrt{\delta_1}} = \const\sqrt{\delta_1}.\]
Hence $ \delta_1'\leq \const \sqrt{\delta_1}$ (this calculation is due to \cite[pp. 190-191]{Donaldson}).
In particular, we have $\delta'_1\to 0$ as $\delta_1 \to 0$.
For $d\geq 0$, we choose $\delta_1 =\delta_1(d)$ so that
$0<\delta_1<1$ and $\delta'_1 = \delta'_1(\delta_1(d))$ satisfies
\begin{equation} \label{eq: choice of delta_1}
 (5+7d+d^2)\delta'_1 \leq \varepsilon_0/4 = 1/(4000).
\end{equation}
The reason of this choice will be revealed in Proposition \ref{prop: conclusion of cut-off}.

\subsection{Gauge fixing on $S^3$ and gluing instantons}  \label{subsection: gauge fixing on S^3}
Let $F := S^3 \times SU(2)$ be the product principal $SU(2)$-bundle over $S^3$.
Let $\mathcal{A}_{S^3}$ be the space of connections on $F$, and
$\mathcal{G}$ be the gauge transformation group of $F$. 
$\mathcal{A}_{S^3}$ and $\mathcal{G}$ are equipped with the $\mathcal{C}^\infty$-topology.
Set $\mathcal{B}_{S^3} := \mathcal{A}_{S^3}/\mathcal{G}$ (with the quotient topology), and let 
$\pi :\mathcal{A}_{S^3} \to \mathcal{B}_{S^3}$ be the natural projection.
Note that the gauge transformations $\pm 1$ trivially act on $\mathcal{A}_{S^3}$.
\begin{proposition}\label{prop: gauge fixing on S^3}
Let $d\geq 0$, and $A\in \mathcal{A}_{S^3}$ be an irreducible connection.
There exist a closed neighborhood $U_A$ of $[A]$ in 
$\mathcal{B}_{S^3}$ and a continuous map $\Phi_{A}: \pi^{-1}(U_A) \to \mathcal{G}/\{\pm 1\}$ 
such that, for any $B\in \pi^{-1}(U_A)$, $[g] := \Phi_A(B)$ satisfies 
the following.

\noindent
(i) $g(B) = A + a$ with $\norm{a}_{L^\infty} \leq \delta_1 = \delta_1(d)$.
($\delta_1$ is the positive constant chosen in the above (\ref{eq: choice of delta_1}).)

\noindent
(ii) For any gauge transformation $h$ of $F$, we have $\Phi_A(h(B)) = [gh^{-1}]$.
\end{proposition}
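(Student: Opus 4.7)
The plan is to construct $\Phi_A$ via the Coulomb gauge slice relative to $A$: for each $B$ close enough to the orbit of $A$, I will find a gauge transformation $g$, unique modulo sign, such that $d_A^*(g(B)-A)=0$. The irreducibility of $A$ is essential: the stabilizer of $A$ in $\mathcal{G}$ is exactly the center $\{\pm 1\}$, equivalently $\ker(d_A\colon \Omega^0(\ad F)\to\Omega^1(\ad F))=0$, so the covariant Laplacian $d_A^*d_A$ on $\Omega^0(\ad F)$ is a positive self-adjoint elliptic isomorphism.

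First I would work in Sobolev completions (e.g.\ $L^p_k$ with $k,p$ chosen so that $L^p_k\hookrightarrow \mathcal{C}^1$ on the $3$-manifold $S^3$) and define
\[
  \Psi\colon \mathcal{G}\times\mathcal{A}_{S^3}\longrightarrow \Omega^0(\ad F),\qquad
  \Psi(g,B):=d_A^*\bigl(g(B)-A\bigr).
\]
The derivative of $\Psi$ at $(1,A)$ in the $g$-direction, written via $g=\exp(\xi)$, equals $d_A^*d_A\xi$, which is invertible. The implicit function theorem then produces a neighborhood $V$ of $A$ in $\mathcal{A}_{S^3}$ and a continuous map $B\mapsto g_B$ with $g_A=1$ and $\Psi(g_B,B)=0$, unique on a $\mathcal{C}^0$-neighborhood of $1$; elliptic regularity upgrades $g_B$ to smoothness once $B$ is smooth. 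Since $\pm g_B$ both solve $\Psi=0$, the class $[g_B]\in\mathcal{G}/\{\pm 1\}$ is the unique continuous local solution.

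Next I would promote this local Coulomb slice to a $\mathcal{G}$-invariant neighborhood: shrink to a closed $V'\subset V$, set $U_A:=\pi(V')$, and for each $B\in\pi^{-1}(U_A)$ choose any $h\in\mathcal{G}$ with $h(B)\in V'$ and define
\[
  \Phi_A(B):=[g_{h(B)}\cdot h]\in\mathcal{G}/\{\pm 1\}.
\]
Independence from the choice of $h$ follows from the uniqueness clause of the implicit function theorem: any two choices $h_1,h_2$ differ by $k:=h_2h_1^{-1}$ which carries the Coulomb representative $g_{h_1(B)}(h_1(B))$ to another element of the slice $d_A^*(\cdot-A)=0$ near $A$, forcing $g_{h_2(B)}\,k\,g_{h_1(B)}^{-1}=\pm 1$, so the class $[g_{h(B)}\cdot h]$ is well defined. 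Property (ii) is then immediate: if $B':=h'(B)$, then $g_{h(B')(h h'^{-1})}=g_{h(B)}$ up to sign, so $\Phi_A(B')=[g_{h(B)} h (h')^{-1}]=[\Phi_A(B)(h')^{-1}]$. Continuity of $\Phi_A$ follows from continuity of $B\mapsto g_B$ combined with the local existence of continuous representative sections of $\pi$ near $[A]$, which is itself the content of the slice theorem. Property (i) is ensured by shrinking $V'$: since $g_B(B)-A$ depends continuously on $B$, vanishes at $B=A$, and $L^p_k\hookrightarrow L^\infty(S^3)$, the bound $\|a\|_{L^\infty}\leq \delta_1(d)$ holds throughout $V'$.

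The main technical obstacle is the descent from the linear slice theorem to a continuous selection on $\pi^{-1}(U_A)$: I have to carefully separate the harmless stabilizer $\{\pm 1\}$ (absorbed into the quotient) from potential non-uniqueness that would arise far from $A$. This is controlled exactly by shrinking $V$ so that the Coulomb-gauge representative of each orbit near $[A]$ is unique modulo $\pm 1$, which in turn depends critically on the irreducibility hypothesis on $A$ (and is the reason Proposition \ref{prop: irreducibility on 3-sphere} is needed before this construction can be applied to restrictions of ASD connections).
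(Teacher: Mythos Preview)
Your proposal is correct and follows essentially the same approach as the paper: both use the Coulomb gauge slice $d_A^*(g(B)-A)=0$ relative to the irreducible connection $A$, invoke uniqueness modulo $\{\pm 1\}$ coming from the trivial stabilizer, and obtain the $L^\infty$ bound via a Sobolev embedding (the paper uses $L^4_1(S^3)\hookrightarrow \mathcal{C}^0(S^3)$). The paper simply cites ``the usual Coulomb gauge construction'' as a black box and reads off (ii) from uniqueness, whereas you spell out the implicit function theorem and the equivariant extension $\Phi_A(B):=[g_{h(B)}h]$ in more detail; the content is the same.
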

\begin{proof}
Let $\varepsilon>0$ be sufficiently small, and we take a closed neighborhood $U_A$ of $[A]$ in 
$\mathcal{B}_{S^3}$ such that 
\[ U_A \subset \{[B]\in \mathcal{B}_{S^3}|\, \exists \text{$g$: gauge transformation of $F$ s.t. } 
   \norm{g(B)-A}_{L^4_1}< \varepsilon\}.\]
The usual Coulomb gauge construction shows that, for each $B\in \pi^{-1}(U_A)$, there uniquely exists
$[g]\in \mathcal{G}/\{\pm 1\}$ such that $g(B) = A+ a$ with $d_A^*a=0$ and 
$\norm{a}_{L^4_1} \leq \const\cdot\varepsilon$.
Since $L^4_1(S^3)\hookrightarrow \mathcal{C}^0(S^3)$, 
we have $\norm{a}_{L^\infty}\leq \const\cdot \varepsilon \leq \delta_1$ for 
sufficiently small $\varepsilon$.
We define $\Phi_A(B):=[g]$.
Then the condition (i) is obviously satisfied, 
and the condition (ii) follows from the uniqueness of $[g]$.
\end{proof}
\begin{proposition}\label{prop: gauge fixing on S^3: flat connection}
Let $d\geq 0$, and $\Theta$ be the product connection on $F =S^3\times SU(2)$.
There exist a closed neighborhood $U_{\Theta}$ of $[\Theta]$ in $\mathcal{B}_{S^3}$
and a continuous map $\Phi_{\Theta}: \pi^{-1}(U_\Theta)\to \mathcal{G}$ such that,
for any $A\in \pi^{-1}(U_\Theta)$, $g:=\Phi_\Theta(A)$ satisfies the following.

\noindent
(i) $g(A) = \Theta + a$ with $\norm{a}_{L^\infty}\leq \delta_1 = \delta_1(d)$.

\noindent
(ii) For any gauge transformation $h$ of $F$, there exists a constant gauge transformation $h'$ of $F$
(i.e. $h'(\Theta) =\Theta$) such that $\Phi_{\Theta}(h(A)) = h'gh^{-1}$.
\end{proposition}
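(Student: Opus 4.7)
The plan is to parallel the proof of Proposition \ref{prop: gauge fixing on S^3} but to account for the fact that $\Theta$ has nontrivial stabilizer, namely the group $SU(2)_{\mathrm{c}} \subset \mathcal{G}$ of constant gauge transformations, so any Coulomb gauge at $\Theta$ is canonical only modulo left multiplication by $SU(2)_{\mathrm{c}}$. First I would apply the standard implicit function theorem construction at $\Theta$ using the transversal slice $\ker d^* \subset \Omega^1(\ad F)$: the Laplacian $d^* d$ on $\Omega^0(\ad F)$ is Fredholm with kernel exactly the constants $\mathrm{Lie}(SU(2)_{\mathrm{c}})$, so the linearized gauge-fixing equation is uniquely solvable modulo constants. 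This produces, for $\varepsilon > 0$ sufficiently small and every $A$ with $[A]$ in a small $\mathcal{B}_{S^3}$-neighborhood of $[\Theta]$, a $g \in \mathcal{G}$ with $g(A) = \Theta + a$, $d^* a = 0$, and $\norm{a}_{L^4_1} \leq C\varepsilon$; shrinking $\varepsilon$ and applying $L^4_1(S^3) \hookrightarrow \mathcal{C}^0(S^3)$ yields $\norm{a}_{L^\infty} \leq \delta_1(d)$, which is condition (i) and defines $U_\Theta$.

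Next I would verify that $g$ is unique modulo left multiplication by $SU(2)_{\mathrm{c}}$: if $g_1(A), g_2(A)$ both lie in the small Coulomb slice, then $g_0 := g_2 g_1^{-1}$ preserves this slice near $\Theta$, and transversality of the $\mathcal{G}$-orbit of $\Theta$ to $\ker d^*$ modulo the stabilizer forces $g_0 \in SU(2)_{\mathrm{c}}$ after shrinking $\varepsilon$ once more. Hence $A \mapsto [g]$ is a well-defined continuous map to $SU(2)_{\mathrm{c}} \backslash \mathcal{G}$. To promote this to a $\mathcal{G}$-valued continuous map I would fix a base point $p_0 \in S^3$ once and for all and define $\mathcal{G}_0 := \{g \in \mathcal{G} : g(p_0) = 1\}$; the homeomorphism $\mathcal{G} \to SU(2)_{\mathrm{c}} \times \mathcal{G}_0$ given by $g \mapsto (g(p_0),\, g(p_0)^{-1} g)$ exhibits $\mathcal{G}_0$ as a continuous global section of $\mathcal{G} \to SU(2)_{\mathrm{c}} \backslash \mathcal{G}$. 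Set $\Phi_\Theta(A)$ to be the unique representative in $\mathcal{G}_0$ of the Coulomb class of $A$.

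Condition (ii) then follows by direct calculation: with $g = \Phi_\Theta(A)$, the identity $(g h^{-1})(h(A)) = g(A)$ shows that $g h^{-1}$ is a Coulomb gauge for $h(A)$, and its unique $\mathcal{G}_0$-representative is $h(p_0) \cdot g h^{-1}$ since $(h(p_0) \cdot g h^{-1})(p_0) = h(p_0) \cdot 1 \cdot h(p_0)^{-1} = 1$; thus $\Phi_\Theta(h(A)) = h' g h^{-1}$ with $h' = h(p_0) \in SU(2)_{\mathrm{c}}$ as required. The main obstacle is the uniqueness-up-to-$SU(2)_{\mathrm{c}}$ step: unlike the irreducible case of Proposition \ref{prop: gauge fixing on S^3}, where the residual ambiguity is only $\{\pm 1\}$, here the stabilizer of $\Theta$ is the full group $SU(2)_{\mathrm{c}}$, and one must verify carefully that the small Coulomb slice captures exactly this ambiguity on a genuine $L^4_1$-neighborhood; the remaining ingredients—Sobolev embedding, the explicit section $\mathcal{G}_0$, and the equivariance calculation—are routine.
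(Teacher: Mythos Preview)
Your proposal is correct and follows essentially the same route as the paper's proof: both use the Coulomb slice $\ker d_\Theta^*$ together with a fixed base point $p_0\in S^3$ (the paper's $\theta_0$) to normalize the residual $SU(2)_{\mathrm{c}}$-ambiguity, and both obtain $h'=h(p_0)$ in condition (ii) by the same calculation. Your write-up is simply more explicit about the uniqueness-modulo-stabilizer step and the section $\mathcal{G}_0\hookrightarrow \mathcal{G}$, which the paper compresses into a single sentence.
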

\begin{proof}
Fix a point $\theta_0\in S^3$.
Let $\varepsilon>0$ be sufficiently small, and we take a closed neighborhood $U_\Theta$ of $[\Theta]$ in 
$\mathcal{B}_{S^3}$ such that 
\[ U_\Theta \subset \{[A]\in \mathcal{B}_{S^3}|\, \exists \text{$g$: gauge transformation of $F$ s.t. } 
   \norm{g(A)-\Theta}_{L^4_1}< \varepsilon\}.\]
For any $A\in \pi^{-1}(U_\Theta)$, there uniquely exists a gauge transformation $g$ with $g(\theta_0) =1$ 
such that $g(A) = \Theta+a$ with $d_\Theta^*a=0$ and 
$\norm{a}_{L^4_1}\leq \const\cdot \varepsilon \> (\leq \delta_1)$.
We set $\Phi_\Theta(A) := g$.
The conditions (i) is obvious, and condition (ii) follows from 
$\Phi_\Theta(h(A)) = h(\theta_0)gh^{-1}$
(Here $h(\theta_0)$ is a constant gauge transformation.
Note that $(h(\theta_0)gh^{-1})(\theta_0) =1$.)
\end{proof}
Recall the settings in Section \ref{section: introduction}.
Let $d\geq 0$. 
The moduli space $\moduli_d$ is the space of all gauge equivalence classes $[\bm{A}]$
where $\bm{A}$ is an ASD connection on $\bm{E} := X\times SU(2)$ satisfying $|F(\bm{A})|\leq d$.

We define $K_d \subset \mathcal{B}_{S^3}$ by 
\[ K_d := \{[\bm{A}|_{S^3\times \{0\}}] \in \mathcal{B}_{S^3}|\, [\bm{A}] \in \moduli_d\}, \]
where we identify $\bm{E}|_{S^3\times \{0\}}$ with $F$.
From the Uhlenbeck compactness \cite{Uhlenbeck, Wehrheim},
$\moduli_d$ is compact, and hence $K_d$ is also compact.
Proposition \ref{prop: irreducibility on 3-sphere} implies that, for any $[\bm{A}]\in \moduli_d$,
$\bm{A}|_{S^3\times\{0\}}$ is irreducible or a flat connection.
(The important point is that $\bm{A}|_{S^3\times\{0\}}$ never be a non-flat reducible connection.)

Set $A_0 := \Theta$ (the product connection on $F$). There exist irreducible connections 
$A_1, A_2, \cdots, A_N$ $(N=N(d))$ on $F$ such that 
$K_d \subset \mathrm{Int}(U_{A_0}) \cup \mathrm{Int}(U_{A_1})\cup \cdots \cup \mathrm{Int}(U_{A_N})$ and 
$[A_i]\in K_d$ $(0\leq i\leq N)$.
Here $\mathrm{Int}(U_{A_i})$ is the interior of the closed set $U_{A_i}$
introduced in Propositions \ref{prop: gauge fixing on S^3} and \ref{prop: gauge fixing on S^3: flat connection}.
Note that we can naturally identify $K_d$ with the space 
$\{[\bm{A}|_{S^3\times\{T\}}]\in \mathcal{B}_{S^3}|\, [\bm{A}]\in \moduli_d\}$
for any real number $T$ because $\moduli_d$ admits the natural $\mathbb{R}$-action.

For the statement of the next proposition, we introduce a new notation.
We denote $F\times \mathbb{R}$ 
as the pull-back of $F$ by the natural projection $X=S^3\times \mathbb{R}\to S^3$.
So $F\times \mathbb{R}$ is a principal $SU(2)$-bundle over $X$.
Of course, we can naturally identify $F\times \mathbb{R}$ with $\bm{E}$, but here
we use this notation for the later convenience.
We define $\hat{A}_0$ as the pull-back of $\Theta$ by the projection $X=S^3\times \mathbb{R}\to S^3$.
(Hence $\hat{A}_0$ is the product connection on $F\times \mathbb{R}$ under the natural identification 
$F\times \mathbb{R} = \bm{E}$.)
\begin{proposition} \label{prop: gluing instantons}
For each $i = 1, 2, \cdots, N$ there exists a connection $\hat{A}_i$ on $F\times \mathbb{R}$ satisfying the following. (Recall $0<\delta_1<1$.)

\noindent 
(i) $\hat{A}_i = A_i$ over $S^3\times [-\delta_1, \delta_1]$. Here $A_i$ 
(a connection on $F\times \mathbb{R}$)
means the pull-back of $A_i$ (a connection on $F$) by the natural projection $X\to S^3$.

\noindent
(ii) $F(\hat{A}_i)$ is supported in $S^3\times (-1, 1)$.

\noindent
(iii) $\norm{F^+(\hat{A}_i)|_{\delta_1<|t|<1}}_{\mathrm{T}} \leq  \varepsilon_0/4 =1/(4000)$, where 
$F^+(\hat{A}_i)|_{\delta_1<|t|<1} = F^+(\hat{A}_i)\times 1_{\delta_1<|t|<1}$ 
and $1_{\delta_1<|t|<1}$ is the characteristic function of the set 
$\{(\theta, t)\in S^3\times \mathbb{R}|\, \delta_1<|t|<1\}$.
\end{proposition}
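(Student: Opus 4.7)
The case $i=0$ is already handled by $\hat A_0$ (defined just before the proposition) since its curvature vanishes identically; fix $i\in\{1,\dots,N\}$. Since $[A_i]\in K_d$, there is an ASD connection $\bm A^{(i)}$ on $\bm E$ with $|F(\bm A^{(i)})|\le d$ and $[\bm A^{(i)}|_{S^3\times\{0\}}]=[A_i]$; after a global gauge transformation I may assume $\bm A^{(i)}|_{S^3\times\{0\}}=A_i$ on the nose. In the temporal gauge near $\{t=0\}$, the ASD evolution $\partial_t\bm A^{(i)}(t)=*_3F_3(\bm A^{(i)}(t))$ together with $|F(\bm A^{(i)})|\le d$ gives $\|\bm A^{(i)}(t)-A_i\|_{L^\infty(S^3)}\le Cd\,|t|$, so on the thin slab $|t|\le 2\delta_1$ the ASD connection is already very close to the pullback $\tilde A_i$.

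In the standard trivialization of $F\times\mathbb{R}=X\times SU(2)$, choose smooth cut-offs $\chi_1,\chi_2:\mathbb{R}\to[0,1]$ with $\chi_1\equiv 1$ on $|t|\le\delta_1$, $\chi_1\equiv 0$ on $|t|\ge 2\delta_1$ and $|\chi_1'|=O(1/\delta_1)$, and $\chi_2\equiv 1$ on $|t|\le 1-\delta_1$, $\chi_2\equiv 0$ on $|t|\ge 1$. Define
\[
\hat A_i \;:=\; \chi_1(t)\,\tilde A_i \;+\; \bigl(1-\chi_1(t)\bigr)\,\chi_2(t)\,\bm A^{(i)}.
\]
Conditions (i) and (ii) are immediate: $\hat A_i=\tilde A_i$ on $|t|\le\delta_1$ and $\hat A_i$ is the product connection on $|t|\ge 1$. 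For (iii), I split $\delta_1<|t|<1$ into three zones. On the bulk $2\delta_1\le|t|\le 1-\delta_1$, $\hat A_i=\bm A^{(i)}$ is ASD, so $F^+(\hat A_i)\equiv 0$. On the inner transition $\delta_1\le|t|\le 2\delta_1$, rewrite $\hat A_i=\bm A^{(i)}+\chi_1(\tilde A_i-\bm A^{(i)})$; the dangerous factor $|\chi_1'|\sim 1/\delta_1$ is multiplied by $|\tilde A_i-\bm A^{(i)}|=O(\delta_1)$, producing a uniform bound $|F^+(\hat A_i)|\le C(d)$ on that slab (the remaining pieces $d_{\bm A^{(i)}}(\tilde A_i-\bm A^{(i)})$ and the quadratic term are controlled by elliptic bounds from $|F(\bm A^{(i)})|\le d$). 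Since by translation-invariance of $g$ any $t$-slab of width $\le 2\delta_1$ satisfies $\sup_x\int_{\text{slab}}g(x,y)\,d\vol(y)\le\delta_1'$, the inner transition contributes at most $C(d)\,\delta_1'$ to the Taubes norm of $F^+(\hat A_i)|_{\delta_1<|t|<1}$.

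The outer transition $1-\delta_1\le|t|\le 1$ is the principal obstacle: a priori $\bm A^{(i)}$ need not be close to any flat connection on that slab, in which case the term $\chi_2'\cdot\bm A^{(i)}$ in $F(\chi_2\bm A^{(i)})$ is of size $1/\delta_1$ without compensating smallness. To address this I use the finite cover $K_d\subset\bigcup_{j=0}^N\mathrm{Int}(U_{A_j})$ in combination with the gauge-fixing Propositions \ref{prop: gauge fixing on S^3} and \ref{prop: gauge fixing on S^3: flat connection}: by tracking the path $t\mapsto[\bm A^{(i)}(t)]$ in $K_d$ through these overlapping patches and applying the continuous local gauges $\Phi_{A_j}$ in succession (pieced together by a partition of unity in $t$), I bring $\bm A^{(i)}$ into a gauge whose restriction to the outer slab lies in $U_\Theta$, and there is $\delta_1$-close to the product connection. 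In that gauge the cut-off $\chi_2\bm A^{(i)}$ has $|F^+|\le C(d)$ on the outer slab and contributes $\le C(d)\,\delta_1'$ to the Taubes norm. Summing the two contributions and invoking $(5+7d+d^2)\delta_1'\le\varepsilon_0/4$ from (\ref{eq: choice of delta_1}) yields (iii), uniformly over the finitely many $i\in\{1,\dots,N\}$.
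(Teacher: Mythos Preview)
Your argument has a genuine gap at the outer transition $1-\delta_1\le |t|\le 1$. You claim that by ``tracking the path $t\mapsto[\bm A^{(i)}(t)]$ through the patches $U_{A_j}$ and applying the local gauges $\Phi_{A_j}$ in succession'' you can bring $\bm A^{(i)}$ into a gauge whose restriction to the outer slab lies in $U_\Theta$. But $U_\Theta$ is a subset of the quotient $\mathcal{B}_{S^3}$, and the gauge equivalence class $[\bm A^{(i)}(t)]$ is \emph{unchanged} by any gauge transformation. If $[\bm A^{(i)}(1-\delta_1)]\notin U_\Theta$ (and there is no reason it should be: $\bm A^{(i)}$ is an arbitrary ASD connection with $|F|\le d$, not a finite-energy instanton, so it need not be anywhere near flat at $|t|\sim 1$), no amount of gauge-fixing will make the connection $\delta_1$-close to the product connection there. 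Chaining the $\Phi_{A_j}$ only lets you pass from one local model $A_j$ to a nearby one; it does not move you in $\mathcal{B}_{S^3}$. Consequently the naive cut-off $\chi_2\bm A^{(i)}$ produces $|F^+|$ of order $|\chi_2'|\cdot|\bm A^{(i)}|$, which is $O(1)$ (or worse) on a slab whose Taubes mass is itself $O(1)$, so the bound $\norm{F^+}_{\mathrm T}\le C(d)\delta_1'$ on the outer slab is simply false.

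The paper takes an entirely different route: it starts from a crude cut-off $A_i'$ of the pull-back (with large self-dual part) and then invokes Donaldson's \emph{instanton gluing} technique to modify $A_i'$ over $\delta_1<|t|<1$. The effect of gluing in highly concentrated instantons is to absorb self-dual curvature, leaving $F^+(\hat A_i)=F^+_1+F^+_2$ with $|F^+_1|\le\varepsilon$ and $|F^+_2|\le \mathrm{const}$ but $\vol(\supp F^+_2)\le\varepsilon$; both pieces then have small Taubes norm for $\varepsilon$ small. This is a genuinely nonlinear construction (adding curvature in the form of small bubbles), not a cut-off-plus-gauge argument, and it is precisely the missing ingredient in your proposal.
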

\begin{proof}
By using a cut-off function, we can construct a connection $A_i'$ on $F\times \mathbb{R}$ 
such that $A_i'=A_i$
over $S^3\times [-\delta_1,\delta_1]$ and $\supp F(A_i') \subset S^3\times (-1,1)$.
We can reduce the self-dual part of $F(A_i')$ by ``gluing instantons'' to $A_i'$ over $\delta_1< |t| < 1$.
This technique is essentially well-known for the specialists in the gauge theory.
For the detail, see Donaldson \cite[pp. 190-199]{Donaldson}.

By the argument of \cite[pp. 196-198]{Donaldson}, we get the following situation.
For any $\varepsilon>0$, there exists a connection $\hat{A}_i$ satisfying the following. 
$\hat{A}_i=A'_i =A_i$ over $|t|\leq \delta_1$, and $\supp F(\hat{A}_i)\subset S^3\times (-1,1)$.
Moreover $F^+(\hat{A}_i)=F^+_1+F^+_2$ over $\delta_1 < |t| < 1$ such that 
$|F^+_1|\leq \varepsilon$ and 
\[ |F^+_2|\leq \const,  \quad  \vol(\supp (F^+_2))\leq \varepsilon,\]
where $\const$ is a positive constant depending only on $A_i'$ and independent of $\varepsilon$.
If we take $\varepsilon$ sufficiently small, then 
\[ \norm{F^+(\hat{A}_i)|_{\delta_1<|t|<1}}_{\mathrm{T}}\leq \varepsilon_0/4.\]
\end{proof}

\subsection{Cut-off construction} \label{subsection: cut-off construction}
Let $T$ be a positive real number.
We define a closed subset $\moduli_{d,T}(i, j) \subset \moduli_d$ 
$(0\leq i, j\leq N = N(d))$ as the set of $[\bm{A}]\in \moduli_d$ satisfying
$[\bm{A}|_{S^3\times \{T\}}]\in U_{A_i}$ and 
$[\bm{A}|_{S^3\times \{-T\}}]\in U_{A_j}$.
Here we naturally identify $E_{T} := \bm{E}|_{S^3\times \{T\}}$ 
and $E_{-T}:= \bm{E}|_{S^3\times \{-T\}}$ with $F$, and 
$A_i$ $(0\leq i \leq N)$ are the connections on $F$ introduced in the previous subsection.
We have 
\begin{equation}\label{eq: decomposition of the moduli space}
 \moduli_d = \bigcup_{0 \leq i, j\leq N} \moduli_{d,T}(i, j) .
\end{equation}
Of course, this decomposition depends on the parameter $T>0$.
The important point is that $N$ is independent of $T$. 
We will define a cut-off construction for each piece $\moduli_{d,T}(i,j)$.

Let $\bm{A}$ be an ASD connection on $\bm{E}$ satisfying 
$[\bm{A}]\in \moduli_{d,T}(i, j)$.
Let $u_+:\bm{E}|_{t\geq T} \to E_T\times [T,+\infty)$ be the temporal gauge of $\bm{A}$ with 
$u_+ = \mathrm{id}$ on $\bm{E}|_{S^3\times\{T\}}=E_T$. (See Donaldson \cite[Chapter 2]{Donaldson-Furuta-Kotschick}.)
Here $\bm{E}|_{t\geq T}$ is the restriction of $\bm{E}$ to $S^3\times [T, +\infty)$, 
and $E_T\times [T, +\infty)$ is the pull-back of $E_T$ by the projection
$S^3\times [T, \infty)\to S^3\times \{T\}$.
We will repeatedly use these kinds of notations.
In the same way, let $u_-:\bm{E}|_{t\leq -T} \to E_{-T}\times (-\infty, -T]$ be the temporal gauge of $\bm{A}$
with $u_- = \mathrm{id}$ on $\bm{E}|_{S^3\times\{-T\}}=E_{-T}$.
We define $A(t)$ $(|t|\geq T)$ by setting $A(t) := u_+(\bm{A})$ for $t\geq T$ and 
$A(t) := u_-(\bm{A})$ for $t\leq -T$.
$A(t)$ becomes $dt$-part free.
Since $\bm{A}$ is ASD, we have 
\begin{equation}\label{eq: parabolic ASD equation}
 \frac{\partial A(t)}{\partial t} = *_3 F(A(t))_3 ,
\end{equation}
where $*_3$ is the Hodge star on $S^3 \times \{t\}$ and $F(A(t))_3$ is the curvature of $A(t)$ as a connection on the 
3-manifold $S^3\times \{t\}$.

We have $[A(T)]\in U_{A_i}$ and $[A(-T)]\in U_{A_j}$.
By using Propositions \ref{prop: gauge fixing on S^3} and \ref{prop: gauge fixing on S^3: flat connection},
we set $[g_+] := \Phi_{A_i}(A(T))$ if $i>0$ and $g_+:=\Phi_\Theta(A(T))$ if $i=0$.
(If $i>0$, the gauge transformation $g_+$ is not uniquely determined because there exists the ambiguity coming from $\pm 1$.
For this point, see Lemma \ref{lemma: gauge equivariance of cut-off} and its proof.)
In the same way we set $[g_-] := \Phi_{A_j}(A(-T))$ if $j>0$ and $g_- := \Phi_\Theta(A(-T))$ if $j=0$.
We consider $g_+$ (resp. $g_-$) as the gauge transformation of $E_{T}$ (resp. $E_{-T}$).
They satisfy
\begin{equation} \label{eq: estimate of deviation in cut-off}
 \norm{g_+(A(T))-A_i}_{L^\infty} \leq \delta_1, \quad 
 \norm{g_-(A(-T))-A_j}_{L^\infty}\leq \delta_1.
\end{equation}
We define a principal $SU(2)$-bundle $\bm{E}'$ over $X$ by 
\[ \bm{E}' := \bm{E}|_{|t|<T+\delta_1/4} \sqcup E_T\times (T, +\infty) \sqcup E_{-T}\times (-\infty, -T)/\sim,\]
where the identification $\sim$ is given as follows.
$\bm{E}|_{|t|<T+\delta_1/4}$ is identified with $E_T\times (T, +\infty)$ over the region $T<t<T+\delta_1/4$ by 
the map $g_+\circ u_+ :\bm{E}|_{T<t<T+\delta_1/4} \to E_T\times (T, T+\delta_1/4)$.
Here we consider $g_+$ as a gauge transformation of $E_T\times (T,T+\delta_1/4)$
by $g_+:E_T\times (T,T+\delta_1/4)\to E_T\times (T,T+\delta_1/4)$, $(p,t)\mapsto (g_+(p),t)$.
Similarly, we identify $\bm{E}|_{|t|<T+\delta_1/4}$ with $E_{-T}\times (-\infty, -T)$ over the region 
$-T-\delta/4 <t<-T$ by the map $g_-\circ u_-:\bm{E}|_{-T-\delta_1/4<t<-T}\to E_{-T}\times (-T-\delta_1/4, -T)$.

Let $\rho(t)$ be a smooth function on $\mathbb{R}$ such that 
$0\leq \rho\leq 1$, $\rho = 0$ $(|t|\leq \delta_1/4)$, $\rho =1$ $(|t|\geq 3\delta_1/4)$ and
\begin{equation*}
|\rho'|\leq 4/\delta_1.
\end{equation*} 
We define a (not necessarily ASD) connection $\bm{A}'$ on $\bm{E}'$ as follows.
Over the region $|t|<T+\delta_1/4$ where
$\bm{E}'$ is equal to $\bm{E}$, we set 
\begin{equation}\label{eq: definition of A', first}
\bm{A}' := \bm{A} \quad \text{on $\bm{E}|_{|t|<T+\delta_1/4}$}.
\end{equation}
Over the region $t>T$, we set
\begin{equation}\label{eq: definition of A', second}
 \bm{A}' := (1-\rho(t-T))g_+(A(t)) + \rho(t-T)\hat{A}_{i, T}\quad \text{on $E_T\times (T, +\infty)$}, 
\end{equation}
where $\hat{A}_{i, T}$ is the pull-back of the connection $\hat{A}_i$ introduced in the previous subsection 
(see Proposition \ref{prop: gluing instantons}) by the map $t\mapsto t-T$. 
So, in particular, $\hat{A}_{i, T} = A_i$ over $T-\delta_1\leq t\leq T+\delta_1$ and 
$F(\hat{A}_{i, T}) = 0$ over $t\geq T+1$.
(\ref{eq: definition of A', second}) is compatible with (\ref{eq: definition of A', first}) 
over $T<t<T+\delta_1/4$ where $\rho(t-T) =0$.
In the same way, over the region $t<-T$, we set 
\[ \bm{A}' := (1-\rho(t+T))g_-(A(t)) + \rho(t+T)\hat{A}_{j, -T} \quad \text{on $E_{-T}\times (-\infty, -T)$}.\]
We have $F(\bm{A}') =0$ $(|t|\geq T+1)$.
Then we have constructed $(\bm{E}', \bm{A}')$ from $\bm{A}$
with $[\bm{A}] \in \moduli_{d,T}(i, j)$.
\begin{lemma} \label{lemma: gauge equivariance of cut-off}
The gauge equivalence class of $(\bm{E}',\bm{A}')$ depends only on the gauge equivalence class of $\bm{A}$.
\end{lemma}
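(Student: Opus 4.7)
The plan is: given a gauge transformation $h$ of $\bm{E}$, set $\bm{B} := h(\bm{A})$ and run the cut-off construction on $\bm{B}$ to obtain data $(\bm{E}'_h, \bm{A}'_h)$. I will exhibit an explicit bundle isomorphism $\Psi:\bm{E}'_h\to\bm{E}'$ with $\Psi(\bm{A}'_h)=\bm{A}'$. The whole argument is bookkeeping: tracking how each ingredient in the recipe transforms under $\bm{A}\mapsto h(\bm{A})$.

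First I would track the temporal gauges. Writing $h_T:=h|_{E_T}$ and $\tilde h_T$ for its constant-in-$t$ extension to $E_T\times[T,+\infty)$, uniqueness of the temporal gauge (normalized at $t=T$) forces
\[
u'_+ \;=\; \tilde h_T\circ u_+\circ h^{-1},
\]
so the slice that enters the Coulomb step becomes $A'(T)=h_T(A(T))$. The equivariance properties of Propositions~\ref{prop: gauge fixing on S^3}(ii) and \ref{prop: gauge fixing on S^3: flat connection}(ii) then give
\[
g'_+ \;=\; c_+\cdot g_+\cdot h_T^{-1},
\]
where $c_+\in\{\pm 1\}$ is central when $i>0$, and where $c_+$ is a constant gauge transformation of $F$ with $c_+(\Theta)=\Theta$ when $i=0$. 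Either way, $c_+$ preserves the model end-connection $\hat A_{i,T}$: trivially when $c_+$ is central, and because $\hat A_{0,T}$ is (the pull-back of) the product connection $\Theta$ when $i=0$. Composing yields $g'_+\circ u'_+ = c_+\cdot(g_+\circ u_+)\circ h^{-1}$, and an analogous identity holds at the $-T$ end with a constant $c_-$.

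Next I would define $\Psi:\bm{E}'_h\to\bm{E}'$ piecewise: set $\Psi=h^{-1}$ on the middle piece $\bm{E}|_{|t|<T+\delta_1/4}$, and $\Psi=c_\pm^{-1}$ on the two end pieces $E_{\pm T}\times(\pm[T,+\infty))$. To check that $\Psi$ descends to the quotients, take $p\in\bm{E}|_{T<t<T+\delta_1/4}$: in $\bm{E}'_h$ it is identified with $c_+g_+u_+h^{-1}(p)$, and $\Psi$ sends this to $g_+u_+h^{-1}(p)$; on the other hand $\Psi(p)=h^{-1}(p)$ is identified in $\bm{E}'$ with $(g_+u_+)(h^{-1}(p))=g_+u_+h^{-1}(p)$, which matches. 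That $\Psi$ intertwines the connections is automatic on the middle piece ($\Psi(\bm{B})=h^{-1}h(\bm{A})=\bm{A}$); on an end piece it follows from the linearity of the gauge action on convex combinations of connections together with $c_+^{-1}(\hat A_{i,T})=\hat A_{i,T}$ and $g'_+\circ\tilde h_T=c_+g_+$. The same kind of check absorbs the residual ambiguity in the choice of representatives produced by $\Phi_{A_i}$ or $\Phi_\Theta$ when applied to a single $\bm{A}$.

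The step I expect to be the main obstacle is not conceptual but rather keeping the constant factors $c_\pm$ straight, especially in the flat case $i=0$: there $c_+$ is a nontrivial element of $SU(2)$ (viewed as a constant gauge transformation of $F$), and the argument uses crucially that $\hat A_{0,T}$ is the product connection so that $c_+^{-1}(\hat A_{0,T})=\hat A_{0,T}$. Once this bookkeeping is done symmetrically at both ends, the map $\Psi$ delivers the desired gauge equivalence between $(\bm{E}'_h,\bm{A}'_h)$ and $(\bm{E}',\bm{A}')$.
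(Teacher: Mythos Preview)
Your proposal is correct and follows essentially the same approach as the paper: track how the temporal gauges and Coulomb gauges transform under $\bm{A}\mapsto h(\bm{A})$, then build a piecewise bundle map that is $h^{\pm 1}$ on the middle piece and the constant $c_\pm^{\pm 1}$ on the end pieces, using that $c_\pm$ preserves $\hat A_{i,T}$. The paper carries this out in the direction $\bm{E}'_{\bm{A}}\to\bm{E}'_{\bm{B}}$ (so its piecewise map is $h$ on the middle and $c_\pm$ on the ends) rather than your inverse direction, but otherwise the bookkeeping and the key observations are identical.
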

\begin{proof}
Suppose $[\bm{A}]\in \moduli_{d,T}(0,1)$. Other cases can be proved in the same way.
Let $h:\bm{E}\to \bm{E}$ be a gauge transformation and set $\bm{B}:=h(\bm{A})$.
Let $(\bm{E}'_{\bm{A}}, \bm{A}')$ and $(\bm{E}'_{\bm{B}},\bm{B}')$ be the bundles and connections constructed by 
the above cut-off procedure form $\bm{A}$ and $\bm{B}$, respectively.
Let $u_{\pm, \bm{A}}$ and $u_{\pm,\bm{B}}$ be the temporal gauges of $\bm{A}$ and $\bm{B}$ over $t\geq T$ or $t\leq -T$.
We have $u_{\pm,\bm{B}}=h_{\pm T}\circ u_{\pm,\bm{A}}\circ h^{-1}$ where $h_{\pm T}:=h|_{t=\pm T}$ on $E_{\pm T}$.
Set $g_{+,\bm{A}}:=\Phi_{\Theta}(A(T))$ and $g_{+,\bm{B}}:=\Phi_{\Theta}(B(T)) =\Phi_\Theta(h_T(A(T)))$.
From Proposition \ref{prop: gauge fixing on S^3: flat connection} (ii), we have 
$g_{+,\bm{B}}=h' g_{+,\bm{A}}h_T^{-1}$, where $h'$ is a constant gauge transformation of $E_T$ ($h'(\Theta) =\Theta$).
Set $[g_{-,\bm{A}}]:=\Phi_{A_1}(A(-T))$ and $[g_{-,\bm{B}}]:=\Phi_{A_1}(B(-T)) = \Phi_{A_1}(h_{-T}(A(-T)))$.
We have $g_{-,\bm{B}} = \pm g_{-,\bm{A}} h_{-T}^{-1}$.
We define a gauge transformation $g:\bm{E}'_{\bm{A}}\to \bm{E}'_{\bm{B}}$ by the following way:
Over the region $|t|<T+\delta_1/4$, we set 
$g:=h$ on $\bm{E}|_{|t|<T+\delta_1/4}$.
Over the region $t>T$, we set $g:=h'$ on $E_T\times (T,+\infty)$.
Over the region $t<-T$, we set $g:=\pm 1$ on $E_{-T}\times (-\infty,-T)$.
We have $g(\bm{A}') = \bm{B}'$. Indeed, over the region $t>T$,
\[ h'((1-\rho)g_{+,\bm{A}}(A(t))+\rho \Theta) = (1-\rho)g_{+,\bm{B}}(B(t))+\rho \Theta,   \quad 
(\rho = \rho(t-T)),\]
because $h'g_{+,\bm{A}}u_{+,\bm{A}}= g_{+,\bm{B}}u_{+,\bm{B}}h$ and $h'(\Theta) = \Theta$.
\end{proof}
\begin{lemma} \label{lemma: result of cut-off}
\[ |F^+(\bm{A}')|\leq 5+7d+d^2  \quad \text{on $T\leq |t|\leq T+\delta_1$}.\]
\end{lemma}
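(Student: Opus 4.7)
The plan is to reduce the estimate to an explicit calculation of $F(\bm{A}')$ in a region where the gluing contribution $\hat{A}_{i,T}$ collapses to a constant-in-$t$ connection, after which only the cut-off profile $\rho$ and the deviation $a(t)$ contribute. The claim is symmetric between $T\leq t\leq T+\delta_1$ and $-T-\delta_1\leq t\leq -T$, so I would work only on the former; the latter follows identically with the pair $(j,g_-)$ in place of $(i,g_+)$.

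The crucial observation is Proposition \ref{prop: gluing instantons}(i): on $T\leq t\leq T+\delta_1$ the glued connection satisfies $\hat{A}_{i,T}=A_i$, so (\ref{eq: definition of A', second}) collapses to $\bm{A}'=A_i+(1-\rho)a(t)$, where $\rho:=\rho(t-T)$, $B(t):=g_+(A(t))$ and $a(t):=B(t)-A_i$. Since $A(t)$ is in temporal gauge and $g_+$ is $t$-independent, both $B(t)$ and $a(t)$ are $dt$-part-free; gauge equivariance applied to (\ref{eq: parabolic ASD equation}) gives $\partial_t a(t)=*_3 F_{S^3}(B(t))$. I would then decompose $F(\bm{A}')=\omega_3+dt\wedge\beta$ according to the product splitting on $S^3\times\mathbb{R}$, and using the identity $d_{A_i,S^3}a=F_{S^3}(B)-F_{S^3}(A_i)-a\wedge a$ I obtain
\begin{align*}
\omega_3 &= \rho\,F_{S^3}(A_i)+(1-\rho)\,F_{S^3}(B(t))-\rho(1-\rho)\,a(t)\wedge a(t),\\
\beta &= -\rho'(t-T)\,a(t)+(1-\rho)*_3 F_{S^3}(B(t)).
\end{align*}

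The pointwise bounds I would then plug in are all standard. Since each $[A_i]\in K_d$ comes from an ASD connection in $\moduli_d$ and $B(t)$ is gauge-equivalent to $\bm{A}$ itself, the ASD condition forces the $S^3$ and $dt$ parts of the 4D curvature to have equal pointwise norm, so $|F_{S^3}(A_i)|,|F_{S^3}(B(t))|\leq d/\sqrt{2}$. From (\ref{eq: estimate of deviation in cut-off}) we have $|a(T)|\leq\delta_1$; integrating $|\partial_t a|\leq d/\sqrt{2}$ over $[T,T+\delta_1]$ yields $|a(t)|\leq(1+d)\delta_1$. Together with $|\rho'|\leq 4/\delta_1$ and the pointwise estimate $|a\wedge a|\leq|a|^2$ for $\mathfrak{su}(2)$-valued $1$-forms, feeding these into $|F^+(\bm{A}')|\leq|F(\bm{A}')|\leq|\omega_3|+|\beta|$ (together with $\delta_1<1$) produces a bound of the shape $c_1+c_2 d+c_3 d^2$ with constants comfortably below $5$, $7$, and $1$.

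The only real obstacle is arithmetic bookkeeping of these constants; there is no analytic difficulty. The coefficients in $5+7d+d^2$ are in fact quite loose — one can afford to lose constants in the $\mathfrak{su}(2)$-bracket estimate and in the coarse bound $|F^+|\leq|F|$ (rather than the sharper identity $|F^+|=\tfrac{1}{\sqrt{2}}|\omega_3+*_3\beta|$) and still land well within the stated bound.
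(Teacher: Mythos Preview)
Your argument is correct and uses the same ingredients as the paper. The only organizational difference is the choice of base point for the curvature expansion: the paper writes $\bm{A}' = g_+(A(t)) + \rho a$ with $a = A_i - g_+(A(t))$ and, since this base is ASD, reads off $F^+(\bm{A}')$ directly as
\[
F^+(\bm{A}') = (\rho'\,dt\wedge a)^+ + \tfrac{\rho}{2}\bigl(F(A_i) + *_3 F(A_i)\wedge dt\bigr) + (\rho^2-\rho)(a\wedge a)^+,
\]
which gives exactly $4(1+d)+d+(1+d)^2 = 5+7d+d^2$. You instead base around $A_i$, compute the full $F(\bm{A}')$, and finish with the cruder $|F^+|\leq |F|\leq |\omega_3|+|\beta|$; this actually produces a slightly smaller constant, so both routes are fine and the paper's choice simply explains where the specific number $5+7d+d^2$ comes from.
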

\begin{proof}
We consider the case $T<t\leq T+\delta_1$ where $\hat{A}_{i, T} = A_i$.
We have $\bm{A}' = (1-\rho)g_+(A(t)) + \rho A_i$, $\rho = \rho(t-T)$.
Set $a := A_i - g_+(A(t))$. Then $\bm{A}' = g_+(A(t)) + \rho a$.
We have 
\[
F^+(\bm{A}') = (\rho'dt\wedge a)^+ + \frac{\rho}{2}(F(A_i) + *_3 F(A_i)\wedge dt) + 
(\rho^2-\rho)(a\wedge a)^+.
\]
We have $|F(A_i)|\leq d$ and $|\rho'|\leq 4/\delta_1$.
From (\ref{eq: estimate of deviation in cut-off}), 
$|A_i -g_+(A(T))|\leq \delta_1$.
From the ASD equation (\ref{eq: parabolic ASD equation}) and $|F(\bm{A})|\leq d$, 
$|A(t)-A(T)|\leq d |t-T|\leq d\delta_1$.
Hence 
\begin{equation} \label{eq: bound on a by ASD equation}
 |a|\leq |A_i -g_+(A(T))| + |g_+(A(T))-g_+(A(t))| \leq (1+d)\delta_1 \quad (T\leq  t \leq T+\delta_1).
\end{equation}
Therefore, for $T\leq t\leq T+\delta_1$,
\[ |F^+(\bm{A}')|\leq 4(1+d) + d + (1+d)^2 = 5+7d +d^2.\]
\end{proof}
\begin{proposition} \label{prop: conclusion of cut-off}
$F(\bm{A}') =0$ over $|t|\geq T+1$, and $F^+(\bm{A}')$ is supported in $\{T< |t| <T+1\}$.
We have $|F(\bm{A}')|\leq d$ over $|t|\leq T$, and
\begin{equation}\label{eq: norms of error terms}
 \norm{F^+(\bm{A}')}_{L^\infty}\leq d',  \quad
 \norm{F^+(\bm{A}')}_{\mathrm{T}}\leq \varepsilon_0 = 1/(1000), 
\end{equation}
where $d' = d'(d)$ is a positive constant depending only on $d$.
Moreover 
\[ \frac{1}{8\pi^2} \int_X tr(F(\bm{A}')^2) 
\leq \frac{1}{8\pi^2} \int_{|t|\leq T} |F(\bm{A})|^2 d\vol + C_1(d)
\leq \frac{2Td^2\vol(S^3)}{8\pi^2} + C_1(d) .\]
Here $C_1(d)$ depends only on $d$.
\end{proposition}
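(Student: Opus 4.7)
The plan is to verify all six claims by analyzing the defining formula of $\bm{A}'$ on the pieces of its natural decomposition. On $|t| < T + \delta_1/4$, $\bm{A}'$ is gauge equivalent to $\bm{A}$ by construction of $\bm{E}'$, hence ASD with $|F(\bm{A}')| \leq d$; on $|t| \geq T+1$ the cutoff $\rho$ equals $1$, so $\bm{A}' = \hat A_{i,\pm T}$ and $F(\bm{A}') = 0$ by Proposition~\ref{prop: gluing instantons}(ii). These observations yield the first three assertions and localize the support of $F^+(\bm{A}')$ to $\{T < |t| < T+1\}$. For the $L^\infty$ bound I split this annulus into a \emph{transition zone} $T \leq |t| \leq T+\delta_1$ (where $\hat A_{i,T}$ agrees with the pulled-back $A_i$ and $\rho$ varies between $0$ and $1$) and an \emph{instanton-gluing zone} $T+\delta_1 < |t| < T+1$ (where $\bm{A}' = \hat A_{i,\pm T}$). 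Lemma~\ref{lemma: result of cut-off} gives $|F^+(\bm{A}')| \leq 5+7d+d^2$ on the transition zone, and $|F^+(\bm{A}')|$ is at most $\max_i \|F(\hat A_i)\|_{L^\infty}$ (a constant depending only on $d$, since the family $\{\hat A_i\}$ depends only on $d$) on the instanton-gluing zone. Taking $d'$ to be the maximum of these two settles the $L^\infty$ estimate.

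The Taubes-norm bound is the key step and the reason for the choice (\ref{eq: choice of delta_1}) of $\delta_1$. Using the translation invariance $g(\tau_s x, \tau_s y) = g(x,y)$ on $X = S^3\times \mathbb{R}$, each transition slab $S^3 \times [T, T+\delta_1]$ (and its symmetric $t < 0$ counterpart) is a translate of $S^3 \times [0,\delta_1] \subset S^3 \times (-\delta_1,\delta_1)$, so
\[ \sup_{x \in X} \int_{S^3 \times [T, T+\delta_1]} g(x,y)\,d\vol(y) \leq \delta_1'. \]
Combined with Lemma~\ref{lemma: result of cut-off} and the defining property (\ref{eq: choice of delta_1}), each of the two transition pieces contributes at most $(5+7d+d^2)\delta_1' \leq \varepsilon_0/4$ to $|F^+(\bm{A}')|_{\mathrm{T}}(x)$. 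Translation invariance similarly reduces each of the two instanton-gluing pieces to $\varepsilon_0/4$ via Proposition~\ref{prop: gluing instantons}(iii). Summing the four contributions gives $\|F^+(\bm{A}')\|_{\mathrm{T}} \leq \varepsilon_0$.

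For the Chern--Weil integral, decompose $X$ as $\{|t|\leq T\} \cup \{T < |t| < T+1\}$, noting that $F(\bm{A}')$ vanishes elsewhere. On the first piece $\bm{A}'$ is ASD, so $\mathrm{tr}(F(\bm{A}')^2) = |F(\bm{A})|^2 d\vol$ and the integral is bounded by $2Td^2\vol(S^3)$. On the annulus, which has finite volume $2\vol(S^3)$, I use $|\mathrm{tr}(F(\bm{A}')^2)| \leq |F(\bm{A}')|^2$ together with a $d$-dependent $L^\infty$ bound for $|F(\bm{A}')|$: on the transition zone this follows from the explicit formula $\bm{A}' = g_\pm(A(t)) + \rho a$ with $|a| \leq (1+d)\delta_1$ already used in the proof of Lemma~\ref{lemma: result of cut-off}, and on the instanton-gluing zone from $\max_i \|F(\hat A_i)\|_{L^\infty}$. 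Integrating over the annulus produces the constant $C_1(d)$. The main obstacle is the Taubes-norm step, whose success depends on the earlier engineering of $\delta_1$ via (\ref{eq: choice of delta_1}) to absorb the transition-zone constant $5+7d+d^2$ of Lemma~\ref{lemma: result of cut-off}; the other steps are careful but routine region-by-region checks building on Sections~\ref{section: solving ASD equation} and~\ref{subsection: gauge fixing on S^3}.
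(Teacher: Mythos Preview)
Your proof is correct and follows essentially the same approach as the paper: the same region-by-region decomposition into $\{|t|\leq T\}$, the transition zones $\{T\leq |t|\leq T+\delta_1\}$, and the instanton-gluing zones $\{T+\delta_1 < |t| < T+1\}$; the same use of Lemma~\ref{lemma: result of cut-off}, the choice (\ref{eq: choice of delta_1}), translation invariance of $g(x,y)$, and Proposition~\ref{prop: gluing instantons}(iii) for the Taubes-norm bound; and the same $L^\infty$ curvature bound on the annulus for the Chern--Weil estimate. The paper's proof is slightly terser but the content is the same.
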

\begin{proof}
The statements about the supports of $F(\bm{A}')$ and $F^+(\bm{A}')$ are obvious by the construction.
Since $\bm{A}'=\bm{A}$ over $|t|\leq T$, $|F(\bm{A}')|\leq d$ over $|t|\leq T$.
We have $\bm{A}' = \hat{A}_{i,T}$ for $t\geq T+\delta_1$ and $\bm{A}' = \hat{A}_{j,-T}$ for $t\leq -T-\delta_1$. 
Hence (from Lemma \ref{lemma: result of cut-off})
\[ \norm{F^+(\bm{A}')}_{L^\infty}\leq d' := \max\left(5+7d+d^2, 
\norm{F^+(\hat{A}_1)}_{L^\infty},\norm{F^+(\hat{A}_2)}_{L^\infty}
 \cdots, \norm{F^+(\hat{A}_N)}_{L^\infty}\right).\]
By using Lemma \ref{lemma: result of cut-off}, (\ref{eq: choice of delta_1}) and Proposition \ref{prop: gluing instantons} (iii)
(note that $g(x,y)$ is invariant under the translations $t \mapsto t-T$ and $t\mapsto t+T$),
\[ \norm{F^+(\bm{A}')}_{\mathrm{T}} \leq 2(5+7d+d^2)\delta_1' + \varepsilon_0/2 \leq \varepsilon_0.\]

We have $\bm{A}'= \bm{A}$ over $|t|\leq T$ and 
\[ F(\bm{A}') = (1-\rho)g_+\circ u_+(F(\bm{A})) + \rho F(A_i) +\rho'dt\wedge a + (\rho^2-\rho)a^2,\]
over $T<t<T+\delta_1$. 
Hence $|F(\bm{A}')|\leq \const_d$ over $T<|t|<T+\delta_1$ by using (\ref{eq: bound on a by ASD equation}).
Then the last statement can be easily proved.
\end{proof}

\subsection{Continuity of the cut-off} \label{subsection: continuity of the cut-off}
Fix $0\leq i,j\leq N$.
Let $[\bm{A}_n]$ $(n\geq 1)$ be a sequence in $\moduli_{d,T}(i,j)$ converging 
to $[\bm{A}] \in \moduli_{d,T}(i,j)$ in the $\mathcal{C}^\infty$-topology over every compact subset in $X$.
Let $[\bm{E}'_n,\bm{A}'_n]$ (respectively $[\bm{E}',\bm{A}']$) be the gauge equivalence classes of the connections
constructed by cutting off $[\bm{A}_n]$ (respectively $[\bm{A}]$) 
as in Section \ref{subsection: cut-off construction}.
\begin{lemma} \label{lemma: continuity in the cut-off}
There are gauge transformations $h_n: \bm{E}'_n\to \bm{E}'$ $(n\gg 1)$ such that $h_n(\bm{A}'_n) = \bm{A}'$ for $|t|\geq T+1$ and
$h_n(\bm{A}'_n)$ converges to $\bm{A}'$ in the $\mathcal{C}^\infty$-topology over $X$.
(Indeed, we will need only $\mathcal{C}^1$-convergence in the later argument)
\end{lemma}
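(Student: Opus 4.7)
The plan is to first reduce to convergence of the raw connections in $\mathcal{C}^\infty_{\mathrm{loc}}(X)$, then trace smooth dependence through every ingredient of the cut-off construction of Section \ref{subsection: cut-off construction}, and finally assemble the pieces into a single gauge transformation $h_n$ that is forced to be the identity outside a compact slab.

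By Lemma \ref{lemma: gauge equivariance of cut-off} the class $[\bm{E}'_n,\bm{A}'_n]$ depends only on $[\bm{A}_n]$, so after replacing each $\bm{A}_n$ by a gauge-equivalent representative (using the definition of convergence in $\moduli_d$; see Remark \ref{remark: another description of moduli_d}) I may assume $\bm{A}_n\to\bm{A}$ in $\mathcal{C}^\infty$ on every compact subset of $X$. The temporal gauges $u_{+,n}$ and $u_{-,n}$ are obtained by solving an ODE in $t$ with identity initial data at $t=\pm T$, so continuous dependence of ODE solutions on the coefficients gives $u_{\pm,n}\to u_\pm$ in $\mathcal{C}^\infty$ on compact subsets of $S^3\times[T,+\infty)$ and $S^3\times(-\infty,-T]$ respectively. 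The initial restrictions $A_n(\pm T)\to A(\pm T)$ follow, and Propositions \ref{prop: gauge fixing on S^3} and \ref{prop: gauge fixing on S^3: flat connection} then yield $[g_{\pm,n}]\to[g_\pm]$. Since the covering $\mathcal{G}\to\mathcal{G}/\{\pm 1\}$ is a local homeomorphism, one may choose lifts with $g_{\pm,n}\to g_\pm$ in $\mathcal{G}$ for all large $n$.

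The gauge transformation $h_n$ is built in three pieces: set $h_n=\mathrm{id}$ on the glued portions $E_T\times(T,+\infty)$ and $E_{-T}\times(-\infty,-T)$, and extend over $\bm{E}|_{|t|<T+\delta_1/4}$ by a gauge transformation $\psi_n$ to be chosen. Compatibility of this definition with the gluing maps $g_{\pm,n}u_{\pm,n}$ of $\bm{E}'_n$ and $g_\pm u_\pm$ of $\bm{E}'$ forces
\[
\psi_n|_{T<t<T+\delta_1/4}=u_+^{-1}g_+^{-1}g_{+,n}u_{+,n},\qquad \psi_n|_{-T-\delta_1/4<t<-T}=u_-^{-1}g_-^{-1}g_{-,n}u_{-,n},
\]
both of which tend to the identity in $\mathcal{C}^\infty_{\mathrm{loc}}$ by the previous paragraph. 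Because $\bm{E}$ is the product bundle these correspond to $SU(2)$-valued smooth functions on two disjoint strips separated by the full slab $\{|t|\leq T\}$; a cut-off-and-exponentiate interpolation in the $t$-variable extends them to a smooth bundle map $\psi_n$ on $\bm{E}|_{|t|<T+\delta_1/4}$ with $\psi_n\to\mathrm{id}$ in $\mathcal{C}^\infty_{\mathrm{loc}}$.

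The two claims of the lemma now fall out. On $|t|\geq T+1$ the connections $\bm{A}'_n$ and $\bm{A}'$ both equal the fixed $\hat{A}_{i,T}$ (resp.\ $\hat{A}_{j,-T}$) on the identified glued piece, where $h_n=\mathrm{id}$, so $h_n(\bm{A}'_n)=\bm{A}'$ exactly. On the compact slab $|t|\leq T+1$ the smooth convergence $h_n(\bm{A}'_n)\to\bm{A}'$ follows from the explicit formulas \eqref{eq: definition of A', first} and \eqref{eq: definition of A', second} together with the $\mathcal{C}^\infty_{\mathrm{loc}}$ convergence of $\psi_n$, $\bm{A}_n$, $u_{\pm,n}$, and $g_{\pm,n}$. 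The main technical care is in the middle extension of $\psi_n$; but the triviality of $\bm{E}$ and the wide separation of the two overlap strips reduce this to a soft gluing exercise for $SU(2)$-valued smooth functions, not to any hard analytic input.
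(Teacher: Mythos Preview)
Your argument is correct and follows the same overall strategy as the paper: reduce to $\mathcal{C}^\infty_{\mathrm{loc}}$-convergence of representatives, track the continuous dependence of the temporal gauges and of the Coulomb-type maps $\Phi_{A_i}$, and patch the pieces via a cut-off-and-exponentiate interpolation.

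The one genuine difference is \emph{where} you put the interpolation. The paper sets $h_n=\mathrm{id}$ on the central piece $\bm{E}|_{|t|<T+\delta_1/4}$ and defines $h_n:=e^{\varphi\chi_n}$ on the glued piece $E_T\times(T,+\infty)$, with the cutoff $\varphi$ supported in $T+\delta_1\leq t\leq T+1$; the compatibility condition $g_+u_+=e^{\chi_n}g_{+,n}u_{+,n}$ then holds automatically on the overlap, and the interpolation region lies entirely inside the domain $t\geq T$ where the temporal gauges are already defined. Your choice is the mirror image: $h_n=\mathrm{id}$ on the glued pieces and $\psi_n$ on the central piece. This forces $\psi_n=u_+^{-1}g_+^{-1}g_{+,n}u_{+,n}$ on the full overlap $(T,T+\delta_1/4)$, and the interpolation must then cross the boundary $t=T$ into the interior, so you need a smooth extension of $\psi_n$ past $t=T$ (e.g.\ by continuing the temporal-gauge ODEs backward, or by a Seeley-type extension) before you can cut off. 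That extra step is what you are implicitly invoking when you call it ``a soft gluing exercise''; it is indeed soft, but the paper's placement avoids it altogether. Both approaches yield the stated conclusion.
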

\begin{proof}
We can suppose that $\bm{A}_n$ converges to $\bm{A}$ in the $\mathcal{C}^\infty$-topology over $|t|\leq T+2$.
Let $u_{+,n}:\bm{E}|_{t\geq T}\to E_T\times [T, +\infty)$ (resp. $u_+$) 
be the temporal gauge of $\bm{A}_n$ (resp. $\bm{A}$), and 
set $A_n(t) := u_{+,n}(\bm{A}_n)$ and $A(t) := u_+(\bm{A})$ for $t\geq T$.
We set $[g_{+,n}]:=\Phi_{A_i}(A_n(T))$ and $[g_+] := \Phi_{A_i}(A(T))$ if $i>0$, and
we set $g_{+,n}:=\Phi_{\Theta}(A_n(T))$ and $g_+:=\Phi_{\Theta}(A(T))$ if $i=0$. 

$u_{+,n}$ converges to $u_+$ in the $\mathcal{C}^\infty$-topology over $T\leq t\leq T+1$, and 
we can suppose that $g_{+,n}$ converges to $g_+$ in the $\mathcal{C}^\infty$-topology.
Hence there are $\chi_n \in \Gamma(S^3\times [T,T+1], \ad E_T\times [T,T+1])$ $(n\gg 1)$ satisfying 
$g_{+}\circ u_{+} = e^{\chi_n}g_{+,n}\circ u_{+,n}$.
$\chi_n \to 0$ in the $\mathcal{C}^\infty$-topology over $T\leq t\leq T+1$.
Let $\varphi$ be a smooth function on $X$ 
such that $0\leq \varphi \leq 1$, $\varphi = 1$ over $t\leq T+\delta_1$
and $\varphi=0$ over $t\geq T+1$.
We define $h_n:\bm{E}'_n\to \bm{E}'$ $(n\gg 1)$ as follows.

\noindent
(i) In the case of $|t|<T+\delta_1/4$, we set $h_n :=\mathrm{id}:\bm{E}\to \bm{E}$.

\noindent 
(ii) In the case of $t>T$, we set 
$h_n := e^{\varphi\chi_n}:E_T\times (T,+\infty)\to E_T\times (T,+\infty)$.
This is compatible with the case (i).

\noindent 
(iii) In the case of $t<-T$, we define $h_n: E_{-T}\times (-\infty,-T)\to E_{-T}\times (-\infty,-T)$ in the same way
as in the above (ii).

Then we can easily check that these $h_n$ satisfy the required properties.
\end{proof}

\section{Proofs of the upper bounds} \label{Section: proof of the upper bound}
\subsection{Proof of $\dim(\moduli_d:\mathbb{R}) <\infty$} \label{subsection: proof of mean dim. < infty}
As in Section \ref{section: introduction}, $\bm{E} = X\times SU(2)$ and 
$\moduli_d$ $(d\geq 0)$ is the space of all gauge equivalence classes $[\bm{A}]$
where $\bm{A}$ is an ASD connection on $\bm{E}$ satisfying $\norm{F(\bm{A})}_{L^\infty}\leq d$.
We define a distance on $\moduli_d$ as follows.
For $[\bm{A}], [\bm{B}]\in \moduli_d$, we set 
\begin{equation*}
 \dist([\bm{A}], [\bm{B}]) 
 := \inf_{g:\bm{E}\to \bm{E}} 
 \left\{\sum_{n\geq 1} 2^{-n}\frac{\norm{g(\bm{A})-\bm{B}}_{L^\infty(|t|\leq n)}}
 {1+ \norm{g(\bm{A})-\bm{B}}_{L^\infty(|t|\leq n)}} \right\},
\end{equation*}
where $g$ runs over all gauge transformations of $\bm{E}$, and $|t|\leq n$ means the region 
$\{(\theta, t)\in S^3\times \mathbb{R}|\, |t|\leq n\}$.
This distance is compatible with the topology of $\moduli_d$ introduced in Section \ref{section: introduction}.
For $R=1,2,3,\cdots$, we define an amenable sequence $\Omega_R \subset \mathbb{R}$ by 
$\Omega_R =\{s\in \mathbb{R}|\, -R\leq s\leq R\}$.
We define $\dist_{\Omega_R}([\bm{A}], [\bm{B}])$ 
as in Section \ref{subsection: Review of mean dimension}, i.e., 
\[ \dist_{\Omega_R}([\bm{A}],[\bm{B}]) 
 := \sup_{s\in \Omega_R}\dist([s^*\bm{A}], [s^*\bm{B}]),\]
where $s^*\bm{A}$ is the pull-back of $\bm{A}$ by $s:\bm{E}\to \bm{E}$.

Let $\varepsilon>0$. 
We take a positive integer $L = L(\varepsilon)$ so that 
\begin{equation}\label{eq: definition of L}
 \sum_{n>L}2^{-n} < \varepsilon/2.
\end{equation}
We define $D=D(d, d', \varepsilon/4)$ as the positive number introduced in Lemma \ref{lemma: interior estimate},
where $d'=d'(d)$ is the positive constant introduced in Proposition \ref{prop: conclusion of cut-off}.
We set $T=T(R,d, \varepsilon) =R+L+D >0$. 

We have the decomposition $\moduli_d = \bigcup_{0 \leq i, j\leq N}\moduli_{d,T}(i, j)$
$(N = N(d))$ as in Section \ref{subsection: cut-off construction}. 
$\moduli_{d,T}(i,j)$ is the space of $[\bm{A}]\in \moduli_d$ satisfying 
$[\bm{A}|_{S^3\times\{T\}}]\in U_{A_i}$ and 
$[\bm{A}|_{S^3\times\{-T\}}]\in U_{A_j}$.
Fix $0 \leq i, j\leq N$.
Let $\bm{A}$ be an ASD connection on $\bm{E}$ satisfying $[\bm{A}]\in \moduli_{d,T}(i,j)$.
By the cut-off construction in Section \ref{subsection: cut-off construction},
we have constructed $(\bm{E}', \bm{A}')$
satisfying the following conditions (see Proposition \ref{prop: conclusion of cut-off}).
$\bm{E}'$ is a principal $SU(2)$-bundle over $X$, and $\bm{A}'$ is a connection on $\bm{E}'$ such that 
$F(\bm{A}') =0$ for $|t|\geq T+1$, $F^+(\bm{A}')$ is supported in $\{T<|t|<T+1\}$, and that
\[ \norm{F^+(\bm{A}')}_{\mathrm{T}}\leq \varepsilon_0, \quad \norm{F^+(\bm{A}')}_{L^\infty}\leq d' ,\quad
    \norm{F(\bm{A}')}_{L^\infty(|t|\leq T)} \leq d.\]
We can identify $\bm{E}'$ with $\bm{E}$ over $|t|<T+\delta_1/4$ by the definition, and 
\begin{equation} \label{eq: equation of (E,A,p) and (E',A',p')}
  \bm{A}'|_{|t|<T+\delta_1/4} = \bm{A}|_{|t|<T+\delta_1/4}.
\end{equation}
$(\bm{E}', \bm{A}')$ satisfies the conditions (i), (ii), (iii) in the beginning of 
Section \ref{subsection: construction}.
Therefore, by using the perturbation argument in Section \ref{section: solving ASD equation} 
(see Proposition \ref{prop: conclusion of the perturbation}),
we can construct the ASD connection $\bm{A}'' := \bm{A}' + d^*_{\bm{A}'}\phi_{\bm{A}'}$ on $\bm{E}'$.
By Lemma \ref{lemma: interior estimate},
\begin{equation} \label{eq: error over the interior region}
 |\bm{A}-\bm{A}''| = |\bm{A}'-\bm{A}''| \leq \varepsilon/4 \quad (|t|\leq T-D = R+L).
\end{equation}
From Proposition \ref{prop: conclusion of cut-off}
and Proposition \ref{prop: conclusion of the perturbation} (b),
\begin{equation}  \label{eq: energy bound} 
 \begin{split}
 \frac{1}{8\pi^2}\int_X |F(\bm{A}'')|^2d\vol &= \frac{1}{8\pi^2}\int_X tr(F(\bm{A}')^2) \\
    &\leq \frac{1}{8\pi^2}\int_{|t|\leq T} |F(\bm{A})|^2 d\vol + C_1(d) 
    \leq \frac{2Td^2\vol(S^3)}{8\pi^2} + C_1(d) ,
 \end{split}
\end{equation}
where $C_1(d)$ is a positive constant depending only on $d$.
Since the cut-off and perturbation constructions respect the gauge symmetry
(see Proposition \ref{prop: conclusion of the perturbation} and Lemma \ref{lemma: gauge equivariance of cut-off}), 
the gauge equivalence class $[\bm{E}',\bm{A}'']$ depends only on the gauge equivalence class $[\bm{A}]$.
We set $F_{i,j}([\bm{A}]) := [\bm{E}',\bm{A}'']$.

For $c\geq 0$, we define $M(c)$ as the space of all gauge equivalence classes $[E, A]$ 
satisfying the following.
$E$ is a principal $SU(2)$-bundle over $X$, and $A$ is an ASD connection on $E$ satisfying 
\[ \frac{1}{8\pi^2}\int_X |F_A|^2d\vol \leq c.  \]
The topology of $M(c)$ is defined as follows. 
A sequence $[E_n,A_n]\in M(c)$ $(n\geq 1)$ converges to $[E,A]\in M(c)$ if 
the following two conditions are satisfied:

\noindent 
(i) $\int_X |F(A_n)|^2d\vol = \int_X |F(A)|^2d\vol$ for $n\gg 1$.

\noindent 
(ii) There are gauge transformations $g_n:E_n\to E$ $(n\gg 1)$ such that for any compact set $K\subset X$ 
we have $\norm{g_n(A_n)-A}_{\mathcal{C}^0(K)} \to 0$.

Using the index theorem, we have 
\begin{equation}\label{eq: index theorem}
 \dim M(c) \leq 8c.
\end{equation}
Here $\dim M(c)$ denotes the topological covering dimension of $M(c)$.
By (\ref{eq: energy bound}), we get the map
\[ F_{i,j}: \moduli_{d,T}(i,j) \to M\left(\frac{2Td^2\vol(S^3)}{8\pi^2} + C_1(d)\right), \quad
  [\bm{A}]\mapsto [\bm{E}', \bm{A}'']  .\]
\begin{lemma} \label{lemma: varepsilon-embedding}
For $[\bm{A}_1]$ and $[\bm{A}_2]$ in $\moduli_{d,T}(i, j)$,
if $F_{i,j}([\bm{A}_1]) = F_{i,j}([\bm{A}_2])$, then 
\[ \dist_{\Omega_R}([\bm{A}_1],[\bm{A}_2]) < \varepsilon.\]
\end{lemma}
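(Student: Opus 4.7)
My plan is to turn the gauge equivalence $g(\bm{A}''_1)=\bm{A}''_2$ between the perturbed ASD connections into an approximate gauge equivalence of $\bm{A}_1$ and $\bm{A}_2$ on the cylinder $\{|t|\le R+L\}$, and then use the shift invariance built into $\dist_{\Omega_R}$ together with the interior perturbation estimate. First I would pick a bundle map $g:\bm{E}'_1\to\bm{E}'_2$ realizing $F_{i,j}([\bm{A}_1])=F_{i,j}([\bm{A}_2])$. By the cut-off construction each $\bm{E}'_\alpha$ canonically contains $\bm{E}|_{|t|<T+\delta_1/4}$ and (\ref{eq: equation of (E,A,p) and (E',A',p')}) gives $\bm{A}'_\alpha=\bm{A}_\alpha$ there, so the restriction of $g$ to this region defines a gauge transformation of $\bm{E}|_{|t|<T+\delta_1/4}$. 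Since $\bm{E}=X\times SU(2)$ is globally trivial, it is just an $SU(2)$-valued function; using a smooth cut-off I extend it to a global gauge transformation $\tilde g$ of $\bm{E}$ which coincides with $g$ on the smaller region $\{|t|\le R+L\}$ (possible because $R+L=T-D<T+\delta_1/4$).

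Next I would combine this with the interior estimate (\ref{eq: error over the interior region}), which says $|\bm{A}_\alpha-\bm{A}''_\alpha|\le\varepsilon/4$ on $\{|t|\le R+L\}$ for $\alpha=1,2$. Using that gauge transformations act as pointwise isometries on differences of connections, and that $\tilde g(\bm{A}''_1)=g(\bm{A}''_1)=\bm{A}''_2$ on this region, the triangle inequality yields
\begin{equation*}
 |\tilde g(\bm{A}_1)-\bm{A}_2|\le |\tilde g(\bm{A}_1)-\tilde g(\bm{A}''_1)|+|\bm{A}''_2-\bm{A}_2|=|\bm{A}_1-\bm{A}''_1|+|\bm{A}''_2-\bm{A}_2|\le\varepsilon/2
\end{equation*}
on $\{|t|\le R+L\}$. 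For each $s\in[-R,R]$ and each $n\le L$, the slab $\{|t|\le n\}$ pulled back by the shift $s$ lies inside $\{|t+s|\le n\}\subset\{|t|\le R+L\}$, so the shifted gauge transformation $s^*\tilde g$ satisfies $\|(s^*\tilde g)(s^*\bm{A}_1)-s^*\bm{A}_2\|_{L^\infty(|t|\le n)}\le\varepsilon/2$.

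Finally I would plug this bound into the series defining $\dist([s^*\bm{A}_1],[s^*\bm{A}_2])$: the head $\sum_{n=1}^{L}2^{-n}\frac{\varepsilon/2}{1+\varepsilon/2}$ is strictly less than $\varepsilon/2$ (since $x/(1+x)<x$ and $\sum_{n\ge 1}2^{-n}=1$), while by (\ref{eq: definition of L}) the tail $\sum_{n>L}2^{-n}$ is also less than $\varepsilon/2$, giving $\dist([s^*\bm{A}_1],[s^*\bm{A}_2])<\varepsilon$; taking the supremum over $s\in\Omega_R$ then yields the claim. Everything reduces to bookkeeping once the choice $T=R+L+D$ is in place—its role is precisely to guarantee that shifts by $s\in\Omega_R$ at scales $n\le L$ stay inside the interior region where Lemma \ref{lemma: interior estimate} holds. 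The most delicate step is therefore Step 1, where one must extend a gauge transformation defined only near the cut-off annulus to a global gauge transformation agreeing with $g$ on the interior; this works because $\bm{E}$ is trivial and because the buffer $\delta_1/4$ built into the cut-off leaves room to interpolate.
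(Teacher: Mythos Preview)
Your proof is correct and follows essentially the same approach as the paper's: obtain a gauge transformation $g$ on $\{|t|<T+\delta_1/4\}$ from the equality $F_{i,j}([\bm{A}_1])=F_{i,j}([\bm{A}_2])$, extend it globally to $\tilde g$ using the triviality of $\bm{E}$, combine the interior estimate (\ref{eq: error over the interior region}) with the triangle inequality to get $|\tilde g(\bm{A}_1)-\bm{A}_2|\le\varepsilon/2$ on $\{|t|\le R+L\}$, and then split the series defining $\dist$ at $n=L$. The paper's write-up is terser (it extends $\tilde g$ to agree with $g$ on the slightly larger set $\{|t|\le T\}$ and does not spell out the triangle-inequality step), but the argument is the same.
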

\begin{proof}
From (\ref{eq: equation of (E,A,p) and (E',A',p')}) and (\ref{eq: error over the interior region}),
there exists a gauge transformation $g$ of $\bm{E}$ defined over $|t| <T+\delta_1/4$ such that 
$|g(\bm{A}_1)-\bm{A}_2|\leq \varepsilon/2$ over $|t|\leq R+L$.
There exists a gauge transformation $\tilde{g}$ of $\bm{E}$ defined all over $X$ satisfying 
$\tilde{g}=g$ on $|t|\leq T = R+L+D$.
Then we have $|\tilde{g}(\bm{A}_1) -\bm{A}_2|\leq \varepsilon/2$ on $|t|\leq R+L$.
For $s \in \Omega_R$ (i.e. $|s|\leq R$), by using (\ref{eq: definition of L}), 
\begin{equation*}
 \begin{split}
 \dist([s^*\bm{A}_1], [s^*\bm{A}_2]) 
 &\leq \sum_{n\geq 1} 2^{-n}\frac{\norm{\tilde{g}(\bm{A}_1)-\bm{A}_2}_{L^\infty(|t-s|\leq n)}}{1
 +\norm{\tilde{g}(\bm{A}_1)-\bm{A}_2}_{L^\infty(|t-s|\leq n)}} \\
 &\leq\sum_{n=1}^L 2^{-n}(\varepsilon/2) + \sum_{n>L} 2^{-n} < \varepsilon/2+\varepsilon/2  = \varepsilon.
 \end{split}
\end{equation*}
\end{proof}
\begin{lemma} \label{lemma: continuity of F_{i,j}}
The map $F_{i,j}:\moduli_{d,T}(i,j)\to M\left(\frac{2Td^2\vol(S^3)}{8\pi^2} + C_1(d)\right)$ is continuous.
\end{lemma}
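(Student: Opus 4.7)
The plan is to stitch together the continuity of the cut-off (Lemma \ref{lemma: continuity in the cut-off}) with the continuity and gauge-equivariance of the perturbation (Propositions \ref{prop: continuity of the perturbation in C^0} and \ref{prop: conclusion of the perturbation}). Given $[\bm{A}_n]\to[\bm{A}]$ in $\moduli_{d,T}(i,j)$, I would first pass to gauge-equivalent representatives so that $\bm{A}_n\to \bm{A}$ in $\mathcal{C}^\infty$ on every compact subset of $X$. Running the cut-off construction of Section \ref{subsection: cut-off construction} produces $(\bm{E}'_n,\bm{A}'_n)$ and $(\bm{E}',\bm{A}')$, and Lemma \ref{lemma: continuity in the cut-off} then supplies gauge transformations $h_n:\bm{E}'_n\to\bm{E}'$ (for $n\gg 1$) with $h_n(\bm{A}'_n)=\bm{A}'$ on $|t|\geq T+1$ and $h_n(\bm{A}'_n)\to \bm{A}'$ in $\mathcal{C}^\infty$ on all of $X$.

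Set $\rho:=\bm{A}'|_{|t|>T+1}$. The connections $\bm{A}'$ and every $h_n(\bm{A}'_n)$ then lie in the space $\mathcal{A}'$ of Section \ref{section: continuity of the perturbation}: they coincide with $\rho$ for $|t|>T+1$, their self-dual curvature is supported in $\{T<|t|<T+1\}$, and by Proposition \ref{prop: conclusion of cut-off} each satisfies $\norm{F^+}_{\mathrm{T}}\leq \varepsilon_0$. Because $h_n(\bm{A}'_n)\to \bm{A}'$ in $\mathcal{C}^1$, Proposition \ref{prop: continuity of the perturbation in C^0} yields both $d^*_{h_n(\bm{A}'_n)}\phi_{h_n(\bm{A}'_n)}\to d^*_{\bm{A}'}\phi_{\bm{A}'}$ in $\mathcal{C}^0$ on every compact subset and the identity
\[
 \int_X |F(h_n(\bm{A}'_n)+d^*_{h_n(\bm{A}'_n)}\phi_{h_n(\bm{A}'_n)})|^2\,d\vol
 = \int_X |F(\bm{A}'+d^*_{\bm{A}'}\phi_{\bm{A}'})|^2\,d\vol .
\]

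I would finish by invoking the gauge-equivariance clause of Proposition \ref{prop: conclusion of the perturbation}: $\phi_{h_n(\bm{A}'_n)}=h_n(\phi_{\bm{A}'_n})$, so $h_n(\bm{A}''_n)=h_n(\bm{A}'_n)+d^*_{h_n(\bm{A}'_n)}\phi_{h_n(\bm{A}'_n)}$. Combining with the previous step, $h_n(\bm{A}''_n)\to\bm{A}''$ in $\mathcal{C}^0$ on compact subsets and the energies $\int_X|F(h_n(\bm{A}''_n))|^2\,d\vol$ are constant equal to $\int_X|F(\bm{A}'')|^2\,d\vol$ for $n\gg 1$; these are exactly conditions (i) and (ii) defining convergence in $M(c)$, so $F_{i,j}([\bm{A}_n])\to F_{i,j}([\bm{A}])$. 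The only genuine subtlety is ensuring that the bundle identifications made during the cut-off and those needed to apply the gauge-equivariant perturbation line up coherently. This is precisely what Lemma \ref{lemma: continuity in the cut-off} was engineered to provide, so no further analytic input should be required beyond piecing these statements together.
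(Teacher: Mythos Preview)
Your proposal is correct and follows essentially the same route as the paper's proof: invoke Lemma \ref{lemma: continuity in the cut-off} to transport the cut-off connections to the fixed bundle $\bm{E}'$, then apply Proposition \ref{prop: continuity of the perturbation in C^0} together with the gauge-equivariance of Proposition \ref{prop: conclusion of the perturbation} to conclude convergence of $h_n(\bm{A}''_n)$ to $\bm{A}''$ in the topology of $M(c)$. Your explicit verification that the transported connections $h_n(\bm{A}'_n)$ lie in $\mathcal{A}'$ (same flat part $\rho$ on $|t|>T+1$, self-dual curvature supported in $\{T<|t|<T+1\}$, Taubes norm $\leq\varepsilon_0$) is a welcome clarification that the paper leaves implicit.
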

\begin{proof}
Let $[\bm{A}_n]\in \moduli_{d,T}(i,j)$ 
be a sequence converging to $[\bm{A}]\in \moduli_{d,T}(i,j)$.
From Lemma \ref{lemma: continuity in the cut-off}, 
there are gauge transformations $h_n:\bm{E}'_n \to \bm{E}'$ $(n\gg 1)$ such that
$h_n(\bm{A}'_n) = \bm{A}'$ over $|t|\geq T+1$ and 
that $h_n(\bm{A}'_n)$ converges to $\bm{A}'$ in the $\mathcal{C}^\infty$-topology over $X$.
Since the perturbation construction in Section \ref{section: solving ASD equation} is gauge equivariant 
(Proposition \ref{prop: conclusion of the perturbation}), we have 
\[ (\bm{E}', h_n(\bm{A}'_n) + d_{h_n(\bm{A}'_n)}^*\phi_{h_n(\bm{A}'_n)})  
    = (h_n(\bm{E}'_n), h_n(\bm{A}''_n)).\]
From Proposition \ref{prop: continuity of the perturbation in C^0},
$d_{h_n(\bm{A}'_n)}^*\phi_{h_n(\bm{A}'_n)}$ converges to $d_{\bm{A}'}^*\phi_{\bm{A}'}$ in the 
$\mathcal{C}^0$-topology over every compact subset in $X$ and 
\[ \int_X |F(h_n(\bm{A}'_n) + d_{h_n(\bm{A}'_n)}^*\phi_{h_n(\bm{A}'_n)})|^2d\vol 
   = \int_X |F(\bm{A}' + d_{\bm{A}'}^*\phi_{\bm{A}'})|^2d\vol  \quad \text{for $n\gg 1$}.\]
This shows that the sequence $[\bm{E}'_n, \bm{A}''_n] = 
[\bm{E}', h_n(\bm{A}'_n) + d_{h_n(\bm{A}'_n)}^*\phi_{h_n(\bm{A}'_n)}]$ $(n\geq 1)$ converges 
to $[\bm{E}',\bm{A}''] = [\bm{E}',\bm{A}'+d^*_{\bm{A}'}\phi_{\bm{A}'}]$ 
in $M_T\left(\frac{2Td^2\vol(S^3)}{8\pi^2} + C_1(d)\right)$.
\end{proof}
From Lemmas \ref{lemma: varepsilon-embedding} and \ref{lemma: continuity of F_{i,j}}, 
$F_{i,j}$ becomes an $\varepsilon$-embedding with respect to the distance $\dist_{\Omega_R}$. Hence 
\[ \widim_\varepsilon(\moduli_{d,T}(i,j),\dist_{\Omega_R})
 \leq \dim M\left(\frac{2Td^2\vol(S^3)}{8\pi^2} + C_1(d)\right).\]
Since $\moduli_d=\bigcup_{0 \leq i,j\leq N}\moduli_{d,T}(i,j)$ (each $\moduli_{d,T}(i,j)$ is a closed set),
by using Lemma \ref{lemma: widim and space sum}, we get
\[ \widim_\varepsilon(\moduli_d, \dist_{\Omega_R})
\leq (N+1)^2\dim M\left(\frac{2Td^2\vol(S^3)}{8\pi^2} + C_1(d)\right) +(N+1)^2-1.\]
From (\ref{eq: index theorem}) and $T=R+L+D$,
\[ \dim M \left(\frac{2Td^2\vol(S^3)}{8\pi^2} + C_1(d)\right) 
\leq \frac{2(R+L+D)d^2\vol(S^3)}{\pi^2} + 8C_1(d).\]
Since $N=N(d)$, $L=L(\varepsilon)$, $D=D(d,d'(d),\varepsilon/4)$ are independent of $R$, we get 
\[ \widim_\varepsilon(\moduli_d:\mathbb{R}) = 
\lim_{R\to \infty}\frac{\widim_\varepsilon (\moduli_d,\dist_{\Omega_R})}{|\Omega_R|}
\leq \frac{(N+1)^2d^2\vol(S^3)}{\pi^2}.\]
This holds for any $\varepsilon>0$. Thus 
\[ \dim(\moduli_d:\mathbb{R}) = \lim_{\varepsilon\to 0}\widim_\varepsilon(\moduli_d:\mathbb{R})
  \leq  \frac{(N+1)^2d^2\vol(S^3)}{\pi^2} < \infty.\]

\subsection{Upper bound on the local mean dimension} \label{subsection: upper bound on the local mean dimension}
\begin{lemma}\label{lemma: Lebesgue number}
There exists $r_1=r_1(d)>0$ satisfying the following. 
For any $[\bm{A}]\in \moduli_d$ and $s\in \mathbb{R}$, 
there exists an integer $i$ $(0\leq i\leq N)$ such that if $[\bm{B}]\in \moduli_d$ satisfies
$\dist_{\mathbb{R}}([\bm{A}],[\bm{B}])\leq r_1$ then 
\[ [\bm{B}|_{S^3\times \{s\}}]\in U_{A_i}.\]
Here we identifies $\bm{E}|_{S^3\times\{s\}}$ with $F$, and $U_{A_i}$ is the closed set introduced in 
Section \ref{subsection: gauge fixing on S^3}.
Recall $\dist_{\mathbb{R}}([\bm{A}],[\bm{B}]) 
= \sup_{s\in \mathbb{R}}\dist([s^*\bm{A}],[s^*\bm{B}])$.
\end{lemma}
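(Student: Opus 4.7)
The plan is to apply the Lebesgue number lemma to the compact metric space $(\moduli_d,\dist)$ together with an open cover pulled back from the cover $\{\mathrm{Int}(U_{A_i})\}_{i=0}^N$ of $K_d$. First I would consider the restriction map
\[
r \colon \moduli_d \to \mathcal{B}_{S^3}, \qquad [\bm{A}] \mapsto [\bm{A}|_{S^3\times\{0\}}],
\]
under the natural identification of $\bm{E}|_{S^3\times\{0\}}$ with $F$. This map is continuous: if $[\bm{A}_n]\to [\bm{A}]$ in $\moduli_d$, then by Remark \ref{remark: another description of moduli_d} there exist gauge transformations $g_n$ of $\bm{E}$ with $g_n(\bm{A}_n)\to \bm{A}$ in $\mathcal{C}^\infty$ on every compact subset; restricting each $g_n$ to $S^3\times\{0\}$ yields gauge transformations of $F$ witnessing the convergence $r([\bm{A}_n])\to r([\bm{A}])$ in $\mathcal{B}_{S^3}$.

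By the construction of $A_0,A_1,\dots,A_N$ in Section \ref{subsection: gauge fixing on S^3}, the image $K_d = r(\moduli_d)$ is contained in $\bigcup_{i=0}^N \mathrm{Int}(U_{A_i})$, so the preimages $V_i := r^{-1}(\mathrm{Int}(U_{A_i}))$ form an open cover of $\moduli_d$. Since $(\moduli_d,\dist)$ is a compact metric space (the distance $\dist$ being compatible with the given topology by construction), the Lebesgue number lemma produces $r_1 = r_1(d) > 0$ such that every closed $\dist$-ball of radius $r_1$ in $\moduli_d$ is contained in some $V_i$.

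Finally, given $[\bm{A}]\in \moduli_d$ and $s\in \mathbb{R}$, choose $i = i(\bm{A},s)$ so that the closed $\dist$-ball of radius $r_1$ around $[s^*\bm{A}]$ lies in $V_i$. For any $[\bm{B}]\in \moduli_d$ with $\dist_{\mathbb{R}}([\bm{A}],[\bm{B}]) \le r_1$, taking the translation by $s$ inside the supremum defining $\dist_{\mathbb{R}}$ yields
\[
\dist([s^*\bm{A}],[s^*\bm{B}]) \le \dist_{\mathbb{R}}([\bm{A}],[\bm{B}]) \le r_1,
\]
so $[s^*\bm{B}]\in V_i$, i.e., $[\bm{B}|_{S^3\times\{s\}}] = r([s^*\bm{B}]) \in \mathrm{Int}(U_{A_i}) \subset U_{A_i}$, as required. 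The argument is essentially formal, and no serious analytic obstacle arises; the only point needing a brief verification is the continuity of $r$, which is immediate from the definition of the topology of $\moduli_d$.
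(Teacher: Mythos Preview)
Your proof is correct and follows essentially the same approach as the paper: both use the Lebesgue number lemma applied to the open cover of $\moduli_d$ pulled back from $\{\mathrm{Int}(U_{A_i})\}$ via the restriction-at-$t=0$ map, and then translate by $s$ using $\dist([s^*\bm{A}],[s^*\bm{B}])\le \dist_{\mathbb{R}}([\bm{A}],[\bm{B}])$ together with the identity $[(s^*\bm{B})|_{S^3\times\{0\}}]=[\bm{B}|_{S^3\times\{s\}}]$. Your write-up is simply more explicit about the continuity of the restriction map, which the paper leaves implicit.
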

\begin{proof}
There exists $r_1>0$ (the Lebesgue number) satisfying the following.
For any $[\bm{A}]\in \moduli_d$, there exists $i = i([\bm{A}])$ such that if 
$[\bm{B}]\in \moduli_d$ satisfies $\dist([\bm{A}],[\bm{B}])\leq r_1$ then 
$[\bm{B}|_{S^3\times\{0\}}]\in U_{A_i}$.
If $\dist_{\mathbb{R}}([\bm{A}],[\bm{B}])\leq r_1$, then for each $s\in \mathbb{R}$ we have 
$\dist([s^*\bm{A}],[s^*\bm{B}])\leq r_1$ and hence 
\[ [\bm{B}|_{S^3\times \{s\}}] = [(s^*\bm{B})|_{S^3\times \{0\}}]
\in U_{A_i},\]
for $i= i([s^*\bm{A}])$.
\end{proof}
\begin{lemma} \label{lemma: the difference of the curvatures is small}
For any $\varepsilon'>0$, there exists $r_2=r_2(\varepsilon')>0$ such that if $[\bm{A}]$ and $[\bm{B}]$
in $\moduli_d$ satisfy $\dist_{\mathbb{R}}([\bm{A}], [\bm{B}])\leq r_2$ then
\[ \norm{|F(\bm{A})|^2-|F(\bm{B})|^2}_{L^\infty(X)}\leq \varepsilon'.\]
\end{lemma}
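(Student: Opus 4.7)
The plan is to argue by contradiction, using translation-invariance of $\dist_{\mathbb{R}}$ and the Uhlenbeck compactness of $\moduli_d$ to reduce the global $L^\infty(X)$ statement to pointwise convergence on the compact slice $S^3\times\{0\}$, where continuity of $|F|^2$ is immediate from the definition of the $\moduli_d$-topology.

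Suppose the conclusion fails. Then there exist $\varepsilon'>0$, sequences $[\bm{A}_n],[\bm{B}_n]\in \moduli_d$ with $\dist_{\mathbb{R}}([\bm{A}_n],[\bm{B}_n])\to 0$, and points $x_n=(\theta_n,t_n)\in X$ such that $\bigl||F(\bm{A}_n)|^2(x_n)-|F(\bm{B}_n)|^2(x_n)\bigr|\geq \varepsilon'$. Since $\dist_{\mathbb{R}}$ is translation-invariant (the defining supremum $\sup_{s\in\mathbb{R}}$ absorbs any shift), and pulling back by $t_n$ merely shifts the $t$-coordinate, I replace $[\bm{A}_n]$ by $[t_n^*\bm{A}_n]$ and $[\bm{B}_n]$ by $[t_n^*\bm{B}_n]$ and take $x_n=(\theta_n,0)$. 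Then, by compactness of $S^3$ and the Uhlenbeck compactness of $\moduli_d$, after extracting subsequences I may assume $\theta_n\to\theta_\infty\in S^3$, $[\bm{A}_n]\to [\bm{A}_\infty]$, and $[\bm{B}_n]\to [\bm{B}_\infty]$ in $\moduli_d$.

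Since $\dist\leq \dist_{\mathbb{R}}$ (take $s=0$ in the defining supremum), we have $\dist([\bm{A}_n],[\bm{B}_n])\to 0$. Because $\moduli_d$ is Hausdorff under $\dist$, the two limits coincide: $[\bm{A}_\infty]=[\bm{B}_\infty]$. By definition of the $\moduli_d$-topology there exist gauge transformations $g_n,h_n$ of $\bm{E}$ with $g_n(\bm{A}_n)\to \bm{A}_\infty$ and $h_n(\bm{B}_n)\to \bm{A}_\infty$ in $\mathcal{C}^\infty$ on every compact subset of $X$. Gauge invariance of $|F|^2$ then yields uniform convergence of $|F(\bm{A}_n)|^2$ and $|F(\bm{B}_n)|^2$ to $|F(\bm{A}_\infty)|^2$ on the compact set $S^3\times\{0\}$. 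Evaluating at $(\theta_n,0)\to(\theta_\infty,0)$ forces
\[ |F(\bm{A}_n)|^2(\theta_n,0)-|F(\bm{B}_n)|^2(\theta_n,0)\to 0, \]
contradicting the choice of $x_n$.

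The main subtlety is the identification $[\bm{A}_\infty]=[\bm{B}_\infty]$: it rests on the fact that the $\dist$-topology on $\moduli_d$ is genuinely Hausdorff, which is a consequence of Uhlenbeck's compactness theorem applied to minimizing sequences of gauge transformations. Once this is granted, the argument is purely soft, and all of the analytic content (elliptic estimates, gauge fixing, curvature bounds) has already been absorbed into the compactness and metrizability of $\moduli_d$.
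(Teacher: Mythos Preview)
Your proof is correct and rests on the same ingredients as the paper's: translation-invariance to reduce to a fixed compact slice, compactness of $\moduli_d$, and continuity of $[\bm{A}]\mapsto |F(\bm{A})|^2$ on that slice. The paper simply phrases this directly as uniform continuity of the map $\moduli_d\to \mathcal{C}^0(S^3\times[0,1])$, $[\bm{A}]\mapsto |F(\bm{A})|^2$, and then translates, whereas you unpack the same uniform continuity via a contradiction/subsequence argument on the slice $S^3\times\{0\}$.
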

\begin{proof}
The map $\moduli_d\ni [\bm{A}]\mapsto |F(\bm{A})|^2 \in \mathcal{C}^0(S^3\times [0,1])$ is continuous.
Hence there exists $r_2>0$ such that if $\dist([\bm{A}],[\bm{B}])\leq r_2$ then 
\[ \norm{|F(\bm{A})|^2-|F(\bm{B})|^2}_{L^\infty(S^3\times [0,1])} \leq \varepsilon'.\]
Then for each $s\in \mathbb{R}$, if $\dist([s^*\bm{A}],[s^*\bm{B}])\leq r_2$, 
\[ \norm{|F(\bm{A})|^2-|F(\bm{B})|^2}_{L^\infty(S^3\times [s,s+1])} \leq \varepsilon'.\]
Therefore if $\dist_{\mathbb{R}}([\bm{A}],[\bm{B}])\leq r_2$, then 
$\norm{|F(\bm{A})|^2-|F(\bm{B})|^2}_{L^\infty(X)}\leq \varepsilon'$.
\end{proof}
Let $[\bm{A}]\in \moduli_d$, and $\varepsilon, \varepsilon'>0$ be arbitrary two positive numbers.
There exists $T_0=T_0([\bm{A}],\varepsilon')>0$ such that for any $T_1\geq T_0$
\[ \frac{1}{8\pi^2 T_1}\sup_{t\in \mathbb{R}}\int_{S^3\times [t,t+T_1]}|F(\bm{A})|^2 d\vol \leq \rho(\bm{A}) +\varepsilon'/2.\]
The important point for the later argument is the following:
We can arrange $T_0$ so that $T_0([s^*\bm{A}],\varepsilon')=T_0([\bm{A}],\varepsilon')$
for all $s\in \mathbb{R}$.
We set
\[ r=r(d,\varepsilon') = \min\left(r_1(d), r_2\left(\frac{4\pi^2\varepsilon'}{\vol(S^3)}\right)\right),\]
where $r_1(\cdot)$ and $r_2(\cdot)$ are the positive constants introduced in Lemmas \ref{lemma: Lebesgue number} and
\ref{lemma: the difference of the curvatures is small}.
By Lemma \ref{lemma: the difference of the curvatures is small}, 
if $[\bm{B}]\in B_r([\bm{A}])_{\mathbb{R}}$ 
(the closed ball of radius $r$ centered at $[\bm{A}]$ in $\moduli_d$ with respect to the distance $\dist_{\mathbb{R}}$), 
then for any $T_1\geq T_0$
\begin{equation}\label{eq: bound of the energy of [B,q] around [A,p]}
 \frac{1}{8\pi^2T_1}\sup_{t\in \mathbb{R}}\int_{S^3\times [t,t+T_1]}|F(\bm{B})|^2d\vol 
 \leq \rho(\bm{A})+\varepsilon'/2 +\varepsilon'/2 = \rho(\bm{A})+\varepsilon'.
\end{equation}

We define positive numbers $L=L(\varepsilon)$ and $D=D(d,d'(d),\varepsilon/4)$ as in the previous subsection.
($L=L(\varepsilon)$ is a positive integer satisfying (\ref{eq: definition of L}), and
$D=D(d,d'(d),\varepsilon/4)$ is the positive number introduced in Lemma \ref{lemma: interior estimate}.)
Let $R$ be an integer with $R\geq T_0$, and set $T:=R+L+D$.
By Lemma \ref{lemma: Lebesgue number}, there exist $i,j$ $(0\leq i, j\leq N)$ depending on 
$[\bm{A}]$ and $T$ such that all $[\bm{B}]\in B_r([\bm{A}])_{\mathbb{R}}$
satisfy $[\bm{B}|_{S^3\times \{T\}}]\in U_{A_i}$ and 
$[\bm{B}|_{S^3\times\{-T\}}]\in U_{A_j}$.
(That is, $B_r([\bm{A}])_{\mathbb{R}}\subset \moduli_{d,T}(i,j)$.)

As in the previous subsection, by using the cut-off construction and perturbation, for each 
$[\bm{B}]\in B_r([\bm{A}])_{\mathbb{R}}$ we can construct the ASD connection
$[\bm{E}',\bm{B}'']$.
By (\ref{eq: energy bound}), (\ref{eq: bound of the energy of [B,q] around [A,p]}) and $T\geq T_0$, 
\[ \frac{1}{8\pi^2}\int_X |F(\bm{B}'')|^2d\vol 
\leq \frac{1}{8\pi^2}\int_{|t|\leq T}|F(\bm{B})|^2d\vol + C_1(d)
\leq 2T(\rho(\bm{A})+\varepsilon')+C_1(d), \]
where $C_1(d)$ depends only on $d$.
Therefore we get the map
\[ B_r([\bm{A}])_{\mathbb{R}}\to M(2T(\rho(\bm{A})+\varepsilon')+C_1(d)),
\quad [\bm{B}]\mapsto [\bm{E}',\bm{B}''].\]
This is an $\varepsilon$-embedding with respect to the distance $\dist_{\Omega_R}$ by Lemmas
\ref{lemma: varepsilon-embedding} and \ref{lemma: continuity of F_{i,j}}.
Therefore we get (by (\ref{eq: index theorem}))
\[ \widim_\varepsilon(B_r([\bm{A}])_{\mathbb{R}},\dist_{\Omega_R})
\leq 16T(\rho(\bm{A})+\varepsilon')+8C_1(d), \]
for $R\geq T_0([\bm{A}],\varepsilon')$ and $r=r(d,\varepsilon')$.
As we pointed out before, we have $T_0([s^*\bm{A}],\varepsilon')=T_0([\bm{A}],\varepsilon')$ 
for $s\in \mathbb{R}$.
Hence for all $s \in \mathbb{R}$ and $R\geq T_0 = T_0([\bm{A}],\varepsilon')$,
we have the same upper bound on 
$\widim_\varepsilon(B_r([s^*\bm{A}])_{\mathbb{R}},\dist_{\Omega_R})$.
Then for $R\geq T_0$,
\[ \frac{1}{|\Omega_R|}\sup_{s\in \mathbb{R}}
\widim_\varepsilon(B_r([s^*\bm{A}])_{\mathbb{R}},\dist_{\Omega_R}) \leq 
\frac{16T(\rho(\bm{A})+\varepsilon')+8C_1(d)}{2R}.\]
$T=R+L+D$. $L=L(\varepsilon)$ and $D=D(d,d'(d),\varepsilon/4)$ are independent of $R$. 
Hence
\begin{equation*}
 \begin{split}
  \widim_\varepsilon(B_r([\bm{A}])_{\mathbb{R}}\subset \moduli_d:\mathbb{R}) 
  &=\lim_{R\to \infty}\left(\frac{1}{|\Omega_R|}\sup_{s\in \mathbb{R}} 
  \widim_\varepsilon(B_r([s^*\bm{A}])_{\mathbb{R}},\dist_{\Omega_R})\right) \\
  &\leq 8(\rho(\bm{A})+\varepsilon').
 \end{split}
\end{equation*}
Here we have used (\ref{eq: one description of local mean widim}).
This holds for any $\varepsilon>0$. (Note that $r=r(d,\varepsilon')$ is independent of $\varepsilon$.)
Hence 
\begin{equation*}
 \dim(B_r([\bm{A}])_{\mathbb{R}}\subset \moduli_d:\mathbb{R}) 
 = \lim_{\varepsilon\to 0}\widim_\varepsilon(B_r([\bm{A}])_{\mathbb{R}}\subset \moduli_d:\mathbb{R}) 
 \leq 8(\rho(\bm{A})+\varepsilon').
\end{equation*}
Since $\dim_{[\bm{A}]}(\moduli_d:\mathbb{R})\leq \dim(B_r([\bm{A}])_{\mathbb{R}}\subset \moduli_d:\mathbb{R})$,
\[ \dim_{[\bm{A}]}(\moduli_d:\mathbb{R})\leq  8(\rho(\bm{A})+\varepsilon').\]
This holds for any $\varepsilon'>0$. Thus 
\[ \dim_{[\bm{A}]}(\moduli_d:\mathbb{R})\leq 8\rho(\bm{A}).\]
Therefore we get the conclusion:
\begin{theorem}
For any $[\bm{A}]\in \moduli_d$, 
\[\dim_{[\bm{A}]}(\moduli_d:\mathbb{R})\leq 8\rho(\bm{A}). \]
\end{theorem}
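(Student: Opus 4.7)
The plan is to localize the argument of Section~\ref{subsection: proof of mean dim. < infty}: instead of bounding the energy of the perturbed connection crudely by $2Td^2\vol(S^3)/(8\pi^2)$, I will control it using $\rho(\bm{A})$ on a small ball around $[\bm{A}]$ in the $\dist_{\mathbb{R}}$-metric, and then run the same cut-off/perturbation/$\varepsilon$-embedding machinery.

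Fix $\varepsilon'>0$. By the definition of $\rho(\bm{A})$ in (\ref{eq: definition of rho(A)}), there is $T_0=T_0([\bm{A}],\varepsilon')>0$ with
\[ \frac{1}{8\pi^2 T_1}\sup_{t\in\mathbb{R}}\int_{S^3\times[t,t+T_1]}|F(\bm{A})|^2\,d\vol \leq \rho(\bm{A})+\varepsilon'/2 \qquad (T_1\geq T_0). \]
Because $\rho(s^*\bm{A})=\rho(\bm{A})$ and the supremum above ranges over all $t$, this same $T_0$ serves every translate $s^*\bm{A}$. Then choose
\[ r=\min\Bigl(r_1(d),\, r_2\bigl(4\pi^2\varepsilon'/\vol(S^3)\bigr)\Bigr) \]
with $r_1,r_2$ from Lemmas~\ref{lemma: Lebesgue number} and~\ref{lemma: the difference of the curvatures is small}. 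The first lemma ensures that for each $T>0$ there exist $i,j$ depending on $([\bm{A}],T)$ with $B_r([\bm{A}])_{\mathbb{R}}\subset\moduli_{d,T}(i,j)$; the second gives $\norm{|F(\bm{B})|^2-|F(\bm{A})|^2}_{L^\infty}\leq 4\pi^2\varepsilon'/\vol(S^3)$ for $[\bm{B}]\in B_r([\bm{A}])_{\mathbb{R}}$, so every window integral of $|F(\bm{B})|^2$ of length $T_1\geq T_0$ is within $4\pi^2 T_1\varepsilon'$ of the corresponding one for $\bm{A}$, hence bounded by $8\pi^2 T_1(\rho(\bm{A})+\varepsilon')$.

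Now, for each $\varepsilon>0$ and each integer $R\geq T_0$, set $T=R+L(\varepsilon)+D(d,d'(d),\varepsilon/4)$ and apply the cut-off of Section~\ref{section: cut-off constructions} followed by the perturbation of Proposition~\ref{prop: conclusion of the perturbation} to every $[\bm{B}]\in B_r([\bm{A}])_{\mathbb{R}}$. The key gain over the global argument is that Proposition~\ref{prop: conclusion of cut-off} combined with the window bound above yields
\[ \frac{1}{8\pi^2}\int_X|F(\bm{B}'')|^2\,d\vol \leq 2T(\rho(\bm{A})+\varepsilon')+C_1(d). \]
The proofs of Lemmas~\ref{lemma: varepsilon-embedding} and~\ref{lemma: continuity of F_{i,j}} go through verbatim inside $B_r([\bm{A}])_{\mathbb{R}}$ and produce a continuous $\varepsilon$-embedding into $M\bigl(2T(\rho(\bm{A})+\varepsilon')+C_1(d)\bigr)$ with respect to $\dist_{\Omega_R}$. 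The index-theorem estimate $\dim M(c)\leq 8c$ then gives
\[ \widim_\varepsilon(B_r([\bm{A}])_{\mathbb{R}},\dist_{\Omega_R}) \leq 16T(\rho(\bm{A})+\varepsilon')+8C_1(d). \]

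Since $T_0$ and $r$ are translation invariant in $[\bm{A}]$, the identical bound holds for every $[s^*\bm{A}]$; feeding it into (\ref{eq: one description of local mean widim}), dividing by $|\Omega_R|=2R$, and using $T/R\to 1$ as $R\to\infty$ yields
\[ \widim_\varepsilon\bigl(B_r([\bm{A}])_{\mathbb{R}}\subset\moduli_d:\mathbb{R}\bigr) \leq 8(\rho(\bm{A})+\varepsilon'). \]
Sending $\varepsilon\to 0$ gives $\dim\bigl(B_r([\bm{A}])_{\mathbb{R}}\subset\moduli_d:\mathbb{R}\bigr)\leq 8(\rho(\bm{A})+\varepsilon')$; the definition of $\dim_{[\bm{A}]}$ and the arbitrariness of $\varepsilon'$ then give the conclusion. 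The main obstacle to watch is the uniformity in translations: the ball $B_r([\bm{A}])_{\mathbb{R}}$ must sit inside a single piece $\moduli_{d,T}(i,j)$ for \emph{every} translate $[s^*\bm{A}]$ simultaneously, which is exactly why the ball is defined with $\dist_{\mathbb{R}}$ rather than $\dist$ and why the translation-invariant quantity $\rho$, rather than $\norm{F}_{L^\infty}$, appears on the right-hand side.
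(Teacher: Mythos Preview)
Your proposal is correct and follows the paper's own argument in Section~\ref{subsection: upper bound on the local mean dimension} essentially step for step: the same choice of $T_0$, the same $r=\min(r_1,r_2)$ via Lemmas~\ref{lemma: Lebesgue number} and~\ref{lemma: the difference of the curvatures is small}, the same localized energy bound $2T(\rho(\bm{A})+\varepsilon')+C_1(d)$, and the same passage through (\ref{eq: one description of local mean widim}) before letting $\varepsilon\to 0$ and then $\varepsilon'\to 0$. One small clarification on your closing remark: the indices $i,j$ in $\moduli_{d,T}(i,j)$ are allowed to depend on the translate $s$ (and on $T$); what must be translation-invariant is only the \emph{upper bound} $16T(\rho(\bm{A})+\varepsilon')+8C_1(d)$, and that holds because $\rho$ and $T_0$ are.
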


\section{Analytic preliminaries for the lower bound}\label{section: Analytic preliminaries for the lower bound}
Let $T>0$ be a positive real number, $\underbar{E}$ be a principal $SU(2)$-bundle over 
$S^3\times (\mathbb{R}/T\mathbb{Z})$,
and $\underbar{A}$ be an ASD connection on $\underbar{E}$. Suppose $\underbar{A}$ is not flat.
Let $\pi:S^3\times \mathbb{R}\to S^3\times (\mathbb{R}/T\mathbb{Z})$ be the natural projection, and 
$E :=\pi^*\underbar{E}$ and $A:=\pi^*\underbar{A}$ be the pull-backs.
Obviously $A$ is a non-flat ASD connection satisfying $\norm{F_A}_{L^\infty} <\infty$. 
Hence it is irreducible (Corollary \ref{cor: non-flat implies irreducible}).
Some constants introduced below (e.g. $C_2$, $C_3$, $\varepsilon_1$, $\varepsilon_2$) will depend on 
$(\underbar{E}, \underbar{A})$. But we consider that $(\underbar{E},\underbar{A})$ is fixed, and hence 
the dependence on it will not be explicitly written.
\begin{lemma} \label{lemma: uniform irreducibility}
There exists $C_2>0$ such that for any $u\in \Omega^0(\ad E)$
\[ \int_{S^3\times [0,T]}|u|^2 \leq C_2\int_{S^3\times [0,T]}|d_Au|^2 .\]
Then, from the natural $T$-periodicity of $A$, for every $n\in \mathbb{Z}$
\[ \int_{S^3\times [nT, (n+1)T]}|u|^2 \leq C_2\int_{S^3\times [nT, (n+1)T]}|d_Au|^2.\]
\end{lemma}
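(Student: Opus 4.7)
The plan is to argue by contradiction, exploiting the fact that for a $0$-form $u \in \Omega^0(\ad E)$ one has $d_A u = \nabla_A u$. Thus the right-hand side of the inequality controls the full covariant derivative, so an $L^2$-bound on $d_A u$ is actually a $W^{1,2}$-bound on $u$ over the compact manifold $S^3 \times [0,T]$. This opens the door to a compactness argument, with the irreducibility of $A$ (Corollary \ref{cor: non-flat implies irreducible}) providing the final contradiction.

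Concretely, suppose no such constant $C_2$ exists. Then one can find a sequence $u_n$ with
\[
\int_{S^3 \times [0,T]} |u_n|^2 = 1, \qquad \int_{S^3 \times [0,T]} |d_A u_n|^2 \to 0.
\]
By the identity $d_A u_n = \nabla_A u_n$, the sequence is bounded in $W^{1,2}(S^3 \times [0,T])$. Since $S^3 \times [0,T]$ is a compact manifold with smooth boundary, the Rellich--Kondrachov theorem yields a subsequence converging strongly in $L^2$ (and weakly in $W^{1,2}$) to some $u_\infty$ with $\|u_\infty\|_{L^2} = 1$ and $d_A u_\infty = 0$ in the distributional sense on $S^3 \times [0,T]$.

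Because $A$ is smooth, the first-order equation $d_A u_\infty = 0$ (which in a local trivialization reads $\partial_i u_\infty = -[A_i, u_\infty]$) forces $u_\infty$ to be smooth by standard elliptic bootstrap. Now I invoke the key topological ingredient: $S^3 \times \mathbb{R}$ is simply connected, so parallel transport along paths depends only on endpoints. Hence the smooth, covariantly constant section $u_\infty$ on $S^3 \times [0,T]$ admits a unique extension $\tilde u_\infty$ to a smooth, covariantly constant section on all of $S^3 \times \mathbb{R}$; this extension is non-zero because $\|\tilde u_\infty|_{S^3 \times [0,T]}\|_{L^2} = 1$. But by Corollary \ref{cor: non-flat implies irreducible}, the non-flat $SU(2)$-ASD connection $A$ with $\norm{F_A}_{L^\infty} < \infty$ is irreducible, so it admits no non-zero covariantly constant section of $\ad E$. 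This is the desired contradiction, establishing the inequality on $[0,T]$; the analogous inequality on $[nT, (n+1)T]$ follows at once from the $T$-periodicity of $A$ by the change of variables $t \mapsto t - nT$.

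The main delicate point is the extension step: one must confirm that the $L^2$-limit $u_\infty$ inherits enough regularity to be extended as a classical covariantly constant section, and that the extension is globally unambiguous. Both issues are handled cleanly by smoothness of $A$ together with the simply-connectedness of $S^3 \times \mathbb{R}$, so the only essential input is the global irreducibility of $A$ proved in Section \ref{section: non-flat implies irreducible}.
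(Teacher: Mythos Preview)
Your proof is correct and follows the same overall strategy as the paper: contradiction plus Rellich--Kondrachov compactness to produce a nonzero section $u_\infty$ with $d_A u_\infty = 0$ on $S^3\times[0,T]$. The only difference lies in how the contradiction is closed. The paper invokes unique continuation (citing \cite[Section 4.3.4]{Donaldson-Kronheimer}) to assert that the restriction $A|_{S^3\times(0,T)}$ is already irreducible, so a nonzero parallel section on that piece is immediately impossible. You instead extend $u_\infty$ by parallel transport to a global covariantly constant section on $S^3\times\mathbb{R}$, using simple-connectedness to guarantee the extension is well-defined, and then appeal to Corollary~\ref{cor: non-flat implies irreducible}. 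Your route is slightly more self-contained in that it avoids the unique continuation citation, at the cost of the (easy) extension step; the paper's route is shorter but leans on an external result. One small remark: your phrase ``standard elliptic bootstrap'' for the regularity of $u_\infty$ is a bit loose, since $d_A u_\infty = 0$ is a first-order system rather than an elliptic equation; the clean justification is that $d_A u_\infty = 0$ implies $\Delta_A u_\infty = d_A^* d_A u_\infty = 0$, which \emph{is} elliptic and yields smoothness.
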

\begin{proof}
Since $A$ is ASD and irreducible, the restriction of $A$ to $S^3\times(0, T)$ is also irreducible
(by the unique continuation \cite[Section 4.3.4]{Donaldson-Kronheimer}).
Suppose the above statement is false, then there exist $u_n$ $(n\geq 1)$ such that 
\[ 1 = \int_{S^3\times [0,T]} |u_n|^2 > n\int_{S^3\times [0,T]}|d_A u_n|^2 .\]
If we take a subsequence, 
then the restrictions of $u_n$ to $S^3\times (0,T)$ converge to some $u$ weakly in $L^2_1(S^3\times (0,T))$
and strongly in $L^2(S^3\times (0,T))$.
We have $\norm{u}_{L^2}=1$ (in particular $u\neq 0$) and $d_Au=0$.
This means that $A$ is reducible over $S^3\times (0,T)$. This is a contradiction.
\end{proof}
\begin{lemma} \label{lemma: quantitative irreducibility}
Let $4<q<\infty$. For any $u\in L^q_1(S^3\times (0,T), \Lambda^+(\ad E))$,
\[ \norm{u}_{L^\infty(S^3\times (0,T))} \leq \const_q \norm{d_Au}_{L^q(S^3\times (0,T))}.\]
\end{lemma}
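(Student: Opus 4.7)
The plan is a three-step argument: an overdetermined-elliptic estimate, followed by a compactness-contradiction that isolates a potential nontrivial limit, and finally a Weitzenb\"ock-based triviality argument which will be the main obstacle.

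\emph{Elliptic step.} Since $u$ is self-dual, $*u=u$ forces $d_A^*u = \mp *d_A u$ pointwise, hence $|d_A^*u|=|d_A u|$ and the two $L^q$-norms agree. The combined operator $(d_A,d_A^*)\colon \Omega^+(\ad E)\to \Omega^3(\ad E)\oplus\Omega^1(\ad E)$ has injective principal symbol $(\xi\wedge\cdot,\,-\iota_\xi\cdot)$: in an orthonormal frame with $\xi=e_1$, the condition $\iota_\xi u=0$ removes the $e_1$-components of $u\in\Lambda^+$, and $\xi\wedge u=0$ then kills what remains. Hence the operator is overdetermined elliptic, so the standard interior estimate yields
\[ \norm{u}_{L^q_1(S^3\times(0,T))}\leq C_q\left(\norm{d_A u}_{L^q}+\norm{u}_{L^q}\right). \]
Since $q>4$ on a $4$-dimensional base, Morrey's embedding $L^q_1\hookrightarrow L^\infty$ upgrades this to
\[ \norm{u}_{L^\infty}\leq C'_q\left(\norm{d_A u}_{L^q}+\norm{u}_{L^q}\right). \qquad (\ast) \]

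\emph{Compactness/contradiction.} Suppose the claimed inequality fails. Take a sequence $u_n$ with $\norm{u_n}_{L^\infty}=1$ and $\norm{d_A u_n}_{L^q}\to 0$. By $(\ast)$ and boundedness of the base, $\{u_n\}$ is bounded in $L^q_1$, so by Rellich--Kondrachov a subsequence converges strongly in $L^q$ to some $u_\infty\in\Omega^+(\ad E)$ with $d_A u_\infty=0$ in the distributional sense. Substituting $\norm{u_n}_{L^\infty}=1$ back into $(\ast)$ yields $\norm{u_n}_{L^q}\geq 1/(2C'_q)$ for large $n$, whence $\norm{u_\infty}_{L^q}>0$.

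\emph{Triviality of $u_\infty$: the main obstacle.} Since $d_A u_\infty=0$ and $u_\infty$ is self-dual, $d_A^*u_\infty=0$ as well, and elliptic regularity makes $u_\infty$ smooth. The Weitzenb\"ock formula on the conformally flat $S^3\times\mathbb{R}$ with $W^+=0$ and $F_A^+=0$ reduces to $\nabla_A^*\nabla_A u_\infty+\tfrac{S}{3}u_\infty=0$, and consequently $\Delta|u_\infty|^2\leq -\tfrac{2S}{3}|u_\infty|^2\leq 0$. On the open slab $S^3\times(0,T)$ this pointwise identity alone does not force $u_\infty=0$, since pairing with $u_\infty$ and integrating by parts produces uncontrolled boundary contributions on $S^3\times\{0,T\}$, and the earlier Liouville-type lemma for bounded non-negative subharmonic functions requires $|u_\infty|^2$ to be defined on all of $S^3\times\mathbb{R}$. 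The cleanest way around this is to run the entire contradiction argument directly on the compact quotient $S^3\times(\mathbb{R}/T\mathbb{Z})$, which is legitimate because $A=\pi^*\underbar{A}$: there is no boundary, the global identity $\int|\nabla_{\underbar{A}}u_\infty|^2+\tfrac{S}{3}\int|u_\infty|^2=0$ immediately gives $u_\infty=0$ (contradicting $\norm{u_\infty}_{L^q}>0$), and the resulting estimate pulls back to one fundamental period $S^3\times(0,T)$ by $T$-translation invariance.
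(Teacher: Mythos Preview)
You have been tripped up by what is almost certainly a typo in the paper: the bundle should be $\ad E$ (i.e.\ $0$-forms), not $\Lambda^+(\ad E)$. Two pieces of evidence make this clear. First, the paper's own proof says the argument is ``the same way as in Lemma~\ref{lemma: uniform irreducibility}'', and that lemma is explicitly about $u\in\Omega^0(\ad E)$, with the contradiction coming from irreducibility of $A$. Second, the only place Lemma~\ref{lemma: quantitative irreducibility} is used is in the proof of Lemma~\ref{lemma: quantitative irreducibility :gauge transformation}, where it is applied to $u_n$ satisfying $g_n=e^{u_n}$ for gauge transformations $g_n$; such $u_n$ are necessarily sections of $\ad E$, not of $\Lambda^+(\ad E)$. (The paper repeats the same typo there.)

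Once you read the statement for $0$-forms, both of your hard steps evaporate. For $u\in\Omega^0(\ad E)$ one has $d_Au=\nabla_Au$, so no overdetermined-elliptic estimate is needed: $\norm{u_n}_{L^\infty}=1$ and $\norm{d_Au_n}_{L^q}\to 0$ already give a uniform $L^q_1$ bound, and the compact embedding $L^q_1\hookrightarrow C^0(S^3\times[0,T])$ (Morrey, $q>4$) yields a nonzero limit $u_\infty$ with $d_Au_\infty=0$. The triviality step is then immediate from irreducibility of $A$ on $S^3\times(0,T)$ (unique continuation, as in Lemma~\ref{lemma: uniform irreducibility}): a nonzero $A$-parallel section of $\ad E$ would make $A$ reducible. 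No Weitzenb\"ock formula, no boundary problem.

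Your proposed fix for the boundary issue---``run the argument on the compact quotient and pull back''---does not work even for the $\Lambda^+$ reading. An inequality proved on $S^3\times(\mathbb{R}/T\mathbb{Z})$ applies to sections on that closed manifold, i.e.\ to $T$-periodic sections on $S^3\times\mathbb{R}$; it says nothing about a general $u\in L^q_1(S^3\times(0,T))$, which need not match across $t=0$ and $t=T$. So you cannot recover the stated estimate on the slab this way. (Relatedly, your ``standard interior estimate'' is asserted globally on the slab, but elliptic estimates up to the boundary for overdetermined first-order systems generally require boundary conditions; this is another place where the $\Lambda^+$ reading creates work that simply is not there for $0$-forms.)
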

\begin{proof}
Note that the Sobolev embedding 
$L^q_1(S^3\times (0,T))\hookrightarrow \mathcal{C}^0(S^3\times [0,T])$ is a compact operator.
Then this lemma can be proved in the same way as in Lemma \ref{lemma: uniform irreducibility}
\end{proof}
\begin{lemma} \label{lemma: quantitative irreducibility :gauge transformation}
Let $4<q<\infty$. For any gauge transformation $g:E\to E$ and $n\in \mathbb{Z}$,
\[ \min(\norm{g-1}_{L^\infty(S^3\times (nT,(n+1)T))}, \norm{g+1}_{L^\infty(S^3\times(nT,(n+1)T))})
   \leq \const_q \norm{d_Ag}_{L^q(S^3\times (nT,(n+1)T))}.\]
Here $\const_q$ is independent of $g$ and $n$. 
\end{lemma}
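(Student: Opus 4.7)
The plan is to argue by contradiction and compactness, in the same spirit as Lemmas~\ref{lemma: uniform irreducibility} and \ref{lemma: quantitative irreducibility}. First, the $T$-periodicity of $(E,A)=\pi^*(\underbar{E},\underbar{A})$ makes translation by $nT$ on the $\mathbb{R}$-factor a bundle isometry intertwining $(E,A)|_{S^3\times (0,T)}$ with $(E,A)|_{S^3\times(nT,(n+1)T)}$, so a uniform constant at $n=0$ automatically serves for every $n$. Suppose the inequality fails at $n=0$: there exist gauge transformations $g_k$ of $E|_{S^3\times(0,T)}$ with $\norm{d_A g_k}_{L^q}\to 0$ while $\min(\norm{g_k-1}_{L^\infty},\norm{g_k+1}_{L^\infty})\geq c>0$.

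Next I would extract a convergent subsequence. Since each $g_k$ is $SU(2)$-valued, $|g_k|\equiv 1$ pointwise, so $\{g_k\}$ is bounded in $L^\infty$ and hence in $L^q(S^3\times(0,T))$; combined with the vanishing of $\norm{d_A g_k}_{L^q}$ this gives a uniform bound on $\norm{g_k}_{L^q_1}$. Because $q>4$ on the $4$-dimensional region $S^3\times(0,T)$, the Sobolev embedding $L^q_1\hookrightarrow \mathcal{C}^0$ is compact, so after passing to a subsequence $g_k$ converges weakly in $L^q_1$ and uniformly to a continuous $SU(2)$-valued gauge transformation $g_\infty$ (the pointwise $SU(2)$ constraint survives uniform convergence), while weak convergence of $d_A g_k$ forces $d_A g_\infty=0$.

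Finally I would invoke irreducibility to identify $g_\infty$. Since $A$ is a non-flat ASD connection on $S^3\times\mathbb{R}$ with bounded curvature, Corollary~\ref{cor: non-flat implies irreducible} gives that $A$ is irreducible; by the unique continuation argument used in the proof of Lemma~\ref{lemma: uniform irreducibility}, the restriction of $A$ to the connected region $S^3\times(0,T)$ is also irreducible. A covariantly constant $SU(2)$-gauge transformation of $E$ over this region must therefore lie in the centre, so $g_\infty\equiv 1$ or $g_\infty\equiv -1$. Either alternative gives $\min(\norm{g_k-1}_{L^\infty},\norm{g_k+1}_{L^\infty})\to 0$, contradicting our standing assumption.

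The only real technical nuisance, more bookkeeping than mathematics, is reconciling the bundle-valued setting with the scalar Sobolev theory: one fixes a finite open cover of the compact closure $S^3\times[0,T]$ by trivializations in which the $g_k$ become honest $SU(2)$-valued maps, checks that the local $L^q_1$-bounds on $g_k$ (which use smoothness of $A$ on the compact closure) transfer to the cover, and then reassembles the local uniform limits into a global section. Once that is in place, the irreducibility step is the same one-line argument that closes Lemma~\ref{lemma: uniform irreducibility}, and the resulting constant depends only on $q$ and the fixed pair $(\underbar{E},\underbar{A})$.
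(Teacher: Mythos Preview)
Your compactness argument is sound up to the point where you identify $g_\infty=\pm 1$, and it matches the paper's setup. The gap is in the contradiction hypothesis and in the final step. The negation of the lemma does \emph{not} give you a sequence with $\min(\norm{g_k-1}_{L^\infty},\norm{g_k+1}_{L^\infty})\geq c>0$; it only gives a sequence with $\min(\cdots)>k\,\norm{d_A g_k}_{L^q}$. Since gauge transformations take values in the compact group $SU(2)$, you cannot rescale them as you could the sections $u$ in Lemmas~\ref{lemma: uniform irreducibility} and~\ref{lemma: quantitative irreducibility}, so there is no way to normalize the numerator away from zero. Your conclusion $\min(\cdots)\to 0$ is perfectly compatible with the ratio blowing up (think $\min\sim k^{-1/2}$ and $\norm{d_A g_k}_{L^q}\sim k^{-1}$), and hence no contradiction is reached.

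The paper closes exactly this gap by an extra linearization step: once the subsequence satisfies $g_n\to 1$ in $\mathcal{C}^0$, one writes $g_n=e^{u_n}$ with $u_n$ a small section of $\ad E$, so that $\norm{g_n-1}_{L^\infty}\asymp\norm{u_n}_{L^\infty}$ and $\norm{d_A u_n}_{L^q}\asymp\norm{d_A g_n}_{L^q}$. Now Lemma~\ref{lemma: quantitative irreducibility} (which \emph{is} homogeneous in $u$, so the compactness argument there works) gives $\norm{u_n}_{L^\infty}\leq C\,\norm{d_A u_n}_{L^q}$, and chaining the comparisons yields $\norm{g_n-1}_{L^\infty}\leq C'\,\norm{d_A g_n}_{L^q}$, contradicting the assumed unbounded ratio. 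So the missing idea is precisely this passage through the exponential map to reduce to the linear estimate already proved.
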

\begin{proof}
From the $T$-periodicity of $A$, it is enough to prove the case of $n=0$.
Suppose the statement is false. 
Then there exists a sequence of gauge transformations $\{g_n\}_{n\geq 1}$ satisfying 
\[ \min(\norm{g_n-1}_{L^\infty(S^3\times (0,T))},\norm{g_n+1}_{L^\infty(S^3\times (0,T))}) 
   > n\norm{d_Ag_n}_{L^q(S^3\times (0,T))}.\]
If we take a subsequence, then $g_n$ converges to some $g$ weakly in $L^q_1(S^3\times (0,T))$ and
strongly in $\mathcal{C}^0(S^3\times [0,T])$.
In particular we have $d_Ag=0$. Hence $g=\pm 1$ since $A$ is irreducible.
By multiplying $\pm 1$ to $g_n$, we can assume that $g=1$.
Then there exists $u_n\in L^q_1(S^3\times (0,T), \Lambda^+(\ad E))$ $(n\gg 1)$
satisfying $g_n=e^{u_n}$ and 
$\norm{u_n}_{L^\infty(S^3\times (0,T))}\leq \const \norm{g_n-1}_{L^\infty(S^3\times (0,T))}$.
Then, by using Lemma \ref{lemma: quantitative irreducibility}, we have
\begin{equation*}
 \begin{split}
 \norm{g_n-1}_{L^\infty(S^3\times (0,T))}&\leq \const \norm{u_n}_{L^\infty(S^3\times (0,T))} \\
   &\leq \const'\norm{d_Au_n}_{L^q(S^3\times (0,T))} \leq \const'' \norm{d_Ag_n}_{L^q(S^3\times (0,T))}.
 \end{split}
\end{equation*}
This is a contradiction.
\end{proof}
\begin{lemma}\label{lemma: quantitative irreducibility :gauge transformation 2}
There exists $\varepsilon_1>0$ such that, for any gauge transformation $g:E\to E$,
if $\norm{d_Ag}_{L^\infty(X)}\leq \varepsilon_1$ then
\[ \min(\norm{g-1}_{L^\infty(X)},\norm{g+1}_{L^\infty(X)})\leq \const \norm{d_Ag}_{L^\infty(X)}.\]
\end{lemma}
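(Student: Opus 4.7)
The plan is to reduce to the already-proved per-period estimate of Lemma~\ref{lemma: quantitative irreducibility :gauge transformation} and then rule out any sign flip between consecutive period slabs. Set $I_n := S^3\times (nT,(n+1)T)$ for $n\in\mathbb{Z}$, and fix any $q\in(4,\infty)$. First I would apply Lemma~\ref{lemma: quantitative irreducibility :gauge transformation} on each slab $I_n$ and convert the $L^q$-norm on the right-hand side to the global $L^\infty$-norm using the uniform volume bound $|I_n|=T\,\vol(S^3)$. This produces a constant $C'$ independent of $n$ with
\[ \min\bigl(\norm{g-1}_{L^\infty(I_n)},\,\norm{g+1}_{L^\infty(I_n)}\bigr)\leq C'\norm{d_A g}_{L^\infty(X)}. \]

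Next, choose $\varepsilon_1>0$ so that $C'\varepsilon_1<1$. Under the hypothesis $\norm{d_A g}_{L^\infty(X)}\leq \varepsilon_1$ the two alternatives $\norm{g-1}_{L^\infty(I_n)}\leq C'\varepsilon_1$ and $\norm{g+1}_{L^\infty(I_n)}\leq C'\varepsilon_1$ cannot both hold, because the constant sections $+1$ and $-1$ of the automorphism bundle are pointwise at fiberwise distance $2$. Hence each slab $I_n$ comes equipped with a well-defined sign $\sigma_n\in\{+1,-1\}$ satisfying
\[ \norm{g-\sigma_n}_{L^\infty(I_n)}\leq C'\norm{d_A g}_{L^\infty(X)}. \]

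The main (and really the only) non-routine step is to show that $\sigma_n$ is independent of $n$. Adjacent closed slabs $\overline{I_n}$ and $\overline{I_{n+1}}$ share the slice $S^3\times\{(n+1)T\}$, and since $g$ is continuous the per-slab estimate extends to the closed slab. Therefore at any point $x\in S^3\times\{(n+1)T\}$ one has simultaneously $|g(x)-\sigma_n|\leq C'\varepsilon_1$ and $|g(x)-\sigma_{n+1}|\leq C'\varepsilon_1$, which forces $|\sigma_n-\sigma_{n+1}|\leq 2C'\varepsilon_1<2$ and hence $\sigma_n=\sigma_{n+1}$. Iterating over $n\in\mathbb{Z}$ yields a single sign $\sigma\in\{+1,-1\}$ that works on every slab.

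Finally, since $X=\bigcup_{n\in\mathbb{Z}}\overline{I_n}$, taking the supremum of the per-slab estimates gives
\[ \norm{g-\sigma}_{L^\infty(X)}\leq C'\norm{d_A g}_{L^\infty(X)}, \]
which is the desired bound. The only subtle point is the sign-consistency across adjacent slabs, but once $\varepsilon_1$ is chosen so that $C'\varepsilon_1<1$ this reduces to the elementary observation that $+1$ and $-1$ are at distance $2$; everything else is bookkeeping around Lemma~\ref{lemma: quantitative irreducibility :gauge transformation}.
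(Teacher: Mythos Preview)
Your proof is correct and follows the same overall strategy as the paper: apply Lemma~\ref{lemma: quantitative irreducibility :gauge transformation} on each slab $S^3\times(nT,(n+1)T)$ to get a sign $\sigma_n$, then show the signs are all equal and take the supremum. The only point where the two arguments differ is the sign-consistency step. The paper propagates the bound across adjacent slabs by integrating $|d_Ag|\le\varepsilon_1$ along the $t$-direction: from $\norm{g-1}_{L^\infty(S^3\times(0,T))}\le C\varepsilon_1$ it deduces $\norm{g-1}_{L^\infty(S^3\times(T,2T))}\le(C+T)\varepsilon_1$, and then chooses $\varepsilon_1$ so that $(C+T)\varepsilon_1<1$. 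You instead use continuity of $g$ at the common boundary slice $S^3\times\{(n+1)T\}$ and compare the two per-slab estimates there via the triangle inequality, which only requires $C'\varepsilon_1<1$. Your argument is slightly more elementary and yields a marginally better threshold for $\varepsilon_1$, but both routes are short and essentially equivalent.
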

\begin{proof}
From Lemma \ref{lemma: quantitative irreducibility :gauge transformation}
\[ \min(\norm{g-1}_{L^\infty(S^3\times (nT,(n+1)T))}, \norm{g+1}_{L^\infty(S^3\times(nT,(n+1)T))})
   \leq C \norm{d_Ag}_{L^\infty(X)} \leq C\cdot \varepsilon_1.\]
Suppose $\min(\norm{g-1}_{L^\infty(S^3\times (0,T))}, \norm{g+1}_{L^\infty(S^3\times(0,T))})
=\norm{g-1}_{L^\infty(S^3\times (0,T))}$.
We want to prove that for all $n\in \mathbb{Z}$
\begin{equation}\label{eq: compatibility of minimum}
 \min(\norm{g-1}_{L^\infty(S^3\times (nT,(n+1)T))}, \norm{g+1}_{L^\infty(S^3\times(nT,(n+1)T))})
   = \norm{g-1}_{L^\infty(S^3\times (nT,(n+1)T))}.
\end{equation}
We have $\norm{g-1}_{L^\infty(S^3\times (0,T))} \leq C\cdot \varepsilon_1 \ll 1$.
From $|d_Ag|\leq \varepsilon_1$, $\norm{g-1}_{L^\infty(S^3\times (T,2T))}\leq (C+T)\varepsilon_1$,
and hence $\norm{g+1}_{L^\infty(S^3\times (T,2T))}\geq 2-(C+T)\varepsilon_1$.
We choose $\varepsilon_1>0$ so that $(C+T)\varepsilon_1<1$. 
Then (\ref{eq: compatibility of minimum}) holds for $n=1$.
In the same way, by using induction, we can prove that (\ref{eq: compatibility of minimum}) holds
for all $n\in \mathbb{Z}$.
Then Lemma \ref{lemma: quantitative irreducibility :gauge transformation} implies 
$\norm{g-1}_{L^\infty(X)}\leq C \norm{d_Ag}_{L^\infty(X)}$.
\end{proof}
Let $N>0$ be a large positive integer which will be fixed later, and set $R :=NT$.
Let $\varphi$ be a smooth function on $\mathbb{R}$ such that $0\leq \varphi\leq 1$, 
$\varphi =1$ on $[0,R]$, $\varphi = 0$ over $t\geq 2R$ and $t\leq -R$, and
$|\varphi'|, |\varphi''|\leq 2/R$.
Then for any $u\in \Omega^0(\ad E)$ (not necessarily compact supported), 
\[ \int_{S^3\times [0,R]} |d_Au|^2 
\leq \int_{S^3\times \mathbb{R}} |d_A(\varphi u)|^2 = \int_{S^3\times \mathbb{R}} (\Delta_A(\varphi u), \varphi u).\]
Here $\Delta_A:=\nabla_A^*\nabla_A=-*d_A*d_A$ on $\Omega^0(\ad E)$.
We have $\Delta_A(\varphi u) = \varphi \Delta_Au + \Delta\varphi\cdot u + *(*d\varphi\wedge d_Au -d\varphi\wedge *d_Au)$.
Then $\Delta_A(\varphi u) = \Delta_A u$ over $S^3\times [0,R]$ and
\[ |\Delta_A(\varphi u)|\leq (2/R)|u| + (4/R)|d_Au| + |\Delta_Au|.\]
Hence 
\[  \int_{S^3\times [0,R]} |d_Au|^2 \leq (2/R)\int_{t\in [-R,0]\cup [R,2R]} |u|^2 
+ (4/R)\int_{t\in [-R,0]\cup [R,2R]}|u||d_Au| + \int_{S^3\times [-R,2R]}|\Delta_Au||u| .\]
From Lemma \ref{lemma: uniform irreducibility}, 
\[\int_{t\in [-R,0]\cup [R,2R]} |u|^2 \leq C_2\int_{t\in [-R,0]\cup [R,2R]} |d_Au|^2, \]
\[\int_{t\in [-R,0]\cup [R,2R]}|u||d_Au|  
\leq \sqrt{\int_{t\in [-R,0]\cup [R,2R]} |u|^2} \sqrt{ \int_{t\in [-R,0]\cup[R,2R]} |d_Au|^2}
\leq \sqrt{C_2} \int_{t\in [-R,0]\cup[R,2R]} |d_Au|^2 .\]
Hence 
\[  \int_{S^3\times [0,R]} |d_Au|^2 \leq \frac{2C_2 + 4\sqrt{C_2}}{R} \int_{t\in [-R,0]\cup[R,2R]} |d_Au|^2 
   + \int_{S^3\times [-R,2R]}|\Delta_Au||u| .\]
For a function (or a section of some Riemannian vector bundle) $f$ on $S^3\times \mathbb{R}$ and $p\in [1,\infty]$, we set 
\[ \norm{f}_{\ell^\infty L^p} := \sup_{n\in \mathbb{Z}} \norm{f}_{L^p(S^3\times (nR, (n+1)R))}.\]
Then the above implies 
\[ \int_{S^3\times [0,R]} |d_Au|^2 \leq \frac{4C_2 + 8\sqrt{C_2}}{R}\norm{d_Au}_{\ell^\infty L^2}^2 
   +  3\norm{|\Delta_Au|\cdot |u|}_{\ell^\infty L^1}.\]
In the same way, for any $n\in \mathbb{Z}$,
\[ \int_{S^3\times [nR,(n+1)R]} |d_Au|^2 \leq \frac{4C_2 + 8\sqrt{C_2}}{R}\norm{d_Au}_{\ell^\infty L^2}^2 
   +  3\norm{|\Delta_Au|\cdot |u|}_{\ell^\infty L^1}.\]
Then we have 
\[   \norm{d_Au}_{\ell^\infty L^2}^2   \leq \frac{4C_2 + 8\sqrt{C_2}}{R}\norm{d_Au}_{\ell^\infty L^2}^2 
   +  3\norm{|\Delta_Au|\cdot |u|}_{\ell^\infty L^1}.\]
We fix $N>0$ so that $(4C_2 + 8\sqrt{C_2})/R\leq 1/2$ (recall: $R=NT$).
If $\norm{d_A u}_{\ell^\infty L^2} <\infty$, then we get 
\[ \norm{d_Au}_{\ell^\infty L^2}^2 \leq 6\norm{|\Delta_Au|\cdot |u|}_{\ell^\infty L^1}.\]
From H\"{o}lder's inequality and Lemma \ref{lemma: uniform irreducibility}, 
\[ \norm{|\Delta_Au|\cdot |u|}_{\ell^\infty L^1} \leq \norm{\Delta_Au}_{\ell^\infty L^2}\norm{u}_{\ell^\infty L^2} 
\leq  \sqrt{C_2} \norm{\Delta_Au}_{\ell^\infty L^2} \norm{d_Au}_{\ell^\infty L^2} .\]
Hence $\norm{d_Au}_{\ell^\infty L^2} \leq 6\sqrt{C_2}\norm{\Delta_Au}_{\ell^\infty L^2}$,
and $\norm{u}_{\ell^\infty L^2} \leq \sqrt{C_2}\norm{d_Au}_{\ell^\infty L^2} 
\leq 6C_2\norm{\Delta_Au}_{\ell^\infty L^2}$.
Then we get the following conclusion.
\begin{lemma} \label{lemma: estimate by irreducibility}
There exists a constant $C_3>0$ such that, for any $u\in \Omega^0(\ad E)$ with $\norm{d_A u}_{\ell^\infty L^2} <\infty$,
we have
\[ \norm{u}_{\ell^\infty L^2} + \norm{d_A u}_{\ell^\infty L^2} \leq C_3 \norm{\Delta_A u}_{\ell^\infty L^2}.\]
\end{lemma}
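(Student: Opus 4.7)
The plan is to use the integration-by-parts argument that the excerpt itself essentially prepares in the paragraphs preceding the lemma statement: test $\Delta_A u$ against $u$ cut off by a bump function whose support is a band of width $3R$ (with $R = NT$, $N$ large), and exploit the uniform irreducibility estimate of Lemma~\ref{lemma: uniform irreducibility} on the transition bands where the cut-off varies. The whole point of introducing the length scale $R$ is that a cut-off of width $R$ produces derivatives of size $O(1/R)$, so the error term can be absorbed into the main term once $N$ is chosen large enough relative to $C_2$.

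Concretely, first I would fix a smooth $\varphi$ with $\varphi\equiv 1$ on $[0,R]$, $\varphi\equiv 0$ outside $[-R,2R]$, and $|\varphi'|,|\varphi''|\leq 2/R$. For $u\in\Omega^0(\ad E)$, integration by parts gives
\[
\int_{S^3\times[0,R]}|d_A u|^2 \;\leq\; \int_X |d_A(\varphi u)|^2 \;=\; \int_X (\Delta_A(\varphi u),\varphi u).
\]
Expanding $\Delta_A(\varphi u)=\varphi\,\Delta_A u+(\Delta\varphi)u+*(*d\varphi\wedge d_A u-d\varphi\wedge *d_A u)$ and using the support of $\varphi'$, this is bounded by $(2/R)\int_{[-R,0]\cup[R,2R]}|u|^2+(4/R)\int_{[-R,0]\cup[R,2R]}|u||d_A u|+\int_{S^3\times[-R,2R]}|\Delta_A u||u|$. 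Lemma~\ref{lemma: uniform irreducibility}, applied on each period of length $T$ inside the two bands $[-R,0]$ and $[R,2R]$, converts the $|u|^2$-integral into an $|d_A u|^2$-integral and, combined with Cauchy--Schwarz, converts the $|u||d_A u|$-integral similarly.

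Next I would replace $[0,R]$ with an arbitrary shifted interval $[nR,(n+1)R]$ (the estimate is translation-invariant in $n$ because $A$ is $T$-periodic and $R=NT$), take suprema, and arrive at
\[
\norm{d_A u}_{\ell^\infty L^2}^2 \;\leq\; \frac{4C_2+8\sqrt{C_2}}{R}\,\norm{d_A u}_{\ell^\infty L^2}^2 \;+\; 3\,\norm{|\Delta_A u|\cdot|u|}_{\ell^\infty L^1}.
\]
Fix $N$ so that $(4C_2+8\sqrt{C_2})/R\leq 1/2$; here is where the hypothesis $\norm{d_A u}_{\ell^\infty L^2}<\infty$ is crucial, since without finiteness we could not subtract the first term. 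Then by H\"older and another application of Lemma~\ref{lemma: uniform irreducibility},
\[
\norm{|\Delta_A u|\cdot|u|}_{\ell^\infty L^1}\;\leq\;\sqrt{C_2}\,\norm{\Delta_A u}_{\ell^\infty L^2}\norm{d_A u}_{\ell^\infty L^2},
\]
which yields $\norm{d_A u}_{\ell^\infty L^2}\leq 6\sqrt{C_2}\,\norm{\Delta_A u}_{\ell^\infty L^2}$ and, by Lemma~\ref{lemma: uniform irreducibility} once more, $\norm{u}_{\ell^\infty L^2}\leq 6C_2\,\norm{\Delta_A u}_{\ell^\infty L^2}$. Adding these two gives the claim with $C_3=6C_2+6\sqrt{C_2}$.

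The only subtle point is the absorption step: one needs to know a priori that $\norm{d_A u}_{\ell^\infty L^2}$ is finite in order to legitimately move half of it to the left-hand side. That is precisely the hypothesis made in the statement, so there is no real obstacle; the harder conceptual ingredient, the uniform-in-period irreducibility estimate of Lemma~\ref{lemma: uniform irreducibility}, is already in hand, and it is what makes the choice of $R=NT$ (rather than some non-periodic scale) possible.
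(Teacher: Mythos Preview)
Your proposal is correct and follows the paper's argument essentially verbatim: the paper presents precisely this cut-off/integration-by-parts computation in the paragraphs immediately preceding the lemma, arriving at the same absorption inequality with coefficient $(4C_2+8\sqrt{C_2})/R$, the same choice of $N$, and the same final constants $6\sqrt{C_2}$ and $6C_2$.
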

The following result gives the ``partial Coulomb gauge slice" in our situation.
\begin{proposition} \label{prop: Coulomb gauge condition}
There exists $\varepsilon_2>0$ satisfying the following. 
For any $a$ and $b$ in $\Omega^1(\ad E)$ satisfying 
$d_A^*a = d_A^*b=0$ and $\norm{a}_{L^\infty}, \norm{b}_{L^\infty}\leq \varepsilon_2$,
if there is a gauge transformation $g$ of $E$ satisfying $g(A+a) = A+b$
then $a=b$ and $g=\pm 1$. 
\end{proposition}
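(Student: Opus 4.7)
The proof is a Coulomb-slice argument, adapted to the non-compact setting via Lemmas \ref{lemma: quantitative irreducibility :gauge transformation 2} and \ref{lemma: estimate by irreducibility}. Computing the gauge action on the identity $g(A+a)=A+b$ yields the pointwise relation
\[ d_A g = ga - bg, \]
so $\norm{d_A g}_{L^\infty}\leq \norm{a}_{L^\infty}+\norm{b}_{L^\infty}\leq 2\varepsilon_2$. Choosing $\varepsilon_2<\varepsilon_1/2$, Lemma \ref{lemma: quantitative irreducibility :gauge transformation 2} gives $\min(\norm{g-1}_{L^\infty},\norm{g+1}_{L^\infty})\leq 2C\varepsilon_2$. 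Since $\pm 1$ act trivially on connections, I may replace $g$ by $-g$ if necessary and assume $\norm{g-1}_{L^\infty}\leq 2C\varepsilon_2$. For $\varepsilon_2$ small this lets me write $g=e^u$ with a unique $u\in\Omega^0(\ad E)$ satisfying $\norm{u}_{L^\infty}=O(\varepsilon_2)$.

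Now I show $u\equiv 0$. Substituting $g=1+u+O(u^2)$, $g^{-1}=1-u+O(u^2)$ and $d_A g = d_A u + O(|u|\,|d_A u|)$ into $d_A g=ga-bg$ gives
\[ d_A u = (a-b) + [u,a] + (a-b)u + R, \qquad |R|\leq C\bigl(|u|\,|d_A u| + |u|^2(|a|+|b|)\bigr). \]
Applying $d_A^*$ to both sides, using $d_A^*a = d_A^*b = 0$ together with the product rules $d_A^*(\varphi\alpha)=\varphi\, d_A^*\alpha - \langle d_A \varphi,\alpha\rangle$ (and its bracket analogue) for a $0$-form $\varphi$ and a $1$-form $\alpha$, the linear contribution of $a-b$ cancels and every remaining term either carries a factor $\norm{a}_{L^\infty}+\norm{b}_{L^\infty}\leq 2\varepsilon_2$, a factor of $|u|$, or arises from the identity $d_A^*(u\cdot d_A u) = u\cdot\Delta_A u - |d_A u|^2$. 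This yields the pointwise estimate
\[ |\Delta_A u| \leq C|u|\,|\Delta_A u| + C\varepsilon_2\bigl(|u|+|d_A u|\bigr) + C|d_A u|^2. \]
The first term is absorbed on the left since $|u|=O(\varepsilon_2)$, and the last is bounded by $O(\varepsilon_2)|d_A u|$ using $\norm{d_A u}_{L^\infty}=O(\varepsilon_2)$.

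Since $u$ and $d_A u$ are uniformly bounded and each slab $S^3\times[nR,(n+1)R]$ has finite volume, $\norm{u}_{\ell^\infty L^2}$ and $\norm{d_A u}_{\ell^\infty L^2}$ are finite. The pointwise bound then gives $\norm{\Delta_A u}_{\ell^\infty L^2}\leq C'\varepsilon_2\bigl(\norm{u}_{\ell^\infty L^2}+\norm{d_A u}_{\ell^\infty L^2}\bigr)$, and Lemma \ref{lemma: estimate by irreducibility} produces
\[ \norm{u}_{\ell^\infty L^2}+\norm{d_A u}_{\ell^\infty L^2}\leq C_3C'\varepsilon_2\bigl(\norm{u}_{\ell^\infty L^2}+\norm{d_A u}_{\ell^\infty L^2}\bigr). \]
Fixing $\varepsilon_2$ so small that $C_3C'\varepsilon_2<1$ forces $u\equiv 0$, hence $g=1$ (or $g=-1$ in the reversed case), and therefore $a=b$.

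The main obstacle is the middle step: verifying that after applying $d_A^*$ every contribution on the right-hand side is bounded either by $\varepsilon_2(|u|+|d_A u|)$ or by $|u|\,|\Delta_A u|$. The dangerous terms are those of the form $d_A^*(u\cdot d_A u)$, which reintroduce $\Delta_A u$; it is essential that they do so with a coefficient $|u|=O(\varepsilon_2)$, so that the bootstrap against the uniform irreducibility estimate of Lemma \ref{lemma: estimate by irreducibility} can close.
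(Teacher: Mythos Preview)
Your argument is correct and follows essentially the same route as the paper: pass from $g$ to $u=\log g$ via Lemma~\ref{lemma: quantitative irreducibility :gauge transformation 2}, derive a smallness estimate on $\Delta_A u$, and close with Lemma~\ref{lemma: estimate by irreducibility}. The one organizational difference is worth noting. You expand $g=e^u$ inside the relation $d_Ag=ga-bg$ and then apply $d_A^*$ term by term, which forces you to track the remainder $R$ and compute $d_A^*R$ from its explicit form (your pointwise bound on $|R|$ alone is not enough, as you implicitly acknowledge). The paper instead applies $d_A^*$ \emph{directly} to the exact identity $d_Ag=ga-bg$: since $d_A^*a=d_A^*b=0$ one obtains the closed formula
\[
\Delta_A g=-*\bigl(d_Ag\wedge *a+*b\wedge d_Ag\bigr),
\]
so $\norm{\Delta_A g}_{\ell^\infty L^2}\leq 2\varepsilon_2\norm{d_Ag}_{\ell^\infty L^2}$ with no expansion needed. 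Only afterwards does the paper relate $\Delta_A g$ to $\Delta_A u$ via $|\Delta_A(e^u-u)|\leq C\varepsilon_2(|d_Au|+|\Delta_Au|)$. This separates the two sources of error (the gauge relation versus the exponential) and makes the bookkeeping you flag as ``the main obstacle'' essentially trivial. Both arguments arrive at the same inequality and the same conclusion.
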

\begin{proof}
Since $g(A+a) = A+b$, we have 
$d_Ag=ga -b g$. Then we have $|d_Ag|\leq 2\varepsilon_2$.
We choose $\varepsilon_2>0$ so that $2\varepsilon_2\leq \varepsilon_1$.
($\varepsilon_1$ is the positive constant introduced 
in Lemma \ref{lemma: quantitative irreducibility :gauge transformation 2}.)
From Lemma \ref{lemma: quantitative irreducibility :gauge transformation 2}, 
by multiplying $\pm 1$ to $g$, 
we can suppose $\norm{g-1}_{L^\infty} \leq \const\cdot \varepsilon_2 \ll 1$.
Then there exists $u\in \Omega^0(\ad E)$ 
satisfying $g= e^u$ and $\norm{u}_{L^\infty}\leq \const\cdot \varepsilon_2$. 
We have 
\[ d_Ae^u = d_Au + (d_Au\cdot u + ud_Au)/2! + (d_Au \cdot u^2 + ud_Au\cdot u + u^2d_Au)/3! + \cdots.\]
Since $|u|\leq \const \cdot \varepsilon_2\ll 1$,
\[ |d_Ae^u|\geq |d_Au|(2-e^{|u|}) \geq |d_Au|/2 .\]
Hence $|d_Au|\leq 2|d_Ag|\leq 4 \varepsilon_2$. 
In particular, $\norm{d_A u}_{\ell^\infty L^2} <\infty$.
In the same way we get $|d_Ag|\leq 2|d_Au|$, and hence
\begin{equation} \label{eq: bound on the norm of d_A g by the norm of Delta_A u}
 \norm{d_Ag}_{\ell^\infty L^2}\leq 2\norm{d_Au}_{\ell^\infty L^2} \leq 2C_3\norm{\Delta_Au}_{\ell^\infty L^2}.
\end{equation}
Here we have used Lemma \ref{lemma: estimate by irreducibility}.
Since $d_A^*a = d_A^*b =0$ and $d_Ag=ga-bg$, we have 
\[ \Delta_A g = -*d_A*d_Ag = -*(d_Ag\wedge *a + *b \wedge d_Ag) .\]
Therefore 
$\norm{\Delta_A g}_{\ell^\infty L^2}\leq (\norm{a}_{L^\infty}+\norm{b}_{L^\infty})\norm{d_Ag}_{\ell^\infty L^2} <\infty$.
Moreover, by using the above (\ref{eq: bound on the norm of d_A g by the norm of Delta_A u})
and $\norm{a}_{L^\infty}, \norm{b}_{L^\infty}\leq \varepsilon_2$, we get
\begin{equation}\label{eq: in the proof of Coulomb gauge condition}
 \norm{\Delta_Ag}_{\ell^\infty L^2}
   \leq 4C_3\varepsilon_2 \norm{\Delta_Au}_{\ell^\infty L^2}.
\end{equation}

A direct calculation shows $|\Delta_Au^n|\leq n(n-1)|u|^{n-2}|d_Au|^2 +n|u|^{n-1}|\Delta_Au|$.
Hence 
\begin{equation} \label{eq: the difference between Delta_A g and Delta_A u}
 |\Delta_A(e^u-u)|\leq e^{|u|}|d_Au|^2+(e^{|u|}-1)|\Delta_Au| 
 \leq C \varepsilon_2 (|d_Au|+|\Delta_Au|).
\end{equation}
Here we have used $|u|, |d_Au|\leq \const \cdot\varepsilon_2\ll 1$.
Hence $(1-C \varepsilon_2)|\Delta_A u|\leq C \varepsilon_2|d_Au|+|\Delta_Ag|$, 
and $(1-C\varepsilon_2)\norm{\Delta_A u}_{\ell^\infty L^2}\leq C\varepsilon_2\norm{d_Au}_{\ell^\infty L^2}
+\norm{\Delta_A g}_{\ell^\infty L^2} <\infty$. 
We choose $\varepsilon_2>0$ so that $(1-C\varepsilon_2)>0$.
Then $\norm{\Delta_A u}_{\ell^\infty L^2}<\infty$.

The above (\ref{eq: the difference between Delta_A g and Delta_A u}) implies 
\[ \norm{\Delta_Ag-\Delta_Au}_{\ell^\infty L^2}\leq 
    C\varepsilon_2(\norm{d_Au}_{\ell^\infty L^2} + \norm{\Delta_Au}_{\ell^\infty L^2}).\]
Using Lemma \ref{lemma: estimate by irreducibility}, we get 
\[ \norm{\Delta_Ag-\Delta_Au}_{\ell^\infty L^2}
  \leq C' \varepsilon_2\norm{\Delta_Au}_{\ell^\infty L^2}.\]
Then the inequality (\ref{eq: in the proof of Coulomb gauge condition}) gives 
\[ (1- 4C_3\varepsilon_2)\norm{\Delta_Au}_{\ell^\infty L^2}
 \leq C'\varepsilon_2\norm{\Delta_Au}_{\ell^\infty L^2}.\]
If we choose $\varepsilon_2 >0$ so small that $(1-4C_3\varepsilon_2)>C'\varepsilon_2$, then 
this estimate gives $\Delta_Au=0$. (Here we have used $\norm{\Delta_A u}_{\ell^\infty L^2}<\infty$.)
Then we get (from Lemma \ref{lemma: estimate by irreducibility}) $u=0$.
This shows $g=1$ and $a=b$.
\end{proof}
The following ``$L^\infty$-estimate'' will be used in the next section.
For its proof, see Proposition \ref{prop: L^infty-estimate, appendix}
in Appendix \ref{appendix: Green kernel}.
\begin{proposition} \label{L^infty-estimate}
Let $\xi$ be a $\mathcal{C}^2$-section of $\Lambda^+(\ad E)$ over $S^3\times \mathbb{R}$, 
and set $\eta := (\nabla_A^*\nabla_A+S/3)\xi$.
If $\norm{\xi}_{L^\infty}, \norm{\eta}_{L^\infty}<\infty$, then 
\[ \norm{\xi}_{L^\infty}\leq (24/S)\norm{\eta}_{L^\infty}.\]
\end{proposition}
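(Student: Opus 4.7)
The plan is to reduce the desired bound to a scalar differential inequality for $|\xi|^2$ and then invoke a maximum principle on the non-compact manifold $X=S^3\times\mathbb{R}$ by comparing to an exponential barrier.

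First I would run the standard Bochner computation,
\[ \Delta|\xi|^2 = 2(\nabla_A^*\nabla_A\xi,\xi) - 2|\nabla_A\xi|^2, \]
and substitute $\nabla_A^*\nabla_A\xi = \eta - (S/3)\xi$ to obtain
\[ (\Delta + 2S/3)|\xi|^2 + 2|\nabla_A\xi|^2 = 2(\eta,\xi). \]
Dropping the non-negative gradient term and using Cauchy--Schwarz together with the hypothesis that $\norm{\eta}_{L^\infty}$ and $\norm{\xi}_{L^\infty}$ are finite yields the pointwise inequality
\[ (\Delta + 2S/3)|\xi|^2 \leq 2\norm{\eta}_{L^\infty}\norm{\xi}_{L^\infty}. \]
Setting $C := 3\norm{\eta}_{L^\infty}\norm{\xi}_{L^\infty}/S$ and $v := |\xi|^2 - C$ turns this into $(\Delta + 2S/3)v\leq 0$ with $v$ bounded above.

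The remaining, and genuinely delicate, step is to prove that any $\mathcal{C}^2$ function $v$ on $X$ which is bounded above and satisfies $(\Delta + 2S/3)v \leq 0$ must satisfy $v\leq 0$ everywhere. Since $X$ is non-compact, the strong maximum principle does not directly apply, and this is where I expect the only real obstacle. I would handle it with the one-parameter family of exponential barriers $\phi_\mu(\theta,t) := \mu\cosh(\alpha t)$ with $\mu > 0$ and $\alpha>0$ small enough that $\alpha^2 < 2S/3$, for which $(\Delta + 2S/3)\phi_\mu = (2S/3 - \alpha^2)\phi_\mu > 0$. The comparison function $w_\mu := v - \phi_\mu$ tends to $-\infty$ as $|t|\to\infty$ (since $v$ is bounded while $\phi_\mu$ grows exponentially in $t$), so it attains its supremum at some interior point $p\in X$. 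There the elementary inequality $\Delta w_\mu(p)\geq 0$ combined with the strict bound $(\Delta + 2S/3)w_\mu(p) < 0$ forces $w_\mu(p) < 0$; hence $v < \phi_\mu$ on all of $X$. Fixing a basepoint and letting $\mu\downarrow 0$ gives $v\leq 0$ as required.

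Combining the two steps yields $|\xi|^2(x) \leq 3\norm{\eta}_{L^\infty}\norm{\xi}_{L^\infty}/S$ for every $x\in X$, whence $\norm{\xi}_{L^\infty}^2 \leq 3\norm{\eta}_{L^\infty}\norm{\xi}_{L^\infty}/S$ and finally $\norm{\xi}_{L^\infty}\leq (3/S)\norm{\eta}_{L^\infty}$, which is stronger than the stated $(24/S)\norm{\eta}_{L^\infty}$. An alternative route, more in the spirit of the Green-kernel toolkit developed in Appendix~\ref{appendix: Green kernel}, would be to test the inequality for $|\xi|^2$ against the Green kernel of $\Delta + S/3$ on a large region and pass to the limit using the exponential decay (\ref{eq: exponential decay of g(x,y)}); this is presumably how the proof in the appendix proceeds and it naturally produces the slightly weaker constant $24/S$, but the barrier argument above seems more economical.
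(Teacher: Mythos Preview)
Your proof is correct and follows essentially the same strategy as the paper's proof in Appendix~\ref{appendix: Green kernel} (Proposition~\ref{prop: L^infty-estimate, appendix}): apply the Bochner identity to $|\xi|^2$, obtain a differential inequality for $(\Delta+2S/3)|\xi|^2$, and then use an exponential barrier $\cosh(\alpha t)$ together with the maximum principle on the non-compact manifold. Your guess that the paper instead integrates against the Green kernel is incorrect; the paper also uses a barrier.

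The differences are only in bookkeeping. The paper fixes a point $(\theta_1,t_1)$ with $|\xi(\theta_1,t_1)|\geq \norm{\xi}_{L^\infty}/2$, compares $|\xi|^2$ to the single barrier $(2M/a)\cosh\sqrt{a}(t-t_1)$ with $a=S/3$ and $M=\norm{\xi}_{L^\infty}\norm{\eta}_{L^\infty}$, applies the weak minimum principle on $S^3\times[-R,R]$, and lets $R\to\infty$. Evaluating at $(\theta_1,t_1)$ then gives $\norm{\xi}_{L^\infty}^2/4\leq 2M/a$, i.e.\ $\norm{\xi}_{L^\infty}\leq (8/a)\norm{\eta}_{L^\infty}=(24/S)\norm{\eta}_{L^\infty}$. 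You instead subtract the constant $C$ first and use the $\mu\to 0$ family of barriers, which lets you work with the actual supremum rather than a near-supremum point; this is why you end up with the sharper constant $3/S$. Both arguments are equally rigorous, and your version is arguably cleaner.
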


\section{Proof of the lower bound: deformation theory} \label{section: Proof of the lower bound}
The argument in this section is a Yang-Mills analogue of the deformation theory 
developed in Tsukamoto \cite{Tsukamoto-deformation}.
Let $d$ be a positive real number.
As in Section \ref{section: Analytic preliminaries for the lower bound}, 
let $T>0$ be a positive real number,
$\underbar{E}$ be a principal $SU(2)$-bundle over $S^3\times (\mathbb{R}/T\mathbb{Z})$, and $\underbar{A}$ be an ASD connection
on $\underbar{E}$. Suppose that $\underbar{A}$ is not flat and
\begin{equation} \label{eq: strict bound on the curvature}
\norm{F(\underbar{A})}_{L^\infty} < d.
\end{equation}
Set $E:=\pi^*\underbar{E}$ and $A:=\pi^*\underbar{A}$ where $\pi:S^3\times \mathbb{R}\to S^3\times (\mathbb{R}/T\mathbb{Z})$
is the natural projection.
Some constants introduced below depend on $(\underbar{E}, \underbar{A})$.
But we don't explicitly write their dependence on it because we consider that $(\underbar{E},\underbar{A})$ is fixed.

We define the Banach space $H^1_A$ by setting 
\[ H^1_A:=\{a\in \Omega^1(\ad E)|\, (d_A^*+d_A^+)a =0, \, \norm{a}_{L^\infty} <\infty\}.\]
$(H^1_A, \norm{\cdot}_{L^\infty})$ becomes an infinite dimensional Banach space.
The additive group $T\mathbb{Z} = \{nT\in \mathbb{R}|\, n\in \mathbb{Z}\}$ acts on $H^1_A$ as follows.
From the definition of $E$ and $A$, we have $(T^*E, T^*A) = (E, A)$ where 
$T:S^3\times\mathbb{R}\to S^3\times \mathbb{R}, (\theta, t)\mapsto (\theta, t+T)$.
Hence for any $a\in H^1_A$, we have $T^*a\in H^1_A$ and $\norm{T^*a}_{L^\infty} = \norm{a}_{L^\infty}$.

Fix $0<\alpha<1$. We want to define the H\"{o}lder space 
$\mathcal{C}^{k, \alpha}(\Lambda^+(\ad E))$ for $k\geq 0$.
Let $\{U_\lambda\}_{\lambda=1}^\Lambda$, $\{U'_\lambda\}_{\lambda=1}^\Lambda$, 
$\{U''_\lambda\}_{\lambda=1}^\Lambda$ be finite open coverings of $S^3\times (\mathbb{R}/T\mathbb{Z})$
satisfying the following conditions.

\noindent 
(i) $\bar{U}_\lambda \subset U'_\lambda$ and $\bar{U}'_\lambda\subset U''_\lambda$.
$U_\lambda$, $U'_\lambda$ and $U''_\lambda$ are connected, and their boundaries are smooth.
Each $U''_\lambda$ is a coordinate chart, i.e., a diffeomorphism between $U''_\lambda$ and 
an open set in $\mathbb{R}^4$ is given for each $\lambda$.

\noindent 
(ii) The covering map $\pi:S^3\times \mathbb{R}\to S^3\times (\mathbb{R}/T\mathbb{Z})$ can be trivialized over each $U''_\lambda$, i.e., we have a disjoint union 
$\pi^{-1}(U''_\lambda) = \bigsqcup_{n\in \mathbb{Z}} U''_{n\lambda}$ such that
$\pi:U''_{n\lambda}\to U''_\lambda$ is diffeomorphic.
We set $U_{n\lambda} := U''_{n\lambda}\cap \pi^{-1}(U_\lambda)$ and 
$U'_{n\lambda} := U''_{n\lambda}\cap \pi^{-1}(U'_\lambda)$.
We have $\pi^{-1}(U_\lambda) = \bigsqcup_{n\in \mathbb{Z}}U_{n\lambda}$ and
$\pi^{-1}(U'_\lambda) = \bigsqcup_{n\in \mathbb{Z}}U'_{n\lambda}$.

\noindent
(iii) A trivialization of the principal $SU(2)$-bundle $\underbar{E}$ over each 
$U''_\lambda$ is given.

From the conditions (ii) and (iii), 
we have a coordinate system and a trivialization of $E$ over each $U''_{n\lambda}$.
Let $u$ be a section of $\Lambda^i(\ad E)$ $(0\leq i\leq 4)$ over $S^3\times \mathbb{R}$. 
Then $u|_{U''_{n\lambda}}$ can be seen as a vector-valued function over $U''_{n\lambda}$.
Hence we can consider the H\"{o}lder norm $\norm{u}_{\mathcal{C}^{k, \alpha}(\bar{U}_{n\lambda})}$ 
of $u$ as a vector-valued function over $\bar{U}_{n\lambda}$
(cf. Gilbarg-Trudinger \cite[Chapter 4]{Gilbarg-Trudinger}).
We define the H\"{o}lder norm $\norm{u}_{\mathcal{C}^{k, \alpha}}$ by setting 
\[ \norm{u}_{\mathcal{C}^{k, \alpha}} := \sup_{n\in \mathbb{Z}, 1\leq \lambda\leq \Lambda} 
   \norm{u}_{\mathcal{C}^{k,\alpha}(\bar{U}_{n\lambda})}.\]
For $a\in H^1_A$, we have $\norm{a}_{\mathcal{C}^{k,\alpha}} \leq \const_k \norm{a}_{L^\infty} <\infty$ for every
$k=0,1,2,\cdots$ by the elliptic regularity.
We define the Banach space $\mathcal{C}^{k, \alpha}(\Lambda^+(\ad E))$ as the space of 
sections $u$ of $\Lambda^+(\ad E)$ over $S^3\times \mathbb{R}$ satisfying $\norm{u}_{\mathcal{C}^{k,\alpha}} <\infty$.

Consider the following map:
\begin{equation*} 
 \Phi: H^1_A\times \mathcal{C}^{2,\alpha}(\Lambda^+(\ad E)) \to \mathcal{C}^{0,\alpha}(\Lambda^+(\ad E)), \quad 
 (a, \phi) \mapsto F^+(A+a+d_A^*\phi).
\end{equation*}
This is a smooth map between the Banach spaces.
Since $F^+(A+a) = (a\wedge a)^+$,
\begin{equation} \label{eq: F^+(A+a+d_A^*phi}
 F^+(A+a+d_A^*\phi) = (a\wedge a)^+ + d_A^+d_A^*\phi + [a\wedge d_A^*\phi]^+ + (d_A^*\phi\wedge d_A^*\phi)^+.
\end{equation}
The derivative of $\Phi$ with respect to the second variable $\phi$ at the origin $(0,0)$ is given by 
\begin{equation} \label{eq: implicit function theorem}
 \partial_2 \Phi_{(0,0)} = d_A^+ d_A^* = \frac{1}{2}(\nabla_A^*\nabla_A + S/3) :
 \mathcal{C}^{2,\alpha}(\Lambda^+(\ad E)) \to \mathcal{C}^{0,\alpha}(\Lambda^+(\ad E)).
\end{equation}
Here we have used the Weitzenb\"{o}ck formula (see (\ref{eq: Weitzenbock formula})).
\begin{proposition} \label{proposition: derivative is isomorphic}
The map $(\nabla_A^*\nabla_A+S/3):\mathcal{C}^{2,\alpha}(\Lambda^+(\ad E)) 
\to \mathcal{C}^{0,\alpha}(\Lambda^+(\ad E))$
is isomorphic.
\end{proposition}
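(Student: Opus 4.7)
The plan is to verify the three ingredients of an isomorphism—injectivity, surjectivity, and continuity of the inverse—using Proposition \ref{L^infty-estimate} together with interior Schauder estimates that hold with constants uniform in the $\mathbb{R}$-direction thanks to the $T$-periodicity of $A$.

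\textbf{Injectivity and the a priori Schauder bound.} If $\xi\in \mathcal{C}^{2,\alpha}(\Lambda^+(\ad E))$ satisfies $(\nabla_A^*\nabla_A+S/3)\xi=0$, both $\xi$ and the right-hand side are bounded, so Proposition \ref{L^infty-estimate} forces $\xi\equiv 0$. For the a priori estimate, fix $\eta:=(\nabla_A^*\nabla_A+S/3)\xi$. In the coordinate charts and bundle trivializations on $U''_{n\lambda}$ the coefficients of $\nabla_A^*\nabla_A+S/3$ are bounded in $\mathcal{C}^{k,\alpha}$ uniformly in $n$ by the $T$-periodicity of $\underbar{A}$. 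The interior Schauder estimate (Gilbarg--Trudinger) therefore yields
\[
\norm{\xi}_{\mathcal{C}^{2,\alpha}(\bar{U}_{n\lambda})}
\le C\bigl(\norm{\xi}_{L^\infty(U'_{n\lambda})}+\norm{\eta}_{\mathcal{C}^{0,\alpha}(U'_{n\lambda})}\bigr),
\]
with $C$ independent of $n$ and $\lambda$. Taking the supremum over $n,\lambda$ and then invoking Proposition \ref{L^infty-estimate} produces
\[
\norm{\xi}_{\mathcal{C}^{2,\alpha}}\le C'\norm{\eta}_{\mathcal{C}^{0,\alpha}},
\]
which simultaneously gives injectivity with closed range, and continuity of the inverse once surjectivity is in hand.

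\textbf{Surjectivity.} Given $\eta\in \mathcal{C}^{0,\alpha}$, for each integer $N\ge 1$ I solve the Dirichlet problem $(\nabla_A^*\nabla_A+S/3)\xi_N=\eta$ on $S^3\times[-N,N]$ with $\xi_N=0$ on the boundary. Existence is standard: the quadratic form $\tfrac12\int(|\nabla_A\xi|^2+(S/3)|\xi|^2)$ is coercive on the zero-trace subspace of $L^2_1$, Lax--Milgram produces a weak solution, and Schauder regularity up to the boundary upgrades it to $\mathcal{C}^{2,\alpha}$. The Weitzenb\"ock computation $(\Delta+2S/3)|\xi_N|^2 = 2(\eta,\xi_N)-2|\nabla_A\xi_N|^2$ underlying the proof of Proposition \ref{L^infty-estimate}, together with the vanishing boundary values of $|\xi_N|^2$, gives the uniform estimate
\[
\norm{\xi_N}_{L^\infty(S^3\times[-N,N])}\le (24/S)\norm{\eta}_{L^\infty}.
\]
Combining this with the interior Schauder estimate above produces $\mathcal{C}^{2,\alpha}_{\mathrm{loc}}$-bounds on the $\xi_N$ uniform in $N$. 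A diagonal argument extracts a subsequence converging in $\mathcal{C}^2_{\mathrm{loc}}$ to a section $\xi$ satisfying $(\nabla_A^*\nabla_A+S/3)\xi=\eta$, and the a priori estimate transmitted to the limit shows $\xi\in \mathcal{C}^{2,\alpha}(\Lambda^+(\ad E))$.

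\textbf{Main obstacle.} The delicate step is the $N$-independence of $\norm{\xi_N}_{L^\infty}$: I need the maximum-principle argument that proves Proposition \ref{L^infty-estimate} to work verbatim on the compact manifolds-with-boundary $S^3\times[-N,N]$. The Dirichlet condition makes the boundary contribution vanish, so a comparison of $|\xi_N|^2$ with the constant supersolution $(24/S)\norm{\eta}_{L^\infty}$ goes through. Once this uniformity is secured, everything else—taking the $\mathcal{C}^2_{\mathrm{loc}}$-limit and promoting it to the global Schauder space—is routine.
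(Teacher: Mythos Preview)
Your proof is correct and follows the same overall architecture as the paper's argument: injectivity via Proposition \ref{L^infty-estimate}, an a priori Schauder bound using the $T\mathbb{Z}$-symmetry to make the interior constants uniform in $n$, and surjectivity by an approximation-and-limit scheme. The genuine difference lies in the approximation. The paper truncates the \emph{data}: it multiplies $\eta$ by cut-offs $\varphi_k$, solves $(\nabla_A^*\nabla_A+S/3)\phi_k=\varphi_k\eta$ on all of $S^3\times\mathbb{R}$ via the Riesz representation theorem on the global $L^2_1$ space, and then invokes Proposition \ref{L^infty-estimate} directly to get the uniform $L^\infty$ bound on $\phi_k$. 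You instead truncate the \emph{domain}, solving Dirichlet problems on $S^3\times[-N,N]$. Your route has the pleasant feature that the maximum principle step is slightly cleaner---with zero boundary data, the interior maximum of $|\xi_N|^2$ is attained and the Weitzenb\"ock inequality gives the bound at once (indeed with a better constant than $24/S$). The paper's route avoids any discussion of boundary regularity. Both are standard exhaustion arguments and lead to the same limit; neither has a real advantage over the other.
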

\begin{proof}
The injectivity follows from the $L^\infty$-estimate of Proposition \ref{L^infty-estimate}.
So the problem is the surjectivity. First we prove the following lemma.
\begin{lemma} \label{lemma: partial surjectivity}
Suppose that $\eta\in\mathcal{C}^{0,\alpha}(\Lambda^+(\ad E))$ is compact-supported.
Then there exists $\phi\in \mathcal{C}^{2,\alpha}(\Lambda^+(\ad E))$ satisfying 
$(\nabla_A^*\nabla_A+S/3)\phi = \eta$ and 
$\norm{\phi}_{\mathcal{C}^{2,\alpha}}\leq \const \cdot \norm{\eta}_{\mathcal{C}^{0,\alpha}}$.
\end{lemma}
\begin{proof}
Set $L^2_1:=\{\xi\in L^2(\Lambda^+(\ad E))|\, \nabla_A\xi\in L^2\}$.
For $\xi_1, \xi_2\in L^2_1$, 
set $(\xi_1, \xi_2)_{S/3} := (S/3)(\xi_1, \xi_2)_{L^2}+(\nabla_A\xi_1, \nabla_A\xi_2)_{L^2}$.
Since $S$ is a positive constant, this inner product defines a norm equivalent to the standard $L^2_1$-norm.
$\eta$ defines a bounded linear functional 
$(\cdot, \eta)_{L^2}: L^2_1\to \mathbb{R}$, $\xi\mapsto (\xi, \eta)_{L^2}$.
From the Riesz representation theorem, there uniquely exists $\phi\in L^2_1$
satisfying $(\xi, \phi)_{S/3} = (\xi, \eta)_{L^2}$ for any $\xi\in L^2_1$.
This implies that $(\nabla_A^*\nabla_A+S/3)\phi = \eta$ in the sense of distributions.
Moreover we have $\norm{\phi}_{L^2_1}\leq \const \norm{\eta}_{L^2}$.
From the elliptic regularity (see Gilbarg-Trudinger \cite[Chapter 9]{Gilbarg-Trudinger})
and the Sobolev embedding $L^2_1\hookrightarrow L^4$,
\begin{equation*}
 \begin{split}
 \norm{\phi}_{L^4_2(U_{n\lambda})}&\leq 
 \const_{\lambda}(\norm{\phi}_{L^4(U'_{n\lambda})} + \norm{\eta}_{L^4(U'_{n\lambda})}) ,\\
 &\leq \const_{\lambda}(\norm{\phi}_{L^2_1(U'_{n\lambda})}+ \norm{\eta}_{L^4(U'_{n\lambda})}),\\
 &\leq \const_\lambda (\norm{\eta}_{L^2} + \norm{\eta}_{L^4}).
 \end{split}
\end{equation*}
Here $\const_\lambda$ are constants depending on $\lambda = 1,2,\cdots,\Lambda$.
The important point is that they are independent of $n\in \mathbb{Z}$.
This is because we have the $T\mathbb{Z}$-symmetry of the equation.
From the Sobolev embedding $L^4_2\hookrightarrow L^\infty$, we have 
\[ \norm{\phi}_{L^\infty}\leq \const\cdot \sup_{n,\lambda}\norm{\phi}_{L^4_2(U_{n\lambda})} \leq
   \const(\norm{\eta}_{L^2}+\norm{\eta}_{L^4}) < \infty.\]
Using the Schauder interior estimate (see Gilbarg-Trudinger \cite[Chapter 6]{Gilbarg-Trudinger}),
we get 
\[ \norm{\phi}_{\mathcal{C}^{2,\alpha}(\bar{U}_{n\lambda})}\leq 
   \const_\lambda (\norm{\phi}_{L^\infty} + \norm{\eta}_{\mathcal{C}^{0,\alpha}(\bar{U}'_{n\lambda})}).\]
From Proposition \ref{L^infty-estimate}, we get $\norm{\phi}_{L^\infty}\leq (24/S)\norm{\eta}_{L^\infty}$.
It is easy to see that 
\begin{equation}\label{eq: C^{0,alpha}-norm over U' leq U}
 \sup_{n,\lambda}\norm{\eta}_{\mathcal{C}^{0,\alpha}(\bar{U}'_{n\lambda})}
 \leq \const \norm{\eta}_{\mathcal{C}^{0,\alpha}}.
\end{equation}
(Recall 
$\norm{\eta}_{\mathcal{C}^{0,\alpha}} 
=\sup_{n,\lambda}\norm{\eta}_{\mathcal{C}^{0,\alpha}(\bar{U}_{n\lambda})}$.)
Hence $\norm{\phi}_{\mathcal{C}^{2,\alpha}}\leq 
\const (\norm{\eta}_{L^\infty} + \norm{\eta}_{\mathcal{C}^{0,\alpha}})\leq \const \norm{\eta}_{\mathcal{C}^{0,\alpha}}$.
\end{proof}
Let $\eta\in \mathcal{C}^{0,\alpha}(\Lambda^+(\ad E))$ (not necessarily compact-supported).
Let $\varphi_k$ $(k=1,2,\cdots)$ be cut-off functions such that $0\leq \varphi_k\leq 1$, 
$\varphi_k=1$ over $|t|\leq k$ and $\varphi_k=0$ over $|t|\geq k+1$.
Set $\eta_k :=\varphi_k\eta$.
From the above Lemma \ref{lemma: partial surjectivity}, 
there exists $\phi_k \in \mathcal{C}^{2,\alpha}(\Lambda^+(\ad E))$ satisfying 
$(\nabla_A^*\nabla_A+S/3)\phi_k = \eta_k$.
From the $L^\infty$-estimate (Proposition \ref{L^infty-estimate}), we get 
\[ \norm{\phi_k}_{L^\infty}\leq (24/S)\norm{\eta_k}_{L^\infty}\leq (24/S)\norm{\eta}_{L^\infty}.\]
From the Schauder interior estimate, we get 
\[ \norm{\phi_k}_{\mathcal{C}^{2,\alpha}(\bar{U}_{n\lambda})}\leq 
 \const_\lambda\cdot (\norm{\phi_k}_{L^\infty(U'_{n\lambda})}
 +\norm{\eta_k}_{\mathcal{C}^{0,\alpha}(\bar{U}'_{n\lambda})})
\leq \const (\norm{\eta}_{L^\infty} + \norm{\eta_k}_{\mathcal{C}^{0,\alpha}(\bar{U}'_{n\lambda})}).\]
We have $\eta_k=\eta$ over each $U'_{n\lambda}$ for $k\gg 1$.
Hence $\norm{\phi_k}_{\mathcal{C}^{2,\alpha}(\bar{U}_{n\lambda})}$ $(k\geq 1)$ is bounded
for each $(n,\lambda)$.
Therefore, if we take a subsequence, $\phi_k$ converges to 
a $\mathcal{C}^2$-section $\phi$ of $\Lambda^+(\ad E)$
in the $\mathcal{C}^2$-topology over every compact subset.
$\phi$ satisfies $(\nabla_A^*\nabla_A+S/3)\phi = \eta$ and 
$\norm{\phi}_{L^\infty}\leq (24/S)\norm{\eta}_{L^\infty}$.
The Schauder interior estimate gives 
\[ \norm{\phi}_{\mathcal{C}^{2,\alpha}(\bar{U}_{n\lambda})}\leq 
\const_\lambda(\norm{\phi}_{L^\infty}+\norm{\eta}_{\mathcal{C}^{0,\alpha}(\bar{U}'_{n\lambda})}).\]
By (\ref{eq: C^{0,alpha}-norm over U' leq U}),
we get $\norm{\phi}_{\mathcal{C}^{2,\alpha}}\leq \const \norm{\eta}_{\mathcal{C}^{0,\alpha}}<\infty$.
\end{proof}
Since the map (\ref{eq: implicit function theorem}) is isomorphic,
the implicit function theorem implies that there exist $\delta_2>0$ and $\delta_3>0$ such that 
for any $a\in H^1_A$ with $\norm{a}_{L^\infty}\leq \delta_2$ there uniquely exists 
$\phi_a\in \mathcal{C}^{2,\alpha}(\Lambda^+(\ad E))$ with $\norm{\phi_a}_{\mathcal{C}^{2,\alpha}}\leq \delta_3$
satisfying $F^+(A+a+d_A^*\phi_a) =0$, i.e., 
\begin{equation} \label{eq: ASD equation for phi_a}
 d_A^+d_A^*\phi_a + [a\wedge d_A^*\phi_a]^+ + (d_A^*\phi_a\wedge d_A^*\phi_a)^+ = -(a\wedge a)^+.
\end{equation}
Here the ``uniqueness" means that if $\phi \in \mathcal{C}^{2,\alpha}(\Lambda^+(\ad E))$ with 
$\norm{\phi}_{\mathcal{C}^{2,\alpha}}\leq \delta_3$ satisfies $F^+(A+a+d_A^*\phi) = 0$ then 
$\phi = \phi_a$.
From the elliptic regularity, $\phi_a$ is smooth.
We have $\phi_0 =0$ and 
\begin{equation}\label{eq: consequence of implicit function theorem}
\norm{\phi_a}_{\mathcal{C}^{2,\alpha}}\leq \const \norm{a}_{L^\infty}, \quad
\norm{\phi_a-\phi_b}_{\mathcal{C}^{2,\alpha}}\leq \const \norm{a-b}_{L^\infty},
\end{equation}
for any $a, b\in H^1_A$ with $\norm{a}_{L^\infty}, \norm{b}_{L^\infty}\leq \delta_2$.
The map $a\mapsto \phi_a$ is $T$-equivariant, i.e., 
$\phi_{T^*a}=T^*\phi_a$ where $T:S^3\times \mathbb{R}\to S^3\times \mathbb{R}, (\theta, t)\mapsto (\theta, t+T)$.

We have $F(A+a+d_A^*\phi_a) = F(A+a) + d_Ad_A^*\phi_a+[a\wedge d_A^*\phi_a] + d_A^*\phi_a\wedge d_A^*\phi_a$.
From (\ref{eq: strict bound on the curvature}), if we choose $\delta_2>0$ sufficiently small, 
\begin{equation}\label{eq: L^infty bound on the curvature of deformed connection}
\norm{F(A+a+d_A^*\phi_a)}_{L^\infty}\leq \norm{F(A)}_{L^\infty} +\const \cdot \delta_2 \leq d.
\end{equation}
Moreover we can choose $\delta_2>0$ so that, for any $a\in H^1_A$ with $\norm{a}_{L^\infty}\leq \delta_2$,  
\begin{equation} \label{eq: L^infty norm of a+d_A^*phi_a}
 \norm{a+d_A^*\phi_a}_{L^\infty}\leq  \const\cdot \delta_2  \leq   \varepsilon_2 ,
\end{equation}
where $\varepsilon_2$ is the positive constant introduced in 
Proposition \ref{prop: Coulomb gauge condition}.
\begin{lemma} \label{lemma: injectivity}
We can take the above constant $\delta_2>0$ sufficiently small so that, 
if $a, b\in H^1_A$ with 
$\norm{a}_{L^\infty}, \norm{b}_{L^\infty}\leq \delta_2$ satisfy $a+d_A^*\phi_a = b+d_A^*\phi_b$,
then $a=b$.
\end{lemma}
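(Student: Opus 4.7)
The plan is to reduce the claimed injectivity to the injectivity of the elliptic operator $\nabla_A^*\nabla_A + S/3$ acting on $\mathcal{C}^{2,\alpha}(\Lambda^+(\ad E))$, which is already available via Proposition \ref{proposition: derivative is isomorphic} (alternatively via the $L^\infty$-estimate of Proposition \ref{L^infty-estimate}). No further smallness of $\delta_2$ beyond what was already imposed to guarantee that $\phi_a,\phi_b$ exist should actually be needed.

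First I would rewrite the hypothesis $a+d_A^*\phi_a = b+d_A^*\phi_b$ as
\[
a-b \;=\; d_A^*(\phi_b-\phi_a).
\]
Then I apply $d_A^+$ to both sides. Because $a,b\in H^1_A$, we have $d_A^+ a = d_A^+ b = 0$, so the left-hand side vanishes and we obtain
\[
d_A^+ d_A^*(\phi_b-\phi_a) \;=\; 0.
\]

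Next I invoke the Weitzenb\"ock formula (\ref{eq: Weitzenbock formula}) on $\Lambda^+(\ad E)$. Since $A$ is ASD we have $F_A^+=0$, and since $X=S^3\times\mathbb{R}$ is conformally flat we have $W^+=0$; the scalar curvature $S$ is a positive constant. Therefore the identity reduces to $2d_A^+ d_A^* = \nabla_A^*\nabla_A + S/3$ on $\Omega^+(\ad E)$, and the previous display becomes
\[
(\nabla_A^*\nabla_A + S/3)(\phi_b-\phi_a) \;=\; 0.
\]

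Now $\phi_a,\phi_b \in \mathcal{C}^{2,\alpha}(\Lambda^+(\ad E))$ by construction, so $\phi_b-\phi_a$ lies in this Banach space; by Proposition \ref{proposition: derivative is isomorphic}, the operator $\nabla_A^*\nabla_A + S/3 : \mathcal{C}^{2,\alpha}(\Lambda^+(\ad E)) \to \mathcal{C}^{0,\alpha}(\Lambda^+(\ad E))$ is an isomorphism, hence has trivial kernel, forcing $\phi_a=\phi_b$. Substituting back into $a-b=d_A^*(\phi_b-\phi_a)$ yields $a=b$, completing the proof. The argument has no real obstacle; the only point requiring care is verifying that both terms of the Weitzenb\"ock identity that could obstruct injectivity ($F_A^+$ and $W^+$) indeed vanish in our setting, which is automatic from the hypotheses on $A$ and on the base $X$.
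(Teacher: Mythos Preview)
Your proof is correct and is in fact cleaner than the paper's argument. The key observation you make---applying $d_A^+$ directly to the identity $a-b=d_A^*(\phi_b-\phi_a)$ and using that $a,b\in H^1_A$ lie in the kernel of $d_A^+$---immediately yields $(\nabla_A^*\nabla_A+S/3)(\phi_b-\phi_a)=0$, after which injectivity of this operator (Proposition~\ref{proposition: derivative is isomorphic}) finishes the job. As you note, no further shrinking of $\delta_2$ is needed.

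The paper instead runs a contraction-style argument: it subtracts the two instances of the nonlinear equation~(\ref{eq: ASD equation for phi_a}) to express $d_A^+d_A^*(\phi_a-\phi_b)$ as a sum of quadratic terms, bounds its $\mathcal{C}^{0,\alpha}$-norm by $\const\cdot\delta_2\,\norm{a-b}_{L^\infty}$, inverts via Proposition~\ref{proposition: derivative is isomorphic} to obtain $\norm{\phi_a-\phi_b}_{\mathcal{C}^{2,\alpha}}\leq \const\cdot\delta_2\,\norm{a-b}_{L^\infty}$, and then feeds the hypothesis back in to get $\norm{a-b}_{L^\infty}\leq \const\cdot\delta_2\,\norm{a-b}_{L^\infty}$, forcing $a=b$ once $\delta_2$ is small. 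Your route is more direct because you exploit the linear constraint $d_A^+a=0$ \emph{after} forming the difference, whereas the paper only uses it implicitly (through the expansion $F^+(A+a)=(a\wedge a)^+$) before subtracting, and so must still handle the residual quadratic terms perturbatively. The paper's approach does yield the quantitative Lipschitz bound with the explicit $\delta_2$ factor as a by-product, but that bound is not used elsewhere.
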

\begin{proof}
By (\ref{eq: ASD equation for phi_a}),
\begin{equation}
 \begin{split}
 &\frac{1}{2}(\nabla_A^*\nabla_A+S/3)(\phi_a-\phi_b) = d_A^+d_A^*(\phi_a-\phi_b) \\
     &= (b\wedge (b-a))^++((b-a)\wedge a)^+ + 
     [b\wedge (d_A^*\phi_b-d_A^*\phi_a)]^+ + [(b-a)\wedge d_A^*\phi_a]^+ \\
    &+ (d_A^*\phi_b\wedge (d_A^*\phi_b-d_A^*\phi_a))^+ 
    +((d_A^*\phi_b-d_A^*\phi_a)\wedge d_A^*\phi_a)^+    
 \end{split}
\end{equation}
Its $\mathcal{C}^{0,\alpha}$-norm is bounded by 
\begin{equation*}
 \begin{split}
 \const (\norm{a}_{\mathcal{C}^{0,\alpha}}+ &\norm{b}_{\mathcal{C}^{0,\alpha}} + 
 \norm{d_A^*\phi_a}_{\mathcal{C}^{0,\alpha}}) \norm{a-b}_{\mathcal{C}^{0,\alpha}}  \\
 &+ \const (\norm{b}_{\mathcal{C}^{0,\alpha}}+\norm{d_A^*\phi_a}_{\mathcal{C}^{0,\alpha}}
 +\norm{d_A^*\phi_b}_{\mathcal{C}^{0,\alpha}})
    \norm{d_A^*\phi_a-d_A^*\phi_b}_{\mathcal{C}^{0,\alpha}}. 
 \end{split}
\end{equation*}
From (\ref{eq: consequence of implicit function theorem}), this is bounded by $\const \cdot \delta_2\norm{a-b}_{L^\infty}$.
Then Proposition \ref{proposition: derivative is isomorphic} implies
\[ \norm{\phi_a-\phi_b}_{\mathcal{C}^{2,\alpha}}\leq \const \cdot \delta_2\norm{a-b}_{L^\infty}.\]
Hence, if $a+d_A^*\phi_a=b+d_A^*\phi_b$ then 
\[ \norm{a-b}_{L^\infty} = \norm{d_A^*\phi_a-d_A^*\phi_b}_{L^\infty} \leq \const\cdot \delta_2\norm{a-b}_{L^\infty}.\]
If $\delta_2$ is sufficiently small, then this implies $a=b$.
\end{proof}
For $r>0$, we set $B_r(H^1_A):=\{a\in H^1_A|\, \norm{a}_{L^\infty}\leq r\}$.
\begin{lemma}\label{lemma: continuity under the topology of compact uniform convergence}
Let $\{a_n\}_{n\geq 1}\subset B_{\delta_2}(H^1_A)$ and suppose that this sequence converges to 
$a\in B_{\delta_2}(H^1_A)$ in the topology of uniform convergence over compact subsets, 
i.e., for any compact set $K\subset S^3\times \mathbb{R}$,
$\norm{a_n-a}_{L^\infty(K)}\to 0$ as $n\to \infty$.
Then $d_A^*\phi_{a_n}$ converges to $d_A^*\phi_a$ in the $\mathcal{C}^\infty$-topology 
over every compact subset in $S^3\times \mathbb{R}$.
\end{lemma}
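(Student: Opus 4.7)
The plan is to combine the uniform $\mathcal{C}^{2,\alpha}$-bound $\norm{\phi_{a_n}}_{\mathcal{C}^{2,\alpha}}\leq \const\cdot\norm{a_n}_{L^\infty}\leq \const\cdot\delta_2$ coming from the implicit function theorem (see (\ref{eq: consequence of implicit function theorem})) with a diagonal extraction and a uniqueness argument. First, I note that on each coordinate patch $\bar{U}_{n\lambda}$ the sequence $\{\phi_{a_n}\}$ is uniformly bounded in $\mathcal{C}^{2,\alpha}$, so by the compact embedding $\mathcal{C}^{2,\alpha}\hookrightarrow \mathcal{C}^2$ together with a diagonal argument I can extract a subsequence (still denoted by $\phi_{a_n}$) that converges in $\mathcal{C}^2_{\mathrm{loc}}$ to some section $\phi_\infty$ of $\Lambda^+(\ad E)$. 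Passing to the limit in the equation (\ref{eq: ASD equation for phi_a}) for $\phi_{a_n}$ and $a_n$, using the local uniform convergence $a_n\to a$, I get
\[
d_A^+d_A^*\phi_\infty + [a\wedge d_A^*\phi_\infty]^+ + (d_A^*\phi_\infty\wedge d_A^*\phi_\infty)^+ = -(a\wedge a)^+,
\]
i.e., $F^+(A+a+d_A^*\phi_\infty)=0$.

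Next I would promote $\mathcal{C}^2_{\mathrm{loc}}$ to $\mathcal{C}^\infty_{\mathrm{loc}}$ by standard elliptic bootstrapping applied to this equation (the operator $d_A^+d_A^*=\tfrac{1}{2}(\nabla_A^*\nabla_A+S/3)$ is elliptic on $\Lambda^+$), differentiating the equation and applying interior Schauder estimates on each $\bar{U}_{n\lambda}$ to control higher derivatives by lower ones plus the (smooth) data $a$. To conclude $\phi_\infty = \phi_a$, I invoke the uniqueness part of the implicit function theorem: since the $\mathcal{C}^{2,\alpha}$-norm is the supremum over the local patches and $\mathcal{C}^2_{\mathrm{loc}}$ convergence preserves the local norms in the limit (the H\"older seminorm is lower semicontinuous under $\mathcal{C}^2$ convergence on each $\bar{U}_{n\lambda}$), I obtain $\norm{\phi_\infty}_{\mathcal{C}^{2,\alpha}}\leq \const\cdot\delta_2\leq \delta_3$, provided $\delta_2$ was chosen sufficiently small. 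By the uniqueness clause following (\ref{eq: ASD equation for phi_a}), this forces $\phi_\infty = \phi_a$.

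Since every subsequence of $\{\phi_{a_n}\}$ admits a further subsequence converging to $\phi_a$ in $\mathcal{C}^\infty_{\mathrm{loc}}$, the entire sequence converges in $\mathcal{C}^\infty_{\mathrm{loc}}$ to $\phi_a$. In particular $d_A^*\phi_{a_n}\to d_A^*\phi_a$ in the $\mathcal{C}^\infty$-topology over every compact subset of $S^3\times\mathbb{R}$, which is what was to be proved.

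The main obstacle I anticipate is the bookkeeping in the elliptic bootstrap: one must verify that the Schauder constants $\const_\lambda$ on each patch $\bar{U}_{n\lambda}$ are bounded independently of $n$ (which is exactly where the $T\mathbb{Z}$-periodicity of $A$ and of the cover $\{U_{n\lambda}\}$ intervenes, as already used in the proof of Lemma \ref{lemma: partial surjectivity}), and that the pointwise/lower-semicontinuity argument really yields $\norm{\phi_\infty}_{\mathcal{C}^{2,\alpha}}\leq \const\cdot\delta_2$ so that the \emph{global} uniqueness statement of the implicit function theorem can be applied; once these are in hand the rest is routine.
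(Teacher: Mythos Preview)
Your proof is correct and follows essentially the same route as the paper: extract a subsequential $\mathcal{C}^\infty_{\mathrm{loc}}$ limit $\phi_\infty$ using the uniform $\mathcal{C}^{2,\alpha}$-bound and elliptic bootstrapping, pass to the limit in (\ref{eq: ASD equation for phi_a}), verify $\norm{\phi_\infty}_{\mathcal{C}^{2,\alpha}}\leq\delta_3$, and invoke the uniqueness clause of the implicit function theorem to identify $\phi_\infty=\phi_a$. The only cosmetic difference is that the paper first notes that $a_n\to a$ in $\mathcal{C}^\infty_{\mathrm{loc}}$ (since each $a_n$ satisfies the elliptic equation $(d_A^*+d_A^+)a_n=0$) and then bootstraps directly to uniform local $\mathcal{C}^k$-bounds on $\phi_{a_n}$ for every $k$, whereas you first extract a $\mathcal{C}^2_{\mathrm{loc}}$ limit and bootstrap afterwards; also, you may simply cite $\norm{\phi_{a_n}}_{\mathcal{C}^{2,\alpha}}\leq\delta_3$ from the implicit function theorem rather than going through $\const\cdot\delta_2$ and shrinking $\delta_2$.
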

\begin{proof}
It is enough to prove that there exists a subsequence (also denoted by $\{a_n\}$) such that 
$d_A^*\phi_{a_n}$ converges to $d_A^*\phi_a$ in the topology of $\mathcal{C}^\infty$-convergence 
over compact subsets in $S^3\times \mathbb{R}$.
From the elliptic regularity, $a_n$ converges to $a$ in the $\mathcal{C}^\infty$-topology over every 
compact subset.
Hence, for each $k\geq 0$ and each compact subset $K$ in $X$, 
the $\mathcal{C}^k$-norms of $\phi_{a_n}$ over $K$ $(n\geq 1)$ are bounded
by the equation (\ref{eq: ASD equation for phi_a}) and $\norm{\phi_{a_n}}_{\mathcal{C}^{2,\alpha}}\leq \delta_3$.
Then a subsequence of $\phi_{a_n}$ 
converges to some $\phi$ in the $\mathcal{C}^\infty$-topology over every compact subset.
We have $\norm{\phi}_{\mathcal{C}^{2,\alpha}}\leq \delta_3$ and $F^+(A+a+d_A^*\phi) =0$.
Then the uniqueness of $\phi_a$ implies $\phi =\phi_a$.
\end{proof}

Consider the following map 
(cf. the description of $\moduli_d$ in Remark \ref{remark: another description of moduli_d}):
\begin{equation}\label{eq: deformation map}
 B_{\delta_2}(H^1_A) \to \moduli_d, \quad a\mapsto [E,A+a+d_A^*\phi_a].
\end{equation}
Note that we have $|F(A+a+d_A^*\phi_a)|\leq d$ 
(see (\ref{eq: L^infty bound on the curvature of deformed connection})), and hence this map is well-defined.
$B_{\delta_2}(H^1_A)$ is equipped with the topology of uniform convergence over compact subsets.
($B_{\delta_2}(H^1_A)$ becomes compact and metrizable.)
The map (\ref{eq: deformation map}) is continuous 
by Lemma \ref{lemma: continuity under the topology of compact uniform convergence}.
$T\mathbb{Z}$ naturally acts on $B_{\delta_2}(H^1_A)$, and the map (\ref{eq: deformation map}) is 
$T\mathbb{Z}$-equivariant.
($\moduli_d$ is equipped with the action of $T\mathbb{Z}$ induced by the action of $\mathbb{R}$.)
\begin{lemma} \label{lemma: deformation map is injective}
The map (\ref{eq: deformation map}) is injective for sufficiently small $\delta_2>0$.
\end{lemma}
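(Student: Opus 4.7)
The plan is to deduce this injectivity from Proposition \ref{prop: Coulomb gauge condition} after first moving both connections into the exact Coulomb-gauge slice, and then to recover $a=b$ via a Banach inverse function argument. Suppose that $g(A+a+d_A^*\phi_a)=A+b+d_A^*\phi_b$ for some gauge transformation $g$ of $E$, and set $\tilde a:=a+d_A^*\phi_a$ and $\tilde b:=b+d_A^*\phi_b$. By (\ref{eq: L^infty norm of a+d_A^*phi_a}) these satisfy $\norm{\tilde a}_{L^\infty},\norm{\tilde b}_{L^\infty}\leq \varepsilon_2$.

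The first step is to establish, in close parallel with Proposition \ref{proposition: derivative is isomorphic}, that $\Delta_A:\mathcal{C}^{2,\alpha}(\ad E)\to \mathcal{C}^{0,\alpha}(\ad E)$ is a Banach-space isomorphism. Injectivity is immediate from Lemma \ref{lemma: estimate by irreducibility}; surjectivity follows the same two-step scheme as in Proposition \ref{proposition: derivative is isomorphic} — one solves the equation for compactly supported right-hand side via the Riesz representation theorem on a suitable $L^2_1$-type space (using the coercivity from Lemma \ref{lemma: uniform irreducibility}), then passes to a limit via Schauder interior estimates together with the uniform $L^\infty$-bound supplied by Lemma \ref{lemma: estimate by irreducibility} (which here plays the role that Proposition \ref{L^infty-estimate} played for $\nabla_A^*\nabla_A+S/3$). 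With this in hand, the Banach implicit function theorem applied to $v\mapsto d_A^*\bigl(e^v(A+\tilde a)-A\bigr)$, whose derivative at $v=0$ equals $-\Delta_A$, produces unique small gauge transformations $h_a=e^{v_a}$ and $h_b=e^{v_b}$ with $h_a(A+\tilde a)=A+\alpha_a$, $h_b(A+\tilde b)=A+\alpha_b$, $d_A^*\alpha_a=d_A^*\alpha_b=0$, and (after shrinking $\delta_2$) $\norm{\alpha_a}_{L^\infty},\norm{\alpha_b}_{L^\infty}\leq\varepsilon_2$.

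Now $A+\alpha_a$ and $A+\alpha_b$ are ASD connections in Coulomb gauge, gauge equivalent via $h_bgh_a^{-1}$, so Proposition \ref{prop: Coulomb gauge condition} yields $\alpha_a=\alpha_b$. To translate this into $a=b$, I define
\[
 F:B_{\delta_2}(H^1_A)\to H^1_A,\qquad F(a):=P\alpha_a,
\]
where $P:\Omega^1(\ad E)\to H^1_A$ is the projection along $d_A(\Omega^0)\oplus d_A^*(\Omega^+)$; this complement is $L^2$-orthogonal to $H^1_A$ because any $a'\in H^1_A$ satisfies $(a',d_Au)=(d_A^*a',u)=0$ and $(a',d_A^*\phi)=(d_Aa',\phi)=(d_A^-a',\phi)=0$, using $d_A^+a'=0$. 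Expanding $\alpha_a=\tilde a-(d_Ah_a)h_a^{-1}+(h_a\tilde ah_a^{-1}-\tilde a)$ gives $F(a)=a+R(a)$: indeed $P\tilde a=a$ since $d_A^*\phi_a\in d_A^*(\Omega^+)$, while $P\bigl((d_Ah_a)h_a^{-1}\bigr)$ and $P(h_a\tilde ah_a^{-1}-\tilde a)$ are at least quadratic in $(v_a,\tilde a)$ because $P(d_Av_a)=0$ and $P([v_a,\tilde a])=O(\norm{v_a}\cdot\norm{\tilde a})$. Since $d_A^+a=0$ for $a\in H^1_A$, the right-hand side of (\ref{eq: ASD equation for phi_a}) is quadratic in $a$, so $\phi_a=O(\norm{a}_{L^\infty}^2)$; consequently $d_A^*\tilde a=d_A^*d_A^*\phi_a=O(\norm{a}^2)$, and the Coulomb equation $\Delta_Av_a=d_A^*\tilde a+(\text{quadratic in }v_a,\tilde a)$ forces $v_a=O(\norm{a}^2)$ as well. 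Therefore $R(a)=O(\norm{a}^3)$, hence $dF_0=\mathrm{id}_{H^1_A}$, and the Banach inverse function theorem shows that $F$ is a local diffeomorphism and in particular injective on $B_{\delta_2}(H^1_A)$ for $\delta_2$ sufficiently small. From $\alpha_a=\alpha_b$ we get $F(a)=F(b)$, whence $a=b$.

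The main technical hurdle is setting up the Banach calculus carefully on the non-compact manifold $X=S^3\times\mathbb{R}$: establishing the $\mathcal{C}^{k,\alpha}$ isomorphism for $\Delta_A$ in exact parallel with Proposition \ref{proposition: derivative is isomorphic}, and verifying that the Coulomb gauge construction $\tilde a\mapsto(v_a,\alpha_a)$ and hence $F$ depend $C^1$-smoothly on $a$, uniformly on $B_{\delta_2}(H^1_A)$. The estimates are direct adaptations of those in Section \ref{section: Analytic preliminaries for the lower bound} and Proposition \ref{proposition: derivative is isomorphic}, but the non-compactness demands uniform control across all translates, which is where Lemmas \ref{lemma: uniform irreducibility} and \ref{lemma: estimate by irreducibility} are essential.
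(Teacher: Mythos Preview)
Your approach is vastly more complicated than necessary because you missed the crucial observation that $\tilde a = a + d_A^*\phi_a$ and $\tilde b = b + d_A^*\phi_b$ are \emph{already} in Coulomb gauge.  Indeed $d_A^*a = 0$ since $a\in H^1_A$, and $d_A^*d_A^*\phi_a = 0$ because $A$ is ASD and $\phi_a$ is self-dual: for any $u\in\Omega^0(\ad E)$ one has $(d_A^*d_A^*\phi_a,u)_{L^2} = (\phi_a, d_A^2u)_{L^2} = (\phi_a,[F_A,u])_{L^2} = 0$, since $[F_A,u]\in\Omega^-$ is pointwise orthogonal to $\phi_a\in\Omega^+$.  (You wrote $d_A^*\tilde a = O(\norm{a}^2)$; in fact it is exactly zero.)  So Proposition~\ref{prop: Coulomb gauge condition} applies directly to $\tilde a,\tilde b$ and gives $\tilde a=\tilde b$.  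Then $a=b$ follows at once from Lemma~\ref{lemma: injectivity}, which was already proved and which you did not invoke.  That is the paper's entire proof.

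Your elaborate detour --- proving $\Delta_A$ is a $\mathcal{C}^{2,\alpha}\to\mathcal{C}^{0,\alpha}$ isomorphism, running the implicit function theorem to produce Coulomb gauges $h_a,h_b$, and then building the map $F$ --- is all unnecessary.  If it were carried out, the uniqueness in your IFT step would force $h_a=h_b=1$ (since $\tilde a,\tilde b$ already satisfy the Coulomb condition), whence $\alpha_a=\tilde a$, $\alpha_b=\tilde b$, and $F(a)=P\tilde a=a$; so your machinery collapses to the identity map and the simple argument above.  Separately, your sketch of the $\Delta_A$ isomorphism is not airtight: Lemma~\ref{lemma: estimate by irreducibility} yields only an $\ell^\infty L^2$ estimate, not an $L^\infty$ estimate, so playing the role of Proposition~\ref{L^infty-estimate} requires an additional elliptic regularity step that you do not spell out.
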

\begin{proof}
Let $a, b\in B_{\delta_2}(H^1_A)$, and suppose that there exists a gauge transformation $g:E\to E$
satisfying $g(A+a+d_A^*\phi_a) = A+b+d_A^*\phi_b$.
We have $d_A^*(a+d_A^*\phi_a) = d_A^*(b+d_A^*\phi_b) =0$ and 
$\norm{a+d_A^*\phi_a}_{L^\infty}, \norm{b+d_A^*\phi_b}_{L^\infty}\leq \varepsilon_2$ 
(see (\ref{eq: L^infty norm of a+d_A^*phi_a})).
Then Proposition \ref{prop: Coulomb gauge condition} implies $a+d_A^*\phi_a = b+d_A^*\phi_b$.
Then we have $a=b$ by Lemma \ref{lemma: injectivity}.
\end{proof}
Therefore the map (\ref{eq: deformation map}) becomes a $T\mathbb{Z}$-equivariant topological embedding.
Hence 
\begin{equation} \label{eq: lower bound on the local mean dimension by a Banach space}
 \dim_{[E,A]}(\moduli_d :T\mathbb{Z}) \geq \dim_0(B_{\delta_2}(H^1_A):T\mathbb{Z}).
\end{equation}
The right-hand-side is the local mean dimension of $(B_{\delta_2}(H^1_A), T\mathbb{Z})$ at the origin.
We define a distance on $B_{\delta_2}(H^1_A)$ by 
\[ \dist(a,b):=\sum_{n\geq 0}2^{-n}\norm{a-b}_{L^\infty(|t|\leq (n+1)T)} 
   \quad (a,b\in B_{\delta_2}(H^1_{A})).\]
   Set $\Omega_n:=\{0,T,2T,\cdots,(n-1)T\} \subset T\mathbb{Z}$ $(n\geq 1)$.
$\{\Omega_n\}_{n\geq 1}$ is an amenable sequence in $T\mathbb{Z}$.
For $a,b\in B_{\delta_2}(H^1_A)$,
\begin{equation} \label{eq: dist_{Omega_n} geq L^infty-norm}
 \dist_{\Omega_n}(a,b) \geq \norm{a-b}_{L^\infty(0\leq t\leq nT)}.
\end{equation}

For each $n\geq 1$, let 
$\pi_n:S^3\times (\mathbb{R}/nT\mathbb{Z}) \to S^3\times (\mathbb{R}/T\mathbb{Z})$ be the natural 
$n$-hold covering, and set $E_n := \pi_n^*(\underbar{E})$ and $A_n:=\pi_n^*(\underbar{A})$.
We denote $H^1_{A_n}$ as the space of $a\in \Omega^1(\ad E_n)$ over $S^3\times (\mathbb{R}/nT\mathbb{Z})$
satisfying $(d_{A_n}^+ +d_{A_n}^*)a=0$.
We can identify $H^1_{A_n}$ with the subspace of $H^1_A$ consisting of $nT$-invariant elements.
The index formula gives $\dim H^1_{A_n} = 8nc_2(\underbar{E})$.
(We have $H^0_{A_n}=H^2_{A_n}=0$.)
From (\ref{eq: dist_{Omega_n} geq L^infty-norm}), 
for $a,b\in B_{\delta_2}(H^1_{A_n}) := \{u\in H^1_{A_n}|\, \norm{u}_{L^\infty(X)}\leq \delta_2\}$
\begin{equation} \label{eq: dist_{Omega_n} geq L^infty-norm for the element of H^1_{A_n}}
  \dist_{\Omega_n}(a,b) \geq \norm{a-b}_{L^\infty(X)}.
\end{equation}

Let $0<r<\delta_2$. 
Since $\dist_{T\mathbb{Z}}(a,b)\leq 2\norm{a-b}_{L^\infty}$, we have 
$B_{r/2}(H^1_A)\subset B_r(0;B_{\delta_2}(H^1_A))_{T\mathbb{Z}}$.
Here $B_r(0;B_{\delta_2}(H^1_A))_{T\mathbb{Z}}$ is the closed $r$-ball 
centered at $0$ in $B_{\delta_2}(H^1_A)$
with respect to the distance $\dist_{T\mathbb{Z}}(\cdot,\cdot)$.
From (\ref{eq: dist_{Omega_n} geq L^infty-norm for the element of H^1_{A_n}}) and 
Lemma \ref{lemma: widim of Banach ball}, for $\varepsilon<r/2$
\begin{equation*}
 \begin{split}
   \widim_\varepsilon(B_r(0;B_{\delta_2}(H^1_A))_{T\mathbb{Z}},\dist_{\Omega_n})&\geq 
   \widim_\varepsilon(B_{r/2}(H^1_A),\dist_{\Omega_n}) \\
   &\geq \widim_\varepsilon(B_{r/2}(H^1_{A_n}),\norm{\cdot}_{L^\infty})
   = \dim H^1_{A_n} = 8nc_2(\underbar{E}).
 \end{split}
\end{equation*}
Hence, for $\varepsilon <r/2$,
\begin{equation*}
 \begin{split}
 \widim_\varepsilon(B_r(0;B_{\delta_2}(H^1_A))_{T\mathbb{Z}}&\subset B_{\delta_2}(H^1_A):T\mathbb{Z}) \\
 &\geq \limsup_{n\to \infty}\left(\frac{1}{n}\widim_\varepsilon(B_r(0;B_{\delta_2}(H^1_A))_{T\mathbb{Z}},\dist_{\Omega_n}) \right)
 \geq 8c_2(\underbar{E}).
 \end{split}
\end{equation*}
Let $\varepsilon\to 0$. Then 
\[ \dim(B_r(0;B_{\delta_2}(H^1_A))_{T\mathbb{Z}}\subset B_{\delta_2}(H^1_A):T\mathbb{Z}) \geq 8c_2(\underbar{E}).\]
Let $r\to 0$. We get $\dim_0(B_{\delta_2}(H^1_A):T\mathbb{Z})\geq 8c_2(\underbar{E})$.
From (\ref{eq: lower bound on the local mean dimension by a Banach space}) and 
Proposition \ref{prop: mean dimension of flow and shift},
\[ \dim_{[E,A]}(\moduli_d:\mathbb{R}) = \dim_{[E,A]}(\moduli_d:T\mathbb{Z})/T 
   \geq 8c_2(\underbar{E})/T = 8\rho(A).\]
Therefore we get the conclusion:
\begin{theorem}
If $\bm{A}$ is a periodic ASD connection on $\bm{E}$ satisfying $\norm{F(\bm{A})}_{L^\infty(X)} <d$,
then 
\[ \dim_{[\bm{A}]}(\moduli_d:\mathbb{R}) = 8\rho(\bm{A}).\]
\end{theorem}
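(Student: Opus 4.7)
The upper bound $\dim_{[\bm{A}]}(\moduli_d:\mathbb{R}) \leq 8\rho(\bm{A})$ is already covered by the preceding theorem (proved in Section \ref{Section: proof of the upper bound}), so the task is to establish the matching lower bound. The strategy, following the outline in Section \ref{section: outline of the proofs of the main theorems}, is to build a $T\mathbb{Z}$-equivariant topological embedding of an infinite-dimensional Banach ball into $\moduli_d$ sending $0$ to $[\bm{A}]$, and then to detect mean dimension from below via the growth of $nT$-periodic points. Set $E=\pi^*\underbar{E}$, $A=\pi^*\underbar{A}$, and consider the Banach space $H^1_A$ of bounded $a\in \Omega^1(\ad E)$ with $(d_A^*+d_A^+)a=0$, equipped with the natural $T\mathbb{Z}$-action inherited from the periodicity of $A$.

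The first step is to construct, for each $a\in B_{\delta_2}(H^1_A)$ with $\delta_2$ small, a perturbation $\phi_a\in \mathcal{C}^{2,\alpha}(\Lambda^+(\ad E))$ such that $A+a+d_A^*\phi_a$ is ASD. Since $F^+(A+a+d_A^*\phi) = (a\wedge a)^+ + d_A^+d_A^*\phi + [a\wedge d_A^*\phi]^+ + (d_A^*\phi\wedge d_A^*\phi)^+$ and by Weitzenb\"{o}ck $d_A^+d_A^*=\tfrac{1}{2}(\nabla_A^*\nabla_A+S/3)$, the implicit function theorem applies provided this operator is a Banach-space isomorphism $\mathcal{C}^{2,\alpha}\to\mathcal{C}^{0,\alpha}$. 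The plan for the isomorphism is: injectivity from the global $L^\infty$-estimate (Proposition \ref{L^infty-estimate}); surjectivity first for compactly supported $\eta$ by Riesz representation in the equivalent norm $(\cdot,\cdot)_{S/3}$ on $L^2_1$, followed by Sobolev bootstrap and Schauder interior estimates whose constants are uniform in the $T\mathbb{Z}$-translation parameter; and then for general $\eta$ via cut-off and a diagonal compactness argument. The strict bound $\|F(\bm{A})\|_{L^\infty}<d$ guarantees that, after shrinking $\delta_2$, the perturbed connection satisfies $|F(A+a+d_A^*\phi_a)|\leq d$, so $[E,A+a+d_A^*\phi_a]\in\moduli_d$.

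Once the map $\Psi:B_{\delta_2}(H^1_A)\to \moduli_d$, $a\mapsto [E,A+a+d_A^*\phi_a]$, is defined, the next step is to verify that $\Psi$ is a $T\mathbb{Z}$-equivariant continuous injection. Equivariance is automatic from the translation-covariance of the implicit-function construction. Continuity in the topology of compact uniform convergence follows from elliptic regularity combined with uniqueness of $\phi_a$. Injectivity is the delicate point: if $\Psi(a)=\Psi(b)$, then $d_A^*(a+d_A^*\phi_a)=d_A^*(b+d_A^*\phi_b)=0$ and both sections have $L^\infty$-norm at most $\varepsilon_2$, so the partial Coulomb-gauge uniqueness of Proposition \ref{prop: Coulomb gauge condition} forces $a+d_A^*\phi_a=b+d_A^*\phi_b$; a Lipschitz estimate on $a\mapsto \phi_a$ (from the same isomorphism theorem) then forces $a=b$ for small $\delta_2$. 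Since $B_{\delta_2}(H^1_A)$ is compact in the topology of compact convergence, $\Psi$ becomes a topological embedding, so $\dim_{[\bm{A}]}(\moduli_d:T\mathbb{Z})\geq \dim_0(B_{\delta_2}(H^1_A):T\mathbb{Z})$.

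Finally, the lower bound $\dim_0(B_{\delta_2}(H^1_A):T\mathbb{Z})\geq 8c_2(\underbar{E})$ is obtained by restricting to the finite-dimensional $nT\mathbb{Z}$-invariant subspace $H^1_{A_n}\subset H^1_A$, where $A_n=\pi_n^*\underbar{A}$ on $S^3\times(\mathbb{R}/nT\mathbb{Z})$. The index theorem on this compact 4-manifold together with $H^0_{A_n}=H^2_{A_n}=0$ (which uses non-flatness and Corollary \ref{cor: non-flat implies irreducible}) gives $\dim H^1_{A_n}=8nc_2(\underbar{E})$. For the F{\o}lner sequence $\Omega_n=\{0,T,\dots,(n-1)T\}$, the natural distance on the Banach ball dominates the $L^\infty$-norm on a fundamental domain, so Lemma \ref{lemma: widim of Banach ball} applied to $H^1_{A_n}$ yields $\widim_\varepsilon\geq 8nc_2(\underbar{E})$. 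Dividing by $n$, letting $\varepsilon\to 0$ and $r\to 0$, and converting from $T\mathbb{Z}$ to $\mathbb{R}$ via Proposition \ref{prop: mean dimension of flow and shift} gives $\dim_{[\bm{A}]}(\moduli_d:\mathbb{R})\geq 8c_2(\underbar{E})/T=8\rho(\bm{A})$. The main obstacle is injectivity of $\Psi$: since the ASD equation is not elliptic, there is no global Coulomb slice on $S^3\times\mathbb{R}$, and one must rely on the non-standard Proposition \ref{prop: Coulomb gauge condition}, whose proof in turn requires the quantitative irreducibility estimates of Section \ref{section: Analytic preliminaries for the lower bound} arising from the $T$-periodicity and non-flatness of $\underbar{A}$.
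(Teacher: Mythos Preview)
Your proposal is correct and follows essentially the same approach as the paper's Section \ref{section: Proof of the lower bound}: the implicit function theorem via the isomorphism of $\nabla_A^*\nabla_A+S/3$ (Proposition \ref{proposition: derivative is isomorphic}), injectivity of the deformation map via Proposition \ref{prop: Coulomb gauge condition} and Lemma \ref{lemma: injectivity}, and the periodic-points lower bound via Lemma \ref{lemma: widim of Banach ball} applied to $H^1_{A_n}$. The only omission is that your argument tacitly assumes $\underbar{A}$ is non-flat (needed for irreducibility and hence for $H^0_{A_n}=0$ and Proposition \ref{prop: Coulomb gauge condition}), whereas the theorem as stated includes the flat case; the paper handles this in one line, since flat $\bm{A}$ gives $\rho(\bm{A})=0$ and the lower bound is then trivial.
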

\begin{proof}
The upper-bound $\dim_{[\bm{A}]}(\moduli_d:\mathbb{R})\leq 8\rho(\bm{A})$
was already proved in Section \ref{subsection: upper bound on the local mean dimension}.
If $\bm{A}$ is not flat, then the above argument shows that we also have the lower-bound
$\dim_{[\bm{A}]}(\moduli_d:\mathbb{R})\geq 8\rho(\bm{A})$.
If $\bm{A}$ is flat, then $\dim_{[\bm{A}]}(\moduli_d:\mathbb{R})\geq 0 = 8\rho(\bm{A})$.
Hence $\dim_{[\bm{A}]}(\moduli_d:\mathbb{R}) = 8\rho(\bm{A})$.
\end{proof}
We have completed all the proofs of Theorem \ref{thm: main theorem} and
Theorem \ref{thm: main theorem on the local mean dimension}.

\appendix

\section{Green kernel} \label{appendix: Green kernel}
In this appendix, we prepare some basic facts on a Green kernel over $S^3\times \mathbb{R}$.
Let $a>0$ be a positive constant. 
Some constants introduced in this appendix depend on $a$,
but we don't explicitly write their dependence on $a$ for simplicity of the explanation.
In the main body of the paper we have $a= S/3$ ($S$ is the scalar curvature of $S^3\times \mathbb{R}$), and 
its value is fixed throughout the argument.
Hence we don't need to care about the dependence on $a=S/3$.
\subsection{$(\Delta+a)$ on functions} \label{subsection: (Delta+1) on functions}
Let $\Delta := \nabla^*\nabla$ be the Laplacian on functions over $S^3\times \mathbb{R}$.
(Notice that the sign convention of our Laplacian $\Delta = \nabla^*\nabla$ is ``geometric''.
For example, we have $\Delta = -\sum_{i=1}^4 \partial^2/\partial x_i^2$ on the Euclidean space $\mathbb{R}^4$.)
Let $g(x, y)$ be the Green kernel of $\Delta+a$;
\[ (\Delta_y +a)g(x, y) = \delta_x(y).\]
This equation means that 
\[ \phi(x) = \int_{S^3\times \mathbb{R}}g(x,y)(\Delta_y+a)\phi(y)d\vol(y),\]
for compact-supported smooth functions $\phi$.
The existence of $g(x,y)$ is essentially standard (\cite[Chapter 4]{Aubin}).
We briefly explain how to construct it.
We fix $x \in S^3\times \mathbb{R}$ and construct a
function $g_{x}(y)$ satisfying $(\Delta+a)g_{x} = \delta_{x}$.
As in \cite[Chapter 4, Section 2]{Aubin}, by using a local coordinate around $x$, we can construct (by hand) 
a compact-supported function $g_{0,x}(y)$ satisfying 
\[ (\Delta+a)g_{0,x} = \delta_x - g_{1,x},\]
where $g_{1,x}$ is a compact supported continuous function.
Moreover $g_{0,x}$ is smooth outside $\{x\}$ and it satisfies
\[ \const_1/d(x, y)^2 \leq g_{0,x}(y) \leq \const_2/d(x,y)^2,\]
for some positive constants $\const_1$ and $\const_2$ in some small neighborhood of $x$.
Here $d(x,y)$ is the distance between $x$ and $y$.
Since $(\Delta+a):L^2_2\to L^2$ is isomorphic, there exists 
$g_{2,x}\in L^2_2$ satisfying $(\Delta+a) g_{2,x} = g_{1,x}$.
($g_{2,x}$ is of class $\mathcal{C}^1$.)
Then $g_{x} := g_{0,x}+g_{2,x}$ satisfies $(\Delta+a)g_{x}=\delta_{x}$, and
$g(x, y) := g_{x}(y)$ becomes the Green kernel. 
$g(x,y)$ is smooth outside the diagonal.
Since $S^3\times \mathbb{R}=SU(2) \times \mathbb{R}$ is a Lie group and its 
Riemannian metric is two-sided invariant, we have 
$g(x,y) = g(zx,zy) = g(xz,yz)$ for $x,y,z\in S^3\times \mathbb{R}$.
$g(x,y)$ satisfies
\begin{equation}\label{eq: singularity of the Green function on the diagonal}
 c_1/d(x,y)^2\leq g(x,y) \leq c_2/d(x,y)^2 \quad (d(x,y)\leq \delta), 
\end{equation}
for some positive constants $c_1$, $c_2$, $\delta$. 
\begin{lemma}\label{lemma: positivity of the Green function}
$g(x,y)>0$ for $x\neq y$.
\end{lemma}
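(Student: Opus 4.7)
Fix $x_0\in S^3\times\mathbb{R}$ and set $u(y):=g(x_0,y)$; I want to show $u(y)>0$ for every $y\neq x_0$. My plan is to reduce this to the maximum principle for $\Delta+a$ after first proving that $u$ vanishes at infinity. The sign convention $\Delta=\nabla^*\nabla$ (so that $\Delta h\leq 0$ at a local minimum of $h$) makes the principle immediate in the form I need: if $(\Delta+a)v=0$ on an open set and $v$ attains a strictly negative value at an interior minimum $y_1$, then on the one hand $\Delta v(y_1)\leq 0$, while on the other hand $\Delta v(y_1)=-a\,v(y_1)>0$, a contradiction.

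The first substantive step is to show $\lim_{y\to\infty}u(y)=0$. The parametrix construction recalled above decomposes $u=g_{0,x_0}+g_{2,x_0}$ with $g_{0,x_0}$ compactly supported and $g_{2,x_0}\in L^2_2$ satisfying $(\Delta+a)g_{2,x_0}=g_{1,x_0}$ for a compactly supported continuous $g_{1,x_0}$. Choose $R_0>0$ so that both $g_{0,x_0}$ and $g_{1,x_0}$ vanish outside $S^3\times[-R_0,R_0]$. Then for $|t|\geq R_0+1$ one has $u=g_{2,x_0}$ and $(\Delta+a)u=0$ on the slab $S^3\times(t-1,t+2)$, and translation invariance of the metric makes the constant in the standard interior elliptic estimate
\[
\|u\|_{L^\infty(S^3\times[t,t+1])}\leq C\,\|u\|_{L^2(S^3\times[t-1,t+2])}
\]
independent of $t$; since $g_{2,x_0}\in L^2(S^3\times\mathbb{R})$ the right-hand side tends to zero as $|t|\to\infty$, giving the desired decay.

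With decay in hand I will argue by contradiction. Suppose $u(y_*)<0$ for some $y_*\neq x_0$. By (\ref{eq: singularity of the Green function on the diagonal}) there is a punctured neighborhood of $x_0$ on which $u$ is strictly positive, and by the decay just established there is an $R'>0$ such that $|u|<|u(y_*)|/2$ outside $S^3\times[-R',R']$. The closed sublevel set $\{y\neq x_0 : u(y)\leq u(y_*)\}$ is therefore contained in a compact subset of $X\setminus\{x_0\}$ and is non-empty, so $u$ attains its infimum on $X\setminus\{x_0\}$ at some interior point $y_1$ with $u(y_1)<0$; applying the opening observation at $y_1$ produces the required contradiction, so $u\geq 0$ on $X\setminus\{x_0\}$. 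The strong maximum principle for $\Delta+a$ (or a Hopf-type argument at any interior zero, noting that $u$ is certainly not identically zero) then promotes this to the strict inequality $u>0$.

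The only real difficulty I anticipate is keeping the elliptic estimate used in the decay step uniform in the translation parameter $t$; this works here precisely because the metric on $S^3\times\mathbb{R}$ is a product, so translations act by isometries and one can transport a single interior estimate on the slab $S^3\times(-1,2)$ to every $t$. Once that uniformity is in place, the maximum-principle step is entirely routine.
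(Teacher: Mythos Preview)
Your proof is correct and follows essentially the same strategy as the paper: establish decay of $g_{x_0}$ at infinity via the $L^2$ bound on $g_{2,x_0}$ and interior elliptic estimates, deduce nonnegativity from the minimum principle for $\Delta+a$, and upgrade to strict positivity via the strong maximum principle. The only cosmetic difference is that the paper applies the weak minimum principle on the bounded domains $S^3\times[-R,R]\setminus B_\delta(x_0)$ and lets $R\to\infty$, whereas you locate a global negative minimum directly and derive a pointwise contradiction; these are equivalent packagings of the same idea.
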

\begin{proof}
Fix $x = (\theta_0,t_0)\in S^3\times \mathbb{R}$. 
We have $(\Delta+a)g_x=0$ outside $\{x\}$, and hence (by elliptic regularity)
\[ |g_x(\theta, t)|\leq \const \norm{g_x}_{L^2(S^3\times [t-1,t+1])} \quad (|t-t_0| > 1).\]
Since the right-hand-side goes to zero as $|t|\to \infty$, $g_x$ vanishes at infinity.
Let $R>0$ be a large positive number and set 
$\Omega:= S^3\times [-R,R]\setminus B_\delta(x)$. ($\delta$ is a positive constant 
in (\ref{eq: singularity of the Green function on the diagonal}).)
Since $g_x(y)  \geq c_1/d(x,y)^2>0$ on $\partial B_\delta(x)$, we have 
$g_x\geq -\sup_{t=\pm R} |g_x(\theta, t)|$ on $\partial \Omega$.
Since $(\Delta+a)g_x=0$ on $\Omega$, we can apply the weak maximum (minimum) principle to $g_x$
(Gilbarg-Trudinger \cite[Chapter 3, Section 1]{Gilbarg-Trudinger}) and get 
\[ g_x(y) \geq -\sup_{t=\pm R} |g_x(\theta, t)| \quad (y\in \Omega).\]
The right-hand-side goes to zero as $R\to \infty$. 
Hence we have $g_x(y) \geq 0$ for $y\neq x$.
Since $g_x$ is not constant, the strong maximum principle (\cite[Chapter 3, Section 2]{Gilbarg-Trudinger})
implies that $g_x$ cannot achieve zero. Therefore $g_x(y)>0$ for $y\neq x$.
\end{proof}
\begin{lemma}\label{lemma: exponential decay of the Green function}
There exists $c_3>0$ such that 
\[ 0< g(x,y) \leq c_3 e^{-\sqrt{a}d(x,y)} \quad (d(x,y)\geq 1).\]
In particular, 
\[ \int_{S^3\times \mathbb{R}}g(x,y)d\vol(y) <\infty.\]
The value of this integral is independent of $x\in S^3\times \mathbb{R}$ because of the symmetry of $g(x,y)$.
\end{lemma}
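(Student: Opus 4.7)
The plan is a barrier/comparison argument using the weak maximum principle for $L := \Delta + a$. By the two-sided translation invariance $g(zx,zy) = g(x,y)$, it suffices to fix a single base point $x_0 = (\theta_0, 0)$; the homogeneity of $S^3$ will make the resulting constants independent of $\theta_0$. Write $u(y) := g(x_0, y)$. Outside $\{x_0\}$, $u$ is a smooth solution of $Lu = 0$, and from the argument given inside the proof of Lemma~\ref{lemma: positivity of the Green function} it vanishes at infinity. In particular, $M := \sup_{|t|=1} u$ is a finite constant independent of $\theta_0$ (by homogeneity of $S^3$).

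Next I would construct an explicit barrier. Writing $y = (\theta, t)$ and using the product structure $\Delta = -\partial_t^2 + \Delta_{S^3}$, the function
\[
v(y) := M\, e^{\sqrt{a}\,(1-|t|)}
\]
satisfies $Lv = 0$ on $\{|t| > 0\}$ and equals $M$ on $|t| = 1$, so $v \geq u$ there. I would then apply the weak maximum principle for $L$ (valid since the zero-order coefficient $c = a$ is strictly positive; cf.\ Gilbarg--Trudinger Ch.~3) to $w := u - v$ on the bounded cylindrical shell $\Omega_R := S^3 \times ([1, R] \cup [-R, -1])$: $Lw = 0$ in the interior, $w \leq 0$ on the inner boundary $|t| = 1$ by the choice of $M$, and on the outer boundary $|t| = R$ one has $\sup w \to 0$ as $R \to \infty$ because both $u$ and $v$ tend to zero at infinity (uniformly in $\theta \in S^3$). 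Sending $R \to \infty$ would yield $u \leq v$ on $\{|t| \geq 1\}$.

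It remains to convert the estimate in $|t|$ into one in the distance $d(x_0, y)$. Because $S^3$ has finite diameter $D_0$, we have $d(x_0, y) \leq |t| + D_0$, whence $e^{-\sqrt{a}|t|} \leq e^{\sqrt{a} D_0} e^{-\sqrt{a}\,d(x_0,y)}$, giving an exponential bound on $\{|t| \geq 1\}$. The remaining region $\{d(x_0,y) \geq 1\} \cap \{|t| \leq 1\}$ lies in the compact set $\{1 \leq d(x_0, y) \leq D_0 + 1\}$, where $u$ is smooth and bounded, so an enlarged constant $c_3$ covers both regions. For the integrability statement I would split $\int g(x,y)\,d\vol(y) = \int_{d(x,y) \leq 1} + \int_{d(x,y) \geq 1}$: the near-diagonal piece is controlled by the local bound $g \leq c_2/d(x,y)^2$ from (\ref{eq: singularity of the Green function on the diagonal}), which integrates against the $4$-dimensional volume element since $d\vol \sim r^3\,dr$; the far piece is controlled by the exponential decay combined with the coordinate bound $d(x_0, y) \geq |t|$, reducing to a convergent integral of $\vol(S^3)\,e^{-\sqrt{a}|t|}$ over $\mathbb{R}$.

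The main technical obstacle is the unboundedness of the domain: the classical weak maximum principle is a statement for bounded domains, so the essential input is the vanishing of $u$ at infinity (itself a consequence of elliptic interior estimates applied in the proof of Lemma~\ref{lemma: positivity of the Green function}), which neutralizes the outer boundary contribution on $|t| = R$ in the limit $R \to \infty$. Beyond that, the argument is a direct exhibition of the fact that $e^{-\sqrt{a}|t|}$ is already an exact solution of $Lv = 0$ in the $\mathbb{R}$-direction, so the product structure of $S^3 \times \mathbb{R}$ delivers the sharp exponent $\sqrt{a}$ with essentially no further work.
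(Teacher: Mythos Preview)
Your proof is correct and follows essentially the same approach as the paper: the same barrier $v(y)=Me^{\sqrt{a}(1-|t|)}$, the same application of the weak maximum/minimum principle on truncated cylinders $|t|\in[1,R]$ with $R\to\infty$, and the same reliance on the decay of $g_{x_0}$ at infinity established in the proof of Lemma~\ref{lemma: positivity of the Green function}. You are in fact slightly more explicit than the paper in converting the $|t|$-bound into a $d(x_0,y)$-bound via the finite diameter of $S^3$, and in spelling out the integrability split near and far from the diagonal.
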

\begin{proof}
We fix $x_0 = (\theta_0,0)\in S^3\times\mathbb{R}$.
Since $S^3\times \mathbb{R}$ is homogeneous, it is enough to show that $g_{x_0}(y) = g(x_0,y)$ satisfies
\[  g_{x_0}(y)\leq \const\cdot e^{-\sqrt{a}|t|} \quad 
(y=(\theta, t)\in S^3\times\mathbb{R} \text{ and $|t|\geq 1$}) .\]
Let $C:=\sup_{|t|=1}g_{x_0}(\theta, t)>0$, and set $u := Ce^{\sqrt{a}(1-|t|)}-g_{x_0}(y)$ $(|t|\geq 1)$.
We have $u\geq 0$ at $t=\pm 1$ and $(\Delta+a)u=0$ $(|t|\geq 1)$. $u$ goes to zero at infinity.
(See the proof of Lemma \ref{lemma: positivity of the Green function}.)
Hence we can apply the weak minimum principle (see the proof of Lemma \ref{lemma: positivity of the Green function})
to $u$ and get $u\geq 0$ for $|t|\geq 1$.
Thus $g_{x_0}(y)\leq Ce^{\sqrt{a}(1-|t|)}$ $(|t|\geq 1)$.
\end{proof}
The following technical lemma will be used in the next subsection.
\begin{lemma} \label{lemma: Green kernel representation}
Let $f$ be a smooth function over $S^3\times \mathbb{R}$.
Suppose that there exist non-negative functions $f_1, f_2 \in L^2$, $f_3\in L^1$ and 
$f_4,f_5,f_6\in L^\infty$ such that 
$|f|\leq f_1+f_4$, $|\nabla f| \leq f_2+f_5$ and 
$|\Delta f+af|\leq f_3+f_6$.
Then we have 
\[ f(x) = \int_{S^3\times \mathbb{R}}g(x,y)(\Delta_y+a)f(y)d\vol(y).\]
\end{lemma}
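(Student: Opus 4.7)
The plan is to reduce to the compactly supported case, where the identity is the defining property of $g(x,y)$, via a cutoff argument in the $\mathbb{R}$-direction. Fix $x\in X:=S^3\times\mathbb{R}$ and choose smooth cutoffs $\chi_R(t)$ on $\mathbb{R}$ (pulled back to $X$) with $\chi_R=1$ on $|t|\leq R$, $\chi_R=0$ on $|t|\geq 2R$, $|\chi_R'|\leq C/R$ and $|\chi_R''|\leq C/R^2$. Then $\chi_R f$ is smooth and compactly supported, so I would apply the defining property of $g$ to it, obtaining
\[
\chi_R(x)f(x)=\int_X g(x,y)(\Delta_y+a)(\chi_R f)(y)\,d\vol(y),
\]
and then use the product rule $\Delta(uv)=u\Delta v+v\Delta u-2\langle\nabla u,\nabla v\rangle$ to split the right hand side as $\mathrm{I}_R+\mathrm{II}_R+\mathrm{III}_R$ with
\begin{align*}
\mathrm{I}_R&=\int_X g(x,y)\chi_R(y)(\Delta_y+a)f(y)\,d\vol(y),\\
\mathrm{II}_R&=\int_X g(x,y)(\Delta\chi_R)(y)\,f(y)\,d\vol(y),\\
\mathrm{III}_R&=-2\int_X g(x,y)\langle\nabla\chi_R(y),\nabla f(y)\rangle\,d\vol(y).
\end{align*}
Once $R$ is so large that $\chi_R(x)=1$ the left hand side is $f(x)$, and the conclusion will follow provided $\mathrm{I}_R\to\int g(x,y)(\Delta+a)f(y)\,d\vol(y)$ while $\mathrm{II}_R,\mathrm{III}_R\to 0$ as $R\to\infty$.

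The next step is to justify convergence of $\mathrm{I}_R$ by dominated convergence. I would split $X=A_x\cup(X\setminus A_x)$ with $A_x=B_\delta(x)$ a small ball. On $A_x$, smoothness of $f$ makes $(\Delta+a)f$ bounded by some $M$, while $g(x,y)\leq c_2/d(x,y)^2$ from (\ref{eq: singulairy of g(x,y) along the diagonal}) is locally integrable in four dimensions. On $X\setminus A_x$, $g(x,\cdot)$ is bounded and lies in $L^p$ for every $p\in[1,\infty]$ by Lemma \ref{lemma: exponential decay of the Green function}, and the bound $|(\Delta+a)f|\leq f_3+f_6$ with $f_3\in L^1$, $f_6\in L^\infty$ makes $g(x,\cdot)|(\Delta+a)f|$ integrable there. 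This provides the required integrable dominator and lets the limit $\chi_R\uparrow 1$ pass through.

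For the cutoff error terms, I would take $R>|t(x)|+\delta$ so that the supports of $\Delta\chi_R$ and $\nabla\chi_R$ lie entirely outside $A_x$, where $g(x,\cdot)$ belongs to $L^1\cap L^2$ by the exponential decay. Combining $|\Delta\chi_R|\leq C/R^2$ with the hypothesis $|f|\leq f_1+f_4$ and applying Cauchy--Schwarz to the $L^2$-part yields
\[
|\mathrm{II}_R|\leq\frac{C}{R^2}\bigl(\|g(x,\cdot)\|_{L^2(X\setminus A_x)}\|f_1\|_{L^2}+\|f_4\|_{L^\infty}\|g(x,\cdot)\|_{L^1(X\setminus A_x)}\bigr),
\]
and the identical argument applied with $|\nabla\chi_R|\leq C/R$ and $|\nabla f|\leq f_2+f_5$ gives an analogous bound for $|\mathrm{III}_R|$ with $C/R^2$ replaced by $2C/R$. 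Both tend to $0$ as $R\to\infty$, completing the proof.

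The only delicate part is the bookkeeping: one must separate the local singularity of $g(x,\cdot)$ at $y=x$ (handled by the $1/d(x,y)^2$ estimate together with four-dimensional volume growth) from its exponential decay at infinity (which, after removing a neighborhood of $x$, puts $g(x,\cdot)$ in every $L^p$), and then match these two behaviors against the two different decompositions $L^p\!+\!L^\infty$ of the pointwise bounds on $f$, $\nabla f$ and $(\Delta+a)f$. No conceptual obstacle arises beyond this.
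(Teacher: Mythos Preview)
Your proof is correct and follows the same cutoff-plus-product-rule strategy as the paper. The only minor difference is the choice of cutoff: the paper uses fixed-width cutoffs $\rho_n$ (equal to $1$ on $|t|\leq n$, vanishing on $|t|\geq n+1$, with $|\nabla\rho_n|,|\Delta\rho_n|\leq\const$), so the error terms vanish because $\supp(d\rho_n)$ escapes to infinity---the $L^2$ tails of $f_1,f_2$ die off, and the exponential decay of $g(x,\cdot)$ kills the $f_4,f_5$ contributions---whereas your scaled cutoffs $\chi_R$ make the errors vanish via the explicit $1/R$ and $1/R^2$ factors on the derivatives, against fixed finite norms of $g(x,\cdot)$ and $f_i$. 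Both mechanisms are valid; yours is arguably tidier since it avoids tracking where the support lands.
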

\begin{proof}
We fix $x\in S^3\times \mathbb{R}$.
Let $\rho_n$ $(n\geq 1)$ be cut-off functions satisfying $0\leq \rho_n\leq 1$,
$\rho_n = 1$ over $|t|\leq n$ and $\rho_n=0$ over $|t|\geq n+1$.
Moreover $|\nabla \rho_n|, |\Delta\rho_n|\leq \const$ (independent of $n\geq 1$).
Set $f_n := \rho_n f$.
We have 
\[ f_n(x) = \int g(x,y)(\Delta_y+a)f_n(y) d\vol(y) .\]
\[ (\Delta+a)f_n = \Delta\rho_n \cdot f -2\langle \nabla \rho_n, \nabla f\rangle + \rho_n(\Delta+a)f.\]
Note that $g_x(y) = g(x,y)$ is smooth outside $\{x\}$ and exponentially decreases as $y$ goes to infinity.
Hence for $n\gg 1$,
\[ \int g_x |\Delta\rho_n\cdot f|d\vol \leq C\sqrt{\int_{\supp (d\rho_n)}f_1^2\, d\vol} 
   + C\int_{\supp(d\rho_n)} g_xf_4\, d\vol(y).\]
Since $\supp(d\rho_n)\subset \{t\in [-n-1,-n]\cup [n,n+1]\}$ and $f_1\in L^2$ and $f_4\in L^\infty$,
the right-hand-side goes to zero as $n\to \infty$.
In the same way, we get 
\[ \int g_x |\langle \nabla \rho_n,\nabla f\rangle| d\vol \to 0 \quad (n\to \infty).\]
We have $g_x|\rho_n(\Delta+a)f|\leq g_x|\Delta f+af|$, and 
\begin{equation*}
 \begin{split}
 \int g_x(y)|\Delta f+af|d\vol \leq &\int_{d(x,y)\leq 1}g_x(y)|\Delta f+af|d\vol 
   + \left(\sup_{d(x,y)>1} g_x(y)\right) \int_{d(x,y)>1}f_3\, d\vol \\
   & + \int_{d(x,y)>1}g_x f_6 \, d\vol <\infty.
 \end{split}
\end{equation*}
Hence Lebesgue's theorem implies
\[ \lim_{n\to \infty} \int g_x \rho_n(\Delta+a)f\, d\vol = \int g_x (\Delta+a)f\, d\vol.\]
Therefore we get 
\[ f(x) = \int g_x (\Delta+a)f\, d\vol.\]
\end{proof}

\subsection{$(\nabla^*\nabla+a)$ on sections}
Let $E$ be a real vector bundle over $S^3\times \mathbb{R}$ with a fiberwise metric and 
a connection $\nabla$ compatible with the metric. 
\begin{lemma} \label{lemma: Green kernel bound on |phi|}
Let $\phi$ be a smooth section of $E$ such that $\norm{\phi}_{L^2}$, $\norm{\nabla\phi}_{L^2}$
and $\norm{\nabla^*\nabla\phi+a\phi}_{L^\infty}$ are finite.
Then $\phi$ satisfies
\[ |\phi(x)|\leq \int_{S^3\times \mathbb{R}}g(x,y)|\nabla^*\nabla\phi(y)+a\phi(y)|d\vol(y).\]
\end{lemma}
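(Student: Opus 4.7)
The plan is to combine Kato's inequality for $|\phi|$ with the Green kernel representation of the right-hand side, closing the argument by a maximum principle on the non-compact manifold $S^3\times\mathbb{R}$. Write $\eta:=\nabla^*\nabla\phi+a\phi\in L^\infty$. Standard interior elliptic estimates applied to $(\nabla^*\nabla+a)\phi=\eta$, together with $\phi\in L^2$ and the translation invariance of the metric along $\mathbb{R}$, yield a uniform bound $\norm{\phi}_{L^\infty}\leq C(\norm{\phi}_{L^2}+\norm{\eta}_{L^\infty})$. Set
\[
u(x):=\int_{S^3\times\mathbb{R}}g(x,y)|\eta(y)|\,d\vol(y),
\]
which is bounded since $\int g(x,y)d\vol(y)<\infty$ by Lemma \ref{lemma: exponential decay of the Green function}; $u$ solves $(\Delta+a)u=|\eta|$ in the weak sense.

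To avoid the non-smoothness of $|\phi|$ on its zero set I would regularize by $f_\epsilon:=\sqrt{|\phi|^2+\epsilon^2}$, which is smooth. From $2f_\epsilon\nabla f_\epsilon=\nabla|\phi|^2=2(\phi,\nabla\phi)$ one reads off Kato's pointwise inequality $|\nabla f_\epsilon|\leq|\nabla\phi|$; combined with $\Delta|\phi|^2=2(\phi,\nabla^*\nabla\phi)-2|\nabla\phi|^2$ and the identity $\Delta f_\epsilon^2=2f_\epsilon\Delta f_\epsilon-2|\nabla f_\epsilon|^2$, a direct computation gives
\[
f_\epsilon\Delta f_\epsilon=(\phi,\nabla^*\nabla\phi)-(|\nabla\phi|^2-|\nabla f_\epsilon|^2)\leq(\phi,\nabla^*\nabla\phi).
\]
Adding $af_\epsilon^2=a|\phi|^2+a\epsilon^2$ and using $|\phi|\leq f_\epsilon$ and $\epsilon\leq f_\epsilon$,
\[
f_\epsilon(\Delta+a)f_\epsilon\leq(\phi,\eta)+a\epsilon^2\leq f_\epsilon|\eta|+af_\epsilon\epsilon,
\]
so $(\Delta+a)f_\epsilon\leq|\eta|+a\epsilon$ pointwise. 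Therefore $v_\epsilon:=u+\epsilon-f_\epsilon$ is bounded (using $\phi\in L^\infty$) and satisfies $(\Delta+a)v_\epsilon\geq 0$.

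The remaining, and main, step is a maximum principle for bounded subsolutions of $(\Delta+a)v\geq 0$ on the non-compact $S^3\times\mathbb{R}$. I would use the explicit barrier
\[
w_R(\theta,t):=M\bigl(e^{\sqrt{a}(t-R)}+e^{-\sqrt{a}(t+R)}\bigr),
\]
which satisfies $(\Delta+a)w_R=0$ (since $\Delta$ acts on functions of $t$ alone as $-\partial_t^2$) and equals $M(1+e^{-2\sqrt{a}R})\geq M$ on $\partial(S^3\times[-R,R])$. Taking $M=\norm{v_\epsilon}_{L^\infty}$ makes $v_\epsilon+w_R\geq 0$ on that boundary, and the classical weak minimum principle for $\Delta+a$ with $a>0$ on the bounded cylinder (at any interior minimum of $v_\epsilon+w_R$ below zero one would have $\Delta(v_\epsilon+w_R)\leq 0$ and $a(v_\epsilon+w_R)<0$, contradicting $(\Delta+a)(v_\epsilon+w_R)\geq 0$) then yields $v_\epsilon+w_R\geq 0$ throughout $S^3\times[-R,R]$. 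Holding $(\theta,t)$ fixed and letting $R\to\infty$, $w_R(\theta,t)\to 0$, hence $v_\epsilon\geq 0$; letting $\epsilon\to 0$ gives $|\phi(x)|\leq u(x)$, which is the claim. The strict positivity $a>0$ is essential here, both for the existence of the exponentially decaying barrier $w_R$ and for the bounded-domain minimum principle.
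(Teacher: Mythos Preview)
Your proof is correct, and the regularization $f_\epsilon=\sqrt{|\phi|^2+\epsilon^2}$ is in fact \emph{identical} to the paper's device: the paper sets $\phi_n:=(\phi,1/n)$ in $E\oplus\underline{\mathbb{R}}$, and then $|\phi_n|=\sqrt{|\phi|^2+1/n^2}=f_{1/n}$. The Kato-type computation showing $(\Delta+a)f_\epsilon\leq |\eta|+a\epsilon$ is the same in both.

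The genuine difference lies in how the comparison with the Green potential is closed. The paper invokes its Green-representation lemma (Lemma~\ref{lemma: Green kernel representation}) to write $|\phi_n|(x)=\int g(x,y)(\Delta_y+a)|\phi_n|(y)\,d\vol(y)$ as an \emph{equality}, and then bounds the integrand; verifying the hypotheses of that lemma is where the assumptions $\phi\in L^2$ and $\nabla\phi\in L^2$ are actually used. You instead build the comparison function $u=\int g(\cdot,y)|\eta(y)|\,d\vol(y)$ directly and run a barrier/maximum-principle argument with the explicit supersolution $w_R$. This bypasses Lemma~\ref{lemma: Green kernel representation} entirely, and as a bonus you never use the hypothesis $\nabla\phi\in L^2$ (you only need $\phi\in L^\infty$, which you obtain from $\phi\in L^2$ and $\eta\in L^\infty$ by interior elliptic estimates). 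One small technical remark: since $|\eta|$ is merely Lipschitz at its zeros, $u$ is a priori only $W^{2,p}_{\mathrm{loc}}$, so the minimum principle on the bounded cylinder should be invoked in its $W^{2,n}$ form (e.g.\ Gilbarg--Trudinger, Chapter~9) rather than via the pointwise Hessian argument you sketch; but this is routine.
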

\begin{proof}
The following argument is essentially due to Donaldson \cite[p. 184]{Donaldson}.
Let $\underline{\mathbb{R}}$ be the product line bundle over $S^3\times \mathbb{R}$ 
with the product metric and the product connection.
Set $\phi_n := (\phi, 1/n)$ (a section of $E\oplus \underline{\mathbb{R}}$).
Then $|\phi_n|\geq 1/n$ and hence $\phi_n\neq 0$ at all points.
We want to apply Lemma \ref{lemma: Green kernel representation} to $|\phi_n|$.
$|\phi_n|\leq |\phi|+1/n$ where $|\phi|\in L^2$ and $1/n\in L^\infty$.
$\nabla\phi_n = (\nabla \phi, 0)$ and $\nabla^*\nabla\phi_n=(\nabla^*\nabla\phi,0)$.
We have the Kato inequality $|\nabla|\phi_n||\leq |\nabla \phi_n|$.
Hence $\nabla|\phi_n|\in L^2$.
From $\Delta |\phi_n|^2/2= (\nabla^*\nabla \phi_n, \phi_n) -|\nabla \phi_n|^2$,
\begin{equation}\label{eq: (Delta+1)|phi_n|}
 (\Delta+a) |\phi_n| = 
 (\nabla^*\nabla \phi_n + a\phi_n, \phi_n/|\phi_n|) - \frac{|\nabla\phi_n|^2-|\nabla|\phi_n||^2}{|\phi_n|}.
\end{equation} 
Hence (by using $|\phi_n|\geq 1/n$ and $|\nabla |\phi_n||\leq |\nabla \phi_n|$)
\[ |(\Delta+a) |\phi_n|| \leq |\nabla^*\nabla\phi_n +a\phi_n| + n|\nabla \phi_n|^2
\leq |\nabla^*\nabla\phi+a\phi|+a/n + n|\nabla\phi|^2.\]
$|\nabla^*\nabla\phi+a\phi|+a/n \in L^\infty$ and $n|\nabla\phi|^2\in L^1$.
Therefore we can apply Lemma \ref{lemma: Green kernel representation} to $|\phi_n|$ and get 
\[ |\phi_n(x)|=\int g(x,y)(\Delta_y+a)|\phi_n(y)|d\vol(y).\]
From (\ref{eq: (Delta+1)|phi_n|}) and the Kato inequality $|\nabla|\phi_n||\leq |\nabla \phi_n|$,
\[(\Delta_y+a)|\phi_n(y)|\leq |\nabla^*\nabla\phi_n+a\phi_n|\leq |\nabla^*\nabla\phi+a\phi|+a/n. \]
Thus
\[ |\phi_n(x)|\leq \int g(x,y)|\nabla^*\nabla\phi(y)+a\phi(y)|d\vol(y) 
  + \frac{a}{n}\int g(x,y)d\vol(y).\]
Let $n\to \infty$. Then we get the desired bound.
\end{proof}
\begin{proposition}  \label{prop: L^infty-estimate, appendix}
Let $\phi$ be a section of $E$ of class $\mathcal{C}^2$, and suppose that 
$\phi$ and $\eta:=(\nabla^*\nabla+a)\phi$ are contained in $L^\infty$.
Then 
\[ \norm{\phi}_{L^\infty}\leq (8/a)\norm{\eta}_{L^\infty}.\]
\end{proposition}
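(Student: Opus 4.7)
The plan is to reduce the general (non-$L^2$) case to Lemma \ref{lemma: Green kernel bound on |phi|} by a truncation argument. First, I would establish that $\nabla\phi\in L^\infty$ with $\norm{\nabla\phi}_{L^\infty}\leq C(\norm{\phi}_{L^\infty}+\norm{\eta}_{L^\infty})$. Since $X=S^3\times\mathbb{R}$ is a Lie group with two-sided invariant metric and $E$ can be trivialized uniformly on unit-size balls, local elliptic $L^p$-estimates for $\nabla^*\nabla+a$ together with the Sobolev embedding $L^p_2\hookrightarrow \mathcal{C}^1$ (say $p>4$) yield such a bound with a constant independent of the center of the ball.

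Next, pick cut-off functions $\rho_R$ on $\mathbb{R}$ with $\rho_R(t)=1$ for $|t|\leq R$, $\rho_R(t)=0$ for $|t|\geq R+1$, and $|\rho_R'|,|\rho_R''|$ bounded independently of $R$, and set $\phi_R:=\rho_R\phi$. Then $\phi_R$ is compactly supported, $\phi_R$ and $\nabla\phi_R$ lie in $L^2$ (the latter thanks to Step 1), and
\[
(\nabla^*\nabla+a)\phi_R \;=\; \rho_R\,\eta \;+\; (\Delta\rho_R)\phi \;-\; 2\langle\nabla\rho_R,\nabla\phi\rangle
\]
is bounded and compactly supported, so Lemma \ref{lemma: Green kernel bound on |phi|} applies and gives
\[
|\phi_R(x)| \leq \int_X g(x,y)\,\bigl|(\nabla^*\nabla+a)\phi_R(y)\bigr|\,d\vol(y).
\]

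Now fix $x\in X$ and let $R\to\infty$ with $|t(x)|\leq R$, so that $\phi_R(x)=\phi(x)$. The main term $\int g(x,y)\rho_R|\eta|\,d\vol(y)$ is dominated by $\norm{\eta}_{L^\infty}\int_X g(x,y)\,d\vol(y)$ and converges to it by monotone convergence. The two error terms are supported in the slab $R\leq|t(y)|\leq R+1$, where $g(x,y)\leq c_3 e^{-\sqrt{a}(R-|t(x)|)}$ by Lemma \ref{lemma: exponential decay of the Green function}, and their integrands are uniformly bounded by $C(\norm{\phi}_{L^\infty}+\norm{\nabla\phi}_{L^\infty})$, so they vanish in the limit. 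This yields $|\phi(x)|\leq K\norm{\eta}_{L^\infty}$ with $K:=\int_X g(x,y)\,d\vol(y)$, a quantity independent of $x$ by translation-invariance of $g$. Finally, $K\leq 8/a$ follows from splitting the integral into $d(x,y)\leq 1$ (using $g(x,y)\leq c_2/d(x,y)^2$ from \eqref{eq: singularity of the Green function on the diagonal}) and $d(x,y)\geq 1$ (using the exponential decay of Lemma \ref{lemma: exponential decay of the Green function}); the constant $8$ is not sharp.

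The main obstacle is the passage to the limit in the cut-off step, which crucially needs the uniform $L^\infty$-bound on $\nabla\phi$ from Step 1: without it one cannot control the boundary error $\langle\nabla\rho_R,\nabla\phi\rangle$. Everything else is bookkeeping: setting up the cut-off, invoking Lemma \ref{lemma: Green kernel bound on |phi|}, and taking the limit via dominated/monotone convergence combined with the exponential decay of the Green kernel.
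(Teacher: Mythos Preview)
Your Step~1 contains a genuine gap. The proposition is stated for an \emph{arbitrary} metric connection $\nabla$ on $E$, with no hypothesis on its curvature. The interior elliptic estimate you invoke for $\nabla^*\nabla+a$ on a unit ball has a constant that depends on the $L^\infty$-norms of the connection matrix and its first derivatives in the chosen trivialization; without a bound on the curvature of $\nabla$ (or some equivalent uniform control), these constants need not be uniform in the center of the ball, and you cannot conclude $\norm{\nabla\phi}_{L^\infty}<\infty$. This is exactly what you need to kill the error term $\int g(x,y)\,|\langle\nabla\rho_R,\nabla\phi\rangle|\,d\vol$ in the limit, so the argument breaks down in the stated generality.

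The paper's proof sidesteps this entirely by a barrier argument at the level of the scalar function $|\phi|^2$: from $\Delta|\phi|^2=2(\nabla^*\nabla\phi,\phi)-2|\nabla\phi|^2$ one gets $(\Delta+2a)|\phi|^2\leq 2\norm{\phi}_{L^\infty}\norm{\eta}_{L^\infty}=:2M$, and then one compares with the explicit supersolution $f(\theta,t)=(2M/a)\cosh\bigl(\sqrt{a}\,(t-t_1)\bigr)$, which satisfies $(\Delta+2a)f\geq 2M$ and dominates $|\phi|^2$ at infinity. The weak minimum principle then gives $|\phi(\theta_1,t_1)|^2\leq 2M/a$ at a near-maximizer $(\theta_1,t_1)$ of $|\phi|$, and the constant $8/a$ drops out. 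No derivative bound on $\phi$ and no property of $\nabla$ beyond metric compatibility is used.

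A secondary point: your final step ``$K\leq 8/a$'' is not actually established by the splitting you describe, since the constants $c_2,c_3$ in the singularity and decay estimates are not explicit in $a$. In fact $K=1/a$ exactly: apply Lemma~\ref{lemma: Green kernel representation} to the constant function $1$ to obtain $1=\int g(x,y)\cdot a\,d\vol(y)=aK$. So if you add a bounded-curvature hypothesis on $\nabla$ (which does hold in all the paper's applications), your route works and even yields the sharper bound $\norm{\phi}_{L^\infty}\leq(1/a)\norm{\eta}_{L^\infty}$; but as stated the proposition needs the barrier argument.
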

\begin{proof}
There exists a point $(\theta_1,t_1)\in S^3\times \mathbb{R}$ where 
$|\phi(\theta_1,t_1)|\geq \norm{\phi}_{L^\infty} /2$.
We have 
\[ \Delta|\phi|^2=2(\nabla^*\nabla\phi,\phi)-2|\nabla\phi|^2 = 2(\eta,\phi)-2a|\phi|^2-2|\nabla\phi|^2.\]
Set $M:=\norm{\phi}_{L^\infty}\norm{\eta}_{L^\infty}$.
Then
\[ (\Delta+2a)|\phi|^2 \leq 2(\eta,\phi)\leq 2M.\]
Define a function $f$ on $S^3\times \mathbb{R}$ by 
$f(\theta,t):= (2M/a)\cosh \sqrt{a}(t-t_1) = (M/a)(e^{\sqrt{a}(t-t_1)}+e^{\sqrt{a}(-t+t_1)})$.
Then $(\Delta+a)f=0$, and hence $(\Delta+2a)f=a f\geq 2M$.
Therefore 
\[ (\Delta+2a)(f-|\phi|^2)\geq 0.\]
Since $|\phi|$ is bounded and $f$ goes to $+\infty$ at infinity, we have $f-|\phi|^2>0$
for $|t|\gg 1$.
Then the weak minimum principle (\cite[Chapter 3, Section 1]{Gilbarg-Trudinger}) implies
$f(\theta_1, t_1)-|\phi(\theta_1,t_1)|^2\geq 0$.
This means that 
$\norm{\phi}_{L^\infty}^2/4 \leq |\phi(\theta_1,t_1)|^2\leq (2M/a) 
= (2/a)\norm{\phi}_{L^\infty}\norm{\eta}_{L^\infty}$.
Thus $\norm{\phi}_{L^\infty}\leq (8/a)\norm{\eta}_{L^\infty}$.
\end{proof}
\begin{lemma} \label{lemma: solve the equation in a special case}
Let $\eta$ be a compact-supported smooth section of $E$.
Then there exists a smooth section $\phi$ of $E$ satisfying $(\nabla^*\nabla+a)\phi=\eta$
and 
\[ |\phi(x)|\leq \int_{S^3\times \mathbb{R}} g(x,y)|\eta(y)|d\vol(y).\]
\end{lemma}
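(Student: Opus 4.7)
The plan is to obtain $\phi$ by a standard variational argument and then invoke Lemma \ref{lemma: Green kernel bound on |phi|} to obtain the pointwise bound. First I would set up the bilinear form
\[ B(\phi,\psi) := \int_{S^3\times \mathbb{R}} \bigl( \langle \nabla\phi,\nabla\psi\rangle + a\langle \phi,\psi\rangle\bigr) d\vol \]
on the Hilbert space $W := \{\phi \in L^2(E) \mid \nabla\phi \in L^2\}$ equipped with the inner product $B(\cdot,\cdot)$. Since $a>0$, $B$ is coercive and equivalent to the standard $L^2_1$-norm; it is also bounded. Because $\eta$ is compact-supported, $\psi \mapsto \int \langle \eta,\psi\rangle d\vol$ defines a bounded linear functional on $W$, so the Riesz representation theorem yields a unique $\phi \in W$ with $B(\phi,\psi) = (\eta,\psi)_{L^2}$ for all $\psi \in W$. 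This means $(\nabla^*\nabla+a)\phi = \eta$ in the distributional sense.

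Next I would upgrade regularity: since $\eta$ is smooth and the operator $\nabla^*\nabla + a$ is elliptic with smooth coefficients, the standard interior elliptic regularity theory (applied locally in the trivializations used in Section \ref{section: Analytic preliminaries for the lower bound}) shows that $\phi$ is in fact smooth, and the equation $(\nabla^*\nabla+a)\phi = \eta$ holds classically.

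Finally, I would verify the hypotheses of Lemma \ref{lemma: Green kernel bound on |phi|}. By construction $\phi \in L^2$ and $\nabla\phi \in L^2$, so $\norm{\phi}_{L^2}$ and $\norm{\nabla\phi}_{L^2}$ are finite; and $\nabla^*\nabla\phi + a\phi = \eta$ is compact-supported, hence bounded, so $\norm{\nabla^*\nabla\phi+a\phi}_{L^\infty} = \norm{\eta}_{L^\infty} < \infty$. Lemma \ref{lemma: Green kernel bound on |phi|} then gives exactly
\[ |\phi(x)| \leq \int_{S^3\times \mathbb{R}} g(x,y)\, |\nabla^*\nabla\phi(y)+a\phi(y)|\, d\vol(y) = \int_{S^3\times \mathbb{R}} g(x,y)\, |\eta(y)|\, d\vol(y), \]
which is the desired bound.

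There is no serious obstacle: the only point requiring care is that the ambient manifold is non-compact, but this is handled automatically by the positivity of $a$, which makes $B$ coercive on all of $W$ without needing any Poincaré-type inequality or compactness. The construction of the Green kernel in Section \ref{subsection: (Delta+1) on functions} and Lemma \ref{lemma: Green kernel bound on |phi|} do the rest of the work.
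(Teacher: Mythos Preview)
Your proof is correct and essentially identical to the paper's: both solve the equation by Riesz representation on the Hilbert space $L^2_1(E)$ with the inner product $(\nabla\cdot,\nabla\cdot)_{L^2}+a(\cdot,\cdot)_{L^2}$, upgrade to smoothness by elliptic regularity, and then apply Lemma~\ref{lemma: Green kernel bound on |phi|} using that $\phi,\nabla\phi\in L^2$ and $(\nabla^*\nabla+a)\phi=\eta\in L^\infty$.
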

\begin{proof}
Set $L^2_1(E):=\{\xi\in L^2(E)|\, \nabla\xi\in L^2\}$ and 
$(\xi_1,\xi_2)_{a}:=(\nabla\xi_1,\nabla\xi_2)_{L^2}+a(\xi_1,\xi_2)_{L^2}$ for 
$\xi_1, \xi_2\in L^2_1(E)$. 
(Since $a>0$, this inner product defines a norm equivalent to the standard $L^2_1$-norm.)
$\eta$ defines the bounded functional 
\[ (\cdot, \eta)_{L^2}:L^2_1(E)\to \mathbb{R}, \quad \xi\mapsto (\xi,\eta)_{L^2}.\]
From the Riesz representation theorem, there uniquely exists $\phi\in L^2_1(E)$ 
satisfying $(\xi, \phi)_{a}=(\xi, \eta)_{L^2}$ for any $\xi\in L^2_1(E)$.
Then we have $(\nabla^*\nabla+a)\phi = \eta$ in the sense of distribution.
From the elliptic regularity, $\phi$ is smooth.
$\phi$ and $\nabla\phi$ are in $L^2$, and $(\nabla^*\nabla+a)\phi = \eta$ is in $L^\infty$.
Hence we can apply Lemma \ref{lemma: Green kernel bound on |phi|} to $\phi$ and get 
\[ |\phi(x)|\leq \int g(x,y)|\nabla^*\nabla\phi(y)+a\phi(y)|d\vol(y) = \int g(x,y)|\eta(y)|d\vol(y).\]
\end{proof}
\begin{proposition}  \label{prop: solve the equation in a general case}
Let $\eta$ be a smooth section of $E$ satisfying $\norm{\eta}_{L^\infty}<\infty$.
Then there exists a smooth section $\phi$ of $E$ satisfying 
$(\nabla^*\nabla+a)\phi =\eta$ and 
\begin{equation}\label{eq: Green kernel bound on phi in general case}
  |\phi(x)|\leq \int_{S^3\times \mathbb{R}}g(x,y)|\eta(y)|d\vol(y).
\end{equation}
(Hence $\phi$ is in $L^\infty$.)
In particular, if $\eta$ vanishes at infinity, then $\phi$ also vanishes at infinity.
Moreover, if a smooth section $\phi'\in L^\infty(E)$ satisfies $(\nabla^*\nabla+a)\phi' = \eta$
($\eta$ does not necessarily vanishes at infinity),
then $\phi' = \phi$.
\end{proposition}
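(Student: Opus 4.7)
The plan is to produce $\phi$ by cutting off $\eta$, solving on each compact support via Lemma \ref{lemma: solve the equation in a special case}, and then passing to the limit. First I would choose smooth cut-off functions $\rho_n$ on $S^3\times \mathbb{R}$ with $0\leq \rho_n\leq 1$, $\rho_n=1$ on $|t|\leq n$, $\rho_n=0$ on $|t|\geq n+1$, and set $\eta_n := \rho_n \eta$. By Lemma \ref{lemma: solve the equation in a special case} there is a smooth $\phi_n$ with $(\nabla^*\nabla+a)\phi_n=\eta_n$ and
\[ |\phi_n(x)|\leq \int_{S^3\times \mathbb{R}} g(x,y)|\eta_n(y)|d\vol(y) \leq \norm{\eta}_{L^\infty}\int g(x,y)d\vol(y) = K\norm{\eta}_{L^\infty},\]
where $K$ is finite by Lemma \ref{lemma: exponential decay of the Green function}. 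In particular $\sup_n \norm{\phi_n}_{L^\infty}<\infty$.

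Next, since $\eta_n\to \eta$ in $\mathcal{C}^\infty$ on every compact set and $\{\phi_n\}$ is uniformly bounded, the interior elliptic (Schauder) estimates applied to $(\nabla^*\nabla+a)\phi_n=\eta_n$ show that $\{\phi_n\}$ is bounded in $\mathcal{C}^{k}$ on every compact set for each $k$. Passing to a diagonal subsequence, $\phi_n$ converges in $\mathcal{C}^\infty$ on compact sets to a smooth $\phi$ satisfying $(\nabla^*\nabla+a)\phi=\eta$. The pointwise bound (\ref{eq: Green kernel bound on phi in general case}) follows by applying the dominated convergence theorem to the inequality $|\phi_n(x)|\leq \int g(x,y)|\eta_n(y)|d\vol(y)$; the dominating function is $g(x,\cdot)\norm{\eta}_{L^\infty}$, which is integrable since $g(x,\cdot)\in L^1$. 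In particular $\phi\in L^\infty$ with $\norm{\phi}_{L^\infty}\leq K\norm{\eta}_{L^\infty}$.

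For the statement that $\phi$ vanishes at infinity whenever $\eta$ does, I would split, for any $R>0$,
\[ \int g(x,y)|\eta(y)|d\vol(y) = \int_{d(y,x_0)\leq R}g(x,y)|\eta(y)|d\vol(y) + \int_{d(y,x_0)>R}g(x,y)|\eta(y)|d\vol(y), \]
with $x_0$ a fixed basepoint. Given $\varepsilon>0$, choose $R$ so large that $|\eta(y)|<\varepsilon$ for $d(y,x_0)>R$; the second term is then bounded by $\varepsilon K$. For the first term, as $x\to \infty$ the exponential decay of $g(x,y)$ given by Lemma \ref{lemma: exponential decay of the Green function} forces it to zero uniformly on the compact region $d(y,x_0)\leq R$. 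Hence $|\phi(x)|\to 0$ at infinity. The main (minor) obstacle here is just bookkeeping the exponential-decay estimate on an unbounded-but-thin region; since $g$ is translation-invariant and decays like $e^{-\sqrt{a}d(x,y)}$ once $d(x,y)\geq 1$, this is routine.

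Finally, uniqueness in $L^\infty$ is immediate: if $\phi'\in L^\infty$ is smooth with $(\nabla^*\nabla+a)\phi'=\eta$, then $\psi := \phi-\phi'$ lies in $L^\infty$ and satisfies $(\nabla^*\nabla+a)\psi = 0$, so Proposition \ref{prop: L^infty-estimate, appendix} gives $\norm{\psi}_{L^\infty}\leq (8/a)\cdot 0 = 0$, i.e.\ $\phi'=\phi$. The only step I expect to require care is verifying that the limit $\phi$ is genuinely $L^\infty$ globally (as opposed to on compacta) with the stated pointwise Green-kernel bound; this is handled by the dominated convergence argument above, and all other steps are standard elliptic-theory manipulations.
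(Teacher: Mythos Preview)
Your proof is correct and follows essentially the same approach as the paper: cut off $\eta$, solve the compactly supported problems via Lemma \ref{lemma: solve the equation in a special case}, extract a convergent subsequence using Schauder interior estimates, and deduce uniqueness from Proposition \ref{prop: L^infty-estimate, appendix}. The only cosmetic differences are that the paper extracts a $\mathcal{C}^2$ limit (rather than $\mathcal{C}^\infty$) and passes the Green-kernel bound directly from the limit of the inequality rather than invoking dominated convergence explicitly.
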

\begin{proof}
Let $\rho_n$ $(n\geq 1)$ be the cut-off functions introduced in the proof of
Lemma \ref{lemma: Green kernel representation}, 
and set $\eta_n :=\rho_n\eta$. From Lemma \ref{lemma: solve the equation in a special case},
there exists a smooth section $\phi_n$ satisfying 
$(\nabla^*\nabla+a)\phi_n=\eta_n$ and 
\begin{equation} \label{eq: Green kernel bound on phi_n}
 |\phi_n(x)|\leq \int g(x,y)|\eta_n(y)|d\vol(y) \leq \int g(x,y) |\eta(y)|d\vol(y).
\end{equation}
Hence $\{\phi_n\}_{n\geq 1}$ is uniformly bounded.
Then by using the Schauder interior estimate (\cite[Chapter 6]{Gilbarg-Trudinger}),
for any compact set $K\subset S^3\times\mathbb{R}$,
the $\mathcal{C}^{2,\alpha}$-norms of $\phi_n$ over $K$ are bounded $(0<\alpha<1)$.
Hence there exists a subsequence $\{\phi_{n_k}\}_{k\geq 1}$ and a section $\phi$ of $E$ such that 
$\phi_{n_k}\to \phi$ in the $\mathcal{C}^2$-topology over every compact subset in $S^3\times \mathbb{R}$.
Then $\phi$ satisfies $(\nabla^*\nabla+a)\phi = \eta$.
$\phi$ is smooth by the elliptic regularity, 
and it satisfies (\ref{eq: Green kernel bound on phi in general case})
from (\ref{eq: Green kernel bound on phi_n}).

Suppose $\eta$ vanishes at infinity. 
Set $K:= \int g(x,y)d\vol(y) <\infty$ (independent of $x$).
For any $\varepsilon>0$, there exists a compact set $\Omega_1\subset S^3\times \mathbb{R}$ such that
$|\eta|\leq \varepsilon/(2K)$ on the complement of $\Omega_1$.
There exists a compact set $\Omega_2\supset \Omega_1$ such that for any $x\not\in\Omega_2$
\[ \norm{\eta}_{L^\infty} \int_{\Omega_1}g(x,y) d\vol(y) \leq \varepsilon/2.\]
Then from (\ref{eq: Green kernel bound on phi in general case}), for $x\not\in\Omega_2$, 
\[  |\phi(x)|\leq \int_{\Omega_1}g(x,y)|\eta(y)|d\vol(y) + \int_{\Omega_1^c}g(x,y)|\eta(y)|d\vol(y)
    \leq \varepsilon/2 +\varepsilon/2 = \varepsilon.\]
This shows that $\phi$ vanishes at infinity. 

Suppose that smooth $\phi'\in L^\infty(E)$ satisfies $(\nabla^*\nabla+a)\phi' = \eta$.
We have $(\nabla^*\nabla+a)(\phi-\phi')=0$, and $\phi-\phi'$ is contained in $L^\infty$.
Then the $L^\infty$-estimate in Proposition \ref{prop: L^infty-estimate, appendix} implies 
$\phi-\phi'=0$.
\end{proof}

\vspace{10mm}

\address{ Shinichiroh Matsuo \endgraf
Graduate School of Mathematical Sciences, University of Tokyo, 
3-8-1 Komaba Meguro-ku, Tokyo 153-8914, Japan}

\textit{E-mail address}: \texttt{exotic@ms.u-tokyo.ac.jp}

\vspace{0.5cm}

\address{ Masaki Tsukamoto \endgraf
Department of Mathematics, Kyoto University, Kyoto 606-8502, Japan}

\textit{E-mail address}: \texttt{tukamoto@math.kyoto-u.ac.jp}

\end{document}